%%%%%%%%%%%%%%%%%%%%%%%%%%%%%%%%%%%%%%%%%%%%%%%%%%%%%%%%%%%%%%%%%%%%%
%  Equivariant limits and rare phenomena in hyperbolic groups      %
%                                  %
%          %
%                                  %
%      as submitted to                    %
%                                  %
          \def\version{01 February, 2026}            %
%%%%%%%%%%%%%%%%%%%%%%%%%%%%%%%%%%%%%%%%%%%%%%%%%%%%%%%%%%%%%%%%%%%%%

 \documentclass[reqno,11pt]{amsart}
 \usepackage{amsmath, amsthm, a4, latexsym, amssymb,comment}
 
\usepackage[unicode]{hyperref}
\hypersetup{
    colorlinks=true, %set true if you want colored links
    linktoc=all,     %set to all if you want both sections and subsections linked
    linkcolor=blue,  %choose some color if you want links to stand out
}
\usepackage{srcltx}

\usepackage{xcolor} 

\usepackage{amstext,bbm,nicefrac,physics, tikz}
\usepackage{subcaption}

\usetikzlibrary{calc}
\usetikzlibrary{decorations.pathmorphing}

\setlength{\topmargin}{0in}
\setlength{\headheight}{0.12in}
\setlength{\headsep}{.40in}
\setlength{\parindent}{1pc}
\setlength{\oddsidemargin}{-0.1in}
\setlength{\evensidemargin}{-0.1in}

% Format :
\marginparwidth 48pt
\marginparsep 10pt

\oddsidemargin-0.5cm
\evensidemargin-.5cm

\headheight 12pt
\headsep 25pt
\footskip 30pt
\textheight 625pt %if \pagestyle{empty} use 630pt
\textwidth 170mm
\columnsep 10pt
\columnseprule 0pt
\setlength{\unitlength}{1mm}

\setlength{\parindent}{20pt}
\setlength{\parskip}{2pt}

\def\@rmrk#1#2{\refstepcounter
  {#1}\@ifnextchar[{\@yrmrk{#1}{#2}}{\@xrmrk{#1}{#2}}}

% Format :
 %\marginparwidth 48pt\marginparsep 10pt
 %\oddsidemargin-0.5mm
 %\topmargin -18pt\headheight 12pt\headsep 25pt\footskip 30pt
 %\textheight 625pt\textwidth 160mm\columnsep 10pt\columnseprule 0pt
%
 
\makeatletter\@addtoreset{equation}{section}\makeatother

 \sloppy
 \parskip 0.8ex plus0.3ex minus0.2ex
 \parindent1em

 \newfont{\bfit}{cmbxti10 scaled 1200}

%%%%%%%%%%%%%%%%%% Abbreviations %%%%%%%%%%%%%%%%%%%%%%%%%%%
\renewcommand{\d}{{\rm{d}}}
 \newcommand{\e}{{\rm e} }
 \newcommand{\bd}{{\rm bd} }

 \newcommand{\eps}{\varepsilon}

 \newcommand{\R}{\mathbb{R}}
\renewcommand{\S}{\mathbb{S}}
 \newcommand{\N}{\mathbb{N}}
 \newcommand{\Z}{\mathbb{Z}}
 \newcommand{\Q}{\mathbb{Q}}

 \newcommand{\Leb}{{\rm Leb\,}}

 \newcommand{\E}{\mathbb{E}}
 \renewcommand{\H}{\mathbb{H}}
 \renewcommand{\P}{\mathbb{P}}
% Blackboard bold 1
 \def\1{{\mathchoice {1\mskip-4mu\mathrm l} 
{1\mskip-4mu\mathrm l}
{1\mskip-4.5mu\mathrm l} {1\mskip-5mu\mathrm l}}}

 \newcommand{\skrid}{{\mathcal D}}

 \newcommand{\skrif}{{\mathcal F}}
 \newcommand{\Fcal}{{\mathcal F}}
 \newcommand{\skrig}{{\mathcal G}}

 \newcommand{\Ncal}{{\mathcal N}}

  \newcommand{\skrit}{{\mathcal T}}

\newcommand{\natls}{{\mathbb N}}

\newcommand\FF{{\mathcal F}}
\newcommand\GG{{\mathcal G}}

\newcommand\LL{{\mathcal L}}
\newcommand\MM{{\mathcal M}}

\newcommand\PP{{\mathcal P}}

\newcommand\PMF{{\PP\kern-2pt\MM\FF}}

\newcommand\PML{{\PP\kern-2pt\MM\LL}}

\newcommand\ep{\epsilon}

\newcommand\til{\widetilde}
 
 \newcommand{\Ga}{{\Gamma}}

%%%%%%%%%%%%%%%%%%%%%%%%%%%%%%%%%%%%%%%%%%%%%%%%%%%%%%%%%%%%%%%

%\renewcommand{\subsection}{\secdef \subsct\sbsect}
%\newcommand{\subsct}[2][default]{\refstepcounter{subsection}
%\vspace{0.15cm}
%{\flushleft\bf \arabic{section}.\arabic{subsection}~\bf #1 }
%\nopagebreak\nopagebreak}
%\newcommand{\sbsect}[1]{\vspace{0.1cm}\noindent
%{\bf #1}\vspace{0.1cm}}

\newenvironment{example}{\refstepcounter{theorem}
{\bf Example \thetheorem\ }\nopagebreak }%
{\nopagebreak {\hfill\rule{2mm}{2mm}}\\ }

\newtheorem{theorem}{Theorem}[section]
\newtheorem{lemma}[theorem]{Lemma}
\newtheorem{cor}[theorem]{Corollary}
\newtheorem{prop}[theorem]{Proposition}

\newtheorem{defn}[theorem]{Definition}

\newtheoremstyle{theorem}{1.5ex}{1.5ex}{\itshape\rmfamily}{}
{\bfseries\rmfamily}{}{2ex}{}

%\newtheoremstyle{remark}{1.3ex}{1.3ex}{\rmfamily}{}
%{\itshape\rmfamily}{}{1.5ex}{}
%\theoremstyle{remark}
%\newtheorem{remark}{{\slshape\sffamily Remark}}[]
%\renewcommand{\theremark}{{\slshape\sffamily\arabic{remark}}.}
%\newtheorem{remarks}{\slshape\sffamily Remarks.}
%\refstepcounter{subsubsection}

\newcounter{remark}
\newenvironment{remark}{
  \refstepcounter{remark}
  \noindent {\bf Remark \theremark\ }\nopagebreak
}{
  \nopagebreak {\hfill\rule{2mm}{2mm}}\\
}

\def\thebibliography#1{\section*{References}
 \list%
 {\arabic{enumi}.}%             {\star}{\star}{\star} style of reference number {\star}{\star}{\star}
  {\settowidth\labelwidth{[#1]}\leftmargin\labelwidth
  \advance\leftmargin\labelsep
  \parsep0pt\itemsep0pt
  \usecounter{enumi}}
  \def\newblock{\hskip .11em plus .33em minus .07em}
  \sloppy          % \clubpenalty4000\widowpenalty4000
  \sfcode`\.=1000\relax}

%\newtheorem{remark}[theorem]{Remark}

%%%%%%%%%%%%%%%%%%%%%%%%%%%%%%%%%%%%%%%%%%%%%%%%%%%%%%%%%%%%%%%%

%%%%%%%%%%%%%%%%%%%%%%%%%%%%%%%%%%%%%%%%%%%%%%%%%%%%%%%%%%%%%%%%

\setcounter{tocdepth}{2}
\setcounter{secnumdepth}{2}

\let\oldtocsection=\tocsection

\let\oldtocsubsection=\tocsubsection

\let\oldtocsubsubsection=\tocsubsubsection

\renewcommand{\tocsection}[2]{\hspace{0em}\oldtocsection{#1}{#2}}
\renewcommand{\tocsubsection}[2]{\hspace{1em}\oldtocsubsection{#1}{#2}}
\renewcommand{\tocsubsubsection}[2]{\hspace{2em}\oldtocsubsubsection{#1}{#2}}

 \begin{document}

%%%%%%%%%%%%%%%%%%%%%%%%%%%%%%%%%%%%%%%%%%%%%%%
\title[Randomized Geodesic Flow on Hyperbolic Groups]
{\large Randomized Geodesic Flow on Hyperbolic Groups}
\author{Luzie Kupffer}
\address{Luzie Kupffer, Fachbereich Mathematik und Informatik, Universit\"at M\"unster, Einsteinstra{\ss}e 62, M\"unster 48149, Germany}

\email{lkupffer@uni-muenster.de}

\author{Mahan Mj}
\address{Mahan Mj,  School
of Mathematics, Tata Institute of Fundamental Research. 1, Homi Bhabha Road, Mumbai-400005, India}

\email{mahan@math.tifr.res.in}

\author{Chiranjib Mukherjee}
\address{Chiranjib Mukherjee, Fachbereich Mathematik und Informatik, Universit\"at M\"unster, Einsteinstra{\ss}e 62, M\"unster 48149, Germany}

\email{chiranjib.mukherjee@uni-muenster.de}

\thispagestyle{empty}
\vspace{-0.5cm}

\subjclass[2010]{60K35,  20F67 (20F65, 51F99, 60J50) } 

\keywords{random walk, hyperbolic group,  harmonic measure, Bowen-Margulis-Sullivan measure}

\date{\today}

\begin{abstract}
Motivated by  Gromov's geodesic flow problem on hyperbolic groups $G$, we develop in this paper an analog using random walks.
This leads to a notion of a harmonic analog  $\Theta$ of the Bowen-Margulis-Sullivan measure on $\partial^2 G$. We provide three different but related constructions of $\Theta$:
1) by moving the base-point along a quasigeodesic ray
2) by moving the base-point along random walk trajectories
3) directly as a push-forward under the boundary map to $\partial^2 G$ of a  measure inherited from studying all bi-infinite random walk trajectories (with no restriction on base-point) on $G^\Z$. 

Of these, the third construction is the most involved and needs  new techniques.  It  relies on developing a framework where we can treat bi-infinite random walk trajectories as analogs of bi-infinite geodesics  on complete simply connected negatively curved manifolds. Geodesic flow on a hyperbolic group is typically not well-defined due to non-uniqueness of geodesics. We circumvent this problem in the random walk setup by considering \emph{all} trajectories. 

We thus get a well-defined
discrete flow that we call the \emph{randomized geodesic flow}, given by the $\Z-$shift on bi-infinite random walk trajectories. The $\Z-$shift is the random analog of the time one map of the geodesic flow. 

As an analog of ergodicity of the geodesic flow on a closed negatively curved manifold, we establish ergodicity of the $G$-action on $(\partial^2G, \Theta)$. As a consequence of our construction, we prove that the randomized geodesic flow is exponentially mixing of all orders and establish a functional CLT.

\end{abstract}

\begin{comment}
    The proofs are based on a new technique developed by studying random trajectories analogous to the geodesic flow on the unit tangent bundle of the universal cover of a closed negatively curved manifold viewed from the boundary. Thus, although the geodesic flow on a hyperbolic group is typically not well-defined due to non-uniqueness of geodesics, in the present random walk setup however, we do get a well-defined 
\emph{discrete flow} given by the $\Z-$shift. 
\end{comment}

\maketitle
\begin{comment}
   \centerline{\sc  \footnote{, {\tt }},
Mahan Mj. \footnote{School of Mathematics, TIFR, Colaba Mumbai, {\tt mahan@math.tifr.res.in}} and
\footnote{University of M\"unster, {\tt }}} 
\footnote{\textit{AMS Subject
Classification:} }
\footnote{\textit{Keywords:} ,
large deviations.}

\vspace{-0.5cm}
\centerline{\textit{Universit\"at M\"unster and Tata Institute of Fundamental Research}}
\vspace{0.2cm}

\begin{center}
\version
\end{center} 
\end{comment}

\tableofcontents

\section{Background and Introduction}\label{Sec Background}

{The goal of this paper is to develop a new framework of randomized geodesic flow in the setting of Gromov-hyperbolic groups by leveraging random walks rather than classical geodesic paths, addressing a long-standing challenge arising from the non-uniqueness of geodesics in this context. In Cayley graphs of hyperbolic groups, unlike negatively curved manifolds,  classical (deterministic) geodesic flow runs into the problem of multiple geodesics connecting boundary points. We circumvent this here by considering all bi-infinite random walk trajectories, yielding a fully defined discrete flow via the natural $\Z$-shift on the space of these trajectories. This $\Z$-shift serves as a probabilistic analog of the time-one map of geodesic flow.

One of the central contributions (Theorem~\ref{theorem 1}) is to provide three distinct yet equivalent constructions of the {\it harmonic Bowen-Margulis-Sullivan measure} $\Theta$ on the double boundary $\partial^2 G$: (a) following quasigeodesic rays, (b) along random walk paths, and (c) as a push-forward from a measure defined on bi-infinite random walks. 
The third construction, which is technically the most involved, is necessary in order to interpret the system as a genuine random-walk analog of geodesic flow, in that it provides the appropriate global dynamical framework rather than a purely boundary-level description. At the same time, it is not a priori clear that this framework should capture the boundary behavior. In particular, one surprising aspect is that the push-forward of the natural measure on bi-infinite random walk paths turns out to have the desired density with respect to the product of harmonic measures, or equivalently that it yields the harmonic Bowen–Margulis–Sullivan measure on $\partial^2G$. Establishing this identification requires new probabilistic and geometric tools and constitutes one of the central conceptual components of the paper. 

One first consequence (Theorem~\ref{big ergodicity intro}) of the above constructions is the ergodicity of the $G$-action on $(\partial^2G, \Theta)$, generalizing classical ergodic properties of geodesic flow on negatively curved manifolds to the random walk context on hyperbolic groups. 

Another consequence of the above constructions that distinguishes our work from previous approaches is the following quantitative aspect: We show that our random walk flow exhibits {\it strong statistical properties}. Concretely, we prove {\it exponential mixing of all orders} (Theorem~\ref{thm mixing}) and a central limit theorem 
(Theorem~\ref{thm CLT}) for the randomized geodesic flow. These results are in sharp contrast with the deterministic hyperbolic-group setting, where strong mixing properties remain largely inaccessible. Crucially, these limit theorems rely on the aforementioned structural viewpoint developed using the random-walk perspective. 

We will now review the relevant literature and outline the main results of the paper, placing our work in context.}

\subsection{Motivation and Outline of Results.} %The aim of this paper is to develop a random walks analog of geodesic flow on hyperbolic groups. 
The study of  geodesic flows on closed negatively curved manifolds side-by-side with Brownian motion or random walks on their universal cover has a rich history. See \cite{kaimanovich,ledrappier,hamenstadt,roblin,cdst} for a small, but representative sample. Gromov-hyperbolic groups $G$ \cite{gro-hyp} provide a far-reaching generalization of fundamental groups of
closed negatively curved manifolds. However, extending the notion of geodesic flow to hyperbolic groups $G$ is problematic due to non-uniqueness of geodesics between points. In \cite[Theorem 8.3C]{gro-hyp}, Gromov gives a construction for geodesic flows on hyperbolic groups. However, the associated group action on the resulting flow space is not by isometries in this construction. A related bi-combing was constructed by Mineyev-Monod-Shalom in \cite{mms}, and
a topological analog of the geodesic flow was developed by Mineyev in \cite{mineyev}. A more measure-theoretic version of a geodesic flow on hyperbolic groups was developed by Bader-Furman in 
 \cite{BF} building on earlier work of Furman \cite{furman}: this  construction  is philosophically related to the present paper.
 Roblin's work \cite{roblin} on $CAT(-1)$ spaces deserves special mention. A key technical problem addressed in many of these papers  is the fact that geodesics between pairs of points in a hyperbolic group are only \emph{coarsely unique, but not unique in general}.

 We adopt a completely opposite point of view in this paper, by considering
 \emph{all possible paths}, thus doing away entirely with the problem of selecting a preferred path. The natural framework that we need to set up thus becomes one of bi-infinite random walks. Once we construct such a framework, geodesic flow (more precisely, the time one map associated with the geodesic flow)  gets replaced by a natural shift on the space of parametrized bi-infinite random walk paths.
 What we   achieve by this construction may be summarized by the following:
 \begin{enumerate}
 \item The construction of a flow space $\FF(\Ga)$ associated to the Cayley graph $\Ga
     =\Ga(G,S)$ of a hyperbolic group $G$ with respect to a finite generating set. The metric on $\Ga$ is the standard word metric with respect to $S$, and each trajectory maps into $\Gamma$.
     \item There is a $\Z$-shift  on $\FF(\Ga)$  preserving trajectories.
 \end{enumerate}
As pointed out above, the flow space in \cite[Section 8]{gro-hyp} does not admit an isometric $G-$action. In Mineyev's construction in \cite{mineyev}, an auxiliary metric needs to be constructed to force geodesics converging to the same point on
the boundary $\partial G$ to be asymptotic in a strong sense, thus changing the background simplicial metric.
In \cite{BF}, the trajectories are not maps to $\Ga$.
Thus, item (1) above fails in some way or another in these constructions.
We address these issues in the present paper by requiring that each flow line/trajectory in $\FF(\Ga)$ is a bi-infinite random walk path. Thus, the topological nature of the constructions in 
\cite{gro-hyp,mineyev} is replaced by a random construction, and the unique auxiliary/abstract trajectory of \cite{BF} is replaced by all trajectories that are intrinsic, i.e.\ $G^\Z$.

{
We mention some consequences of the construction in this paper that underscores the difference of the approach with the earlier ones. There is a natural
left action of $\Z$ (the shift) on $G^\Z/G$. This action turns out to be mixing once the setup is properly established. We thus deduce a  CLT easily. (See Theorems \ref{thm mixing} and  \ref{thm CLT} below, and Section \ref{sec mixing} for proofs). This is much unlike the situation in \cite{BF}, where only weaker mixing results are known. Weak mixing for the Mineyev topological flow was only recently established in \cite{cantrell}. The conceptual reason behind this difference is that the mathematical source of mixing is vastly different in the present setup. A key ingredient behind mixing of the geodesic flow in the earlier approaches is strong asymptoticity of geodesics converging to a point on the boundary, an approach going back to Hopf, Hedlund and Anosov. This property fails for a typical Cayley graph $\Ga$ of a hyperbolic group. In the present paper, mixing is guaranteed on the other hand by randomness. Morally, it goes back to the fact that Brownian motion is mixing on any compact manifold (not necessarily of negative curvature). We thus need an appropriate analog of a finite measure fundamental domain in our setup. This is furnished by $G^\Z/G$, which takes the place of the flow space $\FF(\Ga)$. Via hyperbolicity, and earlier works on random walks on hyperbolic groups, notably \cite{kaimanovich}, $G^\Z$ gets connected to the boundary of $G$.}

 This brings us to the intimately related boundary perspective.
A natural boundary measure in the context of
geodesic flow on non-elementary hyperbolic groups is the Patterson-Sullivan measure \cite{patterson,sullivan,coornaert-pjm}.
The Patterson-Sullivan measure $\mu$ \cite{coornaert-pjm} on the boundary $\partial G$ of a Gromov-hyperbolic group $G$ may be regarded as a limit of the uniform probability distributions $\mu_n$ on $n-$balls $B_n(o)$ in a Cayley graph $\Gamma$ of $G$ with respect to a finite generating set. It
is an example of a conformal density: the group $G$ acts on $\partial G$ by uniformly quasiconformal automorphisms. Let $v$ denote the Hausdorff dimension of $(\partial G, \mu)$, and $\rho$ a visual metric on $\partial G$. Then $v$ can be identified with the volume entropy of $\Ga$.
Then there exists a natural measure 
$$
\d\mu_{\mathrm{BMS}}= \frac{\d\mu \otimes \d\mu}{\rho(x,y)^{2v}}
\quad\mbox{on}\quad \partial^2 G := \big((\partial G \times \partial G) \setminus \Delta \big), \,\, \Delta=\{(\xi,\xi): \xi \in \partial G\}, 
$$
called the
\emph{Bowen-Margulis-Sullivan} measure. The key feature of $\mu_{\mathrm{BMS}}$ is that it is \emph{quasi-invariant} under the $G$-action with uniformly bounded Radon-Nikodym derivatives  -- that is, there exists $C \geq 1$ such that
$$
\frac{\d g_*\, \mu_{\mathrm{BMS}}}{\d\mu_{\mathrm{BMS}}} \in [1/C,C]\qquad\mbox{for all $g \in G$}.
$$

The {Bowen-Margulis-Sullivan} measure may be regarded as the analog in the context of hyperbolic groups
of the Bowen-Margulis measure on the space of geodesics in a closed negatively curved manifold \cite{margulis,bowen,hamenstadt89,kaimanovich}.

In this paper's random walk framework, we replace the Patterson-Sullivan measures with harmonic measures. Instead of using the Patterson-Sullivan measure $\mu$, we start with the harmonic measures $\nu_g$ on the boundary $\partial G$, also known as the hitting measure for a random walk starting at $g \in G$. While harmonic measures correspond to the Poisson boundary of $G$, Kaimanovich's fundamental work \cite[Thm. 7.4]{Kai00} demonstrates that the Gromov boundary $\partial G$ can be identified with the Poisson boundary for a broad class of random walks on $\Gamma$. Let $d_{\skrig}$ represent the Green metric on $\Gamma$, $v_{\skrig}$ the volume entropy with respect to this metric, and $\rho_{\skrig}$ the visual metric on $\partial G$ associated with $d_{\skrig}$ (see Section \ref{green metric} for more details). The harmonic measures are a conformal density of dimension $v_{\skrig}$. We define the \emph{harmonic Bowen-Margulis-Sullivan measure $\mu_{\mathrm{hBMS}}$} as follows:
\[
d\mu_{\mathrm{hBMS}} = \frac{d\nu_o \otimes d\nu_o}{\rho_{\skrig}(x,y)^{2v_{\skrig}}}.
\]

In this paper, we explore three different ways of constructing $\mu_{\mathrm{hBMS}}$ directly in terms of \emph{bi-infinite random walk trajectories}. In Section~\ref{sec bi-inf RW} we set up the main player in the game: bi-infinite random walk trajectories obtained by starting with a random walk trajectory $(X_n)_{n\in\N}$ with finitely supported and symmetric step distribution, shifting the origin to $X_m$, i.e.\ $(X_m^{-1}X_n)_{n\in\N}$, and letting $m$ tend to infinity. We then construct three different measure-equivalent versions of $\mu_{\mathrm{hBMS}}$ on $\partial^2 G$ in the following ways:
\begin{enumerate}
   \item[(1)] We construct $\Theta_1$ as a limit of occupation measures of bi-infinite random walk trajectories, after fixing a geodesic ray in each direction $\xi \in \partial G$ -- see Theorem \ref{theorem 1}, Part (i) and Section~\ref{sec Theta bd}. 
   \item[(2)] We construct a more intrinsic \emph{disintegrated version} $\Theta_2$ of the above measure: instead of choosing a geodesic ray in each direction $\xi \in \partial G$, we capture the behavior obtained while moving along \emph{all} random walk paths starting at the identity -- see Theorem \ref{theorem 1}, Part (ii) and Section \ref{sec Theta disintegration}. 
\item [(3)] We first construct a natural measure $\Q$ on the space  $G^\Z$ of all bi-infinite random walk trajectories (with no restriction on base-point). The measure $\Q$ is 
\begin{itemize}
  \item invariant under the natural $\Z-$shift on trajectories, and admits a quotient $\Z\backslash G^\Z$ equipped with a natural measure $\widehat \Q$ inherited from $\Q$. Note that
  $\Z$ acts on the left on $G^\Z$.
  \item invariant under the natural $G-$action that moves the base-point, and admits a quotient $G^\Z/G$ equipped with a natural measure $\bar\P_{o}$ inherited from $\Q$.
   Note that $G$ acts on the right on $G^\Z$.
\end{itemize} 
We then push forward the measure $\widehat \Q$ under the \emph{boundary map} to obtain a measure $\Theta_3$ that sends each bi-infinite random walk trajectory to the pair of points in $\partial^2 G$ that it is forward and backward asymptotic to -- see Theorem \ref{theorem 1}, Part (3) and Section \ref{sec Theta push-forward}. This construction of $\Theta_3$ allows us to show that for this measure, the $G$-action on $\partial^2 G$ is ergodic -- see Theorem \ref{theorem 1}, Theorem \ref{big ergodicity intro} and Section \ref{sec ergodicity}. 
 \end{enumerate}

These constructions, and in particular (3) above, lie at the heart of the present paper. Indeed, $\Theta_3$ is what justifies the terminology harmonic Bowen-Margulis-Sullivan measure as it reconciles
the internal ($G$) and  boundary ($\partial G$) points of view:
\begin{enumerate}    
\item Internally, it provides an interpretation of the measure's construction as a random walk analogue of the geodesic flow (see below). 

\item On the boundary $\partial^2 G$, it leads to the desired density with respect to $\nu_o\otimes\nu_o$. Thus $\Theta_3$ and the harmonic Bowen-Margulis measure  $\mu_{\mathrm{hBMS}}$ are really the same measure on $\partial^2 G$ (up to a uniformly bounded multiplicative constant).
\end{enumerate}
Note that $\mu_{\mathrm{hBMS}}$ can be defined directly on
$\partial^2 G$ as soon as one has a conformal density given by the harmonic measure $\nu_0$
on $\partial G$. It is  as a consequence of the above two features of $\Theta_3$, that $\mu_{\mathrm{hBMS}}$ can indeed be treated as a genuine discrete flow along bi-infinite random walk paths. It is worth noting that the first two methods (1) and (2) also yield constructions of $\mu_{\mathrm{hBMS}}$, despite the fundamentally different approach adopted there. Showing that these differing constructions achieve the same result is the main thrust of Theorem~\ref{theorem 1} below.
These approaches provide  different perspectives on the interpretation of  $\mu_{\mathrm{hBMS}}$, enriching its theoretical framework.
Demonstrating that these approaches all lead to  versions of $\mu_{\mathrm{hBMS}}$  is nontrivial and requires new techniques that we outline in Section \ref{sec proof ideas}.

We now elaborate on  the point (i) above. Indeed, the description in (3) is the random trajectories analog of the geodesic flow on the unit tangent bundle of the universal cover of a closed negatively curved manifold viewed from the boundary. We should remind the reader here that geodesic flow on a hyperbolic group is typically not well-defined due to non-uniqueness of geodesics. In the random walk setup however, we do get a well-defined 
\emph{discrete flow} given by the $\Z-$shift. This may be regarded as the random walks analog of the cyclic group generated by the time one map of the geodesic flow. Further, since we consider \emph{all}
random walk trajectories, we circumvent the problem 
of non-uniqueness of geodesics and we have a well-defined 
{\emph{random walk flow}}. Thus, we would like to emphasize on the following points: 

\begin{enumerate}
  \item[(a)] The geodesic flow on the (unit tangent bundle to the) universal cover $\widetilde{N}$ of a closed negatively curved manifold
  $N$ is well-defined 
  from the boundary perspective due to uniqueness of geodesics connecting $\xi,\eta \in \partial \widetilde{N}$.
  \item[(b)] The {\emph{random walk flow}} is well-defined 
  from the boundary perspective for an opposite reason: all random walk trajectories are considered.
\end{enumerate}
Furthermore, while any pair of points 
$\xi,\eta \in \partial \widetilde{N}$ determines a unique bi-infinite geodesic, $\xi,\eta \in \partial G$ determines a whole family of random walk trajectories.

Finally, we note that the bi-infinite random walks studied in the present work differ from the classical view of bi-infinite random walks. Indeed, as will be explained in Section \ref{bi-inf RW}, our bi-infinite random walks are constructed by iteratively shifting back a random walk $(X_n)_{n\in\N}$. Hence, the distribution $\bar\P_o$ of our bi-infinite random walk (see \eqref{def X} below and Section \ref{bi-inf RW}) is equivalent to $\P_o\otimes\P_o$, the product measure of two independent random walks started at $o$. The classical view on the other hand regards bi-infinite random walks as loops through the point at infinity in the one-point compactification of the space. We refer to Appendix \ref{appendix} for a more detailed discussion on this difference, both conceptual and technical, between these two notions of bi-infinite random walk paths and the associated distributions and to Example \ref{example RI} for an explicit computation illustrating this difference in the case of a free group.

\section{Main  Results.}\label{sec main results}
We now fix a Gromov-hyperbolic group $G$ with a symmetric finite generating set $S$. Let $\partial G$ denote its Gromov (geometric) boundary, while $(\cdot,\cdot)^{\skrig}_{\cdot}$ being the Gromov product with respect to the Green metric $d_\skrig$ (see Section \ref{sec background} for definitions and background). In this paper,  the set of pairs of {\it distinct boundary points} will be written as 
\begin{equation}\label{def partial2}
\partial^2 G:= \big(\partial G \times \partial G\big)\setminus \Delta, \quad 
\text{with}\quad \Delta:=\big\{(\xi,\xi)\colon \xi \in \partial G\big\}.
\end{equation}
 
To state our results we will need the following notation:

\begin{itemize}
\item Let $\mu$ be a probability measure on $G$ whose support is the finite symmetric generating set $S$. For any $g\in G$, $\P_g$ will denote the distribution of a random walk with step distribution $\mu$ and started at $g\in G$, while $\nu_g$ stands for its hitting distribution on the boundary $\partial G$ (see \eqref{def nug} in Theorem \ref{divergence RW}).
\vspace{1mm}

\item $\bar\P_{g}$ denotes the distribution of a bi-infinite random walk $(X_z)_{z\in\Z}$ defined as
\begin{equation}\label{def X}
X_{z} = \begin{cases} Z_z&\text{ if } z\geq 0,\\ \bar Z_{-z}&\text{ if } z<0. \end{cases}
\end{equation}
where $(\bar Z_n)_{n\in \N}$ and $(Z_n)_{n\in \N}$ are two independent random walks started at $g$ and with step distribution $\mu$. $G^\Z$ will denote the space of all bi-infinite random walk trajectories, see Section \ref{bi-inf RW} for details.

\vspace{1mm} 

\item With $(\bar\P_g)_{g\in G}$ defined on $G^\Z$ above, we define
\begin{equation}\label{def QZ int}
\Q(B):= \sum_{g \in G}\bar\P_g(B)\qquad\mbox{and}\qquad \widehat \Q(B):= \Q(B \cap \skrid) \qquad \forall \, B\subset G^\Z
\end{equation}
where
\begin{equation}\label{def Dcal}
\begin{aligned}
    \mathcal D:= \bigcup_{g \in G} \bigg \{(x_z)_{z} : x_0 = g ,\,  &d_{\skrig}(x_{z},o)> d_{\skrig}(x_0,o) \text{ for }z<0,\\
    &\text{and}\,\, d_{\skrig}(x_{z},o)\geq d_{\skrig}(x_0,o) \text{ for }z>0, \bigg\}.
    \end{aligned}
\end{equation}
We refer to Section \ref{def Q} and Section \ref{sec hatQ Theta3} for details regarding $\Q$ and $\widehat\Q$, respectively.  
\end{itemize}
 Here, and throughout the article, for any two functions $f(\cdot)$ and $g(\cdot)$, we write $f(\cdot)\asymp g(\cdot)$ to mean that there exists a constant $C\in (0,\infty)$ such that $\frac{f(\cdot)}{g(\cdot)}\in [1/C, C]$. 
 
 A measure $\mu$ on a measure space $X$ is called {\bf quasi-invariant} under an action $G$ on $X$, if 
 $g_\star\mu$ is mutually absolutely continuous with respect to $\mu$ for all $g\in G$.
And, two measures $\mu_1,\mu_2$ on the same measure space $(\Omega,\Fcal)$ are called {\bf measure-equivalent}, if they are mutually absolutely continuous.

\subsection{Random Walk Constructions of $\Theta$ and Their Properties} 

We are now ready to state the first main result of our paper:

\begin{theorem}\label{theorem 1}
There exists a measure $\Theta$ on $\partial^2 G$ satisfying 
\begin{equation}\label{density-pair}
\frac{\d\Theta}{\d (\nu_o\otimes\nu_o)} (\xi,\eta)\asymp \e^{2(\xi,\eta)_o^{\skrig}} \qquad\forall (\xi,\eta)\in \partial^2 G, 
\end{equation}
which can be constructed explicitly in the following three measure-equivalent ways:%\textcolor{red}{Explain "measure-equivalent" before the statement of the theorem.}
\begin{enumerate}
\item As the measure $\Theta_1$ given by
\begin{equation}\label{bd meas}
\Theta_1(A) = \int_{A} \lim_{n\to\infty} \sum_{z=-n}^n\bigg[\frac{1}{2n+1} \, \frac{\d(\nu_{p_z}\otimes \nu_{p_z})}{\d(\nu_{o}\otimes \nu_{o})}(\xi,\eta)\bigg]\;\d(\nu_{o}\otimes \nu_{o})(\xi,\eta)
\end{equation}
for $A\subset \partial^2 G$, where $(p_z)_{z\in\Z}=(p_z(\xi,\eta))_{z\in\Z}$ is a fixed quasi-geodesic between $\xi$ and $\eta$. 
\vspace{2mm}
\item As a disintegrated version $\Theta_2$ of $\Theta_1$, defined by
\begin{equation}\label{Theta disintegration}
\Theta_2(A) = \int_{A} \lim_{n\to\infty} \sum_{z=-n}^n \bigg[\frac{1}{2n+1} \,\frac{\d(\nu_{x_z}\otimes \nu_{x_z})}{\d(\nu_{o}\otimes \nu_{o})}(\xi,\eta)\bigg]\;\d\bar\P_o\big((x_n)_{n\in\N}\big),
\end{equation}
for all $A\subset\partial^2 G$, where $\xi=\bd(x_n)_{n\in\N}$ and $\eta=\bd(x_{-n})_{n\in\N}$.
\vspace{2mm}
\item \label{push-forward} As the push forward $\Theta_3(\cdot)= \widehat\Q(\bd^{-1}(\cdot))$ of the measure $\widehat\Q$ (defined in \eqref{def QZ int}) under the boundary map $\bd: G^{\Z}\to \partial^2 G$. The measure $\Theta_3$ is invariant under the action of $G$ on $\partial^2G$. 
\end{enumerate}
Finally, the measure $\Theta$ on $\partial^2 G$ is 
\begin{itemize}
    \item quasi-invariant under the $G$-action with uniformly bounded Radon-Nikodym derivatives $\frac{\d g_{\ast}\Theta}{d\Theta}$, and
    \item ergodic under the action of $G$ on $\partial^2 G$. 
\end{itemize} 
 \end{theorem}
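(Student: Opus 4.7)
The plan is to prove the density formula \eqref{density-pair} for each of the three candidate measures $\Theta_1,\Theta_2,\Theta_3$; this simultaneously yields existence of the limits in \eqref{bd meas} and \eqref{Theta disintegration} and shows that all three coincide with the harmonic BMS measure up to uniformly bounded multiplicative constants. The key pointwise input is the Radon-Nikodym derivative of the harmonic measure along a path escaping to the boundary. First I would record the fact that $\{\nu_g\}_{g\in G}$ is a conformal density of dimension $v_{\skrig}=1$ for the Green metric, giving
\[
\frac{\d(\nu_g\otimes\nu_g)}{\d(\nu_o\otimes\nu_o)}(\xi,\eta)\asymp e^{\beta_\xi^{\skrig}(o,g)+\beta_\eta^{\skrig}(o,g)},
\]
where $\beta^{\skrig}$ is the Busemann cocycle in the Green metric. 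A standard $\delta$-hyperbolic computation shows that whenever $g$ lies on a quasi-geodesic from $\eta$ to $\xi$ in the Green metric, $\beta_\xi^{\skrig}(o,g)+\beta_\eta^{\skrig}(o,g)\asymp 2(\xi,\eta)_o^{\skrig}$ with an error that depends only on the hyperbolicity and quasi-geodesic constants but \emph{not} on the position of $g$ along the path. Applied to the fixed quasi-geodesic $(p_z)$ in part~(1), this makes every term of the Cesàro average in \eqref{bd meas} of order $e^{2(\xi,\eta)_o^{\skrig}}$, so the limit exists and satisfies \eqref{density-pair}. For part~(2) the same estimate holds pathwise, because $\bar\P_o$-almost every bi-infinite random walk trajectory is a Green-metric quasi-geodesic between its two boundary endpoints (an Ancona-type result), and Fubini then yields \eqref{density-pair} for $\Theta_2$.

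For part~(3), I would disintegrate $\widehat\Q$ along the boundary map $\bd:G^\Z\to\partial^2 G$. By \eqref{def QZ int} and the definition \eqref{def Dcal} of the fundamental domain $\skrid$, the measure $\widehat\Q$ is a canonical cross-section of the $\Z$-shift-invariant measure $\Q=\sum_{g\in G}\bar\P_g$, selecting exactly one representative per $\Z$-orbit on $G^\Z$. Reindexing the sum over $g$ in the definition of $\Q$ as a sum over positions $x_0$ along a bi-infinite trajectory reduces $\bd_*\widehat\Q$ to an integral of the same Radon-Nikodym derivative already considered in~(2), which forces $\Theta_3$ to satisfy \eqref{density-pair}. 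The genuine $G$-invariance of $\Theta_3$ (rather than mere quasi-invariance) follows directly from the $G$-equivariance of the family $g\mapsto\bar\P_g$ and the fact that the cut-off to $\skrid$ in \eqref{def Dcal} only involves Green-metric distances relative to the $G$-orbit of $o$.

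The uniform bound in the quasi-invariance statement for the common measure $\Theta$ is a standard calculation for BMS-type measures: combining \eqref{density-pair} with the conformal density property of $\nu_o$ and the cocycle identity for the Gromov product under change of base point, all Busemann factors cancel up to the usual $\delta$-hyperbolic error. The main obstacle, and the step I would attack last, is ergodicity. Here I would exploit the representation $\Theta_3=\bd_*\widehat\Q$: ergodicity of $G$ on $(\partial^2 G,\Theta)$ reduces, via a Hopf-type cross-section argument, to a double ergodicity statement for the commuting $G$- and $\Z$-actions on $(G^\Z,\Q)$, which in turn reduces to triviality of the tail $\sigma$-algebra of the bi-infinite random walk. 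This tail triviality would follow from Kaimanovich's identification of the Gromov boundary with the Poisson boundary, combined with the independence of the forward and backward halves of the bi-infinite walk under $\bar\P_o$; simultaneously controlling $G$-invariant functions by both boundary factors is what I expect to be the technically most demanding component of the proof.
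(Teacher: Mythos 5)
Your treatment of $\Theta_1$ is essentially sound, but there are two genuine gaps in the argument: one false assertion used for $\Theta_2$, and a serious elision of the core difficulty for $\Theta_3$.

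For $\Theta_2$, you claim that "$\bar\P_o$-almost every bi-infinite random walk trajectory is a Green-metric quasi-geodesic between its two boundary endpoints (an Ancona-type result)." This is not true. The known deviation bounds (Kaimanovich, Bla\-ch\`ere--Ha\"issinsky--Mathieu, and Corollary~\ref{almost geodesic} in the paper) say that $d_{\skrig}(X_z,(X_{-\infty},X_\infty))$ grows like $O(\log|z|)$ almost surely, which is unbounded; a quasi-geodesic would require a \emph{uniform} bound. Consequently you cannot run the pointwise Busemann estimate for $\Theta_1$ "pathwise" to get the lower bound in \eqref{density-pair}. What is actually needed is a quantitative control on the \emph{fraction} of times $z$ at which the trajectory is within a fixed distance $D$ of the geodesic, and crucially this has to hold under the measure \emph{conditioned} on the pair of endpoints $(\xi,\eta)$ -- that is the content of Theorem~\ref{disintegrated deviation bds}, after which a Fatou argument and an optimization over $D$ give the lower bound. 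The upper bound from the triangle inequality is the only part of your $\Theta_2$ argument that goes through directly.

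For $\Theta_3$, the assertion that "reindexing the sum over $g$ in the definition of $\Q$ as a sum over positions $x_0$ along a bi-infinite trajectory reduces $\bd_*\widehat\Q$ to an integral of the same Radon--Nikodym derivative already considered in (2)" passes over the whole difficulty. After restricting $\Q$ to the fundamental domain $\skrid$ of \eqref{def Dcal}, one must show that the contributions of base points $g$ that are \emph{far} from the closest-point projection of $o$ to the quasi-geodesic $(\xi,\eta)$ decay fast enough for the total sum to be finite and uniformly comparable to $e^{2(\xi,\eta)_o^{\skrig}}$. The conditioning built into $\skrid$ forces the walk started from such $g$ to avoid the ball $B_{d_{\skrig}(g,o)}(o)$, and the only way this is controllable is through refined Green-function estimates (walking around a ball, barriers, Ancona's inequality, and a Bonk--Schramm embedding for the counting arguments). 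None of this is captured by "reindexing"; it occupies Sections~\ref{bounds on Green combinatorics}--\ref{sec-pfofbehavior Theta 3} of the paper and is arguably its technical heart. Finally, your sketch of ergodicity via Kaimanovich-style tail triviality is a legitimate alternative route, but the paper instead proves a Birkhoff-type statement for the time-shift on $(G^{\Z}_o,\bar\P_o)$ and exploits that the forward and backward Ces\`aro averages coincide; this is more elementary once the measure-theoretic framework is set up, and in the paper the ergodicity step is not the bottleneck -- the density of $\Theta_3$ is.
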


  As a result of the different constructions, the above three measure-equivalent constructions of $\Theta$ all provide different information about its structure. The measures $\Theta_1$ and $\Theta_2$, constructed in Section \ref{sec Theta bd} and Section \ref{sec Theta disintegration} respectively, reflect the dynamics of the hitting measures when moved along bi-infinite quasi-geodesics (resp. bi-infinite random walk paths). 
  
  In contrast, the measure $\Theta_3$, constructed in Section \ref{sec Theta push-forward}, is the random walk equivalent of the Bowen-Margulis-Sullivan measure $\mu_{\mathrm{BMS}}$ (see also Remark \ref{comparison versions Theta}). As mentioned in Section \ref{Sec Background} and we will see below, it is here that the connection to the {\it random walk flow} turns up in a similar way to how $\mu_{\mathrm{BMS}}$ is connected to the geodesic flow in the deterministic setting. This connection then enables us to show the ergodicity of $\Theta_3$ for the $G$-action, and therefore that of $\Theta$. Indeed, on the way to showing this ergodicity, we also show the ergodicity, exponential mixing of all orders and a central limit theorem of the random walk flow. We turn to a precise descriptions of these results now.

\subsection{Randomized Geodesic Flow, Exponential Mixing and Functional CLT}

{

\begin{defn}\label{def RWFlow}
Let $(\tau_z)_{z \in \Z}$ denote the action of $\Z$ on $G^{\Z}_o= G^\Z/G$ defined by
\[\tau_z(x_n)_{n\in \Z}:= x_z^{-1}(x_{n-z})_{n\in \Z}.\]
We call this action the {\bf randomized geodesic flow}, or {\bf the random walk flow}.
\end{defn}

Here is our next main result. 

\begin{theorem}[Double Ergodicity]\label{big ergodicity intro}
The $G$-action on $\Z \backslash G^{\Z}$, and the $\Z$-action on $G^{\Z}/G$ are ergodic for $\bar\P_{o}$ and $\widehat \Q$.
\end{theorem}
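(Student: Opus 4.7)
The plan is to reduce both ergodicity assertions to a single statement -- the joint ergodicity of the commuting $G$- and $\Z$-actions on $(G^\Z, \Q)$ -- and to establish the latter via the i.i.d.\ structure of the bi-infinite increment sequence under $\bar\P_o$. Because the $G$-action and the $\Z$-shift on $G^\Z$ commute, and because $\skrid \subset G^\Z$ is a fundamental domain for the $\Z$-shift with $\widehat\Q = \Q|_{\skrid}$ while $\{x_0 = o\}\subset G^\Z$ is a fundamental domain for the $G$-action with $\bar\P_o = \Q|_{\{x_0=o\}}$, standard quotient arguments yield a bijection (modulo null sets) between the $G$-invariant subsets of $\Z\backslash G^\Z$, the $\Z$-invariant subsets of $G^\Z/G$, and the jointly $(G\times\Z)$-invariant subsets of $G^\Z$. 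Both assertions of the theorem therefore reduce to the claim that every jointly $(G\times\Z)$-invariant measurable $A \subset G^\Z$ satisfies $\Q(A)\,\Q(G^\Z\setminus A) = 0$.

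For this reduction, I would work inside the fundamental domain $\{x_0 = o\}$ with the probability measure $\bar\P_o$ and parametrize trajectories by their right increments $W_z := X_{z-1}^{-1} X_z$, $z \in \Z$. Using the forward/backward construction \eqref{def X} together with the independence and symmetry of the step distribution $\mu$, a direct bookkeeping computation shows that under $\bar\P_o$ the sequence $(W_z)_{z\in\Z}$ is i.i.d.\ with common distribution $\mu$; the symmetry of $\mu$ is essential here, because the backward increments are built from inverses of the steps of the backward walk $\bar Z$. With the paper's convention for the $G$-action -- which is set up so that right increments are $G$-invariant -- the $\Z$-shift on the quotient $G^\Z/G$, namely $(X_z)_z \mapsto (X_{z+1})_z$ followed by $G$-renormalization to $\{x_0 = o\}$, is transported, in increment coordinates, to the standard Bernoulli shift $(W_z)_z \mapsto (W_{z+1})_z$ on $(G^\Z, \mu^{\otimes\Z})$.

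The final step is then classical: the Bernoulli shift on $(G^\Z, \mu^{\otimes\Z})$ is mixing and therefore ergodic, so the $\Z$-action on $(G^\Z/G, \bar\P_o)$ is ergodic, and by the reduction above the $G$-action on $(\Z\backslash G^\Z, \widehat\Q)$ is ergodic as well. The main obstacle I anticipate lies in the identification step: verifying, with the correct group-theoretic conventions, that the $\Z$-shift on $G^\Z/G$ coincides with the untwisted Bernoulli shift on i.i.d.\ increments. This requires careful tracking of the $G$-normalization so that no residual conjugation appears in the $\Z$-action, and it uses the symmetry of $\mu$ to secure the i.i.d.\ property of increments across the $z = 0$ boundary where the forward and backward halves of \eqref{def X} meet. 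Once this identification is in place, the ergodicity conclusion reduces to the standard Kolmogorov tail-triviality for i.i.d.\ sequences.
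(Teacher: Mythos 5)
Your proof is correct, and it takes a genuinely different route through the core step. Where you prove ergodicity of the $\Z$-action on $(G^\Z/G,\bar\P_o)$ by passing to increment coordinates $W_z=X_{z-1}^{-1}X_z$, verifying that $(W_z)_{z\in\Z}$ is i.i.d.\ $\mu$ under $\bar\P_o$ (symmetry of $\mu$ is needed to handle $z\leq 0$, where $W_z=\bar W_{1-z}^{-1}$ in terms of the steps $\bar W_k$ of the backward walk), and observing that the renormalized shift $\tau_k$ is precisely the index shift on $(W_z)_z$, the paper instead proves a Birkhoff-type statement directly: for continuous cylinder functions $f$ the forward and backward Cesàro averages converge a.s.\ to $\E[f\mid\Fcal^{\tau_1}]$, these agree because $\Fcal^{\tau_1}=\Fcal^{\tau_{-1}}$, and the resulting limit $f_\infty$ is time-reversal invariant, depends only on the positive (hence, by reversal invariance, only on the negative) tail of $(x_z)_z$ once $f$ depends on finitely many coordinates, and is therefore constant. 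Both arguments hinge on symmetry of $\mu$ and on the two halves of the bi-infinite walk being independent; yours makes the Bernoulli structure explicit and thereby obtains mixing (not just ergodicity) of the $\Z$-action for free, whereas the paper's argument stays at the level of Cesàro averages. The subsequent reduction to the $G$-action on $(\Z\backslash G^\Z,\widehat\Q)$ via commutativity of the two actions and the fundamental-domain bijections is essentially identical in both. One small caution, which you already flag: the paper's displayed formula $\tau_z(x_n)_n=x_z^{-1}(x_{n-z})_n$ does not fix $x_0=o$ and is presumably a typo for $x_{-z}^{-1}(x_{n-z})_n$; with the corrected normalization the increments transform as $W_n\mapsto W_{n-z}$, confirming your identification with the untwisted Bernoulli shift.
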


\noindent The above theorem is proved in Section \ref{sec proof ergodicity}; see Theorem~\ref{big ergodicity} there. Theorem~\ref{big ergodicity intro} is an analog of the ergodicity of geodesic flow on closed negatively curved manifolds $N$:
\begin{enumerate}
  \item $G^{\Z}$ is the analog of the unit tangent bundle $T_1\til N$ of $\til N$,
  \item $\Z \backslash G^{\Z}$ is the analog of $\partial^2 \til N$,
  \item $G^{\Z}/G$ is the analog of the unit tangent bundle $T_1 N$ of $N$.
\end{enumerate}
A standard duality principle in the theory of geodesic flows on negatively curved manifolds states that the ergodicity of the $G-$action on $\partial^2 \til N$ is equivalent to the 
ergodicity of the $\R-$action on $T_1 N$.
In the context of hyperbolic groups, the analogous construction is due to Bader-Furman 
\cite{BF}. We refer the reader to the discussion preceding Theorem~\ref{big ergodicity} for the relevance and differences with \cite{BF}. 

We now turn to the results concerning quantitative exponential mixing estimates of our random walk flow. This requires setting up the following definitions (see e.g. \cite{expmixing}). 
\begin{defn}\label{def Mixing}
Let $G$ be a group acting on a probability space $(X,\mu)$ and let $r\geq 2$ be an integer. 
\begin{enumerate}
\item [(1)] We say that the action of $G$ on $(X,\mu)$ is {\bf mixing of order $r\geq 2 $}, if for all $f_1,...,f_r \in L^{\infty}(X,\mu)$, and for all $g_1,g_2,\dots, g_r \in G$ we have 
$$
\bigg| \int_{X}\prod_{i=1}^r ( f_i\circ g_i)\d \mu- \prod_{i=1}^r  \int_{X}f_i\d \mu\bigg| \to 0  \qquad\mbox{as $\min_{\ell\neq k} \, d(g_\ell,g_k)\to \infty$}.
$$

\item [(2)] We fix an $G$-invariant sub-algebra $\mathcal A$ of $L^{\infty}(X,\mu)$, and a family $N = (N_s)_{s\in \N}$ of semi-norms on $\mathcal A$. Then we say that the action of the group $G$ on  $(X,\mu)$ is {\bf exponentially mixing of order $r$} with respect to $d$ and $(\mathcal A,N)$, if there exist $\delta_r > 0$ and an integer $s_r\in \N$ such that for all $s > s_r$, $f_1,...,f_r \in  \mathcal A$ and $g_1,\dots,g_r\in G$, 
$$
 \bigg| \int_{X}\prod_{i=1}^r ( f_i\circ g_i)\d \mu- \prod_{i=1}^r  \int_{X}f_i\d \mu\bigg|  \leq C \e^{-\delta_r d_r(g_1,..,g_r)} \prod_{i=1}^r N_s(f_i) 
$$
for some finite constant $C=C(s,r)$. In this case, $\delta_r>0$ is called {\bf the rate of mixing}. 
\end{enumerate}
\end{defn}

\begin{defn}\label{def Holder}
A function $f: X \to \R$ on an infinite product space $X = \prod_{n\in \N} X_n$ of metric spaces $(X_n,d_n)$ is H\"older continuous with respect to the standard metric on $X$ given by
\[d(x,y) = \sum_{n\in \N} 2^{-n} \min\{1, d_n(x_n,y_n)\},\]
if there exists $\alpha >0$ such that
\[\abs{f(x)-f(y)}\leq d(x,y)^{\alpha} \quad \text{for all } x,y\in X. 
%\text{ with } x_i = y_i \text{ for } i \leq n.
\]
\end{defn}

Here is our next main result showing  exponential mixing and a functional central limit theorem (CLT) for our random walk flow from Definition \ref{def RWFlow}. 

\begin{theorem}[Exponential Mixing of all orders]\label{thm mixing}
The $\Z$-action on $(G_o^{\Z},\bar\P_o)$, i.e. the randomized geodesic flow from Definition \ref{def RWFlow}, is mixing of all orders. Moreover, for any $\alpha>0$, this action is also exponentially mixing of all orders with respect to the class $\mathcal A_\alpha$ of H\"older continuous functions of exponent $\alpha>0$. 
\end{theorem}

\begin{theorem}[Functional CLT]\label{thm CLT}
For any H\"older continuous function $f:G^{\Z}_o\to \R$, there exists $\sigma^2_f >0$ such that 
\[\frac{\sum_{z=1}^{m} f\big(\tau_z(X_n)_{n\in \Z}\big)- m \E^{\bar\P_o}[f\big((X_n)_{n\in\Z})\big]}{\sqrt{m}} \to \Ncal(0, \sigma^2_f) \qquad\mbox{as $m\to\infty$,}\]
in distribution under $\bar\P_0$. Here $\Ncal(0,\sigma^2_f)$ denotes the centered Gaussian law with variance $\sigma^2_f$. 
\end{theorem}

Theorem \ref{thm mixing} and Theorem \ref{thm CLT} are proved in Section \ref{sec mixing}.

}

\subsection{Context}\label{sec-lit} 
As mentioned before, the aim of this paper is to develop  a harmonic analog of the Bowen-Margulis measure on $\partial^2 G$ from an \emph{internal} (as opposed to boundary) point of view, i.e\ from the point of view of the collection of bi-infinite random walk trajectories on a Cayley graph $\Ga$ of $G$. Since the Bowen-Margulis measure has been
a long-studied object in  ergodic theory, we provide an eclectic survey of the literature under a few heads below to provide context for the work in this paper.\\

\noindent {\bf Geodesic flow on hyperbolic groups:} We have already said that the key motivation for this paper was to develop a bi-infinite random-walks analog of geodesic flow on hyperbolic groups in the spirit of Gromov's topological construction \cite[Section 8]{gro-hyp}. The papers \cite{mineyev,mms,ct24} amongst others have  pursued and considerably developed
the topological construction suggested by Gromov. Our emphasis in this paper is probabilistic and involves, by contrast, random trajectories.\\

\noindent {\bf The Hopf-Tsuji-Sullivan theorem:} A historical context in which the Bowen-Margulis measure frequently comes up is the Hopf-Tsuji-Sullivan  dichotomy that asserts that the (deterministic) geodesic flow is either conservative and ergodic or dissipative. We refer the reader to 
\cite{hamenstadt89, Kai94,cp94,BM96,furman,roblin,ricks,link,BF,cdst,coulon} for some of the relevant work. Of these papers, \cite{hamenstadt89,Kai94,cp94,BM96,roblin,ricks,link} deal with
Hadamard manifolds, or more generally CAT(0) spaces. In particular, a standing assumption in each of these  is the uniqueness of geodesic segments connecting any pair of points. The only Cayley graph per se that satisfies the CAT(0) condition is that of a free group with standard generating set. The present paper, on the other hand, deals exclusively with Cayley graphs of non-elementary hyperbolic groups and not with CAT(0) spaces.

The papers \cite{BF,cdst} do away with the CAT(0) assumption and prove the Hopf-Tsuji-Sullivan theorem for hyperbolic spaces and \cite{coulon} proves it for actions of acylindrically hyperbolic groups. Kaimanovich's papers \cite{kaimanovich,Kai94} go beyond
the classical Bowen-Margulis measure associated with the geodesic flow:
for certain Markov operators $P$, 
an invariant measure of the  geodesic 
flow called the
invariant harmonic measure associated with $P$ is defined in \cite{Kai94}, and the 
Hopf-Tsuji-Sullivan  dichotomy for these is established. However, a crucial assumption in \cite{Kai94} is uniqueness of geodesics joining a pair of boundary points (denoted Property $U$ in that paper). Similarly, in \cite{kaimanovich} it is assumed that the underlying manifold has negative curvature and hence 
there is a unique geodesic joining a pair of boundary points.
This property breaks down fairly drastically in the context of Cayley graphs of most hyperbolic groups, where typically there are infinitely many infinite geodesic bi-gons between pairs of points on the boundary.

We reiterate that  all these papers, unlike the present one, deal with the (deterministic) geodesic 
flow. \\

\noindent {\bf Action on the double boundary:} We now deal with precursors to the  last statement of Theorem~\ref{theorem 1}, i.e.\ double ergodicity. The papers of Bader and Furman 
\cite{furman,BF} cited above proves that the action of $G$ on $\partial^2 G$ equipped with the Bowen-Margulis measure is ergodic for $G$ hyperbolic.
Perhaps more relevant to the present paper is \cite{Kai03}.
In \cite{Kai03}, Kaimanovich proves an extremely general double ergodicity result
on the square of the Poisson boundary  of a locally compact
second countable group. The papers \cite{furman,BF,Kai03} adopt a  point of view intrinsic to the boundary, the former for the (deterministic) geodesic flow and the latter
for random walks.  In the discussion following \cite[Proposition 5.4]{BHM11} it is pointed out that once the harmonic measure has been constructed a similar double ergodicity result may be obtained by weighting the product measure in a way similar to \eqref{density-pair}. These papers thus deal directly with $\partial^2 G$ without
considering actual trajectories (geodesics or bi-infinite random walks)
in the Cayley graph $\Ga$.
The constructions of $\Theta$ in Theorem~\ref{theorem 1} follow a 
complementary  approach by going to $\partial^2 G$ from inside the space. This is the main difference in the approach
in the present paper.

%{\color{olive}
Our results, too, have a subtly different flavor. We briefly delineate the main technical difference between 
the main result of \cite{Kai03} and Theorem~\ref{theorem 1}.
Consider a symmetric driving measure $\mu$. Double ergodicity is established in  \cite{Kai03} with respect to the product of the $G$-invariant measures $\nu_m = \sum_{g\in G}\nu_g$. In the language of the present work, this aligns with looking at the ergodicity of the push-forward of the measure $\Q$ (defined in \eqref{def QZ int}) under the boundary map to $\partial^2 G$. In Theorem~\ref{theorem 1} we consider instead the push-forward of $\widehat\Q$ (also defined in \eqref{def QZ int}). It is the push-forward of $\widehat\Q$ that is used to construct $\Theta_3$. The ergodicity of the measure $\nu_m \otimes \nu_m$ under the $G$-action on  $\partial^2 G$ follows from the ergodicity of $\Theta_3$. The latter is shown in Section \ref{sec ergodicity}. Note that the $G$- and $Z$-action on $G^{\Z}$ commute (see \eqref{actions}). Now look at the pre-image of a $G$-invariant subset of $\partial^2G$ in $G^{\Z}$.  This gives a $G$-invariant set under the induced action on the fundamental domain. By ergodicity of $\widehat\Q$ (Theorem \ref{big ergodicity intro}), this set is either of measure $0$ or of full measure. Hence the same holds  for $\Q= \sum_{z\in \Z}\tau_{z\ast}\widehat\Q$ and for $\nu_m\otimes\nu_m$. (Recall that the latter is the push forward of $\Q$ under the boundary map.)

Note however that $\nu_m$ is infinite on any set $B \subset \partial G$ with $\nu_o(B)>0$ and $0$ on all other sets. The product $\nu_m \otimes \nu_m$ is thus in a different measure class from $\Theta_3$. We can of course restrict $\Q$ to a fundamental domain of the $\Z$-action 
%look subsequently at the induced $G$-action, 
and push forward the restricted measure to the boundary (as done in Section \ref{sec Theta push-forward}). This does give a measure that is ergodic thanks to Kaimanovich \cite{Kai03}.
However, this does not provide information about the existence of its density with respect to $\nu_o\otimes \nu_o$. The relation to $\Theta_1$ and $\Theta_2$
is also unavailable from this approach.
To obtain these, we need the ideas developed in  Sections \ref{sec-pfofbehavior Theta 3} and \ref{bounds on Green combinatorics}. Proving the existence of $\Theta_3$, establishing the correct density with respect to $\nu_o\otimes\nu_o$, and finding its connection to a randomized version of the geodesic flow, form the conceptual core  of the present work.
%}

\noindent \emph{Harmonic functions on groups:}
In the related literature on harmonic functions on groups, the action on the double boundary makes a somewhat tangentially related appearance in the context of bi-harmonic functions \cite{raugi,kaimanovich-cras}. \\

\subsection{Key Technical Input in the Proofs}\label{sec proof ideas}
For the convenience of the reader, we now summarize the main technical constituents of the proofs. 

To study a flow along bi-infinite random walk paths, we need to define what these paths are. 
In Section \ref{sec bi-inf RW} we will construct these bi-infinite random walk paths, and deduce some important properties of these. Concretely, in Section \ref{bi-inf RW} the bi-infinite random walk will be constructed by taking a random walk $(X_n)_{n\in\N}$ started at $o$ and successively shifting it back by $X_n$ (the position at time $n$) to obtain a bi-infinite path. The reason to consider this notion of bi-infinite random walk paths is that it preserves the distribution of geometrical properties of the original random walk. These properties will be crucially used subsequently.

In particular, in Section \ref{deviation from geodesic} we study the long term behavior of these bi-infinite random walks $(X_z)_{z\in\Z}$ and prove estimates on its distance to the geodesics. Studying the deviation of a (standard/semi-infinite) random walk trajectory $(Y_n)_{n\in\N}$ from geodesics goes back to Kaimanovich, who showed that the distance of $Y_n$ to $[o,Y_{\infty})$ is almost surely $o(n)$ in \cite[Thm. 7.2]{Kai00}. Later, Blachère, Haïssinsky and Mathieu refined this result and showed that the distance is of order $\log(n)$ in \cite[Cor. 3.9]{BHM11}. 

In this regard,  in Theorem \ref{deviation bds} we will first study the case of standard/semi-infinite random walk paths $(Y_n)_{n\geq 0}$ and show deviation bounds for fixed points $Y_n$. The upper bound on the probability in Theorem \ref{deviation bds}, Part (1), was already shown in \cite[Lem. 3.8]{BHM11}, while the corresponding lower bound is new. Note that due to the very minimal assumptions on the driving measure $\mu$ in \cite{BHM11}, they need to add the assumption that $d_{\skrig}$ is hyperbolic. In the case that $\mu$ is symmetric and has finite support, the latter is always true.

These estimates are then used to study the joint behavior of two independent random walks -- see Lemma \ref{gp independent}. We can use this result to show that the two hitting points $X_{-\infty}$ and $X_{\infty}$ are almost surely distinct (Corollary \ref{non-sing}). Consequently, the behavior of our bi-infinite random walk $(X_z)_{z\in\Z}$ can be studied and  the asymptotic behavior of the distance of $X_n$ to a geodesic $(X_{-\infty}, X_{\infty})$ can be made precise -- see Corollary \ref{almost geodesic}. This  also justifies our point of view of considering bi-infinite random walk trajectories as generalized geodesics.

In Theorem \ref{disintegrated deviation bds} we show a more refined version of the above bounds: we prove the validity of the bounds when conditioning on hitting a certain point on the boundary. Here it is imperative that we have both the upper and lower bound obtained in Theorem \ref{deviation bds}. These refined estimates are later used for proving the statements about the measure $\Theta_2$ appearing in Theorem \ref{theorem 1}.

In Section \ref{sec Theta constructions}, we turn to the first two constructions of the measures mentioned in Theorem \ref{theorem 1}. In Section \ref{sec Theta bd} we define and study the measure $\Theta_1$ on
$\partial^2 G$ as defined in \eqref{bd meas} and show that the sum there converges to a measure which is 
\begin{enumerate}
    \item quasi-invariant under the $G$-action on $\partial^2 G$ induced from the $G$-action on $\partial G$ (cf. Lemma \ref{density Theta1}) and
\item has the desired density with respect to $\nu_o\otimes \nu_o$ (cf. Lemma \ref{qi Theta1})
\end{enumerate}
This measure depends however on a choice of quasi-geodesic rays in 
(a Cayley graph of) $G$. We would like to remove this choice. Hence, in Section \ref{sec Theta disintegration} we move on to the construction of $\Theta_2$ defined in \eqref{Theta disintegration}. The measure $\Theta_2$ is defined by considering bi-infinite random walk paths and does not depend on a specific choice of quasi-geodesic rays. In order to show that $\Theta_2$ has the desired density with respect to $(\nu_o\otimes\nu_o)$ (in particular that it satisfies the lower bound), we need to apply the aforementioned refined estimates from Theorem \ref{disintegrated deviation bds}. We refer to Lemma \ref{density Theta2} and Lemma \ref{qi Theta2} for details.

Section \ref{sec Theta push-forward} is devoted to constructing and deducing properties of the measure $\Theta_3(\cdot)=\widehat\Q(\bd^{-1}(\cdot))$ with $\widehat\Q$ from \eqref{def QZ int}. In Section \ref{def Q} we show that the measure $\Q$ is invariant both under the action of $\Z$ and $G$ on $G^\Z$. This invariance will then carry over to the same property for $\Theta_3$ on $\partial^2 G$ under the action of $G$.

The main technical difficulty then involves showing that the measure $\Theta_3$ has the correct density, as stated in Theorem \ref{density Theta}. Due to the poorly connected nature of free groups, the proof is much easier in the case that $G$ is a free group. In Section \ref{example density tree} we will outline the guiding philosophy for the proof of Theorem \ref{density Theta} for this case. Despite its simplicity, this example serves as an important illustration in two key aspects. First, it demonstrates why the measure $\Theta_3$ should possess the desired density relative to $\nu_o \otimes \nu_o$. Second, it  provides insight as to why the method below for bounding the density should yield the correct result.

To make the second point point clearer, we note that, in the tree case, the only contribution to the density $\frac{d\Theta_3}{d(\nu_o \otimes \nu_o)}(\xi, \eta)$ comes from trajectories of the random walk with origins that lie along the path where the geodesics $[o,\xi)$ and $[o,\eta)$ run parallel. The exact value of the density then comes from the conformal nature of the hitting measures $(\nu_g)_{g\in G}$.

\noindent Section \ref{bounds on Green combinatorics} and Section \ref{sec-pfofbehavior Theta 3} constitute the proof of Theorem \ref{density Theta} for general hyperbolic groups. In this setting, we can no longer assume that  removing a ball of a certain size  disconnects points. Due to the geometric nature of hyperbolic groups, removing a ball significantly increases the distance between points on opposite sides of the ball. So a random walk started on one side of a ball is much less likely to hit a point on the opposite side of the ball when forced to walk around the ball. This heuristic will be used in a crucial way in the sequel. We will elaborate this point now.

This fact is in particular important in showing the upper bound for the density in Proposition \ref{upper bound density}. The main challenge in this proof is that in $\widehat \Q$ we are potentially summing over something which is positive for all group elements. So, in order to show the upper bound for the density $\frac{\d\Theta_3}{\d \nu_o\otimes\nu_o}(\xi,\eta)$ in the case of a general hyperbolic group, we need to show that the contributions to $\widehat \Q$ of starting points away from the region where $[o,\xi)$ and $[o,\eta)$ lie very close to each other, decay sufficiently fast.
A key estimate in showing this deals with how much forcing the random walk to walk around a ball reduces the probability.
Estimates similar in spirit have been considered for the weighted Green function by Gou\"ezel and Lalley for surface groups in \cite[Lem. 4.4]{GL13} and by Gou\"ezel for general hyperbolic groups in \cite[Lem. 2.6]{LLT}. They showed that the probability of walking from $x$ to $y$ while staying outside a ball of radius $n$ centered on $[x,y]$ decays as $\exp(c\e^{-\varepsilon n})$ for some $c,\varepsilon>0$. It is however by no means obvious how to apply this estimate to show that the probabilities in $\widehat \Q$ decay fast enough to end up with the claim.
Indeed, we need to refine their statements in two main ways in Lemma \ref{decay greens function}.

\begin{itemize}
\item[(1)] We generalize to a ball not necessarily centered on $[x,y]$. In other words, our study will extend the previous findings by considering cases where the center $o$ of the ball  that gets removed does not lie on the geodesic between $x$ and $y$. 

\medskip 

\item[(2)] Further, we are also able get a bound dependent on the exact distance of $y$ to the center $o$ of the ball, not just the fact that is more than $n$ away from $o$. This part will require an application of the Ancona inequality. 
\end{itemize}

These arguments, developed in Lemma \ref{decay greens function}, Lemma \ref{lemma def barriers} and Lemma \ref{number hitting points}, effectively provide us with estimates for the hitting distribution on a small neighborhood of a geodesic $[o,\xi)$ of the random walk started at $x$ on the boundary of $B_n(o)$ when forced to walk around this ball. This in turn enables us to compare the probability of hitting $x$ when restricting the random walk to staying outside $B_n(o)$ with the unrestricted case.

We note that the bounds mentioned above depend on the angle the random walk has to walk around the ball. When this angle gets very small, we may lose the aforementioned double exponential decay. So even with the  refined estimates developed in the above lemmas in Section \ref{bounds on Green combinatorics},  we then still have to be very careful in applying them in the Proof of Proposition \ref{upper bound density}.  More concretely, we will need to 

\begin{itemize}
\item[(a)] split the group apart into several domains judiciously, and 

\medskip 

\item[(b)] carefully estimate how many points can lie in each of these domains.

\end{itemize}

Throughout these estimates it is crucial that, via the Bonk-Schramm embedding, we may embed the Cayley graph of the group into $\mathbb{H}^d$. This allows us to invoke more explicit geometric knowledge about $\mathbb{H}^d$ than the abstract geometric knowledge we have about the group. We refer to the proof of Proposition \ref{upper bound density} in Section \ref{sec-pfofbehavior Theta 3} for details. The lower bound of Theorem \ref{density Theta}, shown in Proposition \ref{lower bound density}, is easier. The proof is similar to the case of the free group in Section \ref{example density tree}. It proceeds by summing over origins that roughly lie on the shortest geodesic between $o$ and the geodesic $(\xi,\eta)$. This will complete the proof of Theorem \ref{density Theta}.

Section \ref{sec ergodicity} constitutes the proof of ergodicity of $\Theta$ as well as the exponential mixing and the CLT of our random walk flow. For this purpose, we will first prove Theorem \ref{big ergodicity intro}, namely that the measures $\bar\P_{o}$ and $\widehat \Q$ are ergodic for the flow along the random walk paths and the $G$-shift on the paths respectively. The ergodicity of $\Theta_3$, and hence $\Theta$, will then follow.

For proving Theorem \ref{big ergodicity intro} our approach builds on a philosophy related to Bader and Furman  \cite{furman,BF}. However, while they regard $\partial^2 G\cross \R$ as the ``space of all geodesics with all parametrizations", we consider $G^{\Z}$, the space of all random walk trajectories with all parametrizations. This way we avoid having to choose geodesics for each pair of boundary points. Also, since we are in a discrete setup, it is
slightly misleading to think of geodesics as parametrized by $\R$. 

This difference also means that in our case the fundamental domain for the $G$-action as well as the induced $\Z$-action is a more concrete set than in the abstract almost sure fundamental domain they construct. This, in turn, makes the proof of ergodicity somewhat simpler in our case. 

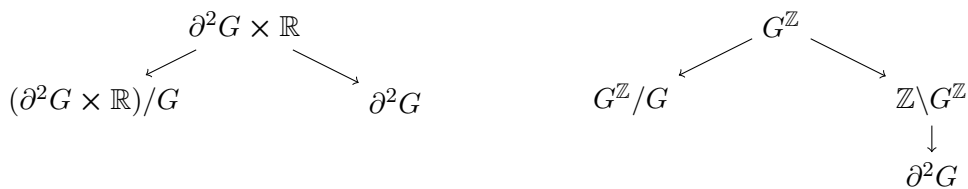
\begin{figure}[h]
  \centering
  \begin{subfigure}[T]{0.45\textwidth}
      \begin{tikzpicture}
 % Define the positions of the nodes
 \node (A) at (0,1) {$\partial^2 G \cross \R$};
 \node (B) at (-2,0) {$(\partial^2 G \cross \R)/G$};
 \node (C) at (2,0) {$\partial^{2} G$};
 
 % Draw the arrows
 \draw[->] (A) -- (B);
 \draw[->] (A) -- (C);
\end{tikzpicture}
    %\caption{Setup of Bader and Furman}
    %\label{setup bf}
  \end{subfigure}
  %\vspace{1cm}
  % \begin{subfigure}[b]{0.45\textwidth}
  \begin{subfigure}[T]{0.45\textwidth}
    \begin{tikzpicture}
 % Define the positions of the nodes
 \node (A) at (0,1) {$G^{\Z}$};
 \node (B) at (-2,0) {$G^{\Z}/G$};
 \node (C) at (2,0) {$\Z\backslash G^{\Z}$};
 \node (D) at (2,-1) {$\partial^2 G$};
 
 % Draw the arrows
 \draw[->] (A) -- (B);
 \draw[->] (A) -- (C);
 \draw[->] (C) -- (D);
\end{tikzpicture}
    %\caption{Our setup}
 \end{subfigure}
  \caption{Comparing the setup in \cite{BF} (on the left) to our setup (on the right).}
\end{figure}

In the end, it is exactly due to this connection between the $G$-action on $\partial^2 G$ and the random walk flow that we are able to show the ergodicity of $\Theta$. This is why we had to move on from the construction of the measure $\Theta$ as seen in Section \ref{sec Theta bd} and \ref{sec Theta disintegration} to seeing $\Theta$ as the push forward of a measure on the quotient, as seen in \ref{sec Theta push-forward}. Indeed, as argued in Remark \ref{comparison versions Theta}, in spirit $\Theta_1$ and $\Theta_2$ align more with the behavior on the $G$-quotient than on the $\Z$ quotient. Hence we cannot hope to gain further information about the behavior of $\Theta$ under the $G$-action on $\partial^2 G$ from these measures. 

Finally, the proofs of exponential mixing of all orders and that of the CLT (Theorem \ref{thm mixing} and Theorem \ref{thm CLT}) rely on quantitative estimates coming from the aforementioned constructions, we refer to Section \ref{sec mixing} for details.

\subsection{Organization of the Rest of the Article}
The remaining part of the paper is organized as follows: In Section \ref{sec background}, we will provide the background on hyperbolic groups needed throughout the paper. In Section \ref{sec bi-inf RW} we introduce the main object used in our constructions, the bi-infinite random walks and deduce some important parts of the long-term behavior of these paths. In Section \ref{sec Theta constructions}, we will construct and derive properties of $\Theta_1$ and $\Theta_2$, while in Section \ref{sec Theta push-forward}, we move on to defining and studying properties of $\Theta_3$ (as outlined above in Section \ref{sec proof ideas}).
In Section \ref{sec ergodicity}, we establish Theorem \ref{big ergodicity intro}. A corollary of this theorem is that $\Theta_3$ is ergodic for the action of $G$ on $\partial^2 G$. Hence,  the final point in Theorem \ref{theorem 1} will also be proved. Finally, Theorem \ref{thm mixing}- \ref{thm CLT} are proved in Section \ref{sec mixing}.

\section{Background and Preliminaries}\label{sec background}

\subsection{Measure Theoretic Terminology} 

We first start with some measure theoretic terminology which will be used in the sequel.

\begin{defn}
A measure $\mu$ on a measure space $(X,\Fcal)$ is said to be \textbf{invariant} for a group action $G$ on $X$, if for all $A\in \Fcal$ and all $g\in G, \,\mu(A) = \mu(gA)$.

The measure $\mu$ is said to be \textbf{quasi-invariant}, if $\mu$ and $g_{\ast}\mu$ are in the same measure class for all $g\in G$, i.e. if they are mutually absolutely continuous.

A quasi-invariant measure is called \textbf{ergodic} with respect to the action of $G$, if for all invariant sets $A\in \Fcal$, i.e.\ sets $A$ with $$\mu(A \backslash gA) + \mu(gA \backslash A) =0 \,\,\  \forall \, g\in G,$$  either $\mu(A)=0$ or $\mu(A^c)=0$.
\end{defn}

\begin{remark}\label{rem ergodicity}
Note that ergodicity is in fact just a property of the measure class of $\mu$, since all measures in the same measure class have the same zero-sets. Hence ergodicity of any particular quasi-invariant measure determines the
ergodicity of any quasi-invariant measure  in the same measure class. In particular,  ergodicity of an invariant measure determines the
ergodicity of any quasi-invariant measure  in the same measure class.
\end{remark}

\begin{theorem}\label{disintegration}(Disintegration of measures)
Let $(\Omega, \P)$ and $(\Omega',\bf{P})$ be two probability spaces and assume that there exists a map $\pi : \Omega \to \Omega'$ such that $\bf{P}= \pi_{\ast}\P$. Then there exist a family of probability measures $(\P_{x})_{x\in \Omega'}$ such that 
\[\P_x(\Omega\backslash \pi^{-1}(x))=0\]
and for any measurable function $g:\Omega \to \R$ we have
\[\int_{\Omega}g(y)\d\bar\P_{o}(y) = \int_{\Omega'}\int_{\pi^{-1}(x)\cross \bd^{-1}(\eta)}g(y)\d\P_x(y)\d\bf{P}(x).\]
\end{theorem}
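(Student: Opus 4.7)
The plan is to realize $(\P_x)_{x\in\Omega'}$ as a regular version of the conditional probability $\P[\,\cdot \mid \pi^{-1}(\Fcal')]$, where $\Fcal'$ denotes the $\sigma$-algebra on $\Omega'$. This requires a mild regularity hypothesis that is implicit in the paper's setting: $\Omega$ should be a standard Borel space. For $\Omega = G^{\Z}$ and each of its quotients considered later in the paper this holds automatically, so the theorem as stated can be invoked in the applications.

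First, for each bounded Borel $f:\Omega\to\R$, define $\Phi_f:\Omega'\to\R$ by Radon-Nikodym via
\[
\int_B \Phi_f \, \d\mathbf{P} \;=\; \int_{\pi^{-1}(B)} f \, \d\P, \qquad B\in\Fcal'.
\]
The identity $\mathbf{P}=\pi_{\ast}\P$ guarantees that the right-hand side is absolutely continuous in $B$ with respect to $\mathbf{P}$, so $\Phi_f$ exists and is unique up to a $\mathbf{P}$-null set which a priori depends on $f$. The central technical difficulty is to produce a common null set outside which $f\mapsto\Phi_f(x)$ is a probability measure for \emph{every} bounded Borel $f$ simultaneously; this is the only non-routine point in the proof.

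To bypass this difficulty, fix a countable Boolean algebra $\skria$ generating the Borel $\sigma$-algebra of $\Omega$. On the countable collection $\{\mathbf{1}_A : A\in\skria\}$, the identities $\Phi_{\mathbf{1}_\Omega}=1$, $\Phi_{\mathbf{1}_A}+\Phi_{\mathbf{1}_B}=\Phi_{\mathbf{1}_{A\cup B}}$ (for disjoint $A,B\in\skria$) and $0\le\Phi_{\mathbf{1}_A}\le 1$ all hold $\mathbf{P}$-almost surely. Since $\skria$ is countable, the union of the corresponding exceptional sets is still $\mathbf{P}$-null; on its complement $\Omega_0'\subset\Omega'$, the map $A\mapsto\Phi_{\mathbf{1}_A}(x)$ is a finitely additive probability content on $\skria$. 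Polishness of $\Omega$, combined with inner regularity and Caratheodory's extension theorem, upgrades this content to a countably additive Borel probability $\P_x$ on $\Omega$. Outside $\Omega_0'$ one simply defines $\P_x$ to be an arbitrary fixed probability measure, which does not affect anything that follows.

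It remains to verify the support condition and the integration formula. Applying the defining equation of $\Phi$ to $f=\mathbf{1}_{\pi^{-1}(A)}$ for $A\in\Fcal'$ yields $\Phi_{\mathbf{1}_{\pi^{-1}(A)}}(x)=\mathbf{1}_A(x)$ for $\mathbf{P}$-almost every $x$. Because $\Omega'$ is standard Borel, its Borel $\sigma$-algebra is generated by a countable point-separating family $\{A_n\}_{n\ge 1}$, and off a single $\mathbf{P}$-null set one has $\P_x(\pi^{-1}(A_n))=\mathbf{1}_{A_n}(x)$ simultaneously for every $n$. For any such $x$, the intersection $\bigcap_{n:\, x\in A_n}\pi^{-1}(A_n)$ equals $\pi^{-1}(x)$ and therefore carries full $\P_x$-mass, giving $\P_x(\Omega\setminus\pi^{-1}(x))=0$. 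The integration identity then holds by construction for indicators in $\skria$ and extends to general measurable $g$ by linearity, monotone convergence and the usual positive/negative decomposition. The genuine obstacle in the whole argument is the simultaneous-null-set issue described above, which is precisely what forces the appeal to a countable generating algebra and to the Polish structure of $\Omega$; once these are in place the rest is bookkeeping.
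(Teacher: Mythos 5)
The paper does not actually prove this theorem; it states it as a background fact (it is classical, often attributed to Rokhlin), so there is no internal argument to compare your proof against. Your route --- realize $(\P_x)$ as a regular conditional probability, construct it first on a countable Boolean algebra $\skria$ to arrange the almost-sure consistency relations on a single null set, and then extend to a Borel probability using the Polish structure of $\Omega$ --- is the standard proof, and it is essentially correct. Two places deserve tightening. First, the extension step ``inner regularity and Caratheodory'' is compressed: Caratheodory requires $\sigma$-additivity on the algebra, which for a merely finitely additive content follows from tightness, not automatically. To make it precise one should include in the construction a countable cofinal family of compacta $K_m\uparrow$ with $\P(K_m)\to 1$; then $\Phi_{\mathbf 1_{K_m}}(x)\uparrow$ and $\int \Phi_{\mathbf 1_{K_m}}\,\d\mathbf P\to 1$ force $\Phi_{\mathbf 1_{K_m}}(x)\to 1$ for $\mathbf P$-a.e.\ $x$, which is the tightness one needs. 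Second, in the support argument, $\bigcap_{n:\,x\in A_n}\pi^{-1}(A_n)$ equals $\pi^{-1}(x)$ only if the family $\{A_n\}$ is closed under complements (so that each relevant $A_n^c$ also appears as some $A_m$); otherwise one must explicitly intersect with $\bigcap_{n:\,x\notin A_n}\pi^{-1}(A_n)^c$, each of which also has full $\P_x$-measure. Both points are routine to repair and do not affect the soundness of the overall approach. One should also note, for completeness, that verifying $\P_x(A)=\Phi_{\mathbf 1_A}(x)$ for all Borel $A$ (not just $A\in\skria$) is a monotone-class argument; you implicitly use this when passing from indicators in $\skria$ to general $g$.
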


\subsection{Gromov-hyperbolic groups} 

In this section we will now recall some elementary facts about Gromov-hyperbolic groups and refer the reader to \cite{gro-hyp,GhH,bh-book} for a more detailed treatment of Gromov-hyperbolicity.

\begin{defn}\label{def-gi}\cite{gro-hyp}\cite[p. 410]{bh-book} 
Let $(X,d)$ be a metric space.
For  $x, y, w$ in $(X,d)$, the Gromov inner product $( x, y )_w$ of $x, y$ with respect to the base-point $w \in X$ is given by $$( x, y )_w = \frac{1}{2} (d(x,w) + d(y,w) - d(x,y)).$$
\end{defn}
Hyperbolicity can be defined in terms of the Gromov inner product as follows:
 $(X,d)$ is  \emph{$\delta$-hyperbolic}, if for all $x,y,z,w \in X$,
$$(x,y)_w \geq \min\{(x,z)_w,(z,y)_w\}-\delta.$$ This is equivalent to the usual thin-triangles definition given in Lemma~\ref{delta slim} below (see \cite{bh-book} for details).
For $x,y \in X$, the geodesic between $x$ and $y$ is denoted by $[x,y]$.
 
\begin{lemma}\label{delta slim}
Let $(X,d)$ be a geodesic metric space which is $\delta$-hyperbolic, then for all $x,y,z \in X$, we have
\[[x,y] \subset N_\delta ([x,z] \cup [y,z])\]
where $[x,y]$ denotes a geodesic from $x$ to $y$ and $N_{\delta}(A)$ is the $\delta$-neighborhood of a set $A\subset X$.
\end{lemma}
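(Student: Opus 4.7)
\textbf{Proof plan for Lemma \ref{delta slim}.}

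The plan is to deduce slimness directly from the Gromov-product definition of $\delta$-hyperbolicity recalled in the paragraph preceding Definition \ref{def-gi}. Fix $w\in[x,y]$; the goal is to produce a point $v\in [x,z]\cup[y,z]$ with $d(w,v)$ of order $\delta$. The first step is purely algebraic: since $w$ sits on the geodesic $[x,y]$, one has $d(x,w)+d(w,y)=d(x,y)$, and therefore $(x,y)_w=0$. This identity is the only place where the hypothesis ``$w\in[x,y]$'' is used.

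Next I would apply the Gromov-product inequality at the base-point $w$ to the triple $\{x,y,z\}$. Inserting the auxiliary point $z$ yields
\[
0 \;=\; (x,y)_w \;\geq\; \min\bigl\{(x,z)_w,\,(z,y)_w\bigr\}-\delta,
\]
so at least one of the two Gromov products is $\leq\delta$. Without loss of generality assume $(x,z)_w\leq\delta$, that is,
\[
d(x,w)+d(w,z)-d(x,z)\;\leq\; 2\delta.
\]
The other case is handled identically with the geodesic $[y,z]$ in place of $[x,z]$.

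The third step is to exhibit a concrete witness on $[x,z]$. If $d(x,w)\leq d(x,z)$, let $u$ be the unique point of $[x,z]$ with $d(x,u)=d(x,w)$; if instead $d(x,w)>d(x,z)$, take $u=z$ and move to the side $[y,z]$ symmetrically. Since $u\in[x,z]$, one has $d(x,u)+d(u,z)=d(x,z)$, whence by the triangle inequality and the estimate above,
\[
d(w,u)\;\leq\; d(w,z)+d(z,u)\;=\; d(w,z)-d(x,u)+d(x,z)\;\leq\; 2\delta,
\]
after substituting $d(x,u)=d(x,w)$ and using $d(w,z)\leq d(x,z)-d(x,w)+2\delta$ from the previous display. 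This shows $w$ lies within a fixed multiple of $\delta$ of $[x,z]\cup[y,z]$, proving slimness.

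\emph{Expected obstacle.} The argument is entirely routine; the only subtlety is the absorbed constant. A direct unwinding of the four-point Gromov-product definition of $\delta$-hyperbolicity produces a slimness constant of the form $C\delta$ with $C$ a small universal integer (the $4\delta$ or $6\delta$ appearing in standard references such as \cite{bh-book} and \cite{GhH}), not $\delta$ itself. The statement of the lemma should therefore be read with $\delta$ absorbing this multiplicative factor, a convention that is harmless in all subsequent applications since hyperbolicity is only used up to universal constants.
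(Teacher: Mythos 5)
The paper cites this as a standard fact (referring to \cite{bh-book}) and gives no proof, so there is nothing to compare against; the only question is whether your argument is correct. It is not, because the last display is false.

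Your first two steps are fine: $w\in[x,y]$ gives $(x,y)_w=0$, and the four-point inequality at base-point $w$ with auxiliary point $z$ gives (WLOG) $(x,z)_w\le\delta$, i.e.\ $d(x,w)+d(w,z)\le d(x,z)+2\delta$. But the concluding chain
\[
d(w,u)\le d(w,z)+d(z,u)=d(w,z)-d(x,w)+d(x,z)\le 2\delta
\]
does not hold. Substituting the bound you derived, $d(w,z)\le d(x,z)-d(x,w)+2\delta$, into $d(w,z)-d(x,w)+d(x,z)$ yields $2\bigl(d(x,z)-d(x,w)\bigr)+2\delta$, which can be arbitrarily large (e.g.\ take $w$ near $x$ on a long side $[x,y]$, with $z$ far from $x$). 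The other natural triangle inequality, $d(w,u)\le d(w,x)+d(x,u)=2d(x,w)$, is also unbounded. The underlying issue is conceptual: knowing that $(x,z)_w$ is small does not, by a bare triangle inequality, place $w$ close to the geodesic $[x,z]$; you need to invoke hyperbolicity once more.

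The correct completion reuses the four-point condition rather than the triangle inequality. Assume $d(x,w)\le(y,z)_x$ (otherwise $d(y,w)\le(x,z)_y$ and one argues symmetrically with $[y,z]$). Since $w\in[x,y]$, one has $(w,y)_x=d(x,w)$, and the four-point inequality at base-point $x$ gives $(w,z)_x\ge\min\{(w,y)_x,(y,z)_x\}-\delta=d(x,w)-\delta$. Now let $u\in[x,z]$ with $d(x,u)=d(x,w)$; then $(z,u)_x=d(x,u)=d(x,w)$, and the four-point inequality at base-point $x$ applied to $w,u$ with auxiliary point $z$ yields $(w,u)_x\ge\min\{(w,z)_x,(z,u)_x\}-\delta\ge d(x,w)-2\delta$. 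Since $(w,u)_x=d(x,w)-\tfrac12 d(w,u)$, this gives $d(w,u)\le4\delta$, which is the standard slimness constant obtained from the four-point definition. Your ``expected obstacle'' paragraph already anticipates that the constant should be a small universal multiple of $\delta$, but the computation you gave does not reach any finite multiple at all; it needs to be replaced by the two-step Gromov-product argument above.
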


A finitely generated group $G$ is said to be hyperbolic with respect to some finite symmetric generating set $S$ if the Cayley graph $\Ga = \Ga (G,S)$ is hyperbolic with respect to the word-metric. A standard fact is that if $G$ is  hyperbolic with respect to some finite symmetric generating set $S$, it is hyperbolic with respect to any finite symmetric generating set.
The group $G$ is 
\emph{non-elementary hyperbolic}, if it is infinite, and does not contain a subgroup of finite index which is isomorphic to $\Z$.

Any hyperbolic space $(X,d)$ comes naturally equipped with a boundary at infinity $\partial X$, which is given by 
\[\partial X = \{[\xi] : \xi \text{ quasi-geodesic ray in } X\}\]
where the set of equivalence classes $[\xi]$ is given by the equivalence relation $\xi \sim \eta$ if
$(\xi(t),\eta(s))\to \infty$, as $s,t \to \infty$. 

The Gromov product extends naturally to the boundary $\partial X$ (see \cite{gro-hyp} for details), by setting
\[(\xi,x)_p= \limsup_{\xi_n \to \xi} (\xi_n,x)_p \;, \text{ and } (\xi,\eta)_p= \limsup_{\xi_n \to \xi, \eta_n \to \eta} (\xi_n,\eta_n)_p\]
for $x,p\in X, \xi,\eta \in \partial X$.

Note that, if we chose $\liminf$ instead of $\limsup$ the value of the Gromov product would only change by at most $2\delta$ on $X \cup \partial X$. Indeed, this follows directly from the definition of hyperbolicity.

\begin{lemma}
Let $\xi, \eta\in \partial X$ and $(\xi_n)_{n\in\N}, (\xi_n')_{n\in\N}$ and $(\eta_n)_{n\in\N}, (\eta_n')_{n\in\N}$ be sequences converging to $\xi$ and $\eta$ respectively. Then for $w,z \in X$
\[\limsup_{n,m \to \infty}(\xi_n,\eta_m)_w \leq \limsup_{n,m \to \infty}(\xi'_n,\eta'_m)_w +2\delta,\]
\[\limsup_{n,m \to \infty}(\xi_n,z)_w \leq \limsup_{n,m \to \infty}(\xi'_n,z)_w +\delta.\]
\end{lemma}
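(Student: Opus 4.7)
My plan is to derive both inequalities from the Gromov four-point condition, combined with the defining fact that sequences converging to the same boundary point have pairwise Gromov products tending to infinity. The first step I would take is to record the auxiliary fact (standard from the construction of $\partial X$): if $\xi_n, \xi_n' \to \xi \in \partial X$, then $(\xi_n, \xi_m')_w \to \infty$ as $n, m \to \infty$. This follows by applying the hyperbolicity inequality $(\xi_n, \xi_m')_w \geq \min\{(\xi_n, \xi_k')_w, (\xi_k', \xi_m')_w\} - \delta$ and letting $k\to\infty$ along a sequence representing $\xi$, thereby reducing to pairwise Gromov products within one equivalence class, which grow unboundedly by the definition of convergence in $\partial X$.

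For the second inequality I would apply hyperbolicity exactly once:
\[
(\xi_n', z)_w \;\geq\; \min\{(\xi_n', \xi_n)_w,\; (\xi_n, z)_w\} - \delta.
\]
Setting $L := \limsup_n (\xi_n, z)_w$ and first assuming $L < \infty$, I would choose $n_k \to \infty$ with $(\xi_{n_k}, z)_w \to L$, and for each $k$ pick $n_k' \to \infty$ (possible by the auxiliary fact) so that $(\xi_{n_k}, \xi_{n_k'})_w \geq L + 1$. The minimum on the right is then realized by $(\xi_{n_k}, z)_w$, yielding $(\xi_{n_k'}, z)_w \geq (\xi_{n_k}, z)_w - \delta$; taking $\limsup$ along $k$ gives the desired bound. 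The case $L = \infty$ follows by running the same argument against any finite threshold $L_0 < L$, forcing $\limsup_n (\xi_n', z)_w = \infty$ as well.

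For the first inequality I would iterate the hyperbolicity inequality twice to obtain
\[
(\xi_{n'}, \eta_{m'})_w \;\geq\; \min\{(\xi_{n'}, \xi_n)_w,\; (\xi_n, \eta_m)_w,\; (\eta_m, \eta_{m'})_w\} - 2\delta.
\]
Letting $L := \limsup_{n, m}(\xi_n, \eta_m)_w$, I would pick $(n_k, m_k)$ realizing this limsup, then select $n_k', m_k' \to \infty$ with both outer terms in the triple minimum exceeding $L + 1$. The minimum collapses to the middle term $(\xi_{n_k}, \eta_{m_k})_w$, so $(\xi_{n_k'}, \eta_{m_k'})_w \geq (\xi_{n_k}, \eta_{m_k})_w - 2\delta$, and passing to $\limsup$ delivers the stated bound; the $L = \infty$ case is handled as before.

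The main obstacle I anticipate is the bookkeeping in the dependent choice of the primed indices: $n_k'$ and $m_k'$ must tend to infinity (so that they actually contribute to $\limsup_{n,m}(\xi_n', \eta_m')_w$) while simultaneously producing large enough cross Gromov products with the unprimed sequences. The auxiliary fact in the first step resolves exactly this issue, since those cross products blow up automatically once we take the primed indices far enough out, so one may simply enforce $n_k', m_k' \geq k$ when making the selection. The split between $\delta$ and $2\delta$ in the two inequalities is transparently accounted for by the number of hyperbolicity applications needed, one against an interior point $z$ and two between a pair of boundary points.
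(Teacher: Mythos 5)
Your argument is correct, and it is precisely the direct deduction from the $\delta$-hyperbolicity inequality that the paper alludes to just before stating the lemma (the paper leaves the proof implicit, remarking only that the analogous change-of-sequence stability ``follows directly from the definition of hyperbolicity''). The one-application versus two-application count cleanly accounts for the $\delta$ versus $2\delta$, and your handling of the index bookkeeping --- choosing the primed indices large enough so that the cross Gromov products $(\xi_{n_k},\xi'_{n'_k})_w$ and $(\eta_{m_k},\eta'_{m'_k})_w$ dominate, which is possible since both sequences converge to the same boundary point --- is exactly what is needed. One could state the auxiliary fact slightly more carefully (the double limit $(\xi_n,\xi'_m)_w\to\infty$ holds for $n,m$ simultaneously large, and since $n_k\to\infty$ the selection of $n'_k$ is valid once $k$ is large), but this is a cosmetic point; the proof is sound.
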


The hyperbolic boundary is not just an abstract space. Indeed, it can be seen as a metric space in which the geodesic ray $\xi$ converges to $[\xi]$. 

\begin{defn}\label{def-visualmetric}
	A metric $d_v$ on $\partial X$ is said to be {\bf a visual metric with parameter $a>1$} with respect to the base-point $o$ if there exist $k_1, k_2 > 0$ such that $$k_1 a^{-( \xi, \xi' )_o} \leq d_v (\xi_1, \xi_2) \leq k_2 a^{-( \xi, \xi' )_o}.$$
\end{defn}

\begin{prop}\label{prop-vmexist}\cite[p. 435]{bh-book} Given $\delta \geq 0$, there exists $a > 1$ such that if
	$(X,d)$ is $\delta-$hyperbolic, then for any base-point $o$,
	a visual metric $d_v $
	with parameter $a>1$ exists on $\partial X$ with respect to the base-point $o$.
	Further, for any $o' \in X$, let $d_v'$ be a visual metric on $\partial X$
 with respect to the base-point $o'$ and
	with parameter $a>1$. Then $d_v'$
	is equivalent (as a metric) to $d_v$.
	
	There exists a natural topology on $\hat{X} = X \cup \partial X$ such that
	\begin{enumerate}
		\item $X$ is open and dense in $\hat X$,
		\item $\partial X$ and $\hat{X}$ are compact if $X$ is proper,
		\item the subspace topology on $\partial X$ agrees with that given by $d_v$.
	\end{enumerate}
\end{prop}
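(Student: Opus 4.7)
The proposition assembles several standard facts about boundaries of hyperbolic spaces, so the plan is to prove each clause in turn, with the crux being a chaining construction that converts the "quasi-metric" $\rho_a(\xi,\eta) := a^{-(\xi,\eta)_o}$ into a genuine metric.

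\textbf{Step 1 (approximate ultrametric).} Fix $\delta$-hyperbolic $X$ and $o \in X$. Extend the Gromov product to $\partial X$ as in the excerpt and set $\rho_a(\xi,\eta) = a^{-(\xi,\eta)_o}$. Using the hyperbolic inequality $(\xi,\zeta)_o \geq \min\{(\xi,\eta)_o,(\eta,\zeta)_o\} - \delta$ (with an additional additive constant, say $2\delta$, arising from the $\limsup$ extension to $\partial X$), I would first verify the ``quasi-triangle'' bound
\[
\rho_a(\xi,\zeta) \leq a^{2\delta}\,\max\{\rho_a(\xi,\eta),\rho_a(\eta,\zeta)\} \leq a^{2\delta}\bigl(\rho_a(\xi,\eta)+\rho_a(\eta,\zeta)\bigr).
\]
So $\rho_a$ is a quasi-metric with multiplicative constant $K:=a^{2\delta}$.

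\textbf{Step 2 (Frink chaining).} This is the step I expect to be the main technical obstacle. Choose $a>1$ small enough that $K = a^{2\delta} < 2^{1/?}$ — concretely, I would aim for the threshold that makes Frink's metrization lemma applicable — and define
\[
d_v(\xi,\eta) \;=\; \inf\Biggl\{\sum_{i=0}^{n-1}\rho_a(\zeta_i,\zeta_{i+1}) \,:\, n\geq 1,\ \zeta_0=\xi,\ \zeta_n=\eta\Biggr\}.
\]
The aim is to show that chains cannot do much better than the direct jump: an induction on the length of the chain (splitting a chain at its ``midpoint'' in the sense of partial sums and applying the quasi-triangle inequality iteratively) yields a constant $k_1>0$ such that $d_v(\xi,\eta)\geq k_1\,\rho_a(\xi,\eta)$, while the trivial one-step chain gives $d_v(\xi,\eta)\leq \rho_a(\xi,\eta)$. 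That produces the visual metric with parameter $a$.

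\textbf{Step 3 (base-point change).} A direct estimate from Definition~\ref{def-gi}, applied to sequences and then passed to $\limsup$, gives $|(\xi,\eta)_o - (\xi,\eta)_{o'}| \leq d(o,o')$, hence $\rho_a$ with base-points $o$ and $o'$ are comparable up to the multiplicative constant $a^{d(o,o')}$. Combined with Step 2, this shows that the two resulting metrics $d_v$ and $d_v'$ satisfy $C^{-1} d_v \leq d_v' \leq C\,d_v$ with $C = a^{d(o,o')}/k_1^2$, so they are bi-Lipschitz equivalent (in particular, they induce the same topology).

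\textbf{Step 4 (topology on $\hat X$).} I would declare a basis of neighborhoods of a boundary point $\xi\in\partial X$ to be $U(\xi,n) := \{y\in\hat X : (y,\xi)_o > n\}$ (interpreting the Gromov product on $\hat X$ by $\limsup$ as before), with the usual topology of $X$ at points of $X$. That $X$ is open and dense in $\hat X$ is immediate: it is the complement of $\partial X$, and every $\xi\in\partial X$ is, by definition, a limit of a quasi-geodesic ray in $X$. When $X$ is proper I would prove compactness by a diagonal argument: given a sequence $(x_n)\subset \hat X$, either it lies in a bounded region of $X$ (then properness gives a subsequential limit in $X$), or, after passing to a subsequence, $d(x_n,o)\to\infty$; pairwise inner products $(x_n,x_m)_o$ are bounded below by a subsequence where these inner products tend to a limit along the diagonal, producing a boundary limit. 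That $\partial X$ is closed in $\hat X$ (hence compact) then follows from the neighborhood description, and the subspace topology on $\partial X$ coincides with the $d_v$-topology because both have a basis consisting of sets of the form $(\xi,\eta)_o > n$, i.e.\ $\rho_a(\xi,\eta) < a^{-n}$, which by Step 2 is comparable to the $d_v$-ball of radius $a^{-n}$ around $\xi$.
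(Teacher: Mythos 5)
The paper does not prove Proposition~\ref{prop-vmexist}; it is imported directly from Bridson--Haefliger \cite[p.~435]{bh-book} as background, so there is no in-paper proof to compare against. Your outline reproduces the standard textbook argument (essentially the one in Bridson--Haefliger, and in Ghys--de la Harpe and V\"ais\"al\"a): establish the quasi-ultrametric inequality for $\rho_a(\xi,\eta) = a^{-(\xi,\eta)_o}$, run the Frink chaining construction to produce a genuine metric comparable to $\rho_a$ once $a$ is close enough to $1$, use $|(\xi,\eta)_o - (\xi,\eta)_{o'}| \leq d(o,o')$ for base-point independence, and use a properness/compactness argument for $\hat X$. The outline is sound, but two places should be firmed up. In Step~2, the placeholder ``$2^{1/?}$'' needs resolving: the Frink induction that yields $d_v \geq k_1\,\rho_a$ works precisely when the quasi-ultrametric constant satisfies $a^{2\delta'} \leq 2$ (with $\delta'$ the effective constant after extending the Gromov product to $\partial X$ by $\limsup$), and this dependence of $a$ on $\delta$ is exactly why the proposition is quantified as ``given $\delta$, there exists $a$'' rather than fixing $a$ once and for all. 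In Step~4, the sentence about pairwise inner products being ``bounded below by a subsequence where these inner products tend to a limit along the diagonal'' is garbled; the clean argument is that if $d(x_n,o)\to\infty$, then properness plus Arzel\`a--Ascoli applied to the geodesic segments $[o,x_n]$ yields a subsequence converging uniformly on compacta to a geodesic ray $\gamma$, and one then checks $(x_n,\gamma(\infty))_o\to\infty$, giving convergence in $\hat X$ --- the divergence of inner products is the conclusion, not the starting hypothesis. With these repairs your proof is complete and coincides with the standard one.
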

We call $\hat{X}$ the {\bf Gromov compactification} of $X$.

For $\xi \in \partial X$ and $o \in X$, a geodesic ray from $o$ and converging to $\xi \in \partial X$ will be denoted by $[o, \xi)$. For $\xi_1 \neq \xi_2 \in \partial X$, a bi-infinite geodesic $f: \R \to X$ converging to $\xi_1, \xi_2$ as $s \in \R$ tends to $\pm \infty$ will be denoted by $(\xi_1, \xi_2)$. 

\subsection{Busemann Functions and Shadows}

On a hyperbolic space $(X,d)$ the {\bf Busemann function} is defined as
\begin{equation}\label{eq:buse}
\beta_\xi (p,q) :=\limsup_{z\to \xi} \, (d(p,z)-d(q,z)).
\end{equation}
for $p,q,z\in X, \xi \in \partial X$. 

\begin{remark}
Note that Busemann functions are also studied for more general metric spaces $(X,d)$. In this case they are defined as
\[\beta_\gamma(x):=\lim_{t \to \infty} \big(d(x, \gamma(t))-t\big)\]
for (parameterized) geodesic rays $\gamma \subset (X,d)$, and $x\in X$. 
In the case of a hyperbolic space, one can then declare as equivalent the set of all Busemann functions for which the defining geodesic rays $\gamma$ have the same limit at $\infty$. This defines an equivalence class $\beta_{\xi}$ of Busemann functions in the horofunction-compactification of $X$, given by the quotient of $C(X)$ under $f \sim g $ if $f-g$ is constant. If we further quotient the boundary of this space by bounded functions, we obtain the Gromov boundary of this space. See for example \cite[Section 2.4]{calegari-notes} for more details. In the context of the present work it suffices however, to use the definition of the Busemann function as given in \eqref{eq:buse}.
\end{remark}

\begin{defn}\cite[Definition 4.1]{coornaert-pjm} \label{qcdens}
For $(X,d)$ Gromov-hyperbolic, let $M(\partial_g X)$ denote the collection of positive finite Borel measures on $\partial_g X $. A $G$-equivariant map $X \to M(\partial_g X)$ sending $x \to \mu_x$ is said to be a $C$ {\bf quasi-conformal density} of dimension $v$ ($v\geq 0$), for some $C\geq 1$, if
\begin{equation}\label{eq-qc}
	\frac{1}{C}\exp\big(-v\beta_\xi(y,x)\big)
	\leq
	\frac{\d\mu_{x}}{\d\mu_y} (\xi)
	\leq C \exp\big(-v\beta_\xi(y,x)\big)
\end{equation}
for all $x, y \in X$, $\xi\in \partial X$. If $C=1$, it is a {\bf conformal density}. Equivalently, we also call the family of measures $(\mu_x)_{x\in G}$ a (quasi)-conformal density.
\end{defn}

Let $R>0$ and $x,y\in X$. The {\bf shadow} $\mho_y(x,R)$ is given by
\[\mho_y(x,R) := \{a\in\partial X: (a,x)_w\geq d(w,x)-R\}.\]
Equivalently, $\mho_x(w,R)$ is the collection of all limits on the boundary of geodesic rays starting at $x$ and passing through $B_R(x)$. 
Shadows come into play when dealing with conformal measures due to the following Shadow lemma, originally due to Sullivan in (\cite{sullivan}, Proposition 3), and rendered  in the context of hyperbolic groups in (\cite{coornaert-pjm}, Proposition 6.1). 

We refer the reader to Remark~\ref{rmk-qr} following Theorem~\ref{thm-bhm} for the notion of quasi-ruled spaces. A metric space is said to be \emph{cocompact} if it admits a cocompact isometric action 
by a discrete subgroup of the group of its isometries.

\begin{lemma}[Shadow Lemma]\label{shadow lemma} 
For a hyperbolic group, or a cocompact hyperbolic quasiruled space $X$, and a family of quasi-conformal measures $(\mu_x)_{x\in X}$ of dimension $v$
 there exists $R_0>0$, such that for $R>R_0$, and for any $w,x\in X$, 
 \[\mu_w(\mho_w(x,R))\asymp \e^{-v d(w,x)}\]
 where the implicit constants do not depend on $x$.
\end{lemma}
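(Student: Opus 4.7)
My plan is to split the Shadow Lemma into three pieces: a Busemann estimate on the shadow, a reduction from $\mu_w$ to $\mu_x$ using quasi-conformality, and a uniform two-sided bound on $\mu_x(\mho_w(x,R))$. The key computation starts from the identity $\beta_\xi(w,x) = 2(\xi,x)_w - d(w,x)$, obtained by passing to the limit $\xi_n \to \xi$ in the finite-point relation $(\xi_n, x)_w = \tfrac{1}{2}(d(w,x) + d(w,\xi_n) - d(x,\xi_n))$. For $\xi \in \mho_w(x,R)$ the defining inequality $(\xi,x)_w \geq d(w,x) - R$ then gives
\[
d(w,x) - 2R - O(\delta) \;\leq\; \beta_\xi(w,x) \;\leq\; d(w,x),
\]
and plugging this into the quasi-conformality relation \eqref{eq-qc} yields $\d\mu_w/\d\mu_x(\xi) \asymp e^{-v d(w,x)}$ uniformly on the shadow, with implicit constants depending only on $R, v, C, \delta$. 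Integrating reduces the proof to establishing $\mu_x(\mho_w(x,R)) \asymp 1$ uniformly in $w, x$.

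The upper bound is immediate from $\mu_x(\mho_w(x,R)) \leq \mu_x(\partial G)$ together with the fact that $G$-equivariance plus cocompactness make $x \mapsto \mu_x(\partial G)$ uniformly bounded. For the lower bound I would use equivariance to reduce to showing $\mu_e(\mho_y(e,R)) \geq c > 0$ for all $y \in G$. When $d(e,y) \leq R$ the shadow equals $\partial G$ and there is nothing to prove; otherwise the identity $(\xi,e)_y + (\xi,y)_e = d(e,y) + O(\delta)$ identifies the complement $\mho_y(e,R)^c$ with the ``cap'' $\{\xi \in \partial G : (\xi, y)_e > R - O(\delta)\}$. The approximate ultrametric inequality $(\xi,\xi')_e \geq \min\{(\xi,y)_e, (\xi',y)_e\} - \delta$ shows that any two points of this cap satisfy $(\xi,\xi')_e > R - O(\delta)$, so by Definition \ref{def-visualmetric} the cap has visual diameter at most $k_2 a^{-R + O(\delta)}$.

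The main obstacle is promoting the pointwise fact $\mu_e(\overline{B_\rho(\eta)}) \to 0$ as $\rho \to 0$ — a standard consequence of non-atomicity of $\mu_e$, which holds for QC densities of positive dimension on non-elementary hyperbolic boundaries — into a uniform statement over all possible cap centers. This is resolved by compactness of $\partial G$ together with upper semi-continuity of $\eta \mapsto \mu_e(\overline{B_\rho(\eta)})$, supplying a single threshold $R_0$ such that every visual ball of radius $k_2 a^{-R_0 + O(\delta)}$ has $\mu_e$-mass at most $\mu_e(\partial G)/2$. For $R \geq R_0$ the cap then has mass uniformly bounded away from the full mass, and assembling the three pieces gives $\mu_w(\mho_w(x,R)) \asymp e^{-v d(w,x)}$. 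The cocompact hyperbolic quasi-ruled version goes through by the same argument, the only group-theoretic inputs being equivariance, cocompactness of the isometric action, and non-atomicity of $\mu_e$.
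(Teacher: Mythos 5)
Your three-piece decomposition and the Busemann/quasi-conformality computation follow the standard Sullivan--Coornaert route, and the equivariance reduction to $\mu_e(\mho_y(e,R))$ is carried out correctly; the paper itself only sketches the shadow-to-visual-ball comparison and cites the references, so yours is essentially a fleshed-out version of that sketch. Two remarks.

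On the sign: be aware that \eqref{eq-qc} as printed is inconsistent with Theorem~\ref{density hitting measures}, which gives $\frac{\d\nu_y}{\d\nu_o}(\xi)=\exp(\beta_\xi(o,y))$ for the hitting measures (a conformal density of dimension $1$); the consistent convention, matching Coornaert, is $\frac{\d\mu_x}{\d\mu_y}(\xi)\asymp\exp(-v\beta_\xi(x,y))$. Your step $\frac{\d\mu_w}{\d\mu_x}(\xi)\asymp\e^{-v\beta_\xi(w,x)}$ silently uses this corrected convention, which is why $\e^{-vd(w,x)}$ comes out with the right sign; a literal substitution $y\mapsto x,\ x\mapsto w$ into \eqref{eq-qc} as printed would yield $\e^{+vd(w,x)}$ instead. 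Worth flagging explicitly so a reader checking against the paper's definition is not derailed.

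The one genuine gap is the appeal to non-atomicity of $\mu_e$. For quasi-conformal densities of positive dimension the standard derivation of non-atomicity goes \emph{through} the Shadow Lemma (Coornaert proves it in \S 7, after Prop.~6.1), so invoking it as a known input risks circularity. Fortunately you need much less. The cap $\mho_y(e,R)^c$ has visual diameter $\lesssim a^{-R+O(\delta)}$, and to bound its mass away from $\mu_e(\partial G)$ it is enough that $\operatorname{supp}\mu_e$ contain two distinct points. This follows, independently of the Shadow Lemma, from $\operatorname{supp}\mu_e=\partial G$: if $U$ were a nonempty open set with $\mu_e(U)=0$, then by \eqref{eq-qc} all $\mu_x$ vanish on $U$, by equivariance $\mu_e$ vanishes on $gU$ for every $g$, and minimality of the non-elementary $G$-action on $\partial G$ gives $GU=\partial G$, a contradiction. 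Now pick $\eta_1\ne\eta_2$ in $\operatorname{supp}\mu_e$ at visual distance $\eps_0$ and set $c_0=\min_i\mu_e(B_{\eps_0/4}(\eta_i))>0$; any set of visual diameter $<\eps_0/2$ misses at least one of the $B_{\eps_0/4}(\eta_i)$, so $\mu_e(\mho_y(e,R))\geq c_0$ whenever $k_2a^{-R+O(\delta)}<\eps_0/2$. This replaces the non-atomicity and compactness/upper-semicontinuity steps in one stroke and removes the circularity.
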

This comes from the observation that for any $\tau>0$ there exist $C,R_0>0$ such that for $R> R_0$ and $\xi \in \partial X, x \in X$ with $(w,\xi)_x \leq \tau$, we have
\[
B_{\eps}(a,(1/C) \e^{-\eps d(w,x)}) \subset \mho_{\GG} (x,R) \subset B_{\ep}(a,C \e^{-\eps d(w,x)}).
\]

\subsection{The Bonk-Schramm Embedding}
Let $(X,d_X)$ and $(Y, d_Y)$ be metric spaces.
We say, following \cite{BS} that $X$ is \emph{roughly similar} to $Y$ if there exists a map
$f : X \to Y$ and constants $k, A> 0$, such that
for all $x, y \in X$
$$|Ad_X(x, y) - d_Y(f(x), f(y)) | \leq k,$$ and $Y$ lies in a $k-$neighborhood of $f(X)$. This notion is considerably stronger than just a quasi-isometry.

One very helpful tool in dealing with hyperbolic groups is the following main theorem of \cite[Theorem 10.2]{BS}.

\begin{theorem}\label{thm-bs}
Let $(X,d)$ be a Gromov-hyperbolic geodesic space. Then there exists an
$n \in \natls$ such that $X$ is roughly similar to a convex subset of hyperbolic
$n$-space $\H^n$.
\end{theorem}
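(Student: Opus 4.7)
The plan is to factor a rough similarity $X \to \H^n$ through an auxiliary hyperbolic cone built from the Gromov boundary. The essential quantitative input, which I would invoke from the bounded growth of $X$ at some scale (automatic for Cayley graphs of finitely generated groups, and implicit in the intended application), is that $\partial X$ has finite Assouad dimension with respect to any visual metric. Without such a growth assumption the conclusion actually fails, so I would state and use this hypothesis explicitly at the outset rather than hiding it.

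Next I would introduce the hyperbolic cone $\mathrm{Con}(Z)$ over a bounded metric space $(Z, d_Z)$: the set $Z \times (0, \mathrm{diam}(Z)]$ equipped with
\[
d_{\mathrm{Con}}\bigl((z,h), (z',h')\bigr) \;=\; 2\log\!\frac{d_Z(z,z') + h \vee h'}{\sqrt{h\, h'}},
\]
and verify directly that $\mathrm{Con}(Z)$ is Gromov-hyperbolic and visual, with boundary canonically identified with $Z$. Taking $Z = \partial X$ with a visual metric of parameter $a > 1$ and a base-point $o \in X$, I would then map $x \mapsto (\pi(x), a^{-d(o,x)})$, where $\pi(x)$ is any boundary limit of a geodesic ray from $o$ through $x$. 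The Gromov-product comparison $\exp(-(\xi,\eta)_o) \asymp d_v(\xi,\eta)$ provided by Proposition~\ref{prop-vmexist} is exactly what makes the two distances match up to an additive constant, giving a rough similarity $X \to \mathrm{Con}(\partial X)$.

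The remaining step is to embed $\mathrm{Con}(Z)$ roughly similarly into some $\H^n$ whenever $Z$ has finite Assouad dimension. For this I would fix maximal $2^{-k}$-separated nets $N_k \subset Z$; finite Assouad dimension gives $|N_k| \leq C^k$ uniformly, so all the nets sit inside a common combinatorial framework of bounded dimension. Using these nets one constructs a snowflake embedding of $Z$ into a sphere of some $\R^n$ and extends radially into the ball model of $\H^n$, with the radial coordinate tied to the $h$-coordinate of the cone. Writing out the ball-model metric and comparing term by term against $d_{\mathrm{Con}}$ shows that this extension is a rough similarity. Composing the two steps yields the stated theorem.

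The hard part will be extracting a genuine \emph{rough similarity} rather than a mere quasi-isometric embedding at the second step: controlling multiplicative and additive errors simultaneously forces the visual parameter $a$ on $\partial X$ to be tuned precisely against the curvature of $\H^n$, so that the snowflake power used to embed the boundary into $\R^n$ exactly cancels the exponential growth of horospheres. A secondary technical obstacle is keeping the dimension $n$ dependent only on the hyperbolicity constant of $X$ and its growth constant, which requires the net-counting estimate to be uniform across all scales rather than asymptotic; otherwise the construction produces a sequence of embeddings into $\H^{n_k}$ with $n_k \to \infty$ and no single ambient $\H^n$ works.
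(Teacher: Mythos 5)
The paper does not prove this statement; it is cited verbatim from Bonk--Schramm (\cite[Theorem~10.2]{BS}) with no argument supplied, so the only meaningful comparison is with Bonk and Schramm's original argument. Against that benchmark your outline is the right one: a rough similarity from a visual hyperbolic space onto a subset of the hyperbolic cone $\mathrm{Con}(\partial X)$ built over a visual metric, followed by a rough-similarity embedding of $\mathrm{Con}(Z)$ into some $\H^n$ via an Assouad-type snowflake embedding of $Z$ into a Euclidean sphere and radial extension. This is exactly the two-step factorization carried out in \cite{BS} (their Sections~7--9, assembled into Theorem~10.2), and you have located the genuine pressure points: matching the visual parameter against the cone/ball-model normalization so that the errors stay additive rather than multiplicative, and obtaining scale-uniform covering bounds so that a single finite $n$ suffices.

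You are also right to flag that the paper's restatement has silently dropped a hypothesis. Bonk and Schramm's Theorem~10.2 requires $X$ to have \emph{bounded growth at some scale}, which is precisely what forces $\partial X$ to be doubling for the visual metric and hence what makes the second step possible. Without it the conclusion fails outright (e.g.\ hyperbolic trees of unbounded valence have boundaries which admit no snowflake bi-Lipschitz embedding into any finite-dimensional Euclidean space). The omission is harmless for the present paper, since the theorem is only ever invoked for Cayley graphs of hyperbolic groups or for $(G,d_{\skrig})$, which is quasi-isometric to one, and these do have bounded growth; but the statement as literally written is false, and your decision to foreground the growth assumption is the correct reading of what \cite{BS} actually prove.
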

(see also Remark \ref{rmk-qr} below). 

\subsection{Green Metric}\label{green metric}

Let $\mu$ be a symmetric measure on $G$ whose  $S$ generates $G$ and $(X_n)_{n\in \N}$ a random walk on $\Gamma$ with transition probabilities given by $p(x,y)= \mu(x^{-1}y)$. If $Y_1,Y_2,...$ are independent and identically distributed random variables on $S$ with distribution $\mu$, then the random walk started at $x\in G$ is given by
\[X_0:=x, \; X_n:= xY_1\cdot Y_2\cdot...\cdot Y_n.\]
We denote by $\P_x$ the law of $(X_n)_{n\in\N}$ when started at $x$. 

\begin{theorem}\cite[Cor. 1]{Woe93}\label{divergence RW}

A random walk $(X_n)_{n\in\N}$ on a non-elementary hyperbolic group $G$ with transition probabilities given by $p(x,y)= \mu(x^{-1}y)$, where $\mu$ is a measure whose support is a finite generating set of $G$, converges to a random point $X_{\infty}\in \partial G$ almost surely. The hitting distribution on $\partial G$ is denoted by 
\begin{equation}\label{def nug}
\nu_g(\cdot)= \mathbb P_g[X_\infty \in \cdot].
\end{equation}
\end{theorem}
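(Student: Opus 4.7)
The plan is to first establish linear escape to infinity and then leverage $\delta$-hyperbolicity to show that $(X_n)_{n\in\N}$ is almost surely a Cauchy sequence in the visual metric $d_v$ on the Gromov compactification $\hat G$ provided by Proposition~\ref{prop-vmexist}. The hitting distribution $\nu_g$ is then simply the pushforward of $\P_g$ under the almost surely well-defined boundary map $X_\infty := \lim_n X_n$.

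For the linear escape, I would apply Kingman's subadditive ergodic theorem to the cocycle $f_n := d(o, X_n)$. Since $X_{n+m} = X_n\cdot(Y_{n+1}\cdots Y_{n+m})$ with $(Y_k)_k$ i.i.d., the triangle inequality yields $f_{n+m} \leq f_n + f_m\circ\theta^n$, where $\theta$ is the shift on $(Y_k)_k$; moreover $\E[f_1] < \infty$ since $\supp\mu$ is finite. Hence $\ell := \lim_n d(o,X_n)/n$ exists almost surely and equals a deterministic constant in $[0,\infty)$. Strict positivity $\ell>0$ follows from non-amenability: a non-elementary hyperbolic group contains a free subgroup of rank two (by the Tits-alternative-type result for hyperbolic groups, which follows from the definition given on p.~\pageref{Sec Background}), and is thus non-amenable, so Kesten's criterion gives spectral radius $\rho(\mu)<1$. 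The resulting exponential decay $\mu^{\ast 2n}(o) \leq C\rho(\mu)^{2n}$, combined via Cauchy--Schwarz with the at-most-exponential volume growth of $G$ and a pigeonhole on $\mu^{\ast n}$ restricted to $B_{\varepsilon n}(o)$, rules out sublinear escape and yields the Guivarc'h-type inequality $\ell \geq 2|\log\rho(\mu)|/v > 0$, where $v$ is the exponential volume growth rate.

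To promote positive drift to boundary convergence, I would show $(X_n,X_m)_o \to \infty$ almost surely as $\min(n,m)\to\infty$, with $(\cdot,\cdot)_o$ the Gromov product of Definition~\ref{def-gi}. For $m>n$, the increment $X_n^{-1}X_m = Y_{n+1}\cdots Y_m$ is independent of $\sigma(Y_1,\ldots,Y_n)$ and distributed as $X_{m-n}$, so $d(X_n,X_m)/(m-n) \to \ell$ almost surely. Substituting into
\[
(X_n,X_m)_o = \tfrac12\bigl(d(o,X_n)+d(o,X_m)-d(X_n,X_m)\bigr)
\]
yields $(X_n,X_m)_o \to \infty$ along regimes where $|m-n|/\min(n,m)$ stays bounded. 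The general joint limit then follows from the hyperbolicity inequality $(X_n,X_m)_o \geq \min\{(X_n,X_k)_o,(X_k,X_m)_o\}-\delta$ taken with $k = \min(n,m)$. By Proposition~\ref{prop-vmexist}, the visual metric satisfies $d_v(X_n,X_m) \leq k_2\, a^{-(X_n,X_m)_o}$, so $(X_n)$ is almost surely Cauchy in $d_v$ and converges to some $X_\infty \in \hat G$; since $d(o,X_n)\to\infty$ by positive drift, $X_\infty$ lies in $\partial G$, and $\nu_g(\cdot) := \P_g[X_\infty \in \cdot]$ is well defined.

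The principal obstacle is the strict positivity $\ell>0$. While Kingman together with the finite-support hypothesis on $\mu$ gives $\ell\in[0,\infty)$ essentially for free, positivity genuinely requires the \emph{non-elementary hyperbolic} hypothesis, entering via Kesten's non-amenability criterion together with the volume-entropy estimate for $G$; the rest of the argument uses only the $\delta$-hyperbolic metric structure and the i.i.d.\ nature of the increments, and would go through for any symmetric, finitely supported driving measure on a non-elementary hyperbolic group.
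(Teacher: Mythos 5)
The paper does not prove Theorem~\ref{divergence RW}; it is quoted from Woess \cite{Woe93}, so I assess your argument on its own terms. Your framework---Kingman's subadditive theorem for $f_n=d(o,X_n)$, constancy of the speed $\ell$ by the Kolmogorov zero--one law, and positivity of $\ell$ via non-amenability, Kesten's criterion and exponential volume growth---is a standard and essentially correct route to positive drift. (Two small quibbles: the Guivarc'h-type bound is $\ell\geq|\log\rho(\mu)|/v$, not $2|\log\rho(\mu)|/v$; and the Tits alternative for hyperbolic groups is not actually stated in the cited background section, though it is of course classical.)

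The genuine gap is in the final step. You assert that the general divergence $(X_n,X_m)_o\to\infty$ ``follows from the hyperbolicity inequality $(X_n,X_m)_o\geq\min\{(X_n,X_k)_o,(X_k,X_m)_o\}-\delta$ taken with $k=\min(n,m)$.'' Take $n\leq m$, so $k=n$. Since $(X_n,X_n)_o=d(o,X_n)$, the inequality becomes
\[
(X_n,X_m)_o \;\geq\; \min\bigl\{\,d(o,X_n),\;(X_n,X_m)_o\,\bigr\}-\delta,
\]
which is vacuous: if the minimum is the second argument the inequality is trivially true, and if it is the first argument the conclusion is precisely what you are trying to prove. So this step closes nothing. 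The real issue is uniformity: the pointwise statement ``for each fixed $n$, $d(X_n,X_{n+k})/k\to\ell$ as $k\to\infty$'' does not yield the required almost-sure assertion that $\inf_{m>n}(X_n,X_m)_o\to\infty$ as $n\to\infty$. To repair this you need an additional ingredient, for instance: a Borel--Cantelli estimate on $\P[\exists\, m>n:(X_n,X_m)_o<\varepsilon n]$ using the uniform return-probability bound $\max_g\mu^{*k}(g)\leq\mu^{*k}(o)\lesssim\rho^k$ to control backtracking; or the Karlsson--Ledrappier multiplicative ergodic theorem producing a random horofunction $h$ with $h(X_n)=-\ell n+o(n)$, which in a hyperbolic space forces convergence to the associated boundary point; or Kaimanovich's sublinear geodesic tracking (\cite{Kai00}, Thm.~7.2), already used elsewhere in this paper. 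As written, the reduction from the bounded-ratio regime to general $n,m$ is a hole.
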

See also \cite[Thm. 7.6]{Kai00} for a version of this statement with a more general step distribution.

We define \emph{the Green’s function} $\GG : \Ga \times \Ga \to [0, \infty]$ by $\GG(x, y) = \sum_{0}^{\infty}
\P_x [X_n = y]$. $\GG(x, y)$ is the expected number of times a random walk starting at $x$ visits $y$. The \emph{hitting probability} $F : \Ga \times \Ga \to [0, \infty]$ is given as follows: $F(x, y)$ is the probability that a random walk starting at $x$ visits $y$.
Thus, $$\GG(x, y) = F (x, y)\GG(e, e).$$
The Green metric on $\Gamma$, first introduced by Blachère and Brofferio in \cite{BB07},  is then given by $$d_{\skrig}(x,y) = -\log F (x, y).$$
Note that it was shown in \cite[Lemma 3.2]{BHM11} that $(\Gamma, d_{\skrig})$ is a proper metric space. Further, by symmetry of the random walk, $\Gamma$ acts on $(\Gamma, d_{\skrig})$ by isometries. 
The volume growth of $(\Gamma,d_{\skrig})$ is exponential with volume growth constant $\nu_G = 1$, as was observed for example in \cite[Sec. 3.4]{BHM11} and \cite{BHM08}.

Further,
\begin{theorem} \cite[Cor. 1.2]{BHM11}\label{thm-bhm}
\begin{enumerate}
	\item The identity map on the vertex set of $\Gamma$ gives a quasi-isometry between the Cayley graph and $(\Gamma, d_{\skrig})$.
	\item $(\Gamma, d_{\skrig})$ is hyperbolic.
\end{enumerate}

\end{theorem}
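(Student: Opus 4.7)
My plan would be to prove the two parts separately, with the quasi-isometry in (1) supplying a key ingredient for the hyperbolicity in (2). For the upper bound $d_{\skrig}(x,y) \leq C_1 \, d_{\Ga}(x,y)$, I would use that the step distribution $\mu$ has finite support $S$ generating $G$: two vertices $x,y$ with $d_{\Ga}(x,y) = n$ are joined by an $n$-step word-metric path, each edge of which is realized by a single step of the walk with probability at least $c := \min_{s\in S}\mu(s)>0$; hence $F(x,y) \geq c^{n}$ and $d_{\skrig}(x,y) \leq n\log(1/c)$. For the reverse inequality $d_{\Ga}(x,y) \leq C_2 \, d_{\skrig}(x,y) + C_3$, I would invoke the standard exponential decay of the Green's function on non-elementary (hence non-amenable) hyperbolic groups: $\GG(x,y) \leq A\,\exp(-\alpha\, d_{\Ga}(x,y))$ for some $\alpha, A > 0$. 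Combined with $\GG(x,y) = F(x,y)\,\GG(e,e)$, this gives $d_{\skrig}(x,y) \geq \alpha\, d_{\Ga}(x,y) - \log(A/\GG(e,e))$, completing the proof that the identity is a quasi-isometry.

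For (2), the crucial input is Ancona's inequality: for any $x,y\in\Ga$ and any $z$ on a word-metric geodesic $[x,y]$,
\[
\frac{1}{C_A}\,F(x,z)\,F(z,y) \;\leq\; F(x,y) \;\leq\; C_A\,F(x,z)\,F(z,y)
\]
for a universal $C_A\geq 1$. Taking negative logarithms, this says $d_{\skrig}(x,y) = d_{\skrig}(x,z) + d_{\skrig}(z,y) + O(1)$ whenever $z$ lies on a word-geodesic from $x$ to $y$, so word-geodesics are approximate $d_{\skrig}$-geodesics and $(\Ga, d_{\skrig})$ is coarsely geodesic. A short computation then shows that the Gromov inner products $(x,y)_w^{\skrig}$ and the rescaled word-metric ones differ by at most a uniformly bounded additive constant. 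Since $(\Ga,d_{\Ga})$ satisfies the four-point $\delta$-hyperbolicity condition, the four-point condition for $d_{\skrig}$ follows with an enlarged constant, which proves hyperbolicity directly via the Gromov-product characterization -- thereby avoiding the usual geodesic-space requirement for quasi-isometric transfer of hyperbolicity.

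The main obstacle is Ancona's inequality itself. In the free-group case it is essentially tautological, since every trajectory from $x$ to $y$ literally passes through each vertex $z$ on $[x,y]$, but for general non-elementary hyperbolic groups it is a substantial theorem: one must show that the bulk of the $F(x,y)$-mass is carried by trajectories passing within bounded distance of every such $z$. I would either invoke Ancona's original argument or its extensions by Gou\"ezel for symmetric finitely supported walks. Modulo this input, the remaining steps -- the two-sided comparison with the word metric and the Gromov-product calculation -- are largely formal.
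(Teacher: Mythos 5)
The paper does not prove Theorem~\ref{thm-bhm}; it is cited verbatim from \cite[Cor.~1.2]{BHM11}, and your overall strategy coincides with theirs. Part~(1) is correct on both sides: the upper bound from following a word-geodesic step by step with probability at least $c^n$, and the lower bound from exponential decay of the Green's function, which indeed follows from non-amenability via Kesten's theorem (spectral radius $\rho<1$, hence $\P_x[X_n=y]\le\rho^n$ and $\skrig(x,y)\le\rho^{d_\Gamma(x,y)}/(1-\rho)$). For Part~(2), invoking Ancona's inequality to deduce that word-metric geodesics are quasi-ruled for $d_{\skrig}$ is precisely what BHM do, systematized in their Appendix~A on quasi-ruled spaces.

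There is, however, a genuine flaw in the step where you assert that $(x,y)^{\skrig}_w$ and ``the rescaled word-metric ones'' differ by a uniformly bounded additive constant. The quasi-isometry from Part~(1) gives only a multiplicative comparison $L^{-1}d_\Gamma - C \le d_{\skrig} \le L\,d_\Gamma + C$ with $L>1$ in general; outside the free-group case there is no constant $\lambda$ with $d_{\skrig}=\lambda d_\Gamma + O(1)$. Since Gromov products are unbounded, a multiplicative comparison of distances does not yield an additive comparison of Gromov products, so the claimed bounded additive gap need not hold. The correct intermediate lemma --- which the Ancona/quasi-ruledness step actually delivers --- is that $(x,y)^{\skrig}_w = d_{\skrig}\bigl(w,[x,y]_\Gamma\bigr) + O(1)$, where $[x,y]_\Gamma$ denotes a word-metric geodesic. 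The four-point condition for $d_{\skrig}$ then follows directly: by $\delta$-thinness of word-geodesic triangles, any point of $[x,y]_\Gamma$ lies within word-distance $\delta$, hence within $d_{\skrig}$-distance $L\delta+C$, of $[x,z]_\Gamma\cup[z,y]_\Gamma$, so $d_{\skrig}(w,[x,y]_\Gamma)\ge \min\{d_{\skrig}(w,[x,z]_\Gamma),\,d_{\skrig}(w,[z,y]_\Gamma)\}-(L\delta+C)$. Replacing the additive-comparison claim with this argument closes the gap and recovers the substance of BHM's quasi-ruled transfer of hyperbolicity.
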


\begin{remark}\label{rmk-qr}
    We point out that $(\Gamma, d_{\skrig})$ is not quite a geodesic metric space. 
However, by the quasi-isometry in the above theorem, it is quasi-geodesic. Furthermore, there exists $\tau>0$, such that it is $\tau$-\emph{quasi-ruled} in the sense of \cite{BHM11}, i.e.\ for all $x,y \in G$ and $z \in [x,y]$, we have
\[d_{\skrig}(x,z)+d_{\skrig}(y,z) \leq d_{\skrig}(x,y) +\tau.\]

This follows since $(G,d_{\skrig})$ may be embedded isometrically into a $\delta$-hyperbolic geodesic space $(G^{\ast},d^{\ast})$ (see \cite{BS}, Theorem 4.1). Denote the embedding by $\iota$. 
Since quasi-geodesics in a hyperbolic space are within bounded distance of real geodesics, there exists a constant $M>0$ which is independent of $x,y$, such that $[\iota(x),\iota(y)]$ and $\iota([x,y])$ are within distance $M$ of each other. Hence for $z \in [x,y]$
\begin{align*}
d_{\skrig}(z,x)+d_{\skrig}(z,y)&= d^{\ast}(\iota(x),\iota(z))+d^{\ast}(\iota(z),\iota(y))\leq d^{\ast}(\iota(x),z^{\ast})+d^{\ast}(z^{\ast},\iota(y))+2M \\
&= d^{\ast}(\iota(x),\iota(y))+2M = d_{\skrig}(x,y)+2M
\end{align*}
where $z^{\ast}$ is a point on $[\iota(x),\iota(y)]$ at distance smaller than $M$ to $\iota(z)$. 
See \cite[Appendix A]{BHM11} for a more detailed study of the relation between quasi-ruled and geodesic spaces. 

Note that via the embedding $\iota$, we may also use the Bonk-Schramm embedding to embed $(G,d_{\skrig})$ into $\H^d$ for some $d>0$ by a rough similarity.
\end{remark}
	
\subsection{Ancona Inequality.} One important tool in showing that $(\Gamma,d_{\skrig})$ is hyperbolic is the Ancona inequality. (We refer the reader to \cite{gouezel} for a significant extension of the Ancona inequality.)
\begin{lemma}\cite[Thm. 5]{ancona}\label{ancona} Let $G$ be a non-elementary hyperbolic group, and $\Gamma$ a Cayley graph of $G$ with graph metric $d$.
  Let $\rho$ be a finitely supported symmetric probability measure
 the support of which generates $G$.
 For any $r\geq 0$, there is a constant $C(r)\geq 1$ such that $$F(x,v)F(v,y)\leq F(x,y)\leq C(r)F(x,v)F(v,y)$$
 whenever $x,y\in \Gamma$ and $v$ is at distance at most $r$ from a geodesic segment between $x$ and $y$.
\end{lemma}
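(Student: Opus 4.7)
The plan is to treat the two inequalities separately, with only the right-hand one requiring the hyperbolic geometry. The lower bound $F(x,v)F(v,y)\leq F(x,y)$ is immediate from the strong Markov property applied at $\tau_v:=\inf\{n\geq 0:X_n=v\}$: on $\{\tau_v<\infty\}$ the post-$\tau_v$ trajectory is an independent walk started at $v$, so
\[
F(x,y)\ \geq\ \mathbb P_x\big[\tau_v<\infty\ \text{and}\ \exists\,m>\tau_v:X_m=y\big]\ =\ F(x,v)\,F(v,y).
\]
This direction uses neither hyperbolicity nor the geodesic assumption, and yields for free the cheap Harnack-type comparison $F(x,z)\leq F(x,v)/F(v,z)$ for arbitrary $x,z,v$, which will be recycled below.

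For the right-hand inequality, the strategy is to show that $v$ acts as a forced checkpoint: walks from $x$ to $y$ are funnelled through a bounded neighbourhood $B:=B_R(v)\cap\Gamma$ of $v$, with $R:=r+C\delta$ chosen via $\delta$-slim triangles so that $B$ captures the bulk of the random walk. Letting $\sigma_y$ and $\tau_B$ denote the first hitting times of $y$ and $B$, decomposing $\{\sigma_y<\infty\}$ according to whether $\sigma_y<\tau_B$ or $\tau_B<\sigma_y$ and applying the strong Markov property at $\tau_B$ yields
\[
F(x,y)\ \leq\ \sum_{z\in B}\mathbb P_x[X_{\tau_B}=z,\,\tau_B<\sigma_y]\,F(z,y)\ +\ \mathbb P_x[\sigma_y<\tau_B].
\]
For the principal (sum) term, uniform positivity $F(v,z)\geq c=c(r,\delta,\mu)>0$ for $z\in B$, obtained by exhibiting explicit short paths in $\supp\mu$ from $v$ to $z$ and using symmetry of $\mu$, together with the already-established lower inequality gives $\mathbb P_x[X_{\tau_B}=z,\,\tau_B<\sigma_y]\leq F(x,z)\leq c^{-1}F(x,v)$ and $F(z,y)\leq c^{-1}F(v,y)$. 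Hence the principal term is bounded by $|B|\,c^{-2}F(x,v)F(v,y)$, with $|B|$ controlled by the exponential growth rate of $G$.

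The main obstacle is the error term $\mathbb P_x[\sigma_y<\tau_B]$, the probability that the walk from $x$ reaches $y$ while entirely avoiding $B$. Since balls in a Cayley graph do not topologically separate points in general, this term cannot simply be discarded; one must invoke the quantitative bottleneck property peculiar to hyperbolic groups to show that going around a ball is much more costly than going through it, in the spirit of the barrier estimates of Gou\"ezel--Lalley \cite{GL13}. Taking $R$ large relative to $r$ should force $\mathbb P_x[\sigma_y<\tau_B]\leq C'(r)F(x,v)F(v,y)$. This comparison is the true technical heart of Ancona's proof and typically requires a delicate iteration: one controls the avoidance term by applying a weaker form of the inequality at nested scales and bootstrapping. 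Combining the two bounds then yields the desired $F(x,y)\leq C(r)F(x,v)F(v,y)$.
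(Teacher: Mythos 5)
The paper does not prove this lemma: it is cited verbatim as Ancona's theorem \cite[Thm.~5]{ancona}, so there is no internal argument here to compare against. Evaluated on its own terms, your proposal correctly establishes the easy direction. The lower bound $F(x,v)F(v,y)\leq F(x,y)$ via the strong Markov property at the first hitting time of $v$ is right, and the auxiliary bound $F(x,z)\leq F(x,v)/F(v,z)$ (using symmetry of $F$, which follows from symmetry of $\mu$ and homogeneity of the graph) is legitimate.

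For the upper bound, the decomposition at $\tau_B$ and the bound of the principal term by $|B|\,c^{-2}F(x,v)F(v,y)$ are both correct, but the argument halts precisely where the theorem begins. You yourself flag the estimate $\mathbb{P}_x[\sigma_y<\tau_B]\leq C'(r)F(x,v)F(v,y)$ as ``the true technical heart'' and defer it to an unspecified ``delicate iteration''. That estimate \emph{is} Ancona's theorem; everything else in your write-up is elementary. In particular it cannot be obtained by ``taking $R$ large relative to $r$'': enlarging $R$ worsens the principal-term constant $|B|c^{-2}$ roughly like $\mathrm{e}^{3R}$ while doing nothing for the error term, which must be compared against $F(x,v)F(v,y)$, a quantity that itself decays \emph{exponentially} in $d(x,y)$. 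The genuine mechanism, as in Gou\"ezel--Lalley, is a super-exponential barrier estimate: restricting a walk from $x$ to $y$ to avoid a ball of radius $n$ centred near the geodesic costs a factor of order $\exp(-c\,\mathrm{e}^{\varepsilon n})$, which overwhelms the exponential factor $F(x,v)F(v,y)$. (The paper's own Lemma~\ref{decay greens function} is an estimate of exactly this flavour, but its proof already uses Lemma~\ref{ancona} as input, so it cannot be invoked here without circularity.) Alternatively one first proves a weak Ancona inequality with a sub-exponential error and upgrades it by self-improvement at dyadic scales. Either way, that step is where hyperbolicity enters quantitatively, and it is missing. As written, the proposal is a well-organized outline with a real gap at the decisive step.
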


Another interesting value in connection to the Green function is the Martin kernel defined by
\[K(x,y):=\frac{\skrig(x,y)}{\skrig(o,y)} \quad \forall x,y \in G.\]
The Martin boundary $\partial_M G$ is then the smallest compactification of $G$ to which the kernels $K(x,\cdot)$ extend continuously. We set
\[K(x,\xi):= \limsup_{\xi_n \to \xi}K(x,\xi_n)\quad \forall x\in G, \xi \in \partial_M G.\]

\begin{theorem}\cite[Thm. 8]{ancona}
Let $G$ be a non-elementary hyperbolic group and $\mu$ be a symmetric probability measure on $G$ supported on a finite generating set of $G$. Then $\partial_M G$ is homeomorphic to the hyperbolic boundary of $G$.
\end{theorem}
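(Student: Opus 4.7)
The plan is to construct a continuous bijection $\Phi: \partial G \to \partial_M G$ via the boundary behaviour of the Martin kernel; since both spaces are compact Hausdorff, such a map automatically upgrades to a homeomorphism. Concretely, I would define $\Phi(\xi)(x) := \lim_{y \to \xi} K(x,y)$ and use the Ancona inequality (Lemma~\ref{ancona}) as the principal tool throughout.

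The first and hardest task is to show the limit exists. Fix $\xi \in \partial G$ and a geodesic ray $[o,\xi) = (o = v_0, v_1, v_2, \ldots)$. If $y_n \to \xi$, then $\delta$-hyperbolicity of $(\Gamma, d_{\skrig})$ (Theorem~\ref{thm-bhm}) guarantees that for each $k$, the point $v_k$ lies within bounded distance of both $[o,y_n]$ and $[x,y_n]$ for all sufficiently large $n$. Applying Lemma~\ref{ancona} to the numerator and denominator of $K(x,y_n)$ yields
\[
\tfrac{1}{C^{2}}\, K(x,v_k) \;\leq\; K(x, y_n) \;\leq\; C^{2}\, K(x,v_k),
\]
which shows boundedness but not convergence. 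To close the gap I would establish the \emph{strong Ancona inequality}: there exist $C>0$ and $\theta \in (0,1)$ such that whenever $v$ lies close to a geodesic from $x$ to $y$ and is at combinatorial depth at least $m$ from both endpoints, the multiplicative error $F(x,y)/[F(x,v)F(v,y)]$ differs from $1$ by $O(\theta^m)$. The derivation would iterate Lemma~\ref{ancona} across a chain of intermediate barrier points along the geodesic and invoke a Harnack-type contraction principle for Green-function ratios to squeeze consecutive bounds together. Telescoping along $(v_k)$ then shows $(K(x,v_k))_k$ is Cauchy, and interleaving any two sequences $y_n, y_n' \to \xi$ shows that $\Phi(\xi)(x) := \lim_n K(x, y_n)$ is independent of the chosen sequence. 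This sharpening of the bounded-ratio version stated in the excerpt is the chief technical obstacle.

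Once well-definedness is in hand, the remaining properties follow more routinely. For injectivity, if $\xi \neq \eta$ the rays $[o,\xi)$ and $[o,\eta)$ diverge linearly in $d_{\skrig}$; choosing $x = v_k$ far along $[o,\xi)$, the Ancona factorisation across $o$ shows that $\Phi(\eta)(v_k)$ stays bounded while $\Phi(\xi)(v_k)$ grows, so $\Phi(\xi) \neq \Phi(\eta)$. Continuity follows because the estimates above can be taken uniform in $\xi$ (the constants in Lemma~\ref{ancona} depend only on $r$, not on the points), giving uniform convergence of $K(\cdot, y_n)$ on compact subsets of $G$, which is precisely convergence in the Martin topology. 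For surjectivity, any Martin boundary point arises as $\lim_n K(\cdot, y_n)$ for some $y_n \in G$; by compactness of the Gromov compactification $\hat G = G \cup \partial G$ (Proposition~\ref{prop-vmexist}), a subsequence of $(y_n)$ converges to some $\hat\xi \in \hat G$. If $\hat\xi \in G$ the limit is an interior Martin function and so does not lie in $\partial_M G$; otherwise $\hat\xi \in \partial G$ and the limit equals $\Phi(\hat\xi)$ by the previous steps. Thus $\Phi$ is the desired homeomorphism.
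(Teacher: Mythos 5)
The paper cites this result as Theorem~8 of Ancona \cite{ancona} and does not prove it, so there is no internal argument to compare against. Your outline --- build a continuous bijection $\partial G \to \partial_M G$ from the boundary limit of the Martin kernel and upgrade to a homeomorphism using compactness of both spaces --- is the right framework, and you have correctly isolated the convergence of $K(x,y_n)$ as $y_n \to \xi$ as the crux. The injectivity, continuity, and surjectivity steps are plausible once that convergence is established.

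The gap is in how you propose to establish it. You invoke a ``strong Ancona inequality'' with geometrically decaying multiplicative error, to be derived by ``iterating Lemma~\ref{ancona} across a chain of intermediate barrier points'' together with a ``Harnack-type contraction principle.'' That is not a derivation: the weak Ancona inequality only gives a fixed multiplicative constant $C$ at each barrier, and composing such bounds never produces a \emph{decaying} error. One needs a genuine contraction mechanism (for instance a strict contraction in the Hilbert projective metric on a cone of positive superharmonic functions, as in the proofs of Gou{\"e}zel or Izumi--Neshveyev--Okayasu), and establishing it is a separate, substantially harder theorem, proved decades after Ancona's; it therefore cannot be presupposed in a proof of Ancona's 1987 result. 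Ancona's own argument avoids it: the weak inequality already shows that any two subsequential limits $h_1, h_2$ of $K(\cdot, y_n)$, $y_n \to \xi$, satisfy $C^{-1}h_1 \leq h_2 \leq C h_1$, and a separate $\varphi$-chain argument (again built on the weak inequality) shows that every such limit is a \emph{minimal} positive $\mu$-harmonic function; two comparable minimal harmonic functions normalized at $o$ coincide, so all subsequential limits agree and convergence follows. Either supply that minimality argument, or give a genuine proof of strong Ancona --- as written, the heart of your sketch is asserted rather than argued.
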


Further, through this connection of the two boundaries, it can be shown that the hitting measures $(\nu_x)_{x \in G}$ are mutually absolutely continuous and their density is given by the Martin kernel (cf. \cite{woess}, Theorem 24.10). This in particular also means that the hitting measures are a family of conformal measures of dimension $1$. See also \cite[Sec. 3]{BHM11} for a more detailed discussion of the Martin boundary and its connection to the hyperbolic boundary and the hitting distributions.

\begin{theorem}\label{density hitting measures}
The hitting distributions $(\nu_x)_{x\in G}$ a the random walk on $G$ satisfy
\[\frac{\d \nu_y}{\d \nu_o}(\xi) = K(y^{-1}x,\xi)= \exp\big\{\limsup_{\xi_n \to\xi}d_{\skrig}(o,\xi_n)-d_{\skrig}(y,\xi_n)\big\}.\]

\end{theorem}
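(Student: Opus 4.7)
The plan is to derive the density formula from classical Martin boundary theory together with the identification $\partial_M G \cong \partial G$ quoted immediately above. For any bounded Borel $f : \partial G \to \R$, the function $h_f(y) := \E_y[f(X_\infty)] = \int f\, \d\nu_y$ is bounded and harmonic on $G$ by the strong Markov property. By the Poisson--Martin representation theorem there exists a unique finite Borel measure $\mu_f$ on $\partial_M G$ with $h_f(y) = \int K(y,\xi)\, \d\mu_f(\xi)$, and the map $f \mapsto \mu_f$ is linear. Evaluating at $y = o$ (where $K(o,\cdot)\equiv 1$) gives $\int f\, \d\nu_o = \mu_f(\partial_M G)$; in particular $\mu_{\mathbf 1} = \nu_o$. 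Approximating bounded Borel $f$ by simple functions and invoking uniqueness of the representing measure forces $\mu_f(\d\xi) = f(\xi)\, \d\nu_o(\xi)$. Consequently
\[\int f\, \d\nu_y = \int f(\xi)\, K(y,\xi)\, \d\nu_o(\xi),\]
so $\frac{\d\nu_y}{\d\nu_o}(\xi) = K(y,\xi)$ (the extra ``$x$'' in the statement being an apparent typo).

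The exponential form is then a short computation. Writing $\skrig(x,z) = F(x,z)\, \skrig(e,e)$, the factor $\skrig(e,e)$ cancels in the Martin kernel ratio, so for any interior vertex $z \in G$
\[K(y,z) = \frac{\skrig(y,z)}{\skrig(o,z)} = \frac{F(y,z)}{F(o,z)} = \exp\bigl(d_\skrig(o,z) - d_\skrig(y,z)\bigr),\]
since $d_\skrig = -\log F$. Taking $z = \xi_n \to \xi$ and applying $\limsup$ yields the claimed boundary expression.

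The one genuinely nontrivial input is the existence of the boundary limit $K(y,\xi) := \lim_{\xi_n \to \xi} K(y,\xi_n)$ and the identification $\partial_M G \cong \partial G$, both of which rest on the Ancona inequality (Lemma~\ref{ancona}): this enforces the almost-multiplicativity $F(x,y) \asymp F(x,v)F(v,y)$ along geodesics, which in turn forces any two sequences converging to the same point of $\partial G$ to produce equivalent Martin kernels. Granting this black box, the main obstacle in the argument above is purely the uniqueness/identification step $\mu_f = f\,\nu_o$, which however follows from linearity of $f\mapsto \mu_f$, the base case $f = \mathbf 1$, and uniqueness of the Martin representing measure for bounded harmonic functions.
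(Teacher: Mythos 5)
Your route (Martin representation $\to$ density $\to$ exponential formula) is the standard one, your final Green-function computation is correct, and you are right that the stray ``$x$'' in the displayed formula is a typo for $K(y,\xi)$. But there is a genuine gap in the middle of the argument. Evaluating $h_f(y) = \int K(y,\xi)\,\d\mu_f(\xi)$ at $y = o$ gives \emph{only} the total-mass identity $\mu_f(\partial_M G) = \int f\,\d\nu_o$. This does not imply $\mu_{\mathbf 1} = \nu_o$ --- both sides merely have total mass $1$ --- and for the same reason the simple-function approximation has nothing to work with: knowing $\mu_{\mathbf 1_A}(\partial_M G) = \nu_o(A)$ for every Borel $A$ falls far short of $\mu_{\mathbf 1_A} = \mathbf 1_A\,\nu_o$. (Nor can you argue that $f\,\nu_o$ is ``a'' representing measure for $h_f$ and appeal to uniqueness: verifying that it represents $h_f$ would require $\d\nu_y = K(y,\cdot)\,\d\nu_o$, which is the very thing being proved.) The identification $\mu_f = f\,\d\nu_o$ is precisely the nontrivial content of the theorem. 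The missing ingredient is the probabilistic Fatou theorem for the Martin compactification: under $\P_o$, $X_n \to X_\infty \in \partial_M G$ almost surely, and for any bounded harmonic $h$ with representing measure $\mu^h$ one has $h(X_n) \to \frac{\d\mu^h}{\d\nu_o}(X_\infty)$ a.s.; combining this with martingale convergence $h_f(X_n) = \E_o[f(X_\infty)\mid\mathcal F_n] \to f(X_\infty)$ forces $\frac{\d\mu_f}{\d\nu_o} = f$ a.e., whence $\mu_f = f\,\nu_o$ and, taking $f=\mathbf 1$, $\mu_{\mathbf 1} = \nu_o$.

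For context, the paper does not prove Theorem~\ref{density hitting measures} at all: it simply cites \cite[Theorem~24.10]{woess} together with \cite[Sec.~3]{BHM11}, which is exactly where the Fatou/martingale machinery is carried out. So your proposal attempts to supply a proof that the paper treats as a citation; as written, the argument assumes what it sets out to prove, and the fix is to invoke the probabilistic Fatou theorem explicitly rather than trying to deduce $\mu_f = f\,\nu_o$ from the total-mass identity and linearity alone.
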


%%%%%%%%%%%%%%%%%%%%%%%%%%%%%%%%%%% END PRELIMINARIES %%%%%%%%%%%%%%%%%%%%%%%%%%%%%%%%%%%

\section{Bi-infinite Random Walk Paths and Deviations}\label{sec bi-inf RW}

In Section \ref{bi-inf RW} we will define the bi-infinite random walk paths by taking a random walk $(X_n)_{n\in\N}$ started at $o$ and successively shifting it back by $X_n$, the position at time $n$. As remarked before, this will preserve the distribution of the geometric properties of the original random walk and will allow us to transfer the estimates on the distance of the random walk to the geodesics to the current bi-infinite setting. These estimates will be shown in Section \ref{deviation from geodesic}, while in Section \ref{deviation from geodesic disintegrated} we will develop a refined version of these estimates as a disintegrated version w.r.t. the measures $\P_o(\cdot | \xi)$ conditioned on the random walk trajectory hitting $\xi\in \partial G$.

\subsection{The Bi-infinite Random Walk}\label{bi-inf RW}

\begin{lemma}\label{shifting RW}
Let $(X_n)_{n\in\N}$ be a random walk on a hyperbolic group $G$ with symmetric step distribution $\mu$ whose support is finite and generates $G$ and starting point $X_0=o$. Fix $m\in \N$ and set
\begin{align*}
\bar Z^m_n &:= X_m^{-1}X_{m-n} &&\text{ for } n=0,...,m;\\
Z^m_n &:= X_m^{-1}X_{m+n} &&\text{ for } n\geq m\\
\end{align*}
Then the following hold:
\begin{itemize}
\item $\big(\bar Z^m_n\big)_{n=0}^m$ and $\big(Z^m_n\big)_{i=0}^{\infty}$ are independent for any $m\in\N$. 
\item Furthermore, they converge in distribution to independent random walks $\big(\bar Z_n\big)_{n\in \N}$ and $\big(Z_n\big)_{n\in\N}$ with step distribution $\mu$ and starting point $o$.
\item Finally, $\big(\bar Z_n\big)_{n\in \N}$ and $\big(Z_n\big)_{n\in\N}$ converge a.s to points $Z_{\infty}$ and $Z_{-\infty}$ on $\partial G$.
\end{itemize}
\end{lemma}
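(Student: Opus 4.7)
The plan is to work at the level of the increments and exploit the symmetry of $\mu$. Write $X_k = Y_1 \cdots Y_k$ where the $Y_i$ are i.i.d.\ with law $\mu$. Then
\[
Z^m_n = X_m^{-1} X_{m+n} = Y_{m+1} Y_{m+2} \cdots Y_{m+n}, \qquad \bar Z^m_n = X_m^{-1} X_{m-n} = Y_m^{-1} Y_{m-1}^{-1} \cdots Y_{m-n+1}^{-1}.
\]
This expression makes everything transparent: $(\bar Z^m_n)_{n=0}^m$ is a measurable function of $(Y_1,\dots,Y_m)$, while $(Z^m_n)_{n\geq 0}$ is a measurable function of $(Y_{m+1},Y_{m+2},\dots)$, so the two are independent, which settles the first bullet.

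For the second bullet, observe that $(Z^m_n)_{n\geq 0}$ is literally a random walk with step distribution $\mu$ started at $o$, so its law does not depend on $m$ and there is nothing to check on that side. For the reversed walk, the increments $Y_m^{-1}, Y_{m-1}^{-1}, \dots, Y_1^{-1}$ are i.i.d.; by the symmetry assumption $\mu(g) = \mu(g^{-1})$, each one has law $\mu$. Hence $(\bar Z^m_n)_{n=0}^m$ is distributed as the first $m$ steps of a random walk with step law $\mu$ started at $o$. In particular, for every fixed $k$ and every $m\geq k$, the law of $(\bar Z^m_0,\dots,\bar Z^m_k)$ is exactly the law of the first $k+1$ positions of such a walk; by Kolmogorov consistency the sequence $(\bar Z^m_n)_{n=0}^m$ converges in distribution (on $G^{\N}$ with the product topology, after extending arbitrarily beyond index $m$) to an infinite random walk $(\bar Z_n)_{n\in\N}$ with the required distribution. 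Since $(\bar Z^m_\cdot, Z^m_\cdot)$ has product law for each $m$, and marginals converge, the joint limit is the product law, i.e.\ $\bar Z$ and $Z$ are independent.

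The third bullet is now immediate: $(\bar Z_n)_{n\in\N}$ and $(Z_n)_{n\in\N}$ are two random walks on the non-elementary hyperbolic group $G$ with finitely supported, symmetric step distribution $\mu$ whose support generates $G$, so Theorem~\ref{divergence RW} applies to each and yields almost sure convergence to boundary points $Z_{-\infty}, Z_{\infty} \in \partial G$ with laws $\nu_o$ on each factor.

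The one delicate point worth flagging is the second bullet: one has to be honest about in which topology the ``convergence in distribution'' of the pair $(\bar Z^m, Z^m)$ takes place, since the index set of $\bar Z^m$ grows with $m$. The clean route is to fix a finite window $\{0,\dots,k\}$, note that for all $m\geq k$ the finite-dimensional marginal of $(\bar Z^m_0,\dots,\bar Z^m_k, Z^m_0,\dots,Z^m_k)$ is already equal (not just close) to the corresponding marginal of the product of two independent $\mu$-walks, and then pass to the projective limit. Everything else is a consequence of (a) the i.i.d.\ structure of increments, (b) the symmetry of $\mu$, which makes $Y_i^{-1}$ have the same law as $Y_i$, and (c) the already-established Theorem~\ref{divergence RW}.
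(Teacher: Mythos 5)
Your proof is correct and follows essentially the same route as the paper: decompose into i.i.d.\ increments $Y_i$, note that $(\bar Z^m_n)_{n\le m}$ and $(Z^m_n)_{n\ge 0}$ depend on disjoint blocks $(Y_1,\dots,Y_m)$ and $(Y_{m+1},\dots)$ respectively, use symmetry of $\mu$ so that $Y_i^{-1}\sim\mu$ makes the reversed walk a $\mu$-walk, then pass to the infinite limit via finite-dimensional marginals, and invoke Theorem~\ref{divergence RW} for boundary convergence. Your explicit computation $\bar Z^m_n = Y_m^{-1}Y_{m-1}^{-1}\cdots Y_{m-n+1}^{-1}$ is in fact cleaner than the indexing written in the paper, and your care about the topology of the convergence (finite windows, projective limit) is a tidier way of saying what the paper does by extending with $\mathbbm{1}_G$.
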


\begin{proof}
Let $W_1,W_2,...\in S$ be the increments of the original random walk $(X_n)_{n\in\N}$, i.e.\  $X_n = W_1\cdot...\cdot W_n$. Then for $n=1,...,m$

\begin{align*}
\bar Z^m_n = X_{m}^{-1}X_{m-n} &= (W_1\cdot...\cdot W_n)^{-1} Y_{1}\cdot...\cdot W_{m-n} \cdot \\
&= W_{n}^{-1}\cdot...\cdot W_1^{-1} \cdot W_1\cdot...\cdot W_{n-m} = W_{n}^{-1}\cdot...\cdot W_{n-m+1}^{-1}
\end{align*}
and similarly 
\begin{align*}
Z^m_n = X_{m}^{-1}X_{m+n} &= (W_1\cdot...\cdot W_n)^{-1}\cdot W_{1}\cdot...\cdot W_{m+n} = W_{m+1}\cdot...\cdot W_{m+n}.
\end{align*}
This means that $(\bar Z^m_n)_{n=0}^m$ only depends on $W_1,...,W_m$ and $(Z^m_n)_{n=0}^{\infty}$ on $(W_{n})_{n\geq m}$. Since the increments $W_1,W_2,...$ are independent, $(\bar Z^m_n)_{n\in \N}$ and $(Z^m_n)_{n\in\N}$ are independent for every fixed $m\in\N$, and hence this also holds when we take the limit in distribution.

\noindent Furthermore, since the increments are identically distributed, for any $m\in\N$ the distribution of $(Z^m_n)_{n\in\N}$ is the one of a random walk with step distribution $\mu$ and origin $o$. Since this holds independently from $m$, the convergence to $(Z_n)_{n\in\N}$ holds trivially.

\noindent Finally, since $\mu$ is symmetric, $W_i$ has the same distribution as $W_i^{-1}$, i.e.\  $(\bar Z^m_n)_{n=0}^{m}$ has the same distribution as the first $m$ steps of a random walk. 
We extend its distribution to $G^{\N}$ by taking the product with $\mathbbm{1}_G$ for each of the missing steps. Then on any cylinder set this extended distribution converges to the one of a random walk with step distribution $\mu$ and origin $o$. So, the extended measure converges to $(\bar Z_n)_{n\in\N}$.
\end{proof}

\begin{remark}
Note that in the same way we can also construct a bi-infinite random walk with any other fixed origin $X_0=g \in G$. In this case we obtain two independent copies of the random walk started at $X_0=g$ by successively shifting back $(X_n)_{n\in\N}$.
\end{remark}

\begin{defn}\label{def bi-inf RW} $ $
\begin{itemize}
\item Given two sequences $\bar z:= (\bar z_n)_{n\in \N}$ and $z:=(z_n)_{n\in \N}$ with $\bar z_0=z_0$, we denote by $(\bar z\oplus z)$ the bi-infinite sequence $(x_n)_{n\in\N}$ given by
\[ \bar z_n \oplus z_n:= x_{n} := \begin{cases} \bar z_{-n} &\text{if } n \leq 0,\\ z_{n} &\text{if } n > 0.
\end{cases}
\]
\item Given two independent random walks $\big(Z_n\big)_{n\in\N}$ and $\big(\bar Z_n\big)_{n\in\N}$ starting at $g\in G$ and with step distribution $\mu$, define the \textbf{bi-infinite random walk with fixed origin $g$ and step distribution $\mu$} as 
\[\big(X_z\big)_{z\in\Z}:= \bigg(\big(\bar Z_n\big)_{n\in\N} \oplus \big( Z_n\big)_{n\in\N}\bigg)\] 
\item We will denote by {\bf $\bar\P_{g}$ the distribution of $(X_z)_{z\in\Z}$, equivalently, the joint distribution of $\big(Z_n\big)_{n\in\N}$ and $\big(\bar Z_n\big)_{n\in\N}$}. We will also write 
\begin{equation}
G^{\Z}:= \bigg\{ \text{all bi-infinite random walk paths } (X_{z})_{z\in\Z} \text{ in } G\bigg\} 
\end{equation}
and for $g \in G$
\begin{equation}
G_{g}^{\Z}:= \bigg \{ \text{all bi-infinite random walk paths } (X_{z})_{z\in\Z} \text{ in } G \text{ with } X_{0}=g \bigg \}.
\end{equation}
We then have that $G^{\Z} = \bigcup_{g\in G}G^{\Z}_g$ is a disjoint union.
\end{itemize}

\end{defn}

\begin{remark}
We note that Lemma \ref{shifting RW} shows that by successively shifting back the random walk $(X_n)_{n\in\N}$ by $X_m$ and reversing time in one half, we end up with two independent copies of $(X_n)_{n\in\N}$. Hence, this lemma justifies the construction of the bi-infinite random walk with fixed origin by gluing together two independent random walks and running time backwards on one half as given in Definition \ref{def bi-inf RW}.

\noindent We also note that Definition \ref{def bi-inf RW} provides us with two different points of view on the bi-infinite random walk with fixed origin $X_0=g$: 
\begin{enumerate}
\item It can be seen as a bi-infinite sequence $(X_z)_{z\in\Z}$, whose distribution we denote by $\bar\P_g$. 
\item Alternatively, it can be understood as a pair of independent random walk paths, whose joint distribution is given by $(\P_g\otimes \P_g)$. 
\end{enumerate}
These two different points of view will be helpful when studying the bi-infinite random walk in the sequel.
 
\end{remark}

\subsection{Deviation from the Geodesic}\label{deviation from geodesic}

The goal of this section is to study the long term behavior of the bi-infinite random walks $(X_z)_{z\in\Z}$ defined in the previous section. First in Theorem \ref{deviation bds} we first study the case of a usual/ non-bi-infinite random walk $(Y_n)_{n\geq 0}$ and show deviation bounds for fixed points $Y_n$. These bounds can then be used to study the joint behavior of two independent random walks -- see Lemma \ref{gp independent}. We can use this result to show that the two hitting points $X_{-\infty}$ and $X_{\infty}$ are almost surely distinct -- see Corollary \ref{non-sing}. Consequently, the behavior of our bi-infinite random walk can be studied and in particular the asymptotic behavior of the distance of $X_n$ to a geodesic $(X_{-\infty}, X_{\infty})$ can be quantified -- see Corollary \ref{almost geodesic}, which also justifies our point of view of considering bi-infinite random walk trajectories as generalized geodesics.

\begin{theorem}\label{deviation bds}
Let $(Y_n)_{n\in\N}$ be a usual random walk started at $o$ with step distribution $\mu$.
\begin{enumerate}
\item[(1)]\label{deviation distance} For any $D\geq 0$ and $n\geq 0$ sufficiently large, there exists $C_D>1$ such that 
\[ C_D^{-1} \e^{-D} \leq \P_{o}\big[d_{\skrig}\big(Y_n, [o,Y_{\infty})\big)\geq D\big]\leq C_D \e^{-D}.\]
The constants $C_D$ are bounded in $D$. 

\item[(2)]\label{deviation gp} This means in particular that for $n>0$ big enough also
\[ \P_{o}\big[\big(Y_n,Y_{\infty}\big)^{\skrig}_{o}< D\big]\ \lesssim \e^{-D}\]
\end{enumerate}
\end{theorem}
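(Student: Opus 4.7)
The plan is to condition on $Y_n=g$ and reduce part~(1) to the uniform estimate
\[ \nu_g\big(\big\{\xi\in\partial G : d_{\skrig}\big(g,[o,\xi)\big)\geq D\big\}\big)\asymp \e^{-D}, \]
valid for all $g$ with $d_{\skrig}(o,g)\geq D+R_0$, where $R_0$ is the Shadow Lemma threshold. Taking expectation in $Y_n$ and using positive Green drift together with Theorem~\ref{divergence RW} to ensure that the event $\{d_{\skrig}(o,Y_n)\geq D+R_0\}$ has probability $1-o_n(1)$ for each fixed $D$, would then yield both bounds in part~(1). The two main ingredients are the conformality of the hitting measure family (Theorem~\ref{density hitting measures}) and the Shadow Lemma (Lemma~\ref{shadow lemma}).

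For the core estimate, hyperbolicity of $(\Gamma,d_{\skrig})$ (Theorem~\ref{thm-bhm}) lets me rewrite $d_{\skrig}\big(g,[o,\xi)\big)=(o,\xi)^{\skrig}_g+O(\delta)$, replacing the distance condition with a Gromov product. Theorem~\ref{density hitting measures} then gives
\[ \frac{\d\nu_g}{\d\nu_o}(\xi)=\exp\bigl(\beta^{\skrig}_\xi(o,g)\bigr)=\exp\bigl(d_{\skrig}(o,g)-2(o,\xi)^{\skrig}_g\bigr). \]
I would decompose the target set into shells $A_k=\{\xi : (o,\xi)^{\skrig}_g\in[D+k,D+k+1)\}$, $k\geq 0$. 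On $A_k$ the Radon--Nikodym derivative is uniformly $\asymp \e^{d_{\skrig}(o,g)-2D-2k}$. By thin-triangles, each $\xi\in A_k$ corresponds to a geodesic $[o,\xi)$ that shadows $[o,g]$ up to a divergence point $h_k\in[o,g]$ with $d_{\skrig}(o,h_k)=d_{\skrig}(o,g)-D-k+O(\delta)$, and $A_k$ equals, up to bounded constants, the annular shadow $\mho_o(h_k,R)\setminus\mho_o(h_{k-1},R)$ for $R$ sufficiently large. The Shadow Lemma applied at $h_k$ and $h_{k-1}$ yields, in both directions,
\[ \nu_o(A_k)\asymp\e^{-d_{\skrig}(o,h_k)}\asymp\e^{D+k-d_{\skrig}(o,g)}. \]
Multiplying and summing the resulting geometric series produces $\nu_g(\{(o,\xi)^{\skrig}_g\geq D\})\asymp\sum_{k\geq 0}\e^{-D-k}\asymp\e^{-D}$, uniformly in $g$.

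Part~(2) then follows from part~(1) via the standard hyperbolic identity $(Y_n,Y_\infty)^{\skrig}_o=d_{\skrig}(o,Y_n)-d_{\skrig}\big(Y_n,[o,Y_\infty)\big)+O(\delta)$. Indeed $(Y_n,Y_\infty)^{\skrig}_o<D$ forces $d_{\skrig}\big(Y_n,[o,Y_\infty)\big)>d_{\skrig}(o,Y_n)-D-O(\delta)$. Restricting to the event $\{d_{\skrig}(o,Y_n)\geq 2D\}$ (of probability $1-o_n(1)$ by positive Green drift) and applying part~(1) with threshold $\asymp D$ gives a contribution $\lesssim\e^{-D}$; the complementary event is absorbed by taking $n$ sufficiently large.

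The main obstacle is the lower bound in part~(1), which is the new contribution here (the upper bound alone, due to \cite[Lem.~3.8]{BHM11}, used only the upper bound of the Shadow Lemma). The lower bound requires \emph{sharp two-sided} control on the shell measures $\nu_o(A_k)$, which in turn rests on identifying $A_k$ with the annular shadow of a specific divergence point $h_k\in[o,g]$ and on the full two-sided Shadow Lemma. This identification uses the thin-triangles property of $(\Gamma,d_{\skrig})$ in an essential way; the Ancona inequality (Lemma~\ref{ancona}) is a natural tool to confirm that geodesics $[o,\xi)$ with $(o,\xi)^{\skrig}_g=D+k$ branch from $[o,g]$ within bounded error of Green distance $d_{\skrig}(o,g)-D-k$ from $o$, and to justify the conformal density formula sharply on each shell.
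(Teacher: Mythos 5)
Your overall strategy -- condition on $Y_n=g$, reduce to a uniform estimate on $\nu_g$ of a deviation set, and invoke the Shadow Lemma plus conformality -- is the same as the paper's, and your part~(2) reduction is also the same. But your decomposition in part~(1) is genuinely different from the paper's: the paper shows directly that $\{d_{\skrig}(g,[o,\xi))\geq D\}$ is sandwiched between two shadows $\mho_g(p,R)$ based at $g$ for a point $p\in[o,g]$ at $d_{\skrig}$-distance $\approx D$ from $g$, and then applies the Shadow Lemma \emph{once}, with base point $g$, to the $\P_g$-measure of each shadow. Nothing is subtracted, and the conformal density $\d\nu_g/\d\nu_o$ never appears explicitly. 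Your route instead switches base point to $o$ via the Radon--Nikodym derivative and decomposes the set into annuli $A_k$, then applies the Shadow Lemma at $o$ to each annulus.

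There is a concrete gap in the lower bound step of your route. You claim that the two-sided Shadow Lemma applied at $h_k$ and $h_{k-1}$ gives $\nu_o(A_k)\asymp\e^{-d_{\skrig}(o,h_k)}$. That does not follow: the Shadow Lemma gives $C^{-1}\e^{-m}\leq\nu_o(\mho_o(h,R))\leq C\e^{-m}$ with $m=d_{\skrig}(o,h)$ for a single shadow, but $\nu_o\bigl(\mho_o(h_k,R)\setminus\mho_o(h_{k-1},R)\bigr)$ is bounded below only by $C^{-1}\e^{-m_k}-C\e^{-m_{k-1}}$, and with $m_{k-1}-m_k=1$ this is negative as soon as $C^2>\e$. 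The shadow-lemma constant $C$ is not controlled, so unit-width shells need not have any mass. The fix is to take shells of width $L$ with $L>2\log C$ (or to lower-bound only the aggregate $\sum_{k\leq L}\e^{-2k}\nu_o(A_k)\geq\e^{-2L}\nu_o(\mho_o(h_L,R)\setminus\mho_o(h_{-1},R))$, which is positive for $L$ large); this yields the desired $\gtrsim\e^{-D}$ with a constant depending on $C$. As written, though, the assertion about $\nu_o(A_k)$ is unjustified -- and this is precisely the point where you say the novelty lies. The paper sidesteps the issue entirely by exhibiting a \emph{single} shadow $\mho_g(p_l,R_l)\subset\{d_{\skrig}(g,[o,\xi))\geq D-D_0\}$ and applying only the Shadow Lemma's lower bound to it; no differences of comparable quantities occur. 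You may wish to either adopt that cleaner inclusion at base point $g$, or repair the annular estimate with adapted shell widths.
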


\begin{proof}
(1):  Conditioning on the position of the random walk at time $n$, we get
\begin{equation}\label{devbd eq}
\begin{aligned}
\P_{o}\Big[d_{\skrig}\big(Y_n, [o,Y_{\infty})\big)\geq D\Big]& = \sum_{y\in G} \P_{o}\Big[d_{\skrig}\big(Y_n, [o,Y_{\infty})\big)\geq D, Y_n=y\Big]\\
&= \sum_{y\in G} \P_o\Big[d_{\skrig}\big(y, [0,yY_n^{-1}Y_{\infty})\big)\geq D\Big\vert Y_n=y\Big]\P_{o}[Y_n=y]\\
&= \sum_{y\in G} \P_y\Big[d_{\skrig}\big(y, [o,Y_{\infty})\big)\geq D\Big]\P_o[Y_n=y]
\end{aligned}
\end{equation}
where we shifted $Y_{\infty}$ back by $Y_n^{-1}$, to get independence from $Y_n$ and then used that $yY_n^{-1}Y_{\infty}$ has the same distribution as the hitting distribution of a random walk started at $y$.

\noindent In order to prove the bounds in the claim we hence want to show that $\P_x[d(x, [o,Y_{\infty}))\geq D]$ fulfills the same exponential bounds. This will be done by showing that there exist $p_l,p_u \in G$ and $R_l,R_u>0$ such that
\begin{equation}\label{relation to shadow}
\{Y_{\infty} \in \mho_y(p_l,R_l)\}\subseteq \{d(y, [o,Y_{\infty}))\geq D\}\subseteq \{Y_{\infty} \in \mho_y(p_u,R_u)\}.
\end{equation} 
We can then use the Shadow Lemma \ref{shadow lemma}, which says that for $y,p\in G$ and $R>R_0$ we have 
\[\P_y[Y_{\infty}\in \mho_y(p,R)]\asymp \e^{-d_{\skrig}(p,y)} = \e^{-D- \tilde c}\]
to get the desired bounds.

\noindent So let us first prove the second inclusion in \eqref{relation to shadow}. On the event $\{d_{\skrig}(y, [o,Y_{\infty}))\geq D\}$ we have in particular $d_{\skrig}(y,o)>D$, so we can choose $p \in [o,y]$ such that $d_{\skrig}(p,x)=D +\tilde c$, where $\tilde c$ is a small constant stemming from the fact that by the coarse geometric nature of $(G,d_{\skrig})$ there might not exist a point at distance exactly $D$ from $x$. Then we consider the triangle $\Delta(o,y,Y_{\infty})$. By hyperbolicity of $G$, we know that we may map the triangle to a comparison tripod by a $(1,c)-$quasi-isometry $T$. Let $d_T$ be the metric on the tripod, $m$ the centroid of the tripod and denote by $m_{0}\in [y,Y_{\infty})$ the point whose image under $T$ is closest to $m$. Further we denote by $p_1,p_2$ and $k_1,k_2$ the two possible positions of the images of points $p\in[o,y]$ and $k\in[o,Y_{\infty}]$ under $T$ in relation to the centroid $m$.
\begin{figure}
\begin{tikzpicture}[scale=0.7]
\coordinate[label = left : $y$] (A) at (0,0);%x
\coordinate[label = right : $o$] (B) at (5,0);%o
\coordinate[label = above : $Y_{\infty}$] (C) at (3,7) ;%xi
\coordinate[label = below : $p$] (D) at (4,.375) ;
\coordinate[label = right : $k$] (E) at (3.75,2.5) ;
\coordinate[label = left : $m_{0}$] (F) at (1.56,2) ;

\coordinate[label = left : $y$] (a) at (8,0);%x
\coordinate[label = right : $o$] (b) at (13,0);%o
\coordinate[label = above : $Y_{\infty}$] (c) at (11,7);%xi
\coordinate[label = right : $m$] (d) at (11,2);
\coordinate[label = below : $p_1$] (e) at ($(a)!0.6!(d)$);
\coordinate[label = below : $p_2$] (f) at ($(b)!0.7!(d)$);
\coordinate[label = right : $k_1$] (g) at ($(c)!0.6!(d)$);
\coordinate[label = right : $k_2$] (h) at ($(b)!0.5!(d)$);

\fill (A) circle (1pt);
\fill (B) circle (1pt);
\fill (C) circle (1pt);
\fill (D) circle (1pt);
\fill (E) circle (1pt);
\fill (F) circle (1pt);
\fill (a) circle (1pt);
\fill (b) circle (1pt);
\fill (c) circle (1pt);
\fill (d) circle (1pt);
\fill (e) circle (1pt);
\fill (f) circle (1pt);
\fill (g) circle (1pt);
\fill (h) circle (1pt);

\draw (0,0) .. controls (2.5,1) .. (5, 0);
\draw (5,0) .. controls (3.7,2) .. (3, 7);
\draw (3,7) .. controls (2,2.3) .. (0, 0);

\draw [->, thick] (5.5,3.5) to node[above]{$(1,c)$-q.i.} (8,3.5);
\draw (8,0) --(11, 2);
\draw (13,0) -- (11, 2);
\draw (11,7) -- (11, 2);
\end{tikzpicture}
\caption{Possible positions of the images of $p$ and $k$ in the tripod under the $(1,c)$-quasi isometry.}
\label{Figure 1}
\end{figure}
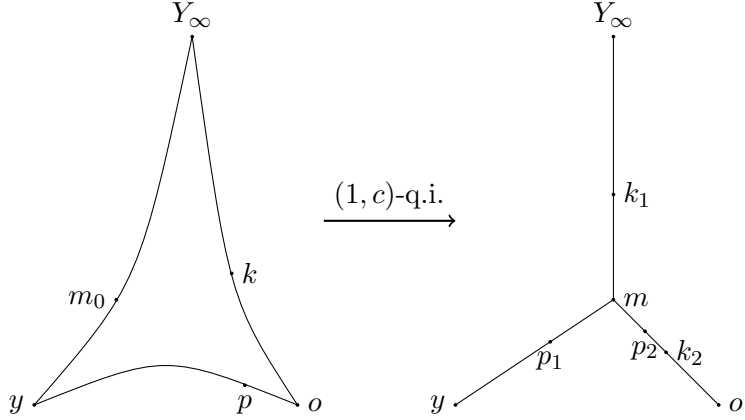
We will now turn to proving the upper bound. We want to show, that there exists $R>0$ independent of $y,p$ and $D$, such that $[y,Y_{\infty})$ passes through $B_R(y)$, i.e.\ $Y_{\infty}\in \mho_y(p,R)$.

\noindent Now, we have either $d_T(T(y),T(p))\leq d_T(T(y),m)$ or $d_T(T(y),T(p))) > d_T(T(y),m)$. In the first case, corresponding to $p_1$ in \ref{Figure 1}, it follows directly that $d_{\skrig}(p,[y,Y_{\infty}))<2 \delta$, since by construction the sides that get mapped onto the same arm of the tripod, are less than $2\delta$ apart.
In the second case we have
\[D \leq d_{\skrig}(y,[o,Y_{\infty})) \leq d_T(T(y),m)+c\]
meaning in particular that by the definition of the metric $d_T$
\[
D-c \leq d_T(T(y),m)= d_T(T(y),T(p))-d_T(m,T(p)) \leq d_{\skrig}(y,p)-d_T(m,T(p)).
\]
Hence, plugging in that $d_{\skrig}(y,p) = D+\tilde c$
\[d_{\skrig}(m_{0},p)\leq d_T(m,p) +c \leq \tilde c + 2c.\]
Hence, in both cases, $[y,Y_{\infty})$ passes through $B_{R}(p)$ for $R = \tilde c + 2c$. So $\{d_{\skrig}(y,[o,Y_\infty])\geq D\}\subset \{X_{\infty}(x)\in \mho_y(p,R)\}$ and hence
\[\P_y[d_{\skrig}(y,[o,Y_\infty])\geq D]\leq \P_{y}[X_{\infty}\in \mho_y(p,R)] \asymp e^{-d_{\skrig}(y,p)}=e^{-D-\tilde c}.\]
Plugging this into \eqref{devbd eq}, we get the upper bound in the claim. 

\noindent For the lower bound, let $Y_{\infty} \in \mho_y(p,R)$ for points $y,o,p\in G$ such that $p \in [y,o], d_{\skrig}(o,y)>D$, and $d_{\skrig}(y,p)= D+\tilde c$ where $\tilde c$ again comes from the fact that there might not be a point exactly at distance $D$. We again regard the triangle $\Delta(y,o,Y_{\infty})$ and switch to the corresponding comparison tripod, and let $m$ be the centroid of the tripod. We again regard the two cases $d_T(T(y),T(p))< d_T(T(y),m)$ and $d_T(T(y),T(p))< d_T(T(y),m)$.

\noindent In the first case, we get that for any point $k\in [o,Y_{\infty})$ that by quasi-isometry of $d_T$ and $d_{\skrig}$
\[d_{\skrig}(y,k)\geq d_T(T(y),m)-c\geq d_{\skrig}(y,m)-2c.\]
In the second case we use the fact that $Y_{\infty}$ is in the shadow $\mho_y(p,R)$, i.e.\ there exists $p_i\in [x,Y_{\infty})$ such that $d_{\skrig}(p_i,p)\leq R$. Then
\[d_T(T(p),m)\leq d_T(T(p),T(p_i))\leq d_{\skrig}(p_i,p)+c\leq R+c\]
and hence again for any point $k\in [o,Y_{\infty})$
\begin{align*}
d_{\skrig}(x,k)&\geq d_T(T(y),T(k))-c = d_T(T(y),m)+d_T(m,T(k))-c = \\
&= d_T(T(y),T(p))- d_T(T(p),m) +d_T(m,T(k))-c \geq d_{\skrig}(x,p)-(R+2c)\end{align*}
and hence $d(x, [o,Y_{\infty}(x)))\geq D-D_0$ where $D_0=R+2c+\tilde c$. Thus we have shown that 
\[\{d(x, [o,Y_{\infty}(x)))\geq D-D_0\}\subset \{Y_{\infty} \in \mho_y(p,R)\}\]
So in particular
\[\e^{-D-c} \asymp \P[X_\infty(x)\in \mho_x(p,R)]\leq \P_x[d_{\skrig}(x,[o,X_{\infty}))\geq D-D_0]\]
and thus
\begin{align*}\P_{o}[d(X_n,[o,X_{\infty}))\geq D] &=\sum_{x\in G} \P_x[d_{\skrig}(x, [0,X_{\infty}(x)))\geq D]\P_{o}[X_n=x]\\
&\geq \sum_{x\in G, d_{\skrig}(x,0)\geq D+D_0} \P_x[d_{\skrig}(x, [0,X_{\infty}(x)))\geq D]\P_{o}[X_n=x]\\
&\gtrsim \P_{o}[d_{\skrig}(X_n,0) \geq D+D_0] \e^{-D-c}.
\end{align*}
To get rid of the dependence on $n$, we observe that by transience $\P_{o}[d(X_n,0) \geq D+D_0] \to 1$ and hence for
$\varepsilon>0$ there exists $N\in \N$, such that $\P_{o}[d(X_n,[o,X_{\infty}))\geq D]\gtrsim (1-\varepsilon)e^{-D-c}$ for $n\geq N$.

\noindent (2): To prove the second claim, we note that 
\[(Y_n,Y_{\infty})^{\skrig}_o = d_{\skrig}(Y_n,o)- (Y_{\infty},o)^{\skrig}_{Y_n}\]
and since there exists $c>0$ dependent on $M$ and $\delta$ such that
\[d_{\skrig}\big(Y_n,[o,Y_\infty)\big)- c \leq (Y_{\infty},o)^{\skrig}_{Y_n}\leq d_{\skrig}\big(Y_n,[o,Y_\infty)\big)+ c\]
by part (1) in particular also
\[\P_o[(Y_{\infty},o)_{Y_n}\geq D]\asymp \e^{-D}.\]
Hence we have
\begin{align*}
\P_o\big[(Y_n,Y_{\infty})_o^{\skrig}\geq D\big] &= \P_o\big[d_{\skrig}(Y_n,o)- (Y_{\infty},o)^{\skrig}_{Y_n}\geq D\big]\\
&\geq \P_o\big[d_{\skrig}(Y_n,o)\geq 2D\big]-\P_o\big[ (Y_{\infty},o)^{\skrig}_{Y_n}\geq D\big]\\
&\geq \P_o\big[d_{\skrig}(Y_n,o)\geq 2D\big]- c_l \e^{-D}.
\end{align*}
And since $\P_o[d_{\skrig}(Y_n,o)\geq 2D] \to 1$, this is in particular bounded below by $1- (c_{l}+1)\e^{-D}$ for n big enough, so
\begin{equation}\label{deviation gp detailed}
\P_o\big[(Y_n,Y_{\infty})_o^{\skrig}< D\big]\leq \Big(1-\P_o\big[d_{\skrig}(Y_n,o)\geq 2D\big]\Big) + c_l \e^{-D}\lesssim \e^{-D}.
\end{equation}
\end{proof}

\begin{lemma}\label{gp independent}
Let $(Y_n)_{n\in\N}$ and $(Z_n)_{n\in\N}$ be two independent usual random walks on $G$ with step distribution $\mu$ and starting at $o$ and $Y_{\infty}, Z_{\infty}$ the respective hitting points on $\partial G$. Then for $n>D$ big enough
\begin{enumerate}%[label={\roman}]
\item \label{gp on bd} $ \P_o\Big[\big(Y_n,Z_{\infty})^{\skrig}_o\geq D\Big]\asymp \e^{-D}$, and 
\item \label{gp in path} $ \P_o\Big[\big(Y_n,Z_n)^{\skrig}_o\geq D\Big]\asymp \e^{-D}$.
\end{enumerate}
\end{lemma}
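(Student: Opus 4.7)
The proof of Part (1) exploits the independence of $Y_n$ and $Z_\infty$, which are measurable with respect to the two independent walks. Conditioning on $Y_n$,
\[
\P_o\bigl[(Y_n, Z_\infty)^{\skrig}_o \geq D\bigr] \;=\; \sum_{y \in G} \P_o[Y_n = y]\; \nu_o\bigl(\{\xi \in \partial G : (y, \xi)^{\skrig}_o \geq D\}\bigr).
\]
Repeating the shadow argument from the proof of Theorem \ref{deviation bds}(1), with the boundary point $\xi$ now playing the role of $Y_\infty$, the level set $\{\xi : (y, \xi)^{\skrig}_o \geq D\}$ is sandwiched between two shadows whose base points lie on $[o,y]$ at Green distance $D + O(1)$ from $o$; it is empty once $d_\skrig(o, y) < D$. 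The Shadow Lemma \ref{shadow lemma} then yields $\nu_o$-measure $\asymp e^{-D}$, uniformly in $y$ with $d_\skrig(o,y) \geq D + O(1)$. Since $\P_o[d_\skrig(o, Y_n) \geq D + O(1)] \to 1$ as $n \to \infty$ by transience of the walk, summing over $y$ yields Part (1).

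For the upper bound in Part (2), I insert $Z_\infty$ via hyperbolicity of $(\Gamma, d_\skrig)$:
\[
(Y_n, Z_\infty)^{\skrig}_o \;\geq\; \min\bigl\{(Y_n, Z_n)^{\skrig}_o,\, (Z_n, Z_\infty)^{\skrig}_o\bigr\} - \delta',
\]
which forces $\{(Y_n, Z_n)^{\skrig}_o \geq D\} \subseteq \{(Y_n, Z_\infty)^{\skrig}_o \geq D - \delta'\} \cup \{(Z_n, Z_\infty)^{\skrig}_o < D\}$. The first term is $\lesssim e^{-D}$ by Part (1), and the second is $\lesssim e^{-D}$ by Theorem \ref{deviation bds}(2) applied to $(Z_n)_{n\in\N}$.

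The matching lower bound in Part (2) is the hard step. Reversing the hyperbolicity inclusion gives
\[
\bigl\{(Y_n, Z_\infty)^{\skrig}_o \geq D + \delta'\bigr\} \cap \bigl\{(Z_n, Z_\infty)^{\skrig}_o \geq D + \delta'\bigr\} \;\subseteq\; \bigl\{(Y_n, Z_n)^{\skrig}_o \geq D\bigr\},
\]
but the naive bound $\P[A \cap B] \geq \P[A] - \P[B^c]$ cannot determine the sign of the right-hand side, because both $\P[A]$ and $\P[B^c]$ are only of order $e^{-D}$ with a priori incomparable constants --- this is the main obstacle. My plan is to condition on $Z_\infty$: the independence of the two walks renders $A$ and $B$ conditionally independent given $Z_\infty$, so that
\[
\P_o[A \cap B] \;=\; \int f(\xi)\, g(\xi)\, d\nu_o(\xi),
\]
with $f(\xi) := \P_o[(Y_n, \xi)^{\skrig}_o \geq D + \delta']$ and $g(\xi) := \P_o[(Z_n, \xi)^{\skrig}_o \geq D + \delta' \mid Z_\infty = \xi]$. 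Writing $\int fg = \int f - \int f(1-g) \geq \int f - (\sup_\xi f)\int (1-g)\, d\nu_o$, Part (1) yields $\int f \gtrsim e^{-D}$ while Theorem \ref{deviation bds}(2) yields $\int (1-g)\, d\nu_o \lesssim e^{-D}$. The crucial uniform pointwise bound $\sup_\xi f(\xi) \lesssim e^{-D}$ is obtained by a second hyperbolicity insertion, $\{(Y_n, \xi)^{\skrig}_o \geq D + \delta'\} \subseteq \{(Y_\infty, \xi)^{\skrig}_o \geq D\} \cup \{(Y_n, Y_\infty)^{\skrig}_o < D + \delta'\}$, combined with Theorem \ref{deviation bds}(2) and the Shadow Lemma applied to the set $\{\eta \in \partial G : (\xi, \eta)^{\skrig}_o \geq D\}$ of $\nu_o$-measure $\asymp e^{-D}$ uniformly in $\xi$. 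Altogether $\int fg \gtrsim e^{-D} - C\, e^{-2D} \gtrsim e^{-D}$ for $D$ large enough, which closes Part (2).
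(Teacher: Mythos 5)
Your proof of Part~(1) and the upper bound of Part~(2) follow the same route as the paper: condition on $Y_n$, reduce to a uniform shadow-measure estimate via the Shadow Lemma, and for the upper bound of Part~(2) insert $Z_\infty$ by a $\delta$-hyperbolicity estimate and invoke Part~(1) plus Theorem~\ref{deviation bds}(2). So far there is nothing to compare.

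The genuinely interesting divergence is the lower bound of Part~(2), and here your diagnosis is correct and, in fact, sharper than the paper's written argument. The paper's displayed chain
\[
\P_o\bigl[(y,Z_n)^{\skrig}_{o}\geq D\bigr]\;\geq\;\P_o\bigl[(y,Z_n)^{\skrig}_{o}\geq D,\ (Z_n,Z_{\infty})^{\skrig}_{o}\geq D\bigr]\;\geq\;\P_o\bigl[(y,Z_{\infty})^{\skrig}_{o}\geq D+\delta\bigr]
\]
rests on the purported containment $\{(y,Z_\infty)^{\skrig}_o \geq D+\delta\}\subset\{(y,Z_n)^{\skrig}_o\geq D\}\cap\{(Z_n,Z_\infty)^{\skrig}_o\geq D\}$, which does not hold: $\delta$-hyperbolicity gives $\{(y,Z_\infty)^{\skrig}_o\geq D+\delta\}\cap\{(Z_n,Z_\infty)^{\skrig}_o\geq D+\delta\}\subset\{(y,Z_n)^{\skrig}_o\geq D\}$, and one cannot simply drop the second requirement. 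The honest bound this yields is $\P_o[(y,Z_n)^{\skrig}_o\geq D]\geq \P_o[(y,Z_\infty)^{\skrig}_o\geq D+\delta]-\P_o[(Z_n,Z_\infty)^{\skrig}_o< D+\delta]$, and since both terms on the right are of order $e^{-D}$ with a priori incomparable Shadow-Lemma constants, the difference is not guaranteed to be positive. You name exactly this obstruction. Your fix — conditioning on $Z_\infty$ so that the two failure events become conditionally independent, writing $\P_o[A\cap B]=\int f g\,\mathrm{d}\nu_o\geq \int f - (\sup_\xi f)\int(1-g)\,\mathrm{d}\nu_o$, and then establishing the uniform pointwise bound $\sup_\xi f(\xi)\lesssim e^{-D}$ by one more hyperbolicity insertion plus the Shadow Lemma — correctly turns the error term into $O(e^{-2D})$, which is negligible against the main term $\gtrsim e^{-D}$ once $D$ is bounded away from $0$ (and small $D$ is handled trivially by monotonicity). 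This closes a genuine gap in the paper's argument rather than merely rephrasing it, and the mechanism — extracting the extra factor $e^{-D}$ from the pointwise smallness of $f$ — is the right quantitative idea.
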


\begin{proof}
By the independence of $(Y_n)_{n\in\N}$ and $(Z_n)_{n\in\N}$, $Y_n$ is also independent of $Z_{\infty}$ and hence 

\begin{equation}\label{gp independent eq1}
\P_o\Big[\big(Y_n,Z_{\infty}\big)^{\skrig}_{o}\geq D\Big] = \sum_{y \in G}\P_o\Big[Y_n=y\big]\P_o\Big[\big(y,Z_{\infty}\big)^{\skrig}_{o}\geq D\Big].
\end{equation}
We now use the Shadow Lemma \ref{shadow lemma} to approximate this sum. So let $R>0$ be as in Lemma \ref{shadow lemma} and $y\in G$ such that $d_{\skrig}(y,o)\geq D+R$. Denote by $x_y$ the point in $[o,y]$ which has distance to $o$ closest to $D+R$. Then
\[\P_o\big[\big(y,Z_{\infty}\big)^{\skrig}_{o}\geq D\big] = \P_o\big[Z_{\infty}\in \mho_{o}(x,R)\big]\asymp \e^{-d_{\skrig}(o,x)}.\]
Putting this into \ref{gp independent eq1}, we get
\begin{equation}\label{gp on bd upper detailed}
\P_o\big[\big(Y_n,Z_{\infty}\big)^{\skrig}_{o}\geq D\big] \lesssim \P_o\big[d_{\skrig}(Y_n,o) \geq D+R\big] \e^{-D}+ \P_o\big[d_{\skrig}(Y_n,o) \leq D+R\big]
\end{equation}
and
\begin{equation}\label{gp on bd lower detailed}
\P_o\big[\big(Y_n,Z_{\infty}\big)^{\skrig}_{o}\geq D\big] \gtrsim \P_o\big[d_{\skrig}(Y_n,o) \geq D+R\big] \e^{-D}.
\end{equation}
Together with the fact that $\P[d_{\skrig}(Y_n,o) \leq D+R]\to 0$, the claim in point \ref{gp on bd} follows.

\noindent For point \ref{gp in path}, we again use the independence of $Y_n$ and $Z_n$ to get
\begin{equation}\label{gp independent eq2}
\P_o\Big[\big(Y_n,Z_n\big)^{\skrig}_{o}\geq D\Big] = \sum_{y \in G}\P[Y_n=y]\P_o\Big[\big(y,Z_n\big)^{\skrig}_{o}\geq D\Big].
\end{equation}
In order to apply what we have already shown, we now want to replace $\big(y,Z_n\big)^{\skrig}_{o}$ by $\big(y,Z_{\infty}\big)^{\skrig}_{o}$. By the $\delta$-hyperbolicity of $G$, we have
\[\big(y,Z_n\big)^{\skrig}_{o} \geq \min\bigg\{\big(y,Z_{\infty}\big)^{\skrig}_{o}, \big(Z_n,Z_{\infty}\big)^{\skrig}_{o}\bigg\}-\delta\]
as well as
\[\big(y,Z_{\infty}\big)^{\skrig}_{o} \geq \min\bigg\{\big(y,Z_{n}\big)^{\skrig}_{o}, \big(Z_n,Z_{\infty}\big)^{\skrig}_{o}\bigg\}-\delta.\]
And since by definition $Z_n \to Z_{\infty}$ almost surely, we know that also $(Z_n,Z_{\infty})_{o}\to \infty$ almost surely. Hence with high probability the minima above will be given by $\big(y,Z_{\infty}\big)^{\skrig}_{o}$ and $\big(y,Z_{n}\big)^{\skrig}_{o}$ respectively. This gives us

\begin{align*}
\P_o\Big[\big(y,Z_n\big)^{\skrig}_{o}\geq D\Big]&\leq \P_o\Big[\big(y,Z_n\big)^{\skrig}_{o}\geq D, \big(Z_n,Z_{\infty}\big)^{\skrig}_{o}\geq D\Big] + \P_o\Big[\big(Z_n,Z_{\infty}\big)^{\skrig}_{o}< D\Big]\\
&\leq \P_o\Big[\big(y,Z_{\infty}\big)^{\skrig}_{o}\geq D-\delta \Big] + \P_o\Big[\big(Z_n,Z_{\infty}\big)^{\skrig}_{o}< D\Big]
\end{align*}
and 
\begin{align*}
\P_o\Big[\big(y,Z_n\big)^{\skrig}_{o}\geq D\Big]&\geq \P_o\Big[\big(y,Z_n\big)^{\skrig}_{o}\geq D, \big(Z_n,Z_{\infty}\big)^{\skrig}_{o}\geq D\Big] \geq \P_o\Big[\big(y,Z_{\infty}\big)^{\skrig}_{o}\geq D +\delta \Big].
\end{align*}
Plugging the bounds shown for $(Y_n,Z_{\infty})_o^{\skrig}$ in point \ref{gp on bd} into Equation \eqref{gp independent eq2}, we get the upper bound
\begin{align*}
\P_o\big[\big(Y_n,Z_n\big)^{\skrig}_{o}\geq D\big]\leq& \sum_{y\in G} \P_o[Y_n=y]\P_{o}\Big[\big(y,Z_{\infty}\big)^{\skrig}_{o}\geq D-\delta \Big] + \P_o\Big[\big(Z_n,Z_{\infty}\big)^{\skrig}_{o}< D\Big]\\
= \,& \P_o\Big[\big(Y_n,Z_{\infty}\big)^{\skrig}_{o}\geq D-\delta \Big] + \P_o\Big[\big(Z_n,Z_{\infty}\big)^{\skrig}_{o}< D\Big]\\
\leq \,& \P_o\big[d_{\skrig}(Y_n,o) \geq D+R-\delta\big] \e^{-D}+ \P_o\big[d_{\skrig}(Y_n,o) \leq D+R-\delta\big] \\
& +\Big(1-\P_o\big[d_{\skrig}(Y_n,o)\geq 2D\big]\Big) + c_l \e^{-D}\\
\leq \,& (c_l+1) \e^{-D}+\Big(1-\P_o\big[d_{\skrig}(Y_n,o)\geq 2D\big]\Big)
\end{align*}
by putting in the detailed upper bounds from equations and \eqref{deviation gp detailed}, and \eqref{gp on bd upper detailed}.

And for the lower bound, using the detailed lower bound in Equation \eqref{gp on bd lower detailed}, that
\begin{equation}
\P_o\big[\big(Y_n,Z_n\big)^{\skrig}_{o}\geq D\big]\geq \P_o\big[\big(Y_n,Z_{\infty}\big)^{\skrig}_{o}\geq D+\delta \big] \gtrsim \P_o\big[d_{\skrig}(Y_n,o)> D+R+\delta \big] \e^{-(D+c_2)} 
\end{equation}
And since $\P_o[d_{\skrig}(Y_n,0)\leq K] \to 0$ as $n\to \infty$ for any constant $K\geq 0$, the claim follows.
\end{proof}

\begin{cor}\label{non-sing}
Let $(Y_n)_{n\in\N}$ and $(Z_n)_{n\in\N}$ be two independent usual random walks on $G$ with step distribution $\mu$ and starting at $o$ and denote their hitting points on $\partial G$ by $Y_{\infty}$ and $Z_{\infty}$ respectively. Then
\[Y_{\infty} \ne Z_{\infty} \quad \text{almost surely.}\]
\end{cor}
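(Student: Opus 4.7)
\begin{Proof}{Proof plan.}
The plan is to reduce the statement to the quantitative estimate in Lemma~\ref{gp independent}(ii) via a standard Fatou-type argument, using that convergence in the Gromov compactification is encoded by the divergence of Gromov products with the boundary limit.

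First I would set $A := \{Y_\infty = Z_\infty\}$ and argue that on $A$ one has $(Y_n, Z_n)_o^{\skrig} \to \infty$ almost surely. Indeed, on $A$ there exists a common boundary point $\xi \in \partial G$ with $Y_n \to \xi$ and $Z_n \to \xi$ in the Gromov compactification (Theorem~\ref{divergence RW}), which by the very definition of the visual metric with respect to $d_{\skrig}$ is equivalent to $(Y_n,\xi)_o^{\skrig} \to \infty$ and $(Z_n,\xi)_o^{\skrig} \to \infty$. Applying the (extended) hyperbolicity inequality
\[
(Y_n, Z_n)_o^{\skrig} \;\geq\; \min\bigl\{(Y_n, \xi)_o^{\skrig}, \, (\xi, Z_n)_o^{\skrig}\bigr\} - 2\delta
\]
(which holds with an additive slack of at most $2\delta$ when one entry lies in $\partial G$) gives the desired divergence on $A$.

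Next, fix $D>0$ and set $B_n := \{(Y_n, Z_n)_o^{\skrig} \geq D\}$. By the previous step, on $A$ eventually every $B_n$ holds, so $A \subseteq \liminf_{n\to\infty} B_n$. Fatou's lemma for sets then yields
\[
\bar\P_o[A] \;\leq\; \bar\P_o\bigl[\liminf_{n\to\infty} B_n\bigr] \;\leq\; \liminf_{n\to\infty} \bar\P_o[B_n].
\]
Since $(Y_n)_{n\in\N}$ and $(Z_n)_{n\in\N}$ are independent random walks both started at $o$, Lemma~\ref{gp independent}(ii) applies and gives, for all $n$ sufficiently large (depending on $D$), the bound $\bar\P_o[B_n] \lesssim e^{-D}$. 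Hence $\bar\P_o[A] \lesssim e^{-D}$, and letting $D \to \infty$ forces $\bar\P_o[A] = 0$, which is the claim.

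I expect essentially no obstacle here: all the hard work has already been done in Lemma~\ref{gp independent}, which in turn rested on the deviation estimates of Theorem~\ref{deviation bds} together with the Shadow Lemma. The only mild subtlety is the extension of the Gromov product to the boundary via $\limsup$, but since this costs at most a bounded $\delta$-error, it is harmless for the argument taking $D\to\infty$. As an equally valid variant, one could instead use Lemma~\ref{gp independent}(i) by writing $(Y_n, Z_\infty)_o^{\skrig} \geq \min\{(Y_n, Z_n)_o^{\skrig},(Z_n,Z_\infty)_o^{\skrig}\} - \delta$ and noting that $(Z_n,Z_\infty)_o^{\skrig} \to \infty$ almost surely, which reduces the divergence on $A$ to the same quantitative bound $\asymp e^{-D}$.
\end{Proof}
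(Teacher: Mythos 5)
Your argument is correct and arrives at the claim by a genuinely different (though closely related) route from the paper's. The paper works directly with the boundary Gromov product: using hyperbolicity it shows that, with high probability, $(Y_\infty, Z_\infty)_o^{\skrig} \leq (Y_n, Z_\infty)_o^{\skrig} + \delta$, then bounds $\P_o[(Y_\infty, Z_\infty)_o^{\skrig} \geq D] \lesssim \e^{-D}$ via Lemma~\ref{gp independent}(i) together with Theorem~\ref{deviation bds}(2), and finishes by letting $D\to\infty$ (the paper phrases this as a Borel--Cantelli step, though it is really just a tail-probability limit). You instead avoid manipulating $(Y_\infty, Z_\infty)_o^{\skrig}$ directly: you observe that $\{Y_\infty = Z_\infty\}$ forces $(Y_n, Z_n)_o^{\skrig} \to \infty$, embed the event in $\liminf_n B_n(D)$, and apply Fatou for sets together with Lemma~\ref{gp independent}(ii). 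Both routes rest on the same quantitative deviation machinery (Theorem~\ref{deviation bds} and the Shadow Lemma via Lemma~\ref{gp independent}), so there is no real difference in depth; the trade-off is that the paper's version is more quantitative, actually producing the exponential tail bound on $(Y_\infty, Z_\infty)_o^{\skrig}$ (which is used implicitly in the heuristic that the bi-infinite path behaves like a quasigeodesic), while yours is slightly more streamlined since it sidesteps the $\limsup$-definition of the boundary Gromov product and the need to argue that the $\min$ in the hyperbolicity inequality is achieved by the correct term with high probability. Your closing remark that the variant through Lemma~\ref{gp independent}(i) also works is accurate and is in fact the option the paper takes.
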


\begin{remark}
Note that the non-singularity of the joint hitting distribution is a well known fact and can also be deduced from the non-singularity of the hitting measures. It is included here, since it follows nicely from the more detailed joint behavior of the two paths (and thus the two halves of the bi-infinite path) shown in Theorem \ref{deviation bds} and Lemma \ref{gp independent}.
\end{remark}

\begin{proof}
The claim will be proven by showing that almost surely $(Y_{\infty},Z_{\infty})^{\skrig}_{o}< \infty$. For this we first shift the Gromov product to a finite time in $Y_n$ to use the bounds we have shown above.
Using that by hyperbolicity
\[\big(y,Z_{\infty}\big)^{\skrig}_{o} \geq \min\bigg\{(Y_{\infty},Z_{\infty})^{\skrig}_{o}, \big(Y_n,Y_{\infty}\big)^{\skrig}_{o}\bigg\}-\delta.\]
as in the proof of the previous Lemma \ref{gp independent}, we again get that for $n$ big with high probability
\[(Y_{\infty},Z_{\infty})^{\skrig}_{o}\leq \big(Y_n,Z_{\infty}\big)_o^{\skrig}+\delta.\]
Hence, again
\begin{equation*}
\P_o\Big[\big(Y_{\infty},Z_{\infty}\big)^{\skrig}_{o}\geq D\Big] \leq \P_o\Big[\big(Y_n,Z_{\infty}\big)^{\skrig}_{o}\geq D-\delta \Big] + \P_o\Big[\big(Y_n,Y_{\infty}\big)^{\skrig}_{o}< D\Big]\lesssim \e^{-D}
\end{equation*}
by the bounds from Lemma \ref{gp independent} and Theorem \ref{deviation bds}.

Note, that we saw in the proofs of these statements, that technically these bounds depend on $\P_o[d_{\skrig}(o,Y_n)\geq K]$ for some constants $K>0$. But since we may choose $n\geq 0$ arbitrarily big, we can assume that these are all arbitrarily close to $1$.

Finally, due to the exponential decay of the probabilities, we get that
\[\sum_{n\in\N} \P_o\Big[\big(X_{\infty},X_{-\infty}\big)^{\skrig}_{o}\geq D_n\Big]< \infty\]
when choosing $D_{n}= O(\log(n))$. So by the Lemma of Borel and Cantelli, we get that
\[\P_o\Big[\big(X_{\infty},X_{-\infty}\big)^{\skrig}_{o}=\infty\Big]=0.\]

\end{proof}

\begin{cor}\label{almost geodesic}
For the bi-infinite random walk $(X_{z})_{z\in\Z}$ with hitting points $X_{-\infty}$ and $X_{\infty}$ on $\partial G$, we have that 
\[\limsup_{\abs{z}\to \infty} \frac{d_{\skrig}\big(X_{z},(X_{-\infty},X_{\infty})\big)}{\log(\abs{z})}< \infty\]
almost surely, where $(X_{\infty},X_{-\infty})$ is a quasi-geodesic ray between the two hitting points.
\end{cor}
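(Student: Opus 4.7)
The plan is to combine the single-sided deviation bound of Theorem \ref{deviation bds}(1), applied independently to each half of the bi-infinite trajectory, with a thin-triangle argument that transfers closeness to the semi-infinite geodesics $[o,X_{\pm\infty})$ into closeness to the bi-infinite quasi-geodesic $(X_{-\infty},X_{\infty})$.

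First, by the definition of $(X_z)_{z\in\Z}$ in Definition \ref{def bi-inf RW}, we have $X_z = Z_z$ for $z\geq 0$ and $X_z = \bar Z_{-z}$ for $z<0$, where $(Z_n)_{n\in\N}$ and $(\bar Z_n)_{n\in\N}$ are independent random walks started at $o$ with step distribution $\mu$, and $X_\infty = Z_\infty$, $X_{-\infty}=\bar Z_\infty$. Theorem \ref{deviation bds}(1) gives
\[
\P_o\bigl[d_{\skrig}(Z_n,[o,Z_\infty))\geq D\bigr]\lesssim \e^{-D}
\]
for all $n$ large. Choosing $D_n=c\log n$ for some $c>1$, the probabilities are summable, so Borel--Cantelli yields $d_{\skrig}(Z_n,[o,Z_\infty))\leq c\log n$ for all but finitely many $n$, almost surely. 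The same argument applies to $\bar Z_n$ and $[o,\bar Z_\infty)=[o,X_{-\infty})$.

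Next I transfer this to the bi-infinite quasi-geodesic. By Corollary \ref{non-sing}, $X_\infty\neq X_{-\infty}$ almost surely, so the Gromov product $T:=(X_\infty,X_{-\infty})^{\skrig}_o$ is finite a.s. Apply $\delta$-thinness (Lemma \ref{delta slim}) to the ideal triangle with vertices $o$, $X_\infty$, $X_{-\infty}$ in the hyperbolic (quasi-)geodesic space $(G,d_{\skrig})$: any point on $[o,X_\infty)$ at distance more than $T+O(1)$ from $o$ lies within a uniformly bounded distance $C_0$ of the bi-infinite quasi-geodesic $(X_{-\infty},X_\infty)$ (the Morse lemma absorbs the constants coming from quasi-geodesicity of $(G,d_{\skrig})$, cf.\ Remark \ref{rmk-qr}). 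Let $q_z$ be a closest point on $[o,X_\infty)$ to $X_z$; then
\[
d_{\skrig}(o,q_z)\geq d_{\skrig}(o,X_z)-d_{\skrig}(X_z,q_z)\geq d_{\skrig}(o,X_z)-c\log z.
\]
By transience, $d_{\skrig}(o,X_z)\to\infty$ a.s., so for $z$ large enough $d_{\skrig}(o,q_z)>T+O(1)$, and hence
\[
d_{\skrig}\bigl(X_z,(X_{-\infty},X_\infty)\bigr)\leq d_{\skrig}(X_z,q_z)+C_0\leq c\log z+C_0.
\]
The symmetric argument for $z<0$ uses $\bar Z_{-z}$ and $[o,X_{-\infty})$. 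Taking $\limsup$ and dividing by $\log|z|$ gives the claim.

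The main obstacle is the transfer step from $[o,X_{\pm\infty})$ to the bi-infinite quasi-geodesic $(X_{-\infty},X_\infty)$: the thin-triangle/Morse argument requires that $T=(X_\infty,X_{-\infty})^{\skrig}_o<\infty$ almost surely, which is exactly what Corollary \ref{non-sing} supplies, together with the fact that $d_{\skrig}(o,X_z)$ eventually exceeds $T$, which follows from transience of the random walk in the Green metric (this is implicit in Theorem \ref{divergence RW} combined with the quasi-isometry of Theorem \ref{thm-bhm}). Once these two ingredients are in place, the remaining computation is routine.
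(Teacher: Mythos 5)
Your proof follows the same overall strategy as the paper's: Borel--Cantelli on the deviation bound from Theorem~\ref{deviation bds}, combined with a thin-triangle argument and non-singularity (Corollary~\ref{non-sing}) to pass from $[o,X_{\pm\infty})$ to the bi-infinite quasi-geodesic. However, the justification of your transfer step has a small but genuine gap. You argue that $d_{\skrig}(o,q_z)\geq d_{\skrig}(o,X_z)-c\log z>T+O(1)$ eventually, citing only transience for $d_{\skrig}(o,X_z)\to\infty$; but $d_{\skrig}(o,X_z)\to\infty$ does not imply $d_{\skrig}(o,X_z)-c\log z\to\infty$, so the deduction as written does not close. The conclusion $d_{\skrig}(o,q_z)\to\infty$ is in fact true, but it needs a different justification: either that $q_z\to X_\infty$ because $X_z\to X_\infty$ and nearest-point projection onto $[o,X_\infty)$ is continuous at $X_\infty$ (equivalently, $(X_z,X_\infty)_o\to\infty$ controls $d_{\skrig}(o,q_z)$ from below), or the positive escape rate $d_{\skrig}(o,X_z)/z\to\ell>0$. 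The paper sidesteps this issue entirely by using an \emph{unconditional} thin-triangle inequality: with $m_2$ the point on $[o,X_\infty)$ past which $[o,X_\infty)$ and $(X_{-\infty},X_\infty)$ are $2\delta$-close, one always has
\[
d_{\skrig}\big(X_z,(X_{-\infty},X_\infty)\big)\leq d_{\skrig}\big(X_z,[o,X_\infty)\big)+d_{\skrig}(o,m_2)+2\delta
\]
regardless of where the projection lands, and $d_{\skrig}(o,m_2)$ is an a.s.\ finite (path-dependent) constant that gets washed out by the $\log|z|$ denominator. Adopting this unconditional bound would let you drop the problematic transience step altogether.
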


\begin{proof}
In order to bound the deviation of $(X_{z})_{z\in\Z}$ from the quasi-geodesic $(X_{-\infty}, X_{\infty})$, we will show that it suffices to regard the deviation from by $[o,X_{\infty})$ and $[o,X_{-\infty})$, the quasi-geodesics from $o$ to $X_{\infty}$ and $X_{-\infty}$ respectively.
We consider the triangle $\Delta(X_{-\infty},o,X_{\infty})$. Since $(G,d_{\skrig})$ is $\delta$-hyperbolic, there exist points $m_1 \in [0,X_{-\infty})$ and $m_2 \in [o,X_{\infty})$ such that $d(x,(X_{-\infty},X_{\infty}))< 2\delta$ for $x \in[m_1,X_{-\infty})\cup [m_2,X_{\infty}]$. 
So we can approximate for $n\geq 0$
\[d_{\skrig}\big(X_{n},(X_{-\infty},X_{\infty})\big)\leq d-G\big(X_n,[o,X_{\infty})\big)+ d_{\skrig}(o,m_2)+\delta\]
and the same with $X_{-\infty}$ and $m_1$ for $n< 0$.
Now by the deviation bounds shown in Theorem \ref{deviation bds}, we can apply the Lemma of Borel and Cantelli to $\{d_{\skrig}(X_{n},[0,X_{-\infty}) > D_n)\}$ for $D_n = O(\log(n))$ to get that almost surely
\[\limsup_{n\to\infty} \frac{d\big(X_{n},[0,X_{-\infty})\big)}{\log(n)}< \infty\]
and the same for $n\to -\infty$. Hence we also have that almost surely
\[\limsup_{\abs{n}\to \infty} \frac{d\big(X_{n},(X_{-\infty},X_{\infty})\big)}{\log(\abs{n})}< \infty.\]
\end{proof}

\subsection{Deviation from the Geodesic: a Disintegrated Version}\label{deviation from geodesic disintegrated}

 Since $\nu_o$ is the push-forward of $\P_o$ under $\bd: G^{\Z}\to \partial G$, for every $\xi \in \partial G$ there exists a measure $\P_o(\cdot\vert \, \xi\, )$ such that 
\[
\P_o\Big(G^{\Z}\backslash \bd^{-1}(\xi) \,\, \big \vert \,\,  \xi\Big)= 0,
\]
and for all $f: G^{\Z}\to \R$ measurable
\[\int_{G^{\Z}}f(x)\d\P_o(x)= \int_{\partial G}  \int_{\bd^{-1}(\xi)}f(x)\d\P_o(x\vert \xi) \d \nu_o(\xi)\]
as in Theorem \ref{disintegration}.
Since the geometric arguments in the above proof of Theorem \ref{deviation bds} do not really depend where the limit point $X_{\infty}$ lies on the boundary $\partial G$, it is possible to transfer the above statement to the conditioned measures $\P_o(\cdot \vert \xi)$ as well.

\begin{theorem}\label{disintegrated deviation bds}
Let $(Y_n)_{n\in\N}$ be a random walk started at $o$ with step distribution $\mu$. For any $D\geq 0$ and $n\geq 0$ sufficiently large, there exists $C_D>1$ such that for all $\xi\in \partial G$
\[ C_D^{-1} \e^{-D} \leq \P_{o}\big[d_{\skrig}\big(Y_n, [o,\xi)\big)\geq D\big \vert \xi \big]\leq C_D \e^{-D}.\]
The constants $C_D$ are bounded in $D$ and do not depend on $\xi$. 
\end{theorem}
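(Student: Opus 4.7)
My plan is to follow the skeleton of the proof of Theorem~\ref{deviation bds}, passing to the conditional law $\P_o(\,\cdot\,\mid\xi)$ via the Martin-kernel disintegration. By the Markov property (conditioning on $Y_n=y$ makes the future an independent copy of the walk started at $y$), together with the density identity of Theorem~\ref{density hitting measures},
\[
\P_o[Y_n=y\mid \xi] \;=\; \P_o[Y_n=y]\,K(y,\xi), \qquad K(y,\xi):=\tfrac{\d\nu_y}{\d\nu_o}(\xi),
\]
so the quantity we wish to estimate equals $\sum_{y\in B_\xi(D)}\P_o[Y_n=y]\,K(y,\xi)$ with $B_\xi(D):=\{y\in G:d_{\skrig}(y,[o,\xi))\ge D\}$, while the unrestricted $y$-sum equals $1$ (for every $\xi$, by continuity of the Martin kernel on the Martin boundary).

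Next, I would observe that the tripod-comparison argument underlying Theorem~\ref{deviation bds} is an entirely deterministic statement about the $\delta$-hyperbolic triangle $\Delta(o,y,\xi)$; it does not use that $Y_\infty$ is random. Hence, with $p_y\in[o,y]$ the point at Green distance $D+\tilde c$ from $y$, the same argument yields universal constants $R,D_0>0$ such that
\[
\{\zeta\in\partial G : d_{\skrig}(y,[o,\zeta))\ge D\} \;\subseteq\; \mho_y(p_y,R) \;\subseteq\; \{\zeta\in\partial G: d_{\skrig}(y,[o,\zeta))\ge D-D_0\}.
\]
In particular, $y\in B_\xi(D)$ forces $\xi\in\mho_y(p_y,R)$, pinning down the position of $\xi$ relative to $y$.

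The third step replaces the Shadow Lemma, which in the unconditioned setting bounded $\nu_y(\mho_y(p_y,R))\asymp e^{-d_{\skrig}(y,p_y)}$, by a pointwise estimate on $K(y,\xi)$. Let $m$ be the centroid of the tripod comparison for $\Delta(o,y,\xi)$. By $\delta$-hyperbolicity $m$ lies within bounded distance of both $[o,z]$ and $[y,z]$ for any $z$ close enough to $\xi$, so applying the Ancona inequality (Lemma~\ref{ancona}) to numerator and denominator yields
\[
K(y,\xi)\;=\;\lim_{z\to\xi}\tfrac{F(y,z)}{F(o,z)}\;\asymp\;\tfrac{F(y,m)}{F(o,m)}\;=\;e^{d_{\skrig}(o,m)-d_{\skrig}(y,m)}\;=\;e^{(y,\xi)^{\skrig}_o-(o,\xi)^{\skrig}_y}.
\]
Standard $\delta$-hyperbolic identities give $(o,\xi)^{\skrig}_y\asymp d_{\skrig}(y,[o,\xi))$ and $(y,\xi)^{\skrig}_o\asymp d_{\skrig}(o,y)-d_{\skrig}(y,[o,\xi))$, so for $y\in B_\xi(D)$ one obtains $K(y,\xi)\asymp e^{d_{\skrig}(o,y)-2d_{\skrig}(y,[o,\xi))}\leq C\,e^{d_{\skrig}(o,y)-2D}$.

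The main obstacle is the final summation, since individual summands $\P_o[Y_n=y]K(y,\xi)$ can be large for $y$'s whose projection onto $[o,\xi)$ lies far along the ray, and we need bounds \emph{uniform in $\xi$}. I would stratify $B_\xi(D)$ by the level sets $L_k:=\{y:d_{\skrig}(y,[o,\xi))\in[D+k,D+k+1)\}$, apply the refined form $K(y,\xi)\asymp e^{(y,\xi)^{\skrig}_o-(D+k)}$ from Step~3 on each $L_k$, and use the probability-mass identity $\sum_y\P_o[Y_n=y]K(y,\xi)=1$ as the analog of $\nu_o(\partial G)=1$: shadow-covering a small $\nu_o$-neighborhood of $\xi$ and applying the unconditioned deviation bound of Theorem~\ref{deviation bds} layer by layer yields $\sum_{y\in L_k}\P_o[Y_n=y]K(y,\xi)\lesssim e^{-(D+k)}$, whence summing over $k$ produces the desired upper bound $\lesssim C_D\,e^{-D}$. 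For the matching lower bound, one restricts to $y$'s for which $\xi$ lies in a slightly interior shadow $\mho_y(p_y,R')$ with $R'<R$, mirroring the lower-bound construction in the proof of Theorem~\ref{deviation bds}, and invokes the lower half of the Martin-kernel estimate. Since all constants in Ancona, the shadow estimates, and the tripod comparisons depend only on $G$ (through $\delta$, the Ancona constant, and the visual parameters), the uniformity in $\xi$ asserted in the statement follows.
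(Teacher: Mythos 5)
Your Steps 1–3 are correct and, in fact, give a cleaner reformulation than the paper's: the identity $\P_o[Y_n=y\mid\xi]=\P_o[Y_n=y]\,K(y,\xi)$ follows from the Markov property and Theorem~\ref{density hitting measures}, the tripod/shadow inclusions from the proof of Theorem~\ref{deviation bds} are indeed deterministic in the triangle $\Delta(o,y,\xi)$, and the Ancona estimate $K(y,\xi)\asymp e^{(y,\xi)^{\skrig}_o-(o,\xi)^{\skrig}_y}$ via a centroid $m$ of $\Delta(o,y,\xi)$ is a valid replacement for the Shadow Lemma. (The paper never passes to the Martin kernel explicitly; it works throughout with $\P_x[d(x,[o,Y_\infty))<D,\,Y_\infty\in B]$ and analyzes small shadows $B$. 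So your route is a genuine variant up to this point.)

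The gap is in Step 4. The bound you need, $\sum_{y\in L_k}\P_o[Y_n=y]\,K(y,\xi)\lesssim e^{-(D+k)}$ uniformly in $\xi$, is, by Step~1, exactly $\P_o\big[d_{\skrig}(Y_n,[o,\xi))\in[D+k,D+k+1)\mid\xi\big]\lesssim e^{-(D+k)}$ — i.e.\ (a sliced version of) the very statement of the theorem. "Shadow-covering a small $\nu_o$-neighborhood of $\xi$ and applying Theorem~\ref{deviation bds} layer by layer" does not produce this: for a shadow $B$ around $\xi$, Theorem~\ref{deviation bds} together with $\P_o[Y_\infty\in B]=\nu_o(B)$ only gives
\[
\int_B \P_o\big[d_{\skrig}(Y_n,[o,\zeta))\geq D\mid \zeta\big]\,\d\nu_o(\zeta)=\P_o\big[d_{\skrig}(Y_n,[o,Y_\infty))\geq D,\ Y_\infty\in B\big]\leq \min\!\big(C\,e^{-D},\ \nu_o(B)\big),
\]
whereas the theorem asserts the \emph{product} bound $\lesssim e^{-D}\,\nu_o(B)$. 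Passing from the $\nu_o$-average over $B$ to a bound holding uniformly (for $\nu_o$-a.e.\ $\xi$) requires establishing the approximate independence of the events $\{d_{\skrig}(Y_n,[o,Y_\infty))\geq D\}$ and $\{Y_\infty\in B\}$, and this independence is precisely the content of the theorem, not a corollary of the unconditioned estimate. Also note that the pointwise estimate $K(y,\xi)\asymp e^{d_{\skrig}(o,y)-2(D+k)}$ on $L_k$ does not by itself close the argument, since $\sum_{y\in L_k}\P_o[Y_n=y]\,e^{d_{\skrig}(o,y)}$ has no a priori uniform bound — there are infinitely many $y$ at each level, and the weight $e^{d_{\skrig}(o,y)}$ exactly cancels the Green-function decay, so one cannot sum directly. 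This is where the paper invests all its technical work: it reduces to sets $B=\psi^{-1}(\mho_0(a,R'))$ via the Bonk–Schramm embedding, reinterprets the inclusions $\mho_x(p,R)^c\subset B$ and $\mho_x(p,R)^c\cap B\neq\emptyset$ as explicit angular conditions in $\H^d$ on $\angle([0,\psi(x)),[0,a))$, and then carries out a limiting argument in the angle to show that the relevant sum over $x$ produces the constant factor $\asymp e^{-D}\,\nu_o(B)$ uniformly. Some argument of this type (or another device to secure uniformity in $\xi$) is needed to complete your Step~4.
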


\begin{proof}
We will prove the claim by showing that for any set $B$ in a generating set of the $\sigma$-field on $ \partial G$ we have 
\[ \P_{o}\big[d_{\skrig}\big(Y_n, [o,Y_{\infty})\big)\geq D, Y_{\infty}\in B]\asymp  \P_{o}\big[d_{\skrig}\big(Y_n, [o,\xi)\big)\geq D]\P_o[Y_{\infty} \in B].\]
This implies the statement, since if there existed a set $B_D\subset \partial G$ of positive measure on which the deviation bound did not hold, then applying the disintegration equality for $f= \mathbbm{1}_{B_D}$ would lead to a contradiction.

As in the above proof of Theorem \ref{deviation bds}, we may rewrite
\begin{equation}\label{ind distance and hitting point}
\P_{o}\big[d_{\skrig}\big(Y_n, [o,Y_{\infty})\big)< D, Y_{\infty}\in B\big] = \sum_{x \in G}\P_{x}\big[d_{\skrig}\big(x, [o,Y_{\infty})\big)< D, Y_{\infty}\in B\big]\P_{o}[Y_n = x]
\end{equation}
Note that for technical reasons it will be nicer, to regard the case "$<D$" rather that $\geq D$ in the probability above. Further, we have shown that there exist $p_l,p_u\in G$ and $R_{l},R_{u}>0$ such that 
\[\mho_{x}(p_l,R_l)\subset \{d_{\skrig}\big(x, [o,Y_{\infty})\big)\geq D, Y_{\infty}\} \subset \mho_x(p_u,R_u)\]
So to get an upper bound on \eqref{ind distance and hitting point}, we need to  consider $x \in G$ for which 
\begin{equation}\label{upper bd intersection shadows}
B  \cap \mho_{x}(p_l,R_l)^c \ne \emptyset
\end{equation}
and for a lower bound those $x \in G$ for which
\begin{equation}\label{lower bd intersection shadows}
\mho_{x}(p_u,R_u)^c \subset B .
\end{equation}

We will first study what happens in the hyperbolic space $\H^d$. We then use the rough isometry $\psi: G \to \H^d$ from the Bonk-Schramm Theorem \ref{thm-bs} to transfer between $\H^d$ and $G$. Note that there exist $\lambda, c$, such that for all $x,y \in G$
\[\lambda d_{\skrig}(x,y)\in [d_H(\psi(x),\psi(y))-c, d_H(\psi(x),\psi(y))+c].\]
In particular, this also means that the Gromov product $\lambda (x,y)_z^{\skrig}$ is bounded above and below by $(\psi(x),\psi(y))_{\psi(z)}\pm 3c$, i.e.\ the image of a shadow $\mho_x(p,R)$ is contained in the shadow $\mho_{\psi(x)}(\psi(p), \tilde R)$ for $\tilde R= \lambda^{-1}(R +4c)$ and contains this shadow for $\tilde R =\lambda^{-1} (R-4c)$. Hence we look at the analogous statements to equations \eqref{upper bd intersection shadows} and \eqref{lower bd intersection shadows} in $\H^d$.

So, let $\mho_{x}^{\H}(p,R)$ and $\mho_{0}^{\H}(a,R')$ be two shadows in $\H^d$, where $a\in G$ is arbitrary and $p$ is assumed to be on the geodesic $[0,x]$ and $d_{\H}(p,x)> R$. 
Then, using the fact that in $\H^d$ the Gromov product is given as
\[(\xi,\eta)_p = \log(\sin(\frac{\theta}{2})^{-1})\]
where $\theta$ is the angle between the geodesics $[p,\xi)$ and $[p,\eta)$, we get that 
\[\mho_{0}^{\H}(a,R')= \{\xi \in \partial G:\, \angle([0,a),[0,\xi))\leq \alpha\}\]
for some $\alpha>0$ dependent on $R'$ and $d_{\H}(0,a)$. In the same way
\[\mho_{x}^{\H}(p,R)^c= \{\xi \in \partial G:\, \angle([0,a),[0,\xi))\geq \gamma\}\]
for some $\beta >0$ dependent on $R$ and $d_{\H}(x,p)$. 

Looking at the geometry of $\H^d$ in the ball model then shows us, that for any $\alpha_l< \alpha$ there exists $d_{\alpha_l}>0$ such that for $x$ with $d_{\H}(x,a)> d_{\alpha_l}$ and such that the angle between $[0,x]$ and $[0,a]$ is less than $\alpha_l$, $\mho_{x}^{\H}(p,R)^c\subset \mho_{0}^{\H}(a,R')$. And similarly, 
for any $\alpha_u> \alpha$ there exists $d_{\alpha_u}>0$ such that for $x$ with $d_{\H}(x,a)< d_{\alpha_l}$ and such that the angle between $[0,x]$ and $[0,a]$ is less than $\alpha_u$, $\mho_{x}^{\H}(p,R)^c\cap \mho_{0}^{\H}(a,R')\ne \emptyset$.

Now, to transfer these observations to $G$, let $\psi:G \to \H^d$ be the $(\lambda,c)$-rough isometry from the Bonk-Schramm embedding. Since $\psi$ is injective and the shadows $\mho_{0}(a,R')$ generate the $\sigma$-field on $\partial \H^d$, sets of the form $\psi^{-1}(\mho_o(a,R'))$ form a generating set of the $\sigma$-algebra on $\partial G$. So, we set $B = \psi^{-1}(\mho_{0}(a,R'))$. Further, for $0<\delta< \alpha$, set 
\[H_{\delta}=\psi^{-1}(\{x \in \H^{d}: \angle([0,a),[0,x))< \alpha-\delta\})\]
and for $\delta>0$
\[H^{\delta}=\psi^{-1}(\{x \in \H^{d}: \angle([0,a),[0,x))< \alpha+\delta\}).\]
Then, using that $G \cup \partial G$ equipped with the visual metric is a topological metric space, we have that
\begin{align*}
\nu_o(B) &= \P_o[\lim_{n\to \infty}Y_n \in B ]= \lim_{\varepsilon \to 0}\lim_{m\to \infty}\P_o[d(X_m,B)\leq \varepsilon]\\
&= \lim_{\varepsilon \to 0}\lim_{m\to \infty}\P_o\big[\exists \xi \in B: (X_m,\xi)_o^{\skrig}\geq \varepsilon^{-1}\big] = \lim_{\delta \to 0}\lim_{m\to \infty}\P_o\big[\exists \eta  \in\mho_0(a,R'): (\psi(X_m),\eta)_0^{\H}\geq \delta^{-1}\big]
\end{align*}
Note that when changing from the Gromov product in $G$ to the one in $\H$, we get upper and lower bounds for the probability with $\delta^{-1}= \lambda^{-1}\varepsilon^{-1}\pm3c$. However, since the limit does not depend on the way  $\varepsilon \to 0$, the specific choice of $\delta$ does not matter either.
Now, by the above argument, for fixed $\delta >0$ we have
\[\{x_m:\,\exists \eta  \in\mho_0(a,R'): (\psi(X_m),\eta)_0^{\H}\geq \delta^{-1}, d_{\H}(x_m,0)> d_{\delta_u}\}\subset \{x \in \H^{d}: \angle([0,a),[0,x))< \alpha+\delta_u\}\]
where $\delta_u$ is the choice of angle corresponding to the Gromov product being $\delta^{-1}$. And in the same way
\[\{x \in \H^{d}: \angle([0,a),[0,x))< \alpha-\delta_l\}\subset \{x_m:\exists \eta  \in\mho_0(a,R'): (\psi(X_m),\eta)_0^{\H} \geq \delta^{-1}\}\]
where we choose $\delta_l<0<\alpha$ such that $\mho_{\psi(x)}^{\H}(\psi(p),\tilde R)^c\subset \mho^{\H}_0(a,R')$ for $x\in G$ such that $d_{\H}(\psi(x),0)\geq \delta^{-1}$.

Then, for fixed $\delta>0$, using also that $\P_o[d_G(X_m,o)\leq d_{\delta}]\to 0$ as $m \to \infty$, we have
\[\lim_{m\to \infty} \P_o\big[\exists \eta  \in\mho_0(a,R'): (\psi(X_m),\eta)_0^{\H}\geq \delta^{-1}\big] \leq\lim_{m\to \infty}\P_o\big[ X_m \in H^{\delta}\big] \]
and
\[\lim_{m\to \infty} \P_o\big[\exists \eta  \in\mho_0(a,R'): (\psi(X_m),\eta)_0^{\H}\geq \delta^{-1}\big] \geq\lim_{m\to \infty}\P_o\big[ X_m \in H_{\delta}\big].\]
Further, as $\delta \to 0$, 
\[\P_o\big[ X_m \in H^{\delta}\big] -\P_o\big[ X_m \in H_{\delta}\big] \to 0\]
so the inequalities are actually equalities.
Finally, 
\[\{x \in H_{\delta}: d_{\H}(\psi(x),0)\geq d_{\delta}\} \subset \{x: \mho_x(p,R)^c \subset B, d_{\H}(\psi(x),0)\geq d_{\delta}\}\]
and
\[\{x: \mho_x(p,R)^c \cap B \ne \emptyset\} \subset H^{\delta}.\]
and hence
\[\sum_{x \in G}\P_{x}\big[d_{\skrig}\big(x, [o,Y_{\infty})\big)< D, Y_{\infty}\in B]\P_{o}[Y_n = x] \leq (1-c_l \e^{-D}) \nu_0(B) \]
for some $c_l> 0$ coming from the Shadow Lemma \ref{shadow lemma}. In the same way we get the lower bound by exchanging $c_l$ for $c_u>0$ using the corresponding upper bound in the Shadow Lemma.
This in particular also implies that 
\[\P_{o}\big[d_{\skrig}\big(Y_n, [o,\xi)\big)\geq D\vert \xi]\asymp \e^{-D}.\]
\end{proof}

%%%%%%%%%%%%
\section{Dynamics of the Hitting Measures}\label{sec Theta constructions}

The goal of this section is to provide the first two constructions of the measure $\Theta$ appearing in Theorem \ref{theorem 1}: in Section \ref{sec Theta bd} and Section \ref{sec Theta disintegration} we will show that the measures $\Theta_1$ and $\Theta_2$ come from the {\it dynamics} of the hitting measures along deterministic and random paths respectively.

\subsection{Construction of $\Theta_1$: Dynamics Along Deterministic Paths}\label{sec Theta bd}

\noindent The construction of $\Theta_1$ will depend on a choice of a family of bi-infinite paths connecting each pair of points on the boundary. These paths are constructed as follows:

Recall that 
$$
\partial G \simeq \bigg\{[(x_n)_{n\in\N}]: (x_n)_{n\in\N} \text{ geodesic ray started at }o\bigg\}. 
$$
Hence, for $[\xi],[\eta]\in \partial G$ we may choose such rays $(\xi_n)_{n\in\N}$ and $(\eta_n)_{n\in\N}$. Then $(p_z)_{z\in\Z}:=\big((\xi_n)_{n\in\N}\oplus (\eta_n)_{n\in\N}\big)$ is a bi-infinite ray connecting $\xi$ and $\eta$.

So $\partial^2 G\simeq \{[(\xi \oplus \eta)]: \xi\ne\eta \in \partial G\}$. Using these rays, we now define the measure $\Theta_1$ (recall \eqref{bd meas}) by setting 
\begin{equation}\label{definition Theta1}
\d\Theta_1(\xi \oplus \eta):=\lim_{n\to\infty} \sum_{z=-n}^n \frac{1}{2n+1} \d(\nu_{p_z}\otimes \nu_{p_z})(\xi,\eta)
\end{equation}
for $\xi\ne \eta$. This way, $\Theta_1$ captures how the hitting measures evolve as we move the origin along the paths $(p_z)_{z\in\Z}$.

The chosen paths $(p_z)_{z\in\Z}$ come with a parametrization, which always sets the origin $o$ at time $0$. $\Theta_1$ however is independent of the choice of parametrization. To show this, we consider the action of $\Z$ on the bi-infinite sequences $x=(x_n)_{n\in \Z}$ given by $ z.(x)(n)=x_{n-z}$. Since $x_n$ and $x_{n-z}$ always have at most distance $z$, this action does not change the limits of the sequence as $n\to \infty$ and $n\to -\infty$.

\begin{lemma}
The measure $\Theta_1$ defined in \eqref{definition Theta1} is independent of the parametrization of the rays $(\xi \oplus \eta)$ we chose.
\end{lemma}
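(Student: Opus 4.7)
The plan is to identify the ambiguity in parametrization with the $\Z$-shift action introduced just before the lemma, and then to show that the Cesàro average in \eqref{definition Theta1} is invariant under this shift. The convention $p_0=o$ together with the choice of the two rays $(\xi_n)_{n\in\N}$ and $(\eta_n)_{n\in\N}$ fixes one particular parametrization of the bi-infinite concatenation $(\xi\oplus\eta)$, and any alternative parametrization is obtained by replacing $(p_z)_{z\in\Z}$ with $(p_{z+k})_{z\in\Z}$ for some $k\in\Z$. It thus suffices to verify that the limit defining $\Theta_1$ is unchanged by such a shift.

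The main step is a telescoping estimate on densities. Write
\[
f(z;\xi,\eta):=\frac{\d(\nu_{p_z}\otimes\nu_{p_z})}{\d(\nu_o\otimes\nu_o)}(\xi,\eta).
\]
The difference between the original Cesàro average and the $k$-shifted one equals
\[
\frac{1}{2n+1}\sum_{z=-n}^{n}\bigl[f(z+k;\xi,\eta)-f(z;\xi,\eta)\bigr],
\]
which, after reindexing, collapses to a sum of at most $2|k|$ boundary values of $f$ divided by $2n+1$. Provided one has a pointwise bound $\sup_{z\in\Z}f(z;\xi,\eta)<\infty$ for every $(\xi,\eta)\in\partial^2G$, this quantity vanishes as $n\to\infty$, so the two Cesàro sequences share the same pointwise limit and define the same density against $\nu_o\otimes\nu_o$, hence the same measure $\Theta_1$.

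The main (and essentially only) obstacle is to verify this uniform-in-$z$ bound. By Theorem \ref{density hitting measures},
\[
f(z;\xi,\eta)=\exp\!\bigl(\beta^{\skrig}_\xi(o,p_z)+\beta^{\skrig}_\eta(o,p_z)\bigr),
\]
so what must be shown is that the sum of Green-metric Busemann functions stays bounded as $z$ varies along a quasi-geodesic joining $\eta$ and $\xi$. I would deduce this from the hyperbolicity of $(\Gamma,d_{\skrig})$ (Theorem \ref{thm-bhm}) via a standard thin-triangle computation: along such a quasi-geodesic, the $z$-dependences of $\beta^{\skrig}_\xi(o,p_z)$ and of $\beta^{\skrig}_\eta(o,p_z)$ cancel, leaving
\[
\beta^{\skrig}_\xi(o,p_z)+\beta^{\skrig}_\eta(o,p_z)=2(\xi,\eta)_o^{\skrig}+O(1)
\]
uniformly in $z$, with an $O(1)$ error depending only on the hyperbolicity constant and on the quasi-geodesic constants of the chosen rays. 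This yields the required uniform bound $f(z;\xi,\eta)\lesssim \e^{2(\xi,\eta)_o^{\skrig}}$, which is consistent with the density formula \eqref{density-pair} and closes the telescoping argument.
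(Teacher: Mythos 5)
Your proof is correct and takes essentially the same route as the paper: identify reparametrization with a $\Z$-shift, reindex the Cesàro sum, and observe that only $O(1)$ summands change. The paper's proof of this lemma simply reindexes and remarks that ``the $\limsup$ does not change when changing boundedly many summands,'' silently relying on a uniform bound on the summands $f(z;\xi,\eta)$ that is only established in the subsequent Lemma \ref{density Theta1}; you make this dependency explicit and derive the bound $f(z;\xi,\eta)\lesssim \e^{2(\xi,\eta)^{\skrig}_o}$ from hyperbolicity of $(\Gamma,d_{\skrig})$ and quasi-ruledness, which is exactly the content of that later lemma. So your writeup is, if anything, a little tighter in terms of logical ordering, at the cost of front-loading work that the paper defers.
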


\begin{proof}
For $z\in \Z$ we set
\[\d\Theta_1^{z}(\xi \oplus \eta):= \lim_{n\to\infty}\sum_{i=-n}^n \frac{1}{2n+1} \;\d(\nu_{p_{i+z}}\otimes \nu_{p_{z+i}})(\xi_{z+i},\eta_{z+i}).\]
Then 
\begin{align*}
\d\Theta_1^{z}(\xi\oplus\eta) &= \lim_{n\to\infty} \sum_{i=-n}^n \frac{1}{2n+1} \;\d (\nu_{p_{z-i}}\otimes\nu_{p_{z-i}})(\xi, \eta)\\
&= \lim_{n\to\infty}\sum_{i=-n-z}^{n-z}\frac{1}{2n+1} \;\d (\nu_{p_{i}}\otimes\nu_{p_{i}})(\xi_{i}, \eta_{i})\\
&= \lim_{n\to\infty}\sum_{i=-n}^n\frac{1}{2n+1} \;\d (\nu_{p_{i}}\otimes\nu_{p_{i}})(\xi_{i}, \eta_{i})\\
&= \d \Theta_1(\xi\oplus\eta).
\end{align*}
since the $\limsup$ does not change when changing boundedly many summands in each step.
\end{proof}

We now turn to the required estimates on the density of $\Theta_1$ with respect to $(\nu_o\otimes\nu_o)$ (shown in Lemma \ref{density Theta1}) as well as proving the quasi-invariance of $\Theta_1$ under the $G$-action on $\partial^2 G$ (shown in Lemma \ref{qi Theta1}).

\begin{lemma}[Density of $\Theta_1$]\label{density Theta1}
For all $\xi,\eta\in \partial G$, we have 
\[
\e^{-(2 M + 8\delta)}\, \e^{2(\xi,\eta)^{\skrig}_{o}}\leq 
\frac{\d \Theta_1}{\d(\nu_0\otimes\nu_0)}(\xi,\eta) \leq \e^{2\delta}\, \e^{2(\xi,\eta)^{\skrig}_{o}}
\]
where $M$ is the constant such that $(G,d_{\skrig})$ is $M$ quasi-ruled in the sense of Remark~\ref{rmk-qr}. In particular this also means that the limit in the definition of $\Theta_1$ exists, i.e. that $\Theta_1$ is well defined.

\end{lemma}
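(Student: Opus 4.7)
The plan is to write the log-density $\phi(z) := \log \frac{d(\nu_{p_z} \otimes \nu_{p_z})}{d(\nu_o \otimes \nu_o)}(\xi, \eta)$ as a combination of Gromov products and distances via Theorem \ref{density hitting measures}, and then to control $\phi(z)$ uniformly in $z$ using the hyperbolicity and quasi-ruledness of $(G, d_{\skrig})$. Combined with the identity $d_{\skrig}(o, x) - d_{\skrig}(y, x) = 2(x, y)_o^{\skrig} - d_{\skrig}(o, y)$, applied in the limit $x \to \xi$ under the paper's convention $(\xi, y)_o^{\skrig} := \limsup_{\xi_n \to \xi} (\xi_n, y)_o^{\skrig}$, Theorem \ref{density hitting measures} yields exactly
$$\log \frac{d\nu_y}{d\nu_o}(\xi) \;=\; 2(\xi, y)_o^{\skrig} - d_{\skrig}(o, y),$$
so that, summing the two boundary contributions at $y = p_z$,
$$\phi(z) \;=\; 2(\xi, p_z)_o^{\skrig} + 2(\eta, p_z)_o^{\skrig} - 2 d_{\skrig}(o, p_z).$$

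The three terms on the right are now estimated using the structure of $(p_z)$ as the splicing at $o$ of two quasi-geodesic rays, to $\eta$ for $z \geq 0$ and to $\xi$ for $z \leq 0$. For $z \geq 0$, the $\tau$-quasi-ruled property of $(G, d_{\skrig})$ (Remark \ref{rmk-qr}, with constant $M = \tau$) gives
$$d_{\skrig}(o, p_z) \in [z - M, z] \quad \text{and} \quad (\eta, p_z)_o^{\skrig} \in [z - M, z],$$
with symmetric bounds for $z \leq 0$. The cross-term $(\xi, p_z)_o^{\skrig}$ is handled via the two $\delta$-hyperbolic four-point inequalities for the triple $(\xi, p_z, \eta)$, combined with the previous lower bound $(p_z, \eta)_o \geq |z| - M$, which together yield
$$\min\bigl((\xi, \eta)_o^{\skrig}, |z|\bigr) - (M + 2\delta) \;\leq\; (\xi, p_z)_o^{\skrig} \;\leq\; \min\bigl((\xi, \eta)_o^{\skrig}, |z|\bigr) + 2\delta,$$
the extra $2\delta$ absorbing the $\limsup$-versus-$\liminf$ ambiguity in the boundary Gromov product. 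Substituting the three estimates into $\phi(z)$ produces
$$2 \min\bigl((\xi, \eta)_o^{\skrig}, |z|\bigr) - (2M + 8\delta) \;\leq\; \phi(z) \;\leq\; 2 \min\bigl((\xi, \eta)_o^{\skrig}, |z|\bigr) + 2\delta.$$

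To finish, take the Cesaro average of $e^{\phi(z)}$. Split $\sum_{|z| \leq n}$ at $|z| = \lceil (\xi, \eta)_o^{\skrig} \rceil$: the boundary layer contains only $O((\xi, \eta)_o^{\skrig})$ terms, each bounded by $e^{2(\xi, \eta)_o + 2\delta}$, whose contribution vanishes upon dividing by $2n+1$ as $n \to \infty$. In the tail, the $\min$ in $\phi(z)$ saturates at $(\xi, \eta)_o^{\skrig}$, placing each summand in $[e^{-(2M + 8\delta)}, e^{2\delta}] \cdot e^{2(\xi, \eta)_o^{\skrig}}$; hence the Cesaro mean lies in the same interval, establishing the two-sided density bound. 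Existence of the limit (as opposed to merely a bounded oscillation) follows because $\phi(z)$ stabilizes at $2(\xi, \eta)_o^{\skrig} + O(M + \delta)$ as $|z| \to \infty$, so the Cesaro smoothing converges pointwise to a value in the claimed window. The main technical obstacle is the careful propagation of the $\delta$- and $M$-errors through the hyperbolic estimates for $(\xi, p_z)_o^{\skrig}$: the $8\delta$ in the lower bound arises from combining the $\delta$-errors in the two four-point inequalities with the $2\delta$-ambiguity from the $\limsup$-based extension of the Gromov product to the boundary, all multiplied by the factor of $2$ with which $(\xi, p_z)_o^{\skrig}$ enters $\phi(z)$.
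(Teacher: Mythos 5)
Your decomposition of the log-density into three Gromov product terms,
\[
\phi(z) = 2(\xi,p_z)_o^{\skrig} + 2(\eta,p_z)_o^{\skrig} - 2 d_{\skrig}(o,p_z),
\]
is a legitimate alternative structure to the paper's proof, which instead rewrites $\phi(z) = 2(\xi,\eta)_o^{\skrig} - 2(\xi,\eta)_{p_z}^{\skrig}$ (in the form: bound $d_{\skrig}(p_z,\xi_i)+d_{\skrig}(p_z,\eta_i)$ against $d_{\skrig}(\xi_i,\eta_i)$, so that only the single quantity $(\xi_i,\eta_i)_{p_z}^{\skrig}$ needs controlling). The paper's form is the one on which quasi-ruledness and thin triangles bite directly.

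There is, however, a genuine gap in your intermediate step. You claim that for $z \geq 0$ the quasi-ruled property yields $d_{\skrig}(o,p_z) \in [z-M,z]$ and $(\eta,p_z)_o^{\skrig} \in [z-M,z]$. This conflates two different parametrizations: $(p_z)$ is indexed by its position along a word-metric geodesic ray, while $d_{\skrig}$ is the Green metric, which is only quasi-isometric to the word metric with a multiplicative constant (Theorem~\ref{thm-bhm}). There is no reason for $d_{\skrig}(o,p_z)$ to lie in a unit-slope window around $z$; what is true is $d_{\skrig}(o,p_z) \asymp z$. Quasi-ruledness controls the Gromov product $(\xi,\eta)_w^{\skrig}$ for $w$ on a Green geodesic between $\xi$ and $\eta$; it says nothing about the relation between the Green distance and the word-metric index. (And $(p_z)$ is a word geodesic, hence only a Green \emph{quasi}-geodesic, so even the statement ``$(\eta,o)_{p_z}^{\skrig}$ is bounded'' requires a quasi-geodesic stability constant in addition to the quasi-ruled constant.) The paper's argument sidesteps all of this by approximating $p_z$ by a point $\tilde p_z$ on the Green geodesic $[\xi_i,\eta_i]$ and applying quasi-ruledness only at $\tilde p_z$; at no point does it need to compare $d_{\skrig}(o,p_z)$ with $z$. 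One should also note that even granting your three intermediate inequalities, substituting them into $\phi(z)$ gives a lower bound of $2\min(\cdot) - 4M - 4\delta$, not the $2\min(\cdot) - (2M + 8\delta)$ you assert, so the bookkeeping does not reproduce the claimed constants. Your Cesàro-splitting step at $|z| = \lceil (\xi,\eta)_o^{\skrig}\rceil$ is fine in principle, and the overall shape of the argument can be repaired, but as written the scaling claim $d_{\skrig}(o,p_z) \in [z-M,z]$ is false and is the load-bearing error.
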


\begin{proof}

We know from Theorem \ref{density hitting measures}, that the density of the hitting measures for the random walk started at $x$ and at $y$ is given by 
\[\frac{\d \nu_x}{\d \nu_o} (\xi) = \exp{\limsup_{\xi_n \to \xi} d_{\skrig}(o,\xi_n)-d_{\skrig}(x,\xi_n)}.\]
And since for the product measures the density is 
\[\frac{\d (\nu_x\otimes\nu_x)}{\d (\nu_o\otimes \nu_o)}(\xi,\eta)=\frac{\d \nu_x}{\d \nu_y}(\xi)\frac{\d \nu_x}{\d \nu_y}(\eta),\]
we have that
\[
\frac{\d(\nu_{p_z}\otimes \nu_{p_z})}{ \d(\nu_o\otimes \nu_o)}(\xi,\eta) = \exp\bigg\{\limsup\limits_{\substack{\xi_i \to\xi \\ \eta_i\to\eta}}d_{\skrig}(o,\xi_i)+d_{\skrig}(o,\eta_i)-d_{\skrig}(p_z,\xi_i)-d_{\skrig}(p_z,\eta_i)\bigg\}
\]
Now, if we can show that for big $i\in\N$ and $z\in\Z$, the value of $d_{\skrig}(p_z,\xi_i)+d_{\skrig}(p_z,\eta_i)$ is within bounded distance of $d_{\skrig}(\xi_i,\eta_i)$ for a bound independent of $\xi$ and $\eta$, the claim will follow.

\noindent Let $\abs{z} \geq (\xi,\eta)^{\skrig}_{o}+ \varepsilon$. Then in particular also $\abs{z}\geq (\xi_i,\eta_i)^{\skrig}_{o}$ for all $i$ big enough. 
Consider the triangle $\Delta(0,\xi_i,\eta_i)$. By hyperbolicity of $G$ there exists a point $\tilde p_z\in[\xi_i,\eta_i]$ such that $d_{\skrig}(p_z,\tilde p_z)\leq 4\delta$.

And since $(G,d_{\skrig})$ is $M$ quasi-ruled, we have 
\[
d_{\skrig}(\xi_i,\eta_i)\leq d_{\skrig}(p_z,\xi_i)+d_{\skrig}(p_z,\eta_i) \leq d_{\skrig}(\tilde p_z,\xi_i)+d_{\skrig}(\tilde p_z,\eta_i)+ 8\delta \leq d_{\skrig}(\xi_i,\eta_i)+ 2M + 8\delta.
\]
So, all together for $c = 2M + 8\delta$ and $m=(\xi,\eta)^{\skrig}_{o}+ \varepsilon$
\begin{align*}
\frac{\d\Theta_1}{ \d(\nu_o\otimes \nu_o)}(\xi, \eta) 
 &= \lim_{n \to \infty}\sum_{m\leq\abs{z}\leq n} \frac{1}{2n+1}\,\exp\bigg\{\limsup_{\substack{\xi_i \to\xi \\ \eta_i\to\eta}}d_{\skrig}(o,\xi_i)+d_{\skrig}(o,\eta_i)-d_{\skrig}(p_z,\xi_i)-d_{\skrig}(p_z,\eta_i)\bigg\}\\
& \geq \lim_{n \to \infty} \sum_{m\leq\abs{z}\leq n} \frac{1}{2n+1}\, \exp\bigg\{\limsup_{\substack{\xi_i \to\xi \\ \eta_i\to\eta}}d_{\skrig}(o,\xi_i)+d_{\skrig}(o,\eta_i)-d_{\skrig}(\xi_i,\eta_i) - c\bigg\}\\
 &\geq \e^{-c} \, \e^{2(\xi,\eta)^{\skrig}_{o}}
\end{align*}
where we used that ignoring the summands for $\abs{z}<m$ does not change the limit.

\noindent For the lower bound we use the triangle inequality to get rid of $p_z$. This leaves us with
\[\limsup_{\substack{\xi_i \to\xi \\ \eta_i\to\eta}}d_{\skrig}(o,\xi_i)+d_{\skrig}(o,\eta_i)-d_{\skrig}(\xi_i,\eta_i)\leq (\xi,\eta)^{\skrig}_{o} + 2\delta,\]
where we can switch from the $\limsup$ to the $\liminf$ in the Gromov product by hyperbolicity. All together this means
\begin{align*}
\frac{\d\Theta_1}{ \d(\nu_o\otimes \nu_o)}(\xi, \eta) 
 &= \lim_{n \to \infty}\sum_{z=-n}^n \frac{1}{2n+1}\,\exp\bigg\{\limsup_{\substack{\xi_i \to\xi \\ \eta_i\to\eta}}d_{\skrig}(o,\xi_i)+d_{\skrig}(o,\eta_i)-d_{\skrig}(p_z,\xi_i)-d_{\skrig}(p_z,\eta_i)\bigg\}\\
 &\leq \lim_{n \to \infty} \sum_{z=-n}^n \frac{1}{2n+1}\, \exp\bigg\{\limsup_{\substack{\xi_i \to\xi \\ \eta_i\to\eta}}d_{\skrig}(o,\xi_i)+d_{\skrig}(o,\eta_i)-d_{\skrig}(\xi_i,\eta_i) \bigg\}\\
 &\leq \e^{2\delta} \, \e^{2(\xi,\eta)^{\skrig}_{o}}.
\end{align*}
Note that the upper and lower bounds above, show that the Césaro limit is taken over terms which are bounded above and below, and hence the limit exists.

\end{proof}

\begin{lemma}[Quasi-invariance of $\Theta_1$]\label{qi Theta1}
For any $g\in G$, denoting $g_\star\Theta_1$ the push-forward of $\Theta_1$, we have 
\[
\e^{-16\delta -2M}\leq \frac{\d (g_{\ast}\Theta_1)}{\d \Theta_1}(\xi, \eta)\leq \e^{16\delta +2M}.
\]
where, $M$ is the constant for which $(G,d_{\skrig})$ is $M$ quasi-ruled. In particular, the measure $\Theta_1$ is quasi-invariant under the action of $G$ on $\partial^2 G$. 
\end{lemma}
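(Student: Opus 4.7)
The plan is to recognize that $g_{\ast}\Theta_1$ is nothing but the variant of the $\Theta_1$ construction in which the family of bi-infinite quasi-geodesic rays $(p_z(\xi,\eta))_{z\in\Z}$ (passing through $o$ at time zero) is replaced by the shifted family $q_z(\xi,\eta):=g\cdot p_z(g^{-1}\xi,g^{-1}\eta)$ (passing through $g$ at time zero). The density bound of Lemma~\ref{density Theta1} will then hold for the shifted construction with the same constants, and dividing gives the desired ratio. This is conceptually much cleaner than a direct computation via the Busemann/conformal density identities.

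Concretely, I would first establish the equality $g_{\ast}\Theta_1=\Theta_1^g$, where $\Theta_1^g$ is defined by \eqref{definition Theta1} with $q_z$ in place of $p_z$. The only ingredient needed is the $G$-equivariance $g_{\ast}\nu_y=\nu_{gy}$, which follows because $\nu_y$ is the hitting distribution of a left-invariant random walk started at $y$. Applying this to the product measure $\nu_{p_z}\otimes \nu_{p_z}$ under the change of variables $(\xi,\eta)\mapsto(g^{-1}\xi,g^{-1}\eta)$ then rewrites the defining formula of $g_\ast\Theta_1$ exactly in the form that defines $\Theta_1^g$.

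Next I would observe that the proof of Lemma~\ref{density Theta1} never uses $p_0=o$ in an essential way. The upper bound is a pure triangle inequality estimate. The lower bound relies on the geometric fact that, for $|z|$ larger than the Gromov product $(\xi,\eta)_{p_0}^\skrig$ (plus $\varepsilon$), the point $p_z$ lies within $4\delta$ of the $d_\skrig$-geodesic $[\xi_i,\eta_i]$, because of thinness of the triangle $\Delta(p_0,\xi_i,\eta_i)$ together with the $M$-quasi-ruled property. All of this remains valid when $p_0=o$ is replaced by $q_0=g$; only the finite threshold changes, and the Cesaro limit discards any bounded number of terms. Hence
\[
e^{-(2M+8\delta)}\,e^{2(\xi,\eta)_o^\skrig}\;\le\;\frac{\d g_{\ast}\Theta_1}{\d(\nu_o\otimes\nu_o)}(\xi,\eta)\;\le\;e^{2\delta}\,e^{2(\xi,\eta)_o^\skrig},
\]
the same envelope that bounds $\d\Theta_1/\d(\nu_o\otimes\nu_o)$. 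Dividing the two chains of inequalities, the common factor $e^{2(\xi,\eta)_o^\skrig}$ cancels and one is left with a uniform two-sided bound of the form $e^{\pm(2M+O(\delta))}$ for $\d g_\ast\Theta_1/\d\Theta_1$, which yields the claim (the precise constant $16\delta$ versus $10\delta$ in the exponent is absorbed by the slack in the $\limsup/\liminf$ estimates on the Gromov product that appear inside Lemma~\ref{density Theta1}).

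The main obstacle is to make the identification $g_{\ast}\Theta_1=\Theta_1^g$ rigorous despite the fact that the rays themselves depend on the integration variable $(\xi,\eta)$. The cleanest way to handle this is to write out $\d\Theta_1$ as $F(\xi,\eta)\,\d(\nu_o\otimes\nu_o)(\xi,\eta)$ where $F$ is the Cesaro average of the Radon-Nikodym derivatives $\d\nu_{p_z(\xi,\eta)}/\d\nu_o$, and then use the cocycle identity $(\d\nu_y/\d\nu_o)(g^{-1}\xi')=(\d\nu_{gy}/\d\nu_g)(\xi')$ to rewrite $F(g^{-1}\xi',g^{-1}\eta')\,(\d\nu_g/\d\nu_o)(\xi')(\d\nu_g/\d\nu_o)(\eta')$ as the analogous density for $\Theta_1^g$. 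Once this bookkeeping is dispatched, the rest of the argument is essentially free.
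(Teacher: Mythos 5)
Your proposal is correct and ultimately rests on the same geometric fact as the paper's proof (hyperbolicity forces the Cesaro average over rays to become insensitive to the choice of base point beyond finitely many terms), but your organization of the argument is genuinely cleaner. The paper works directly with the density formula at $(g\xi,g\eta)$, introduces the auxiliary ray $(g p_z)$ passing through $g$ and compares it term-by-term to the chosen ray $(p^g_z)$ through $o$ (both converging to $g\xi$ and $g\eta$), using hyperbolicity to show they stay within $4\delta$ past a finite threshold and the $M$-quasi-ruled property to convert that to a bound on the two-point distance sums. You instead observe \emph{first} that $g_\ast\Theta_1=\Theta_1^g$ exactly, where $\Theta_1^g$ is the same construction built from rays through $g$ (the cocycle identity $(\d\nu_y/\d\nu_o)(g^{-1}\xi')=(\d\nu_{gy}/\d\nu_g)(\xi')$ indeed makes this identification hold on the nose), and \emph{then} note that the entire proof of Lemma~\ref{density Theta1} is base-point-independent: the upper bound is pure triangle inequality, and the lower bound only uses thinness of $\Delta(p_0,\xi_i,\eta_i)$ beyond $|z|\gtrsim(\xi,\eta)_{p_0}^\skrig$, which the Cesaro limit renders irrelevant, while the $2(\xi,\eta)_o^\skrig$ factor comes from the density being taken against $\nu_o\otimes\nu_o$ rather than from $p_0=o$. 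Dividing the two envelopes then cancels the exponential and gives the bound (with constant $2M+10\delta$, which is slightly better than the paper's $2M+16\delta$ and is absorbed into the paper's somewhat conservative constant). The advantage of your route is that it avoids the auxiliary ray comparison altogether and reuses Lemma~\ref{density Theta1} verbatim; the paper's route makes the dependence on the $4\delta$ fellow-traveling of the two rays explicit. Both are sound.
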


\begin{proof}
Denote by $(p_z)_{z\in\Z}$ the chosen bi-infinite ray connecting $\xi$ and $\eta$, and by $(p^{g}_z)_{z\in\Z}$ the ray between $g\xi$ and $g\eta$. By choice of these rays, $(g p_z)_{z\leq 0}$ and $(g p_z)_{z\geq 0}$ are quasi-geodesics converging to $g\xi$ and $g \eta$ respectively, so 
\[\Delta(0,g,g \xi) = (p^{g}_z)_{z\leq 0} \,\, \cup\,\, [0,g] \,\,\cup \,\, (g p_z)_{z\leq 0} \]
\[\Delta(0,g,g \eta) = (p^{g}_z)_{z\geq 0} \,\, \cup\,\, [0,g]\,\, \cup\,\, (g p_z)_{z\geq 0}\]
are quasi-geodesic triangles. In these triangles, the two sides going to $\xi$ and $\eta$ are within distance $4\delta$ of each other for $z$ such that $\d_{\skrig}(p_z,o)\geq (\xi,\eta)_o^{\skrig}$. Hence there exists $m>0$ such that for any point $p\in(g p_z)_{z\geq m}$ there exists $\tilde p \in(p^{g}_z)_{z\geq m}$ for which $d_{\skrig}(p,\tilde p)\leq 4 \delta$.

So, using that $G,d_{\skrig}$ is $M$ quasi-ruled as in the proof of Lemma \ref{density Theta1}, we have
\[\Big|d_{\skrig}(g p_z,\xi_i)-d_{\skrig}(g p_z,\eta_i) - d_{\skrig}(\tilde p^{g}_z,\xi_i)-d_{\skrig}(\tilde p^{g}_z,\eta_i)\Big|\leq 2M + 8\delta.\]
Hence, for $\abs{z}\geq m$ and $c= 16\delta +2M$
\[d_{\skrig}(g p_z,\xi_i)-d_{\skrig}(g p_z,\eta_i)- c \leq d_{\skrig}( p^{g}_z,\xi_i)-d_{\skrig}(p^{g}_z,\eta_i)\leq d_{\skrig}(g p_z,\xi_i)-d_{\skrig}(g p_z,\eta_i) + c.\]

With this we get 
\begin{align*}
\frac{\d \Theta_1}{\d(\nu_o\otimes \nu_o)}(g \xi,g \eta) 
&= \lim_{n\to\infty} \frac{1}{2n+1} \sum_{z=-n}^n \frac{\d(\nu_{p^{g}_z}\otimes \nu_{p^{g}_z})}{\d(\nu_o\otimes \nu_o)}(g \xi,g \eta) \\
&=\lim_{n\to\infty} \frac{1}{2n+1} \sum_{\abs{g}\leq \abs{z}\leq n} \exp\bigg\{\limsup_{\substack{\xi_i \to g\xi\\ \eta_i\to g\eta}}d_{\skrig}(o,\xi_i)+d_{\skrig}(o,\eta_i)-d_{\skrig}(p^{g}_z,\xi_i)-d_{\skrig}(p^{g}_z,\eta_i)\bigg\}\\
& \leq \lim_{n\to\infty} \frac{\e^{c}}{2n+1} \sum_{\abs{g }\leq \abs{z}\leq n} \exp\bigg\{\limsup_{\substack{\xi_i \to g\xi\\ \eta_i\to g\eta}} d_{\skrig}(o,\xi_i)+d_{\skrig}(o,\eta_i)-d_{\skrig}(g p_z,\xi_i)-d_{\skrig}(g {p}_z,\eta_i)\bigg\}\\
&=\e^{ c} \lim_{n\to\infty} \frac{1}{2n+1} \sum_{\abs{z}\leq n-\abs{g}} \frac{\d(\nu_{g p_z}\otimes \nu_{g p_z})}{\d(\nu_o\otimes \nu_o)}(g \xi,g \eta) \\
&= \e^{c} \lim_{n\to\infty} \frac{1}{2n+1} \sum_{\abs{z}\leq n}\frac{ \d(\nu_{p_z}\otimes \nu_{p_z})}{\d(\nu_o\otimes \nu_o)}(\xi, \eta) = \e^{c}\, \frac{\d \Theta_1}{\d(\nu_o\otimes \nu_o)}.
\end{align*}
using that $\d\nu_{gx}(g\xi)=\d\nu_{x}(\xi)$. Using the lower bounds on $d_{\skrig}( p^{g}_z,\xi_i)-d_{\skrig}(p^{g}_z,\eta_i)$ instead of the upper bounds, we get in the same way that the density is bounded below by $\e^{-c}$.
\end{proof}

\subsection{Construction of $\Theta_2$: Dynamics Along Random Paths}\label{sec Theta disintegration}

\noindent The measure $\Theta_1$ constructed Section \ref{sec Theta bd} depends on a specific choice of bi-infinite rays. In order to remove this in some ways arbitrary choice, we will now construct the measure $\Theta_2$ mentioned in Theorem \ref{theorem 1} in a similar way, where instead of relying on this specific choice of geodesic ray, we integrate over all bi-infinite rays passing through $o$ and converging to the boundary points $\xi, \eta$ to determine the density $\d\Theta_2(\xi,\eta)$. Thus, $\Theta_2$ captures the dynamics of the hitting measures when moved along all random walk paths with origin $X_0=o$. In this way it can be seen as a disintegrated version of $\Theta_1$.

\noindent For notational convenience, we define the function $f: G^{\Z}\to \R$ to be 
\begin{equation}\label{def function f in Theta2}
f\Big((x_n)_{n\in\Z}\Big)= \lim_{n\to\infty} \sum_{z=-n}^n \frac{1}{2n+1} \frac{\d(\nu_{x_z}\otimes \nu_{x_z})}{\d(\nu_{x_0}\otimes \nu_{x_0})}(\xi,\eta)
\end{equation}
where $\xi=\bd\big((x_n)_{n\in\N})\big)$ and $\eta=\bd\big((x_{-n})_{n\in\N})\big)$. We also recall from Definition \ref{def bi-inf RW} that $\bar\P_g$ stands for the distribution of bi-infinite random walk trajectory $(X_n)_{n\in\Z}$. Then for any $A \subset \partial^{2}G$, we set (recall \eqref{Theta disintegration}),
\begin{equation}\label{definition Theta2}
\Theta_2(A) := \int_{\bd^{-1}(A)}f\big((x_n)_{n\in\Z}\big)\;\d\bar\P_o\Big((x_n)_{n\in\Z}\Big).\end{equation}

We will now show the required estimates on the density of $\Theta_2$ with respect to $\nu_o\otimes\nu_o$:

\begin{lemma}[Density of $\Theta_2$]\label{density Theta2}
For every $\xi,\eta \in \partial G$, we have 
\[\frac{\d \Theta_2 }{\d (\nu_o\otimes \nu_o)}(\xi, \eta)\asymp \e^{2(\xi,\eta)_o^{\skrig}}.\]
In particular, this also means the limit in the definition of $f$ in \eqref{def function f in Theta2} exists, and $\Theta_2$ is well defined.
\end{lemma}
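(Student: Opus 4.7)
The plan is to mirror the two-sided estimate of Lemma~\ref{density Theta1}, with the fixed quasi-geodesic $(p_z)$ replaced by random bi-infinite paths and with one extra disintegration step to average over the path. Writing
\[
D(x;\xi,\eta):=\frac{\d(\nu_x\otimes\nu_x)}{\d(\nu_o\otimes\nu_o)}(\xi,\eta),
\]
Theorem~\ref{density hitting measures} combined with the algebraic identity $\beta_\xi(o,x)=2(\xi,x)_o^{\skrig}-d_{\skrig}(o,x)$ (which follows from the paper's definitions of the Busemann function and Gromov product) gives the pointwise formula $\log D(x;\xi,\eta)=2(\xi,x)_o^{\skrig}+2(\eta,x)_o^{\skrig}-2d_{\skrig}(o,x)$. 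For the \emph{upper} bound, $\delta$-hyperbolicity forces $\min\{(\xi,x)_o^{\skrig},(\eta,x)_o^{\skrig}\}\leq(\xi,\eta)_o^{\skrig}+\delta$, while the other Gromov product is always bounded by $d_{\skrig}(o,x)$; this yields $D(x;\xi,\eta)\leq e^{2(\xi,\eta)_o^{\skrig}+2\delta}$ uniformly in $x\in G$, which bounds every summand in the Cesaro average defining $f$ pointwise and hence bounds $\d\Theta_2/\d(\nu_o\otimes\nu_o)(\xi,\eta)$ from above.

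For the \emph{lower} bound I disintegrate $\bar\P_o$ along the boundary map $\bd\colon G^{\Z}\to\partial^2 G$ as in Theorem~\ref{disintegration}: since $\bd_{\ast}\bar\P_o=\nu_o\otimes\nu_o$ and the two halves of the bi-infinite walk are independent under $\bar\P_o$, the conditional law $\bar\P_o(\cdot\mid\xi,\eta)$ factorises as $\P_o(\cdot\mid\xi)\otimes\P_o(\cdot\mid\eta)$. Using the $M$-quasi-ruled property of $(G,d_{\skrig})$ from Remark~\ref{rmk-qr}, a short calculation based on closest-point projection to a bi-infinite quasi-geodesic $(\eta,\xi)$ shows that $d_{\skrig}(x,(\eta,\xi))\leq D$ forces $D(x;\xi,\eta)\geq e^{2(\xi,\eta)_o^{\skrig}-cD}$ for a constant $c=c(\delta,M)$. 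By Theorem~\ref{disintegrated deviation bds} applied to each half, $\P_o[d_{\skrig}(X_z,[o,\xi))\leq D\mid\xi]\geq 1-C_De^{-D}$ uniformly in $z$ and $\xi$ for $|z|$ large; by $\delta$-hyperbolicity the rays $[o,\xi)$ and $[o,\eta)$ fellow-travel with the bi-infinite geodesic $(\eta,\xi)$ at uniformly bounded distance once past the ``core'' ball of radius $\sim(\xi,\eta)_o^{\skrig}$ around $o$, which is exactly where $X_z$ lives for $|z|$ large by Corollary~\ref{almost geodesic}. Choosing $D$ large enough and combining these inputs yields $\E_o[D(X_z;\xi,\eta)\mid\xi,\eta]\gtrsim e^{2(\xi,\eta)_o^{\skrig}}$, uniformly in $\xi,\eta$, for all $|z|$ sufficiently large.

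Finally I Fubini-swap integral and Cesaro average: writing $C_n$ for the $n$th Cesaro sum appearing in $f$, the previous step gives $\int C_n\,\d\bar\P_o(\cdot\mid\xi,\eta)\asymp e^{2(\xi,\eta)_o^{\skrig}}$ for $n$ large, while the uniform upper bound on $C_n$ from the first paragraph combined with reverse Fatou yields $\int\limsup_n C_n\,\d\bar\P_o(\cdot\mid\xi,\eta)\asymp e^{2(\xi,\eta)_o^{\skrig}}$, delivering both the claimed density bound and, together with a Birkhoff-type ergodic argument for the Markov process $(X_z)$ conditional on $(\xi,\eta)$, the $\bar\P_o$-a.s.\ existence of the limit in $f$ (so that $\Theta_2$ is unambiguously defined). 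The main obstacle is the second paragraph: transferring the disintegrated deviation bound of Theorem~\ref{disintegrated deviation bds}, which is phrased in terms of the half-geodesic $[o,\xi)$ conditional only on $\xi$, into an estimate for the distance to the \emph{bi-infinite} geodesic $(\eta,\xi)$ conditional on the pair $(\xi,\eta)$. This is precisely where the hyperbolic fellow-travelling of $[o,\xi)$ and $(\eta,\xi)$ for points far from $o$, together with the product structure of $\bar\P_o(\cdot\mid\xi,\eta)$, become indispensable.
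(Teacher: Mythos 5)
Your argument follows the paper's proof in all essential respects: the pointwise upper bound $D(x;\xi,\eta)\leq e^{2(\xi,\eta)_o^{\skrig}+O(\delta)}$ (you reach it via the Gromov-product identity and thin triangles rather than the paper's direct triangle inequality $d_{\skrig}(x_z,\xi_i)+d_{\skrig}(x_z,\eta_i)\geq d_{\skrig}(\xi_i,\eta_i)$, but the two manipulations are interchangeable), the disintegration $\bar\P_o(\cdot\mid\xi,\eta)\simeq\P_o(\cdot\mid\xi)\otimes\P_o(\cdot\mid\eta)$, and the lower bound via Theorem~\ref{disintegrated deviation bds} plus a quasi-ruledness estimate near the geodesic $(\xi,\eta)$. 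Your explicit remark that the deviation bound for $[o,\xi)$ must be transferred to $(\eta,\xi)$ using fellow-travelling past the core ball is a useful clarification that the paper handles only implicitly, while your appeal to a ``Birkhoff-type'' argument for the a.s.\ existence of the Ces\`aro limit is no more rigorous than the paper's brief boundedness remark (the shifted sequence is not stationary under $\bar\P_o(\cdot\mid\xi,\eta)$, so Birkhoff does not apply off the shelf) -- but that issue is peripheral to the density estimate, which is the substance of the lemma.
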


\begin{proof}
Since $(\nu_o\otimes\nu_o)$ is the push-forward of $\bar\P_{o}$ under the boundary map, by disintegration of measure, there exists a family $\bar\P(\cdot\vert(\xi,\eta))$ such that for every measurable function $g:G^{\Z}\to \R$ we have
\[\int_{G_{0}^{\Z}}g(x_n)_{n}\d\bar\P_{o}(x_n)_n = \int_{\partial^2G}\int_{\bd^{-1}(\xi)\cross \bd^{-1}(\eta)}g(x_n)_{n}\d\bar\P_o\big((x_n)_{n\in\Z}\vert \xi,\eta\big)\d(\nu_o\otimes\nu_o)(\xi,\eta).\]
In particular this holds for $g(\cdot)= \mathbbm{1}_{\bd^{-1}(A)}(\cdot)f(\cdot)$ with $f$ defined in \eqref{def function f in Theta2} and any $A\in \partial^2 G$. To get the bound on the density, we hence have to estimate 
\[\int_{\bd^{-1}(\xi)\cross \bd^{-1}(\eta)}f\big((x_n)_{n\in \Z}\big)\;\d\bar\P_o\big((x_n)_{n\in\Z}\vert (\xi,\eta)\big)\]
By the triangle inequality 
\[d_{\skrig}(\xi_i,\eta_i)\leq d_{\skrig}(x_z,\xi_i)-d_{\skrig}(x_z,\eta_i)\]
and hence, remembering that we only consider paths with $x_0=o$, we get that 
$$
f((x_n)_{n\in\Z}) \leq \e^{2(\xi,\eta)_o^{\skrig}}, 
$$
implying the desired upper bound of the density.

To get the lower bound, we first notice that by Fatou's lemma, we can exchange the limit and the integral to get a lower bound. Afterwards we may exchange the integral and sum, so we get
\begin{align*}
&\int_{\bd^{-1}(\xi)\cross \bd^{-1}(\eta)}f(x_n)_{n}\d\bar\P_o\big((x_n)_{n\in\Z}\vert (\xi,\eta)\big) \\
&\geq \lim_{n\to\infty} \sum \int \frac{1}{2n+1} \exp\Big\{\limsup_{\substack{\xi_i \to\xi \\ \eta_i\to\eta}}d_{\skrig}(x_0,\xi_i)+d_{\skrig}(x_0,\eta_i)-d_{\skrig}(x_z,\xi_i)-d_{\skrig}(x_z,\eta_i)\Big\} \d\bar\P_o\big((x_n)_{n\in\Z}\vert \xi,\eta\big)
\end{align*}
where the above sum is taken over $z \in \{-n,...,n\}$ and the integral over $(\bd^{-1}(\xi))\times (\bd^{-1}(\eta))$.

Now we will use what we proved in Theorem \ref{disintegrated deviation bds} about the deviation of the random walk paths $(x_n)_{n\in\Z}$ from the geodesic for the disintegrated measures. Note that, since $\bar\P_o$ is equivalent to the product $\P_o\otimes\P_o$ of two independent measures, under splitting the paths at time $0$ and reversing time on the negative time half, the same also holds for the disintegrated measures $\bar\P_o(\cdot\vert (\xi,\eta))$.

So, let $D>0$ be fixed and $x_z$ such that $d_{\skrig}(x_z,(\xi,\eta))\leq D$. Further let $x\in [\xi_i,\eta_i]$ be such that $d_{\skrig}(x_z,x)\leq D$. Then
\[d_{\skrig}(x_z,\xi_i)+d_{\skrig}(x_z,\xi_i) \leq d_{\skrig}(x,\xi_i)+d_{\skrig}(x,\xi_i)+ 2D \leq d_{\skrig}(\xi_i,\eta_i) + 2D+M\]
using that $G$ is $M$ quasi-ruled for $d_{\skrig}$. Hence
\begin{align*}
&\int_{\bd^{-1}(\xi)\cross \bd^{-1}(\eta)} \frac{\d(\nu_{x_z}\otimes \nu_{x_z})}{\d(\nu_{x_0}\otimes \nu_{x_0})}(\xi,\eta) \d\bar\P_o\big((x_n)_{n\in\Z}\vert (\xi,\eta)\big)\\
&\geq \int_{\bd^{-1}(\xi)\cross \bd^{-1}(\eta)} \exp\bigg\{\limsup_{\substack{\xi_i \to\xi \\ \eta_i\to\eta}}d_{\skrig}(x_0,\xi_i)+d_{\skrig}(x_0,\eta_i)-d_{\skrig}(x_z,\xi_i)-d_{\skrig}(x_z,\eta_i)\bigg\}\d\bar\P_o\big((x_n)_{n\in\Z}\vert (\xi,\eta)\big)\\
&\geq \int_{B} \exp\bigg\{\limsup_{\substack{\xi_i \to\xi \\ \eta_i\to\eta}}d_{\skrig}(x_0,\xi_i)+d_{\skrig}(x_0,\eta_i)-d_{\skrig}(\xi_i\eta_i)- 2D-M\bigg\} \d\bar\P_o\big((x_n)_{n\in\Z}\vert (\xi,\eta)\big)\\
& \geq (1-\e^{-D}) \,\e^{-2D-M}\, \e^{2(\xi,\eta)_0}
\end{align*}
where we have restricted the integral to 
\[B= \bd^{-1}(\xi)\cross \bd^{-1}(\eta)\cap \big\{d_{\skrig}\big(x_z,(\xi,\eta)\big)\leq D\big\}.\]
Optimizing over $D$, we get that the maximal lower bound is achieved at $D=\log(\frac{3}{2})$. Here the value is $e^{-M}\,\frac{4}{27}$.
By the upper and lower bound again the Césaro limit is taken over bounded terms and hence exists. Note that, since the claim is that the limit exists for $f$, in the lower bound we also need to look at the terms where $x_n$ is further than $D$ from the geodesic $(\xi,\eta)$. But since this distance turns up in $f$ as a term of order $\e^{-d_{\skrig}(x_n,(\xi,\eta))}$, we still have that all terms are bounded below by $0$.  
\end{proof}

\begin{lemma}\label{qi Theta2}
$\Theta_2$ is quasi-invariant for the $G$-action.
\end{lemma}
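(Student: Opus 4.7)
The plan is to deduce quasi-invariance of $\Theta_2$ directly from the already-established quasi-invariance of $\Theta_1$ (Lemma~\ref{qi Theta1}), using the density computations of Lemmas~\ref{density Theta1} and~\ref{density Theta2}. The key observation is that both lemmas exhibit densities of the form $\asymp e^{2(\xi,\eta)_o^{\skrig}}$ with respect to $\nu_o\otimes\nu_o$, so dividing yields
\[
\frac{\d \Theta_1}{\d \Theta_2}(\xi,\eta) \,=\, \frac{\d\Theta_1/\d(\nu_o\otimes\nu_o)}{\d\Theta_2/\d(\nu_o\otimes\nu_o)}(\xi,\eta) \,\asymp\, 1,
\]
so $\Theta_1$ and $\Theta_2$ are mutually absolutely continuous with uniformly bounded Radon-Nikodym derivatives.

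With this comparison in hand, the transfer of quasi-invariance is immediate: for any $g\in G$ and any Borel set $A\subset \partial^2 G$,
\[
g_{\ast}\Theta_2(A) \,=\, \Theta_2(g^{-1}A) \,\asymp\, \Theta_1(g^{-1}A) \,=\, g_{\ast}\Theta_1(A) \,\asymp\, \Theta_1(A) \,\asymp\, \Theta_2(A),
\]
where the middle comparison is Lemma~\ref{qi Theta1} and all implicit constants are independent of $g$. This shows $g_{\ast}\Theta_2 \asymp \Theta_2$ with Radon-Nikodym derivative uniformly bounded above and below, hence $\Theta_2$ is quasi-invariant under the $G$-action with uniformly bounded Radon-Nikodym derivatives.

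A direct alternative (which I expect is the only real temptation to avoid) would bypass $\Theta_1$ by computing $\frac{\d g_{\ast}\Theta_2}{\d\Theta_2}$ via the change-of-variables formula: using $G$-invariance of the Gromov product $(g^{-1}\xi,g^{-1}\eta)^{\skrig}_o=(\xi,\eta)^{\skrig}_g$, together with Theorem~\ref{density hitting measures} giving $\frac{\d g_{\ast}(\nu_o\otimes\nu_o)}{\d(\nu_o\otimes\nu_o)}(\xi,\eta) = e^{\beta_\xi(o,g)+\beta_\eta(o,g)}$, one obtains
\[
\frac{\d g_{\ast}\Theta_2}{\d \Theta_2}(\xi,\eta) \,\asymp\, \exp\Big\{2\big[(\xi,\eta)_g^{\skrig}-(\xi,\eta)_o^{\skrig}\big]+\beta_\xi(o,g)+\beta_\eta(o,g)\Big\}.
\]
The bracketed exponent vanishes up to additive $O(\delta)$ by the standard hyperbolic identity $2[(\xi,\eta)_o^{\skrig}-(\xi,\eta)_g^{\skrig}] = \beta_\xi(o,g)+\beta_\eta(o,g) + O(\delta)$ (obtained by writing $(\xi_n,\eta_n)_o - (\xi_n,\eta_n)_g = \tfrac{1}{2}(d_{\skrig}(o,\xi_n)-d_{\skrig}(g,\xi_n)+d_{\skrig}(o,\eta_n)-d_{\skrig}(g,\eta_n))$ and passing to the $\limsup$). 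Either route works; the first is shorter because the hard analytic work has already been done in Lemma~\ref{density Theta2}, so I would present only the transfer argument. No genuine obstacle arises; the only subtle point is that density bounds are of the form $\asymp$, not equality, which is exactly why the comparison framework of quasi-invariance (rather than invariance) is the right language.
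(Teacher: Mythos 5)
Your primary argument is correct, and it takes a genuinely shorter route than the paper. The paper proves the lemma directly: starting from $\Theta_2(gA)=\int_{g\,\bd^{-1}(A)}f\,\d\bar\P_o$, it changes variables to rewrite this as an integral against $\bar\P_g$, disintegrates over $\partial^2 G$ with respect to $\nu_g\otimes\nu_g$, applies the density estimate from the proof of Lemma~\ref{density Theta2} (but now conditioned on $\bar\P_g$), and then verifies the hyperbolic identity
\[
2(\xi,\eta)_g^{\skrig}+\beta_\xi(o,g)+\beta_\eta(o,g)=2(\xi,\eta)_o^{\skrig}+O(\delta),
\]
which is precisely the estimate in your ``direct alternative''. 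So the paper's proof is essentially what you sketched second. Your first argument instead observes that Lemmas~\ref{density Theta1} and~\ref{density Theta2} already imply $\Theta_1\asymp\Theta_2$ with uniformly bounded Radon--Nikodym derivatives, and quotes Lemma~\ref{qi Theta1} to transfer quasi-invariance (with uniformly bounded $g$-independent constants) from $\Theta_1$ to $\Theta_2$. That chain is sound and avoids redoing the disintegration and Busemann algebra. What the paper's route buys is logical independence from $\Theta_1$: the quasi-invariance of $\Theta_2$ is established entirely within the framework of Section~\ref{sec Theta disintegration}, which one might want since $\Theta_1$ and $\Theta_2$ are conceptually distinct constructions. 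What your route buys is economy, since the analytic work is concentrated in Lemma~\ref{density Theta2} and the transfer is one line. One small remark: your parenthetical that the direct computation is ``the only real temptation to avoid'' has it backwards — the direct route is exactly what the paper does, and it is not a trap.
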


\begin{proof}
Let $A\subset \partial^2 G$ and $g\in G$. Then $\bd^{-1}(g A)=g \, \bd^{-1}(A)$ and hence

\begin{align}
\Theta_2(gA) &= \int_{g \bd^{-1}(A)}f\big((x_n)_{n\in\Z}\big)\;\d\bar\P_o\big((x_n)_{n\in\Z}\big)\nonumber\\
&=\int_{\bd^{-1}(A)}f\big((g^{-1}x_n)_{n\in\Z}\big)\; \d\bar\P_o\big((x_n)_{n\in\Z}\big)\label{Theta2QI}\\
%&=\int_{\bd^{-1}(A)}f\big((x_n)_{n\in\Z}\big)\;d\bar\P_o\big(g^{-1}(x_n)_{n\in\Z}\big)\\
&=\int_{\bd^{-1}(A)}f\big((x_n)_{n}\big)\;\d\bar\P_g\big((x_n)_{n\in\Z}\big)\nonumber\\
&=\int_{\bd^{-1}(A)}\int_{\bd^{-1}(\xi)\cross \bd^{-1}(\eta)}f\big((x_n)_{n\in\Z}\big)\;\d\bar\P_g\big((x_n)_{n\in\Z}\vert(\xi,\eta)\big)\;\d(\nu_g\otimes\nu_g)(\xi,\eta)\nonumber\\
&\asymp\int_{\bd^{-1}(A)} \frac{\d(\nu_g\otimes\nu_g)}{\d(\nu_o\otimes\nu_o)}(\xi,\eta)\; \e^{2(\xi,\eta)^{\skrig}_g} \;\d(\nu_o\otimes\nu_o)(\xi,\eta)\nonumber
\end{align}
\noindent
where in \eqref{Theta2QI} we used, that $f$ is invariant under the $G$-shift, and that we can do the same disintegration for $\bar\P_g$ as well. 

Finally, since 
\[\frac{\d(\nu_g\otimes\nu_g)}{\d(\nu_o\otimes\nu_o)}(\xi,\eta) = \exp\bigg\{\limsup_{\substack{\xi_i\to\xi\\ \eta_i \to \eta}}d_{\skrig}(\xi_i,o)+d_{\skrig}(\eta_i,o)-d_{\skrig}(\xi_i,g)-d_{\skrig}(\eta_i,g)\bigg\}\]
it is left to show that there exist $c_1,c_2>0$ such that
\[ 2(\xi,\eta)_g^{\skrig} + \limsup_{\substack{\xi_i\to\xi\\ \eta_i \to \eta}}d_{\skrig}(\xi_i,o)+d_{\skrig}(\eta_i,o)-d_{\skrig}(\xi_i,g)-d_{\skrig}(\eta_i,g) \begin{cases}\geq(\xi,\eta)_o^{\skrig}-c_1\\ \leq (\xi,\eta)_o^{\skrig}+c_2\end{cases}\]
for all $\xi,\eta\in \partial G$ to prove the quasi-invariance. We have
\begin{align*}
2(\xi,\eta)^{\skrig}_g&= \limsup_{\substack{\xi_i \to\xi \\ \eta_i\to\eta}}d_{\skrig}(g,\xi_i)+d_{\skrig}(g,\eta_i)-d_{\skrig}(\xi_i\eta_i)\\
&=\limsup_{\substack{\xi_i \to\xi \\ \eta_i\to\eta}}d_{\skrig}(o,\xi_i)+d_{\skrig}(o,\eta_i)-d_{\skrig}(\xi_i\eta_i) + d_{\skrig}(o,\xi_i)+d_{\skrig}(o,\eta_i)- d_{\skrig}(g,\xi_i)+d_{\skrig}(g,\eta_i).
\end{align*}
And since for any fixed sequences $\xi_i \to\xi$ and $\eta_i \to\eta$ we have
\[\abs{\limsup_{i\to\infty} d_{\skrig}(o,\xi_i)+d_{\skrig}(o,\eta_i)-d_{\skrig}(\xi_i,\eta_i) -(\xi,\eta)_0 } \leq 2 \delta\]
we at most accumulate an error of $4 \delta$ when splitting the above $\limsup$ into two. Switching between the $\limsup$ and $\liminf$ also just changes the value by at most $2 \delta$. Hence
\[2(\xi,\eta)^{\skrig}_g + \limsup_{\substack{\xi_i \to\xi \\ \eta_i\to\eta}}d_{\skrig}(o,\xi_i)+d_{\skrig}(o,\eta_i)- d_{\skrig}(g,\xi_i)-d_{\skrig}(g,\eta_i)\begin{cases}\geq(\xi,\eta)_o^{\skrig}-6\delta\\ \leq (\xi,\eta)_o^{\skrig}+6\delta\end{cases}\]
and we have proved the claim.

\end{proof}

\section{Randomized Geodesic Flow}\label{sec Theta push-forward}

\noindent Previously in Section 
\ref{sec Theta bd} and Section \ref{sec Theta disintegration} we constructed the two measures $\Theta_!$ and $\Theta_2$ which highlight what we expect to see as the limit points of a bi-infinite random walk as we move its origin along the random walk paths.

We now move on to a completely different construction of the measure $\Theta_3$ mentioned in Theorem \ref{theorem 1}, Part (iii), which will descend from a measure on the space of all bi-infinite random walk paths. This version $\Theta_3$ highlights the connection between the $G$-action and the flow along the random walk paths. This connection is also seen in the deterministic setup and is what in the end will lead to the statements of ergodicity.

\subsection{The Measure $\Q$ on $G^\Z$}\label{def Q}
Recall from Definition \ref{def bi-inf RW} that we denote by
\[G^{\Z}:= \bigg\{ \text{all bi-infinite random walk paths } (X_{z})_{z\in\Z} \text{ in } G\bigg\} \]
and write
\[G_{g}^{\Z}:= \bigg \{ \text{all bi-infinite random walk paths } (X_{z})_{z\in\Z} \text{ in } G \text{ with } X_{0}=g \bigg \}\]
for $g \in G$, and that $G^{\Z} = \bigcup_{g\in G}G^{\Z}_g$ is a disjoint union.

\noindent We now consider the following actions on $G^{\Z}$

\begin{align}\label{actions}
G \curvearrowright G^{\Z} &\text{ via } g.(x_z)_{z} := (g x_z)_{z}\\
 G^{\Z}\curvearrowleft \Z &\text{ via } k.(x_z)_{z} := (x_{z-k})_{z}.
\end{align}

These actions commute, since
\[g.(k.(x_z)_{z}) = g.(x_{z-k}) = (gx_{z-k})_{z} = k.(gx_z)_z = k.(g.(x_z)_z).\]

\begin{lemma}\label{lemma measure Q}
Recall the measure $\bar\P_g$ from Definition \ref{def bi-inf RW}. Then 
\begin{equation}\label{Q def}
\Q (A) := \sum_{g\in G} \bar\P_g(A \cap G_g^{\Z})
\end{equation}
is a well-defined measure on $G^{\Z}$, which is invariant under both the $\Z$-action and the $G$-action defined above.
\end{lemma}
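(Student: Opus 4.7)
My approach is to compute $\Q$ explicitly on cylinder sets and then read off both invariances from the resulting formula. Well-definedness is essentially automatic: the $(G_g^{\Z})_{g \in G}$ are pairwise disjoint with union $G^{\Z}$, and each $\bar\P_g$ is a probability measure supported on $G_g^{\Z}$, so the series in \eqref{Q def} defines a $\sigma$-finite Borel measure on $G^{\Z}$ of total mass $|G|$.

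For a cylinder $A = \{x \in G^{\Z} : x_{z_i} = a_i,\ i=1,\ldots,n\}$ with $z_1 < \cdots < z_n$, I would use that under $\bar\P_g$ the forward and backward halves of $(X_z)_{z\in\Z}$ are independent $\mu$-random walks emanating from $g$. Writing $p_k(a,b) := \mu^{*k}(a^{-1}b)$ for the $k$-step transition kernel and applying the Markov property on each half, the joint probability factors into a forward Markov product, a backward Markov product, and one or two ``connection'' factors $p_{z_{m+1}}(g, a_{m+1})$ and/or $p_{-z_m}(g,a_m)$ that tie the two partial paths to the common anchor $g$. Now I sum over $g$: when all $z_i$ have the same sign only one such connection factor is present, and $\sum_g p_k(g,a) = \sum_h \mu^{*k}(h) = 1$ eliminates it; when $z_m < 0 < z_{m+1}$, both factors appear, and I use symmetry of $\mu$ (so $p_k(g,a) = p_k(a,g)$) together with Chapman--Kolmogorov to collapse
\[
\sum_{g} p_{-z_m}(a_m, g)\, p_{z_{m+1}}(g, a_{m+1}) \;=\; p_{z_{m+1} - z_m}(a_m, a_{m+1}).
\]
The outcome in every case is the clean Markov formula $\Q(A) = \prod_{i=1}^{n-1} p_{z_{i+1}-z_i}(a_i, a_{i+1})$.

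Both invariances now follow immediately from this formula. The $\Z$-shift $k.A$ replaces each $z_i$ by $z_i + k$, preserving every difference $z_{i+1} - z_i$, and hence $\Q(k.A) = \Q(A)$ on cylinders and, by the standard Carathéodory extension argument, on all Borel sets. For the $G$-action, left-translation by $h$ preserves the $\mu$-increment structure of the two halves, so $h_\ast\bar\P_g = \bar\P_{hg}$; therefore $\Q(h^{-1}A) = \sum_g \bar\P_g(h^{-1}A \cap G_g^{\Z}) = \sum_g \bar\P_{hg}(A \cap G_{hg}^{\Z}) = \Q(A)$ after reindexing $g' = hg$ (equivalently, one reads it off the cylinder formula via $p_k(hg, ha) = p_k(g,a)$). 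The only genuinely delicate point is the sum-over-$g$ when the cylinder straddles time $0$: this is the single place where the symmetry hypothesis on $\mu$ is actually used, and it is precisely that symmetry which converts the family $\{\bar\P_g\}_g$, each of which is ``doubly anchored'' at $g$ at time $0$, into a $\Z$-shift-invariant Markov measure on $G^{\Z}$ upon summation.
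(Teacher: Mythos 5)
Your proof is correct and rests on the same core idea as the paper's --- computing $\Q$ on cylinder sets and invoking the symmetry of $\mu$ to absorb the time-$0$ anchor --- but you push it slightly further in a way that is both cleaner and more informative. The paper restricts to cylinders that fix \emph{all} coordinates in an interval $\{-m,\ldots,n\}$; then exactly one $g$ (namely $g_0$) contributes to the sum \eqref{Q def}, and $\Z$-invariance is checked only for the generator $k=1$ by a term-by-term comparison in which symmetry converts $\mu(s_{-1})$ into $\mu(s_{-1}^{-1})$. You instead allow arbitrary cylinder times $z_1<\cdots<z_n$ and carry out a genuinely nontrivial sum over $g$: symmetry in the form $p_k(g,a)=p_k(a,g)$ combined with Chapman--Kolmogorov collapses the two anchor factors into a single transition, yielding the explicit formula $\Q(A)=\prod_{i=1}^{n-1}p_{z_{i+1}-z_i}(a_i,a_{i+1})$. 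This is manifestly invariant under every $\Z$-shift simultaneously and exhibits $\Q$ as the stationary Markov measure on $G^{\Z}$ attached to the reversible kernel $p$, a structural fact the paper's proof leaves implicit (one only gets $k=1$ explicitly, with general $k$ following from the group property). Your $G$-invariance argument (the relation $h_{\ast}\bar\P_g=\bar\P_{hg}$ plus reindexing) and your $\sigma$-finiteness remark coincide with the paper's. The only point worth spelling out is the boundary case where some $z_{m+1}=0$: there $p_{z_{m+1}}(g,a_{m+1})=\delta_{g,a_{m+1}}$ picks out $g=a_{m+1}$, and the Chapman--Kolmogorov collapse degenerates to an application of symmetry alone --- which still gives the claimed formula, so the argument goes through.
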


\begin{proof}
Note that $\Q$ is well defined on $G^\Z$ since $G^{\Z} = \bigcup_{g\in G}G^{\Z}_g$ is a disjoint union.  Since for each $g\in G$, the measure $\bar\P_g$ is $\sigma$-finite, so is $\Q$.

We will now show $G$-invariance of $\Q$. Since for $A\subset G^{\Z}_x$ and $g \in G$ we have $gA\subset G^{\Z}_{gx}$, it suffices to show the invariance for a set $A \subset G^{\Z}_x$ for some $x\in G$. Then, for $g \in G$
\[\Q(gA) = \bar\P_{gx}(gA) = \bar\P_x(A) = \Q(A)\]
and hence $\Q$ is $G$-invariant.

To show the $\Z$-action, we further restrict ourselves to studying cylinder sets of the form 
\[A= G^{\N}\cross\{g_{-m}\}\cross\{g_{-m+1}\}\cross...\cross\{g_n\}\cross G^{\N},\]
since these generate the $\sigma$-algebra on $G^{\Z}_{g_{0}}$. Since for $g\in G$ the measure $\bar\P_{g}$ is supported on the space of sequences, for which the increments $g_{i-1}^{-1}g_{i}\in S$ are in our generating set, the set $A$ can be equivalently described by 
\[S^{\N}\cross\{s_{-m}\}\cross...\cross\{s_{-1}\}\cross\{g_{0}\}\cross \{s_1\}\cross...\cross\{s_n\}\cross S^{\N}.\]
Then the shifted set $1.A$ is equivalently described by
\[S^{\N}\cross\{s_{-m}\}\cross...\cross\{s_{-1}g_{0}\}\cross\{s_{-1}^{-1}\}\cross \{s_1\}\cross...\cross\{s_n\}\cross S^{\N}.\]
By construction of the measures $\bar\P_x$, this means
\[\Q(A) = \bar\P_{g_0}(A)= \mu(s_{-m})... \mu(s_{-1})\mu(s_{1}) ... \mu(s_n)\]
and
\[\Q(1.A) = \bar\P_{s_{-1}g_0}(1.A) = \mu(s_{-m})... \mu(s_{-2})\mu(s_{-1}^{-1})\mu(s_{1}) ... \mu(s_n).\]
And since we assume $\mu$ to be symmetric , this means $\Q(A)= \Q(1.A)$, i.e.\  $\Q$ is invariant under the $\Z$-action.
\end{proof}

\subsection{The Measures $\widehat\Q$ and $\Theta_3$}\label{sec hatQ Theta3} In order to define the measures $\widehat\Q$ and $\Theta_3$ (recall \eqref{def QZ int} and Theorem \ref{theorem 1}, part (iii)) we now move to quotients of $G^{\Z}$ by the respective actions on $\Z$ and $G$ defined in the previous section. We will identify the quotients with the respective fundamental domains, i.e. sets which contain one point of each orbit of the respective action. 

The fundamental domain for the $G$-action on $G^\Z$ can be chosen to be $G^{\Z}_o$, since $G^{\Z}=\bigcup_{g\in G}G^{\Z}_g$ is a disjoint union. This fundamental domain is already equipped with the measure $\bar\P_{o}$ (which is also the restriction of $\Q$ to $G^Z_o$). 

The $\Z$-action on $G^{\Z}$ then induces a $\Z$-action on $G^{\Z}_{o}$, which we denote by $(\tau_z)_{z\in\Z}$. More precisely, we set 
\[\tau_z(x_{n})_{n\in\Z} : = x_{z}^{-1}(x_{n-z})_{n\in\Z}, \qquad \forall z\in \Z.\]
We note that, heuristically, this $\Z$-action on $G^{\Z}/G$ can be thought of the {\it flow} along the random walk path, i.e. the analogue of the geodesic flow in the deterministic case.

We now define the fundamental domain for the $\Z$-action. This is slightly more tricky and will only be identified up to a set of measure $0$. Indeed, since the random walk paths almost surely diverge (recall Theorem \ref{divergence RW}), we may restrict $G^{\Z}$ to paths which, for any $n$, stay within a ball $B_n(o)$ of radius $n$ only a finite amount of time. A fundamental domain for the $\Z$-action on this restriction of $G^{\Z}$ is then

\begin{equation}\label{def calD}
\begin{aligned}
\skrid=\bigcup_{g \in G} \bigg \{(x_z)_{z} : x_0 = g , & d_{\skrig}(x_{z},o)> d_{\skrig}(x_0,o) \text{ for }z<0, \\
&\mbox{and}\,\, d_{\skrig}(x_{z},o)\geq d_{\skrig}(x_0,o) \text{ for }z>0 \bigg \}. 
\end{aligned}
\end{equation}
In other words, we choose the parametrization of $(x_z)_{z\in\Z}$ according to where it gets closest to $o$ the first time. We again identify the quotient space with this fundamental domain. The measure $\widehat \Q$ on the quotient space is the restriction of $\Q$ to the fundamental domain, i.e.
\begin{equation}
\widehat \Q (B) = \Q(B \cap \skrid) \quad \forall B \subset G^{\Z}.
\end{equation}
Since $G^{\Z}$ is the space of all random walk paths with all parametrizations, the quotient by $\Z$ can be thought of the space of all sequences in $G$ without a specific choice of parametrization.

\begin{theorem}\label{behavior Theta 3}\label{density Theta}
Let $\Theta_3:= \bd_{\ast}\widehat \Q$ be the push-forward under the boundary map. Then $\Theta_3$ is invariant for the $G$-action on $\partial^2 G$ and satisfies
\[\frac{\d \Theta_3}{\d (\nu_o\otimes \nu_o)}(\xi,\eta)\asymp \e^{2(\xi,\eta)_o^{\skrig}}.\]
\end{theorem}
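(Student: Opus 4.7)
The plan is to dispose of the $G$-invariance by a short algebraic argument using what is already set up for $\Q$, and then attack the density estimate by disintegrating $\widehat\Q$ over starting points and reducing to Green-function decay estimates on the contributions far from the closest-point region.

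For the $G$-invariance, Lemma~\ref{lemma measure Q} says that $\Q$ is invariant under the commuting $G$- and $\Z$-actions on $G^{\Z}$. The boundary map $\bd$ is $G$-equivariant and $\Z$-invariant (shifting the index leaves forward and backward limits unchanged). Since the two actions commute, $g^{-1}\skrid$ is itself a fundamental domain for the $\Z$-action on $G^{\Z}$ for every $g \in G$, and the $\Z$-invariant set $\bd^{-1}(A)$ has the same $\Q$-mass in any such fundamental domain. Hence
\begin{align*}
\Theta_3(gA) &= \Q\big(g\,\bd^{-1}(A) \cap \skrid\big) = \Q\big(\bd^{-1}(A) \cap g^{-1}\skrid\big) \\
&= \Q\big(\bd^{-1}(A) \cap \skrid\big) = \Theta_3(A),
\end{align*}
where the first equality uses $G$-equivariance of $\bd$, the second uses $G$-invariance of $\Q$, and the third uses $\Z$-invariance of $\Q$ together with the $\Z$-invariance of $\bd^{-1}(A)$.

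For the density estimate I would disintegrate by starting point, writing
\[
\Theta_3(A) = \sum_{g \in G}\bar\P_g\big(\{(x_z)_{z\in\Z} \in \skrid \cap G^{\Z}_g:\ \bd((x_z)_z) \in A\}\big).
\]
By Theorem~\ref{density hitting measures} the conformality of the hitting measures yields
\[
\frac{\d(\nu_g\otimes\nu_g)}{\d(\nu_o\otimes\nu_o)}(\xi,\eta) = \exp\!\big(\beta^{\skrig}_\xi(o,g) + \beta^{\skrig}_\eta(o,g)\big),
\]
and a standard Busemann-function computation in the $\delta$-hyperbolic Green metric shows that this exponent is within an additive constant of $2(\xi,\eta)^{\skrig}_o$ whenever $g$ lies within bounded distance of the geodesic $(\xi,\eta)$, and is essentially constant in $g$ as $g$ slides along the axis of $(\xi,\eta)$ (the two Busemann contributions cancel). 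The fundamental-domain indicator $\mathbf{1}_{\skrid}$, which demands that $x_0=g$ realise the first closest approach of the bi-infinite trajectory to $o$ in $d_{\skrig}$, is what makes the sum over $g$ effectively finite. For the lower bound I would restrict the sum to $g$ in a bounded neighbourhood of the closest point $P$ on $(\xi,\eta)$ to $o$: on that neighbourhood the density factor is $\asymp \e^{2(\xi,\eta)^{\skrig}_o}$, and the disintegrated deviation bound of Theorem~\ref{disintegrated deviation bds}, applied to both semi-infinite halves of the bi-infinite trajectory conditional on hitting $(\xi,\eta)$, gives a positive-probability event on which both halves stay in a tube around $(\xi,\eta)$ and the closest approach to $o$ is attained in the chosen ball, yielding the lower bound directly.

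The matching upper bound is the main obstacle. Because the density factor is roughly constant along the entire axis of $(\xi,\eta)$, the suppression of contributions from $g$ far from $P$---whether along the axis or transverse to it---must come entirely from the fundamental-domain restriction: for such $g$ that restriction forces the random-walk halves to avoid the Green-ball $B_{d_{\skrig}(g,o)}(o)$, and one needs quantitative decay for the probability of this detour. This requires a strengthening of the Gou\"ezel--Lalley Green-function decay estimates so that (a) the avoided ball need not be centred on the geodesic $[x,y]$ and (b) the decay tracks the exact distance from the endpoint to the centre of the ball, the latter via the Ancona inequality. Combining these refined estimates with a careful partition of $G$ into regions indexed by projection onto $(\xi,\eta)$, and using the Bonk--Schramm embedding of the Cayley graph into $\mathbb{H}^d$ to count how many group elements lie in each region, would produce a convergent geometric series bounded by $O(\e^{2(\xi,\eta)^{\skrig}_o})$. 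The delicate point is ensuring that the angular losses incurred in the Green-function decay estimate (which become weak when the avoidance angle is small) are always compensated by the volume bounds on each region; this is where the bulk of the technical work lies.
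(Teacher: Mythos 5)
Your proposal is correct in outline and follows essentially the same route as the paper: the $G$-invariance argument via the commuting $G$- and $\Z$-invariances of $\Q$ together with the $G$-equivariance and $\Z$-invariance of $\bd$ is exactly what the paper does; and for the density, the plan of disintegrating over starting points, using conformality of the hitting measures, and obtaining the upper bound by a region decomposition plus refined Green-function detour estimates (strengthened beyond Gou\"ezel--Lalley in precisely the two directions you name, via the Ancona inequality) combined with Bonk--Schramm point-counting and the angular-loss/volume trade-off, is the paper's strategy step for step.

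One place where your sketch as written would not quite close: for the lower bound you invoke Theorem~\ref{disintegrated deviation bds}, but that theorem controls $d_{\skrig}(Y_n, [o,\xi))$ for a walk started at $o$ and conditioned on hitting $\xi$, which is not what you need here. You need to control walks started at $g$ near $P$ and show the restriction imposed by the fundamental domain $\skrid$ (that the trajectory never re-enters the open ball $B_{d_{\skrig}(g,o)}(o)$ in the past and never re-enters the closed ball in the future) costs only a uniformly bounded factor relative to $\nu_g(B_1)\nu_g(B_2)$; moreover, ``both halves stay in a tube around $(\xi,\eta)$ with the closest approach to $o$ landing in a fixed ball'' does not by itself produce the $\skrid$ event ``the first closest approach to $o$ occurs exactly at time $0$.'' The paper instead sums over $g$ along the initial segment $[o,P]$ and uses a direct half-space Harnack-type argument (a uniform $c_0>0$ with $\P_g[\bd(X_n)\in B_1,\, d_{\skrig}(X_n,o)\geq d_{\skrig}(g,o)\ \forall n]\geq c_0\,\P_g[\bd(X_n)\in B_1]$) to handle exactly this point; substituting that argument into your plan closes the gap.
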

Theorem \ref{behavior Theta 3} is the main result of this section. Following the intuition outlined in Section \ref{example density tree} and building on the technical lemmas proved in Section \ref{bounds on Green combinatorics}, Theorem \ref{behavior Theta 3} will be proved in Section \ref{sec-pfofbehavior Theta 3}.

\subsection{A Guiding Example: the Free Group}\label{example density tree}
Due to the highly disconnected nature of free groups, the proof that $\Theta_3$ has the desired density is much easier in the case that $G$ is a free group. Although proving Theorem \ref{density Theta} is significantly harder in the case of general hyperbolic groups, we will outline the proof for this case here as a guiding philosophy.

\noindent For simplicity we will study the case of the simple random walk on $\mathbb{F}_q$. In other words, we set $G$ to be the free group with $q$ generators and $S$  a minimal, symmetric generating set on $G$. Let $\mu$ be the uniform distribution on $S$. The Cayley graph of $G$ with respect to $S$ then is a $2q$-regular tree $\Ga$.

\noindent For $g\in G$, we refer to $\skrit_g$ as the set of branches at $g$, meaning the connected components in $\Gamma \setminus \{g\}$ which do not contain $o$. One element $T \in \skrit_g$ is a branch at $g$. Let $\skrit = \bigcup_g \skrit_g$. The $\sigma$-algebra on $\partial G$ is then generated by the sets $\{(x_n)_n \text{ fully lies in } T \text{ eventually}\}$ for all $T\in \skrit$.

\noindent Let $B_1,B_2$ be two connected, disjoint subsets of $\partial G$ and $T_1, T_2\in \skrit$ the two branches in $G$, such that $\bd^{-1}(B_i)=\{(x_n)_n \text{ lies in }T_i\text{ eventually}\}$ for $i=1,2$.
Here it suffices to look at disjoint sets, since in $\partial^2 G$ the diagonal is removed. Furthermore, the pre-images under the boundary map are indeed of this form, since we chose $B_1$ and $B_2$ to be connected.

\noindent We now consider two cases, either $T_1$ and $T_2$ are subsets of different branches at $o$, or they lie in the same one. In the first case 
\[\bar\P_g[(X_n)_{n\in\N} \in T_2 \text{ eventually}, X_{i}\not\in B_{d_{\skrig}(g,o)-\delta},(X_{-n})_{n\in\N} \in T_1 \text{ eventually}, X_{-i}\not\in B_{d_{\skrig}(g,o)}]=0\]
for all $g \ne o$, since any geodesic from $T_1$ to $T_2$ has to pass through $o$. So
\[\Theta(B_1\cross B_2)= \P_{o}[(X_n) \in T_1 ] \P_{o}[(X_{-n}) \in T_2 ] = \nu_o(B_1)\nu_o(B_2).\]
And since the Green metric in the tree is actually additive along geodesics, we have $(\xi,\eta)^{\skrig}_{o}=0$ for all $\xi\in T_1, \eta \in T_2$. This means the density is correct for all points in different branches off of $o$.

\noindent The second case is that $T_1$ and $T_2$ lie in the same branch at $o$. Since they are disjoint, however, there does exist a point $x \in G$ such that $T_1$ and $T_2$ lie in different branches off of $x$. Then
\[\bar\P_g[(X_n)_{n\in\N} \in T_2 \text{ eventually}, X_{i}\not\in B_{d_{\skrig}(g,o)-\delta},(X_{-n})_{n\in\N} \in T_1 \text{ eventually}, X_{-i}\not\in B_{d_{\skrig}(g,o)}]\ne0\]
if and only if $g \in [o,x]$. Now, since on the tree the restriction to $\{X_{i}\not\in B_{d_{\skrig}(g,o)-\delta}\}$ just means that we might not step towards $o$ from $g$, and similarly $\{X_{i}\not\in B_{d_{\skrig}(g,o)}\}$ means we might not step towards $o$ from $x_1$, the ratios
$$
\begin{aligned}
&c_1 = \frac{\P_g[(X_n)_{n\in\N} \in T_1 \text{ eventually}, X_{i}\not\in B_{d_{\skrig}(g,o)}]}{\P_g[(X_n)_{n\in\N} \in T_1 \text{ eventually}]} \qquad \text{ and } \\
&c_2 = \frac{\P_g[(X_n)_{n\in\N} \in T_2 \text{ eventually}, X_{i}\not\in B_{d_{\skrig}(g,o)-\delta}]}{\P_g[(X_n)_{n\in\N} \in T_2 \text{ eventually}]}, 
\end{aligned}
$$
do not depend on the base point $g$. Together with the fact that 
$$
\P_g[(X_n)_{n\in\N} \in T_i \text{ eventually}] = \nu_g(T_i) \qquad\mbox{for}\qquad i=1,2,
$$
we get
\begin{align*}
&\sum_{g \in [o,x] } \bar\P_g\Big [(X_n)_{n\in\N} \in T_2 \text{ eventually}, X_{i}\not\in B_{d_{\skrig}(g,o)-1},(X_{-n})_{n\in\N} \in T_1 \text{ eventually}, X_{-i}\not\in B_{d_{\skrig}(g,o)}\Big]\\
&= c_1c_2 \sum_{g \in [o,x]} \nu_{g}(B_1)\nu_g(B_2)\\
&=c_1c_2  \sum_{g \in [o,x]} \int_{B_1\cross B_2}\exp\bigg\{\limsup_{\xi_i \to \xi, \eta_i \to \eta} d_{\skrig}(o,\xi_i)+ d_{\skrig}(o,\eta_i)-(d_{\skrig}(g,\xi_i)+ d_{\skrig}(g,\eta_i))\bigg\} \d(\nu_o \otimes \nu_o)\\
&=c_1c_2 \int_{B_1\cross B_2}\e^{2 (\xi,\eta)^{\skrig}_{o}} \sum_{g \in [o,x]} \e^{- 2d_{\skrig}(g,x)} \d(\nu_o \otimes \nu_o)\\
\end{align*}
Since $\sum_{g \in [o,x]} \e^{- 2d_{\skrig}(g,x)}$ converges as $x\to\infty$, $\Theta$ thus has the correct density on all products of connected, disjoint sets.

\begin{remark}({\bf Relation between $\Theta_1$, $\Theta_2$ and $\Theta_3$.})\label{comparison versions Theta}
The purpose of this remark is to provide the reader with a heuristic picture about about the relationships between the measures $\Theta_1$, $\Theta_2$ and $\Theta_3$. Note that in the same setting as in the above Example \ref{example density tree}, we can also explicitly calculate $\Theta_1$ and $\Theta_2$. Indeed, for the $\Theta_1$ we may choose the rays from $o$ to $\xi\in \partial G$ to be the unique geodesics in the graph metric. In this case, these are also the geodesics in the Green metric. Then

\begin{align*}
\frac{\d \Theta_1}{\d (\nu_o\otimes \nu_o)}(\xi,\eta) %&= \lim_{n\to \infty}\sum_{z=-n}^n \frac{1}{2n+1} \frac{\d (\nu_{p_z}\otimes\nu_{p_z})}{\d (\nu_{o}\otimes\nu_{o})}\\
&= \lim_{n\to \infty}\sum_{z=-n}^n \frac{1}{2n+1} \exp \bigg\{\limsup_{\substack{\xi_n \to \xi,\\\eta_n \to \eta}}d_{\skrig}(o, \xi_n)+d_{\skrig}(o, \eta_n) -d_{\skrig}(p_z, \xi_n)-d_{\skrig}(p_z, \eta_n)\bigg\}\\
&= \lim_{n\to \infty} \frac{1}{2n+1} \bigg( \e^{2(\xi,\eta)^{\skrig}_{o}}\sum_{p_z \in [o,x], \abs{z}\leq n} \e^{-d_{\skrig}(p_z,x)} + \e^{2(\xi,\eta)^{\skrig}_{o}} \sum_{p_z \not\in [o,x], \abs{z}\leq n} \e^{0} \bigg)\\
&= \e^{2(\xi,\eta)^{\skrig}_{o}} 
\end{align*}
where, as in the previous example, $[o,x]$ is the part where $[0,\xi)$ and $[0,\eta)$ overlap and we use the fact, that the Green metric is additive in this case. Similarly one gets that the density of $\Theta_2$ originates from the same behavior of the densities $\frac{\d (\nu_{x_z}\otimes\nu_{x_z})}{\d (\nu_{o}\otimes\nu_{o})}$ for points $x_z$ close to the geodesic $(\xi,\eta)$ by the same argument as in the proof of Lemma \ref{density Theta1}. So, in the end, the fact that the density of $\Theta_1$, $\Theta_2$ and $\Theta_3$ with respect to $\nu_o\otimes \nu_o$ is in $[C^{-1} \e^{2(\xi,\eta)_o^{\skrig}},C \e^{2(\xi,\eta)_o^{\skrig}}]$ for some $C >0$ comes from the same behavior of the densities $\frac{\d (\nu_{x_z}\otimes\nu_{x_z})}{\d (\nu_{o}\otimes\nu_{o})}$. 

\noindent However, whereas $\Theta_3$ comes from the push forward of a measure on the $\Z$-quotient of $G^{\Z}$, $\Theta_1$ and $\Theta_2$ align more closely with the $G$-quotient of $G^{\Z}$. 
Indeed, $\Theta_2$ averages over the hitting measures $(\nu_{x_z}\otimes\nu_{x_z})$ along the path $(x_n)_{n\in\Z}$, where $x_z$ is the position at time $0$ of the path $z.(x_{n})_{n\in\Z}$. 
In order to obtain the density of $\Theta_2$ with respect to $(\nu_o\otimes \nu_o)$, the measure $(\nu_{x_z}\otimes\nu_{x_z})$ then gets ``shifted back" by the position $x_z$ at time $z$ via the density $\frac{\d (\nu_{x_z}\otimes\nu_{x_z})}{\d (\nu_{o}\otimes\nu_{o})}$. This is very similar in spirit to the induced $\Z$-action on the fundamental domain for the $G$-quotient of $G^{\Z}$. This action also takes a path $(x_n)_{n\in\Z}$, shifts the time by $z$ and then shifts the whole path back to $x_z^{-1}(x_n)_{n\in\Z}$ by $x_z$. This shifting back is needed to again get a path that still is at $o$ at time $0$. The difference is however, that the $\Z$-shift works ``on the inside" of the space $G^{\Z}$, by shifting the paths directly, whereas the ``shift" in $\Theta_2$ works from the point of view of the measure. 

This means, in a way, that $\Theta_2$ lives on the $G$-quotient of $G^{\Z}$ and this is where its $G$ (quasi)-invariance comes from. The Cesàro-limit over the shifts in the definition of $\Theta_2$ comes from the fact, that we want $\Theta_2$ to only depend on the limits on the boundary of the paths. For $\Theta_1$ the behavior is similar, but as we only regard one path $(p_z)_{z\in\Z}$ for each pair of points $\xi,\eta$, the connection to $G^{\Z}/G$ is not as immediate.
\end{remark}

\subsection{Bounds on the Greens Function and Combinatorial Arguments}\label{bounds on Green combinatorics}

\noindent As remarked earlier, in general hyperbolic groups we cannot assume that by removing a ball of a certain size we disconnect points from each other. Due to the geometric nature of hyperbolic groups it does however significantly increase the distance between points on opposite sides of the ball. So a random walk started on one side of a ball is much less likely to hit a point on the opposite side of the ball when forced to walk around the ball. 

\noindent We will need to quantify how much forcing the random walk to walk around a ball actually changes the hitting probabilities. As discussed in Section \ref{sec proof ideas},  
we will provide a refined quantitative analysis of how the probability of walking from a point $x$ to a point $y$ at distance more than $2n$ away changes, if the random walk is forced to walk around a ball of radius $n$ centered on the geodesic $[x,y]$. Note that we will consider cases where the center $o$ of the ball that gets removed does not lie on the geodesic between $x$ and $y$. Furthermore, our bounds are sensitive to the distance of $y$ to the ball's center $o$, effectively yielding estimates for the hitting distribution on a geodesic $[o,\xi)$ of the random walk started at $x$ when forced to walk around a ball.

\noindent In the following we let $d\geq 1$ be as in Theorem \ref{thm-bs} by Bonk and Schramm, such that there exists a rough similarity $\psi:G\to \H^d$, i.e. a map for which there exist $\lambda,c>0$ such that 
\begin{equation}\label{rough similarity}
\abs{\lambda d_{\skrig}(g,h)- d_{\H^d}(\psi(g),\psi(h))}\leq c \quad\forall g,h\in G.
\end{equation}
Using this map, we may make use of the explicit knowledge we have about the space $\H^{d}$. We will throughout this section always look at $\H^{d}$ as given by the ball model. 

\noindent For $x,y \in \H^d$, we denote by $\angle([0,x),[0,y))$ the Euclidean angle between the rays $[0,x)$ and $[0,y)$ as seen in the ball model for $\H^d$ with center $0$.

\noindent We further define the {\emph restricted Green's function} as
\begin{equation}\label{restricted green}
\skrig(a,b;A) = \sum_{n\in\N}\P_a[X_n = b, X_2,...,X_{n-1}\notin A]
\end{equation}
for $a,b \in G, A\subset G$. 

\begin{lemma}\label{decay greens function}
For $n\in \N$ big enough, there exist $\varepsilon,r >0$ such that for all $a,b\in G$ with $d_{\skrig}(a,o),d_{\skrig}(b,o)\geq n$ we have
\[\skrig(a,b;B_{n}^{\skrig}(o))\leq \exp\big[- rn \alpha \e^{\varepsilon n}\big] \skrig(b,b^n)\]
where $b^n$ is a point on $[o,b)$ with $\lfloor d_{\skrig}(b^n,0)\rfloor =n$ and $\alpha= \angle([0,\psi(a)[0,\psi(b))$ is the angle between $[0,\psi(a))$ and $[0,\psi(b))$ in $\H^d$.
\end{lemma}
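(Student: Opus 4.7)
The plan is to transfer the problem to hyperbolic space via the Bonk--Schramm rough similarity $\psi$, prove a \emph{detour lemma} bounding below the length of paths in $\H^{d}$ that must circumnavigate a ball around the image of $o$, and then convert this geometric bound into the desired probabilistic estimate by combining the Green metric identity $F(x,y)=\e^{-d_{\skrig}(x,y)}$ with Ancona's inequality.

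First I would set $A=\psi(a)$, $B=\psi(b)$ and $O=\psi(o)$ in $\H^{d}$; after post-composing $\psi$ with a hyperbolic isometry we may assume $O$ is the center of the ball model, so that the angle $\alpha$ from the statement is genuinely the angle at $O$ between the rays $[O,A)$ and $[O,B)$. By \eqref{rough similarity} we have $d_{\H}(A,O),d_{\H}(B,O)\geq \lambda n - c$. The main geometric step (the detour lemma) is to show that every rectifiable path in $\H^{d}$ from $A$ to $B$ which stays outside $B_{\lambda n-c}(O)$ has hyperbolic length at least
\[\lambda\,d_{\skrig}(a,b)\;+\;c_{1}\,n\alpha\,\e^{\varepsilon n}\]
for suitable $c_{1},\varepsilon>0$. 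The intuition is that any admissible path must cross the sphere $\partial B_{\lambda n-c}(O)$ while sweeping angular separation $\gtrsim \alpha$; since the intrinsic metric on this sphere is that of a Euclidean sphere of radius $\sinh(\lambda n)\asymp\e^{\lambda n}$, the forced arc alone contributes length $\gtrsim \alpha\,\e^{\lambda n}$, and the polynomial factor $n$ absorbs corrections from the radial tangent portions of the detour and from hyperbolic trigonometric identities in the ball model. The argument naturally splits into the case where $[A,B]$ itself already lies outside $B_{\lambda n-c}(O)$ (where the geometric claim is vacuous and no detour is enforced) and the case where the direct geodesic is forced deep into the forbidden ball, where the minimum-length admissible path is a geodesic-arc-geodesic curve tangent to $\partial B_{\lambda n-c}(O)$ and the length can be estimated explicitly.

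Next I would convert this geometric bound to a probabilistic one. The identity $F(x,y)=\e^{-d_{\skrig}(x,y)}$ gives $\skrig(a,b)\asymp\e^{-d_{\skrig}(a,b)}$, and random walk trajectories in $G$ avoiding $B_{n}^{\skrig}(o)$ correspond, via $\psi$, to discrete paths in $\H^{d}$ avoiding $B_{\lambda n-c}(O)$. A path-counting argument of the type used by Gou\"ezel--Lalley \cite{GL13} and Gou\"ezel \cite{LLT}, suitably adapted so that the forbidden ball need not be centered on $[a,b]$, then yields
\[\skrig(a,b;B_{n}^{\skrig}(o))\;\leq\;C\,\e^{-d_{\skrig}(a,b)}\,\exp\bigl(-rn\alpha\,\e^{\varepsilon n}\bigr).\]
To replace the factor $\e^{-d_{\skrig}(a,b)}$ by $\skrig(b,b^{n})$, I would apply Ancona's inequality (Lemma \ref{ancona}). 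Since $b^{n}\in[o,b]$, $\delta$-thin triangles place $b^{n}$ within bounded distance of $[a,b]\cup[a,o]$; in each alternative, a single or double application of Lemma \ref{ancona} to the triples $(a,b^{n},b)$ or $(a,o,b)$ gives $F(a,b)\leq C\,F(a,b^{n})F(b^{n},b)\leq C\,F(b,b^{n})$, so that $\e^{-d_{\skrig}(a,b)}\asymp\skrig(a,b)\lesssim\skrig(b,b^{n})$, and combining with the previous display delivers the claim.

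The main obstacle will be the detour lemma with the correct dependence on both $n$ and $\alpha$: obtaining the double-exponential factor requires sharp hyperbolic trigonometry in the ball model and must be uniform in the positions of $A$ and $B$ along their respective rays from $O$, since these need not be equidistant. A secondary delicate point is making the Ancona step uniform across the range of $\alpha$: when $\alpha$ is so small that $[A,B]$ does not interfere with $B_{\lambda n-c}(O)$, the geometric contribution is trivial and one must verify by a separate argument (via $\delta$-hyperbolicity and the Ancona inequality applied to the appropriate triple) that the crude bound $\skrig(a,b;B_{n}^{\skrig}(o))\leq\skrig(a,b)\lesssim\skrig(b,b^{n})$ still delivers the conclusion, even in the absence of any angular correction factor.
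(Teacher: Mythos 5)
Your proposal hinges on a conversion step that does not hold: you want to pass from a lower bound on the hyperbolic length of any path from $\psi(a)$ to $\psi(b)$ avoiding $B_{\lambda n-c}(\psi(o))$ to an upper bound on the restricted Green's function $\skrig(a,b;B_n^{\skrig}(o))$ of the form $\e^{-d_{\skrig}(a,b)}\exp(-rn\alpha\e^{\varepsilon n})$. The identity $F(x,y)=\e^{-d_{\skrig}(x,y)}$ is a \emph{definition} of the Green metric for the unrestricted hitting probability; there is no corresponding identity relating the restricted hitting probability to any ``restricted Green distance,'' let alone to a geometric length of curves in $\H^{d}$. The restricted Green's function is a sum over all killed trajectories, not an exponential of the shortest admissible path length, and the random walk does not preferentially follow geodesics of the restricted region; entropy of paths has to be balanced against step probabilities, which is precisely what the Green metric encodes only in the \emph{unrestricted} case. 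The phrase ``a path-counting argument of the type used by Gou\"ezel--Lalley, suitably adapted'' papers over exactly this: the adaptation \emph{is} the content of the lemma, and it is not a geometric length estimate.

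The way the paper actually controls $\skrig(a,b;B_n^{\skrig}(o))$ is structurally different. It constructs a family of barriers $H_1,\dots,H_M$ (with $M\sim\alpha\e^{\varepsilon n}$ scaling with the angle) that any admissible trajectory must cross in order, decomposes by successive barrier crossings,
\[
\skrig(a,b;B_n^{\skrig}(o))\le\sum_{a_1\in H_1}\cdots\sum_{a_M\in H_M}\prod_i\skrig(a_i,a_{i+1}),
\]
and bounds the right-hand side by the product of operator norms $\prod_i\|L_i\|_{\ell^2}$, where $L_i$ is the kernel operator $(L_if)(a)=\sum_{b\in H_i}\skrig(a,b)f(b)$. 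Controlling $\|L_i\|$ by Hilbert--Schmidt norms reduces the problem to showing $\sum_{a_i\in H_i,a_{i+1}\in H_{i+1}}\skrig(a_i,a_{i+1})^2\lesssim\e^{-rn}$, and this is where Ancona's inequality enters --- applied at each barrier crossing to a point $x_a$ near the origin shared by the segment $[a_i,a_{i+1}]$, not merely at the end as in your proposal --- together with the counting estimates of Lemma \ref{number hitting points} on how many vertices of $G$ can land in each barrier shell. Your final Ancona step $\skrig(a,b)\lesssim\skrig(b,b^n)$ is correct and does appear, but the double-exponential factor in $n$ and $\alpha$ comes from the multiplicativity across the $M\sim\alpha\e^{\varepsilon n}$ barriers, not from a ``detour lemma'' about curve lengths. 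Without the barrier/operator-norm machinery (or an equivalently rigorous decomposition of the restricted path space), your outline does not yield a proof.
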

The idea of the proof is to construct a sequence of barriers $H_1,...H_M$ which the random walk has to pass through, and show that the probability of passing from one barrier to the next is sufficiently small. The concrete statement is the following.

\begin{lemma}\label{lemma def barriers}
Let $a\in G$ be such that $d_{\skrig}(a,o)\geq n$ and fix a point $\xi\in \partial \H^d$ and denote by $\alpha \in[0,\pi]$ the angle between the rays $[0,\xi)$ and $[0,\psi(a)]$. 
For fixed $n\in\N,\varepsilon>0$, set $2M = \lfloor\alpha \e^{\varepsilon n}\rfloor$ and let $I_{i} = [\frac{2i-1}{\e^{\varepsilon n}},\frac{2i}{\e^{\varepsilon n}}]$ for $i=1,..,M$.
Then there exist barriers $H_1,..H_M\subset G$ and $C_1>0$ such that
\begin{equation}\label{bd green function fixed end}
\sum_{a_{i} \in H_{i}}\skrig(a_i,a_{i+1})^2 \leq C_1 \e^{-rn} \skrig(a_{i+1}, a_{i+1}^{n})^2
\end{equation}
for all $a_{i+1}\in H_{i+1}$ and where $a_{i+1}^{n}$ is a point on $[a_{i+1},0]$ such that $\lfloor d_{\skrig}(a^n_{i+1},o)\rfloor=n$.
In particular this also means that there exists $C_2>0$ such that
\begin{equation}\label{bd green function double sum}
\sum_{a_{i} \in H_{i},a_{i+1} \in H_{i+1}}\skrig(a_i,a_{i+1})^2\leq C_2 \e^{-rn}.
\end{equation}
These barriers can be defined as 
\begin{equation}\label{barriers}
H_i = H(\alpha_i) = \bigcup_{\eta} \bigg\{x \in G: d_{\skrig}(x,o)\geq n \text{ and } d_{\H}(\psi(x),[0,\eta))<C_0\bigg\}
\end{equation}
where the union is over all $\eta\in \partial\H^d$ such that $\angle ([0,\eta),[0,\xi))=\alpha_i$,
for some choice $\alpha_i\in I_i$ for $i=1,..,M$ and for some $C_0>0$.
\end{lemma}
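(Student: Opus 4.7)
The plan is to choose each $\alpha_i$ arbitrarily within $I_i$, set $C_0$ to be a universal constant larger than the rough-similarity defect, and define $H_i$ exactly as in \eqref{barriers}. The crucial structural fact enforced by the spacing of the intervals is that $\alpha_{i+1}-\alpha_i\geq e^{-\varepsilon n}$, so consecutive barriers are angularly separated at the origin by at least this amount. Apart from this spacing, the proof is a two-step argument: a geometric estimate in $\H^d$ translated back to $(G,d_{\skrig})$ via the rough similarity, followed by a summation via volume-growth counting in $G$.

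For the geometric step, fix $a_i\in H_i$ and $a_{i+1}\in H_{i+1}$. Each $\psi(a_j)$ lies within hyperbolic distance $C_0$ of some ray in the corresponding pencil and, since $d_{\skrig}(a_j,o)\geq n$, satisfies $d_{\H}(\psi(a_j),0)\geq \lambda n-O(1)$. The Euclidean angle at $0$ between the rays $[0,\psi(a_i))$ and $[0,\psi(a_{i+1}))$ is therefore at least $e^{-\varepsilon n}-O(e^{-\lambda n})$. Applying the classical hyperbolic distance formula $\cosh d=\cosh r_1\cosh r_2-\sinh r_1\sinh r_2\cos\beta$ and its large-$r$ asymptotic $d\approx r_1+r_2+2\log\sin(\beta/2)$ shows that the foot of the perpendicular from $0$ onto $[\psi(a_i),\psi(a_{i+1})]$ lies at hyperbolic distance at most $\varepsilon n+O(1)$ from $0$. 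Transporting back through \eqref{rough similarity} converts this into the Gromov product bound $(a_i,a_{i+1})_o^{\skrig}\leq \varepsilon n/\lambda+O(1)$, hence
\[
d_{\skrig}(a_i,a_{i+1})\;\geq\;d_{\skrig}(a_i,o)+d_{\skrig}(a_{i+1},o)-2\varepsilon n/\lambda-O(1).
\]
Since $\skrig(x,y)=\skrig(e,e)\,e^{-d_{\skrig}(x,y)}$ by the very definition of $d_{\skrig}$, this yields the pointwise estimate
\[
\skrig(a_i,a_{i+1})^2\;\lesssim\;e^{4\varepsilon n/\lambda}\,e^{-2d_{\skrig}(a_i,o)}\,e^{-2d_{\skrig}(a_{i+1},o)}.
\]

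For the counting step, recall that $(G,d_{\skrig})$ has volume entropy $1$, so for any subset $H\subset G$,
\[
\sum_{a\in H,\ d_{\skrig}(a,o)\geq n}e^{-2d_{\skrig}(a,o)}\;\leq\;\sum_{k\geq n}|\{a\in G:k\leq d_{\skrig}(a,o)<k+1\}|\,e^{-2k}\;\lesssim\;\sum_{k\geq n}e^{k}e^{-2k}\;\lesssim\;e^{-n}.
\]
Applied to $H_i$, this together with the pointwise bound above gives
\[
\sum_{a_i\in H_i}\skrig(a_i,a_{i+1})^2\;\lesssim\;e^{4\varepsilon n/\lambda-n}\,e^{-2d_{\skrig}(a_{i+1},o)}.
\]
By the quasi-ruled property (Remark~\ref{rmk-qr}), $d_{\skrig}(a_{i+1},a_{i+1}^n)=d_{\skrig}(a_{i+1},o)-n+O(1)$, so $\skrig(a_{i+1},a_{i+1}^n)^2\asymp e^{2n-2d_{\skrig}(a_{i+1},o)}$. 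Dividing gives \eqref{bd green function fixed end} with $r=3-4\varepsilon/\lambda$, positive as soon as $\varepsilon<3\lambda/4$. A further sum of $\skrig(a_{i+1},a_{i+1}^n)^2$ over $a_{i+1}\in H_{i+1}$, by the same volume-growth estimate (now absorbing the factor $e^{2n}$), is bounded by $e^{n}$; combined with the previous bound this produces \eqref{bd green function double sum} with exponent $r=2-4\varepsilon/\lambda$, still positive for $\varepsilon<\lambda/2$.

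The main obstacle is the uniformity of all constants: the rough-similarity defect $c$ in \eqref{rough similarity}, the tube width $C_0$, and the quasi-ruled constant $\tau$ must all be absorbed into a single $O(1)$ error that does not depend on $i$, $a_i$, $a_{i+1}$ or $n$. This is ensured by the fact that the hyperbolic geometric input (convergence of geodesics towards $0$ when their endpoints subtend a given angle there) is universal in $\H^d$, and by the fact that the ambient $\psi(G)$ has bounded geometry with respect to the rough similarity. This is precisely the point at which the Bonk--Schramm embedding is indispensable: the estimate is inherently a $\H^d$-phenomenon and would be cumbersome to obtain working purely inside $(G,d_{\skrig})$.
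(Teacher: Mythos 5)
Your argument is correct and takes a genuinely different, more direct route than the paper's. The paper's proof selects the barrier angle $\alpha_{M-1}$ \emph{randomly} (uniformly in $I_{M-1}$), bounds the expectation of $\sum_{a_{M-1}\in H_{M-1}}\skrig(a_{M-1},a_M)^2$ by combining the estimate $\Leb(I(a))\lesssim \e^{-\lambda d_{\skrig}(a,o)}$, the Ancona inequality (splitting $\skrig(a_{M-1},a_M)$ at a point $x_a$ within distance $\varepsilon n$ of $o$), and the per-sphere count $n(m)\leq C_1$ from Lemma~\ref{number hitting points}, and then invokes the probabilistic method. You instead notice that the intervals $I_i,I_{i+1}$ are separated by a gap of exactly $\e^{-\varepsilon n}$, so \emph{any} deterministic choices $\alpha_i\in I_i$ force the underlying ray pencils of $H_i$ and $H_{i+1}$ to subtend an angle $\geq \e^{-\varepsilon n}$ at the origin; together with the $C_0$-tube condition and the lower bound $d_{\H}(\psi(a_j),0)\gtrsim\lambda n$, the hyperbolic law of cosines then gives the Gromov product bound $(a_i,a_{i+1})_o^{\skrig}\leq \varepsilon n/\lambda+O(1)$ uniformly over all pairs. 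Since $\skrig(x,y)=\skrig(e,e)\e^{-d_{\skrig}(x,y)}$ by definition of the Green metric, this yields a pointwise bound on $\skrig(a_i,a_{i+1})^2$, and the summation is then handled by the crude volume-growth estimate $\sum_{d_{\skrig}(a,o)\geq n}\e^{-2d_{\skrig}(a,o)}\lesssim\e^{-n}$, without Lemma~\ref{number hitting points}. This dispenses with the random choice, the Ancona inequality, and the counting lemma altogether, and in fact delivers \eqref{bd green function fixed end} uniformly over $a_{i+1}\in H_{i+1}$ (whereas the paper's probabilistic step, as written, only certifies the bound for a fixed $a_M$). The trade-off is a different exponent — you obtain $r=3-4\varepsilon/\lambda$ for \eqref{bd green function fixed end} and $r=2-4\varepsilon/\lambda$ for \eqref{bd green function double sum}, versus the paper's $\lambda-\varepsilon$ — but since the downstream application (Lemma~\ref{decay greens function}) only requires \emph{some} $r>0$, this is immaterial.
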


\begin{figure}[h!]
\centering

\begin{tikzpicture}[scale=0.3] 

\coordinate[label=left :$\psi(a)$]  (a) at (4,11.5);
\coordinate[label=below:$0$]  (o) at (7,9.5);

\fill (a) circle (3pt);
\fill (o) circle (3pt);

\draw  (7,9.5) circle (2cm);
\draw (7,9.5) circle (9.5cm);

\draw [line width=1.5pt] (7,9.5) -- (16.5,9);
\draw [line width=1.5pt] (7,9.5) -- (16.5,10);
\fill[cyan] (7,9.5) -- (16.5,9) arc [start angle=-5, end angle=1.4, radius=8.9cm] -- cycle;
\coordinate[label=right:$B$]  (B) at (16.5,9.5);

\draw [line width=1.5pt] (8.75,10.5) -- (15,14.6)node[right]{$H_4$};
\draw [line width=1.5pt] (8,11.25) -- (11.75,17.75)node[above]{$H_3$};
\draw [line width=1.5pt] (7,11.5) -- (7,19)node[above]{$H_2$};
\draw [line width=1.5pt] (6,11.25) -- (2.25,17.75)node[above]{$H_1$};

\draw [line width=1.5pt] (8.75,8.5) -- (15.4,5)node[right]{$H_4$};
\draw [line width=1.5pt] (8,7.75) -- (11.75,1.25)node[below]{$H_3$};
\draw [line width=1.5pt] (7,7.5) -- (7,0)node[below]{$H_2$};
\draw [line width=1.5pt] (6,7.75) -- (2.75,1)node[below]{$H_1$};

%DRAW THE RANDOM WALK
\draw[red, decorate, decoration={snake, amplitude=0.5mm, segment length=5mm}] (4,11.5) .. controls (6,13) .. (16.5,9.5); 

\draw[red, decorate, decoration={snake, amplitude=0.5mm, segment length=5mm}] (4,11.5) .. controls (5,2) and (13,11) ..(16.5,9.5); 

\end{tikzpicture}
%}%
\label{fig:barriers}
\caption{This is an illustration of the embedding of a random walk started in $a$ into $\mathbb{H}^2$ under the Bonk-Schramm embedding. In the image, all paths converging to a point in $B$ and staying outside $B_{n}(0)$ have to hit each of the barriers $H_1,...,H_4$ at some time.}
\end{figure}
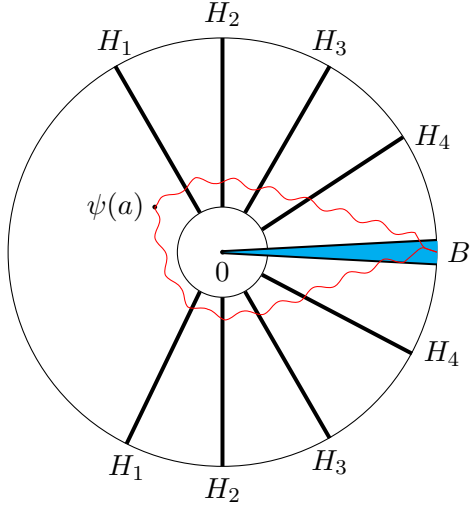

\begin{remark}
The definition of the barriers is motivated by the fact that we want to apply this lemma to estimate the probability to walk from $\psi(a)$ to a small neighborhood of $\xi$. They could also be chosen in a more general manner, e.g by only looking at the angle between $[0,\psi(a)]$ and the rays in the barrier.
\end{remark}

Before we can however turn to proving these statements, we first need to show some estimates on the number of points of $\psi(G)$ we may find in certain regions of $\H^d$.

\begin{lemma}\label{number hitting points}
Let $\xi \in \partial \H^d$, and $\beta \in [0,\pi]$. Let $B\subset \partial \H^d$ be the set of all $\zeta$ such that $\angle([0,\xi),[0,\zeta))<\beta$ and set
\[A:= \bigcup_{\zeta \in B}\big\{x \in [0,\zeta)\big\}.\]
Further, let $\alpha \in (\beta,\pi]$ and $H=H(\alpha)$ be a barrier as defined in \eqref{barriers}, i.e.
\[ H(\alpha) = \bigcup_{\eta} \bigg\{x \in G: d_{\skrig}(x,o)\geq n \text{ and } d_{\H}(\psi(x),[0,\eta))<C_0\bigg\}.\]
Let $\lambda>0$ be the multiplicative constant for the rough similarity of the Bonk-Schramm embedding \eqref{rough similarity}.
Then there exist $C_1,C_2>0$ such that
\begin{enumerate}
\item $n(m):= \abs{\{g\in H : \lfloor d_{\skrig}(g,o)\rfloor =m\}} \leq C_1$.
\item$N(m):= \abs{\{g\in G : \lfloor d_{\skrig}(g,o)\rfloor =m, \psi(g)\in A\}}\geq C_2 \e^{m}$, as long as $\psi^{-1}(B)\ne \emptyset$. The constant $C_2$ might depend on $\alpha.$
\end{enumerate}
\end{lemma}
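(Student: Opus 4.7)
Both estimates can be obtained by transferring the count to $\H^d$ via the Bonk-Schramm rough similarity $\psi$, exploiting the explicit ball-model geometry, and then pushing the result back through the relation $d_{\H}(\psi(g),\psi(h)) = \lambda d_{\skrig}(g,h) + O(1)$.

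For part (1), my plan is to observe first that an element $g \in H$ with $\lfloor d_{\skrig}(g,o)\rfloor = m$ has $\psi(g)$ constrained to the intersection of two bounded hyperbolic neighborhoods: a $(c+1)$-neighborhood of the hyperbolic sphere $S_{\lambda m}$ about $\psi(o)$, and, by definition of $H$, a $C_0$-neighborhood of the hyperbolic cone $C_\alpha$ consisting of all rays from $0$ at Euclidean angle $\alpha$ from $[0,\xi)$. Hence $\psi(g)$ lies in a bounded neighborhood $R_m$ of the $(d-2)$-dimensional sub-sphere $C_\alpha \cap S_{\lambda m}$. Because $\psi$ is a rough similarity, distinct group elements are hyperbolically separated by at least $\lambda - c$, so open balls of radius $\epsilon_0 := (\lambda-c)/3$ centered at the $\psi(g)$ are pairwise disjoint, and I would conclude $n(m) \le \mathrm{Vol}_{\H^d}(R_m)/\mathrm{vol}_{\H^d}(B_{\epsilon_0})$. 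Evaluating $\mathrm{Vol}(R_m)$ in the ball model and balancing the resulting growth against the density ceiling on $\psi(G)$-points on $S_{\lambda m}$ dictated by the volume growth exponent $\nu_G = 1$ of $(G,d_{\skrig})$ should yield the asserted uniform bound $C_1$.

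For part (2) the lower bound is cleaner: the volume growth $\nu_G = 1$ together with Theorem \ref{thm-bhm} gives $|\{g : \lfloor d_{\skrig}(g,o)\rfloor = m\}| \asymp e^m$. Since $A$ has positive solid angle at $0$ and $\psi^{-1}(B)\ne\emptyset$, there exists $\zeta$ in the closure of $\psi(G)$ with $\angle([0,\zeta),[0,\xi))<\beta$. I would then invoke the Shadow Lemma \ref{shadow lemma} applied to the harmonic conformal density $(\nu_x)_{x\in G}$, which is conformal of dimension $v_{\skrig} = 1$: for a point $g^{\star}$ at $d_{\skrig}$-distance $m$ from $o$ along a geodesic toward $\zeta$, the shadow $\mho_o(g^{\star},R)$ has $\nu_o$-mass $\asymp e^{-m}$ and, for $m$ large enough, lies inside the pre-image of $B$ under the identification of $\partial G$ with directions in $\partial\H^d$. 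Comparing with the total mass and summing over the $\asymp e^m$ admissible positions of $g^{\star}$ yields at least $C_2 e^m$ group elements whose embedding direction lies in $A$.

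The principal difficulty lies in part (1): ensuring that the bound is genuinely uniform in $m$. In the case $d = 2$ the intersection $C_\alpha \cap S_{\lambda m}$ is a pair of points, $R_m$ has bounded intrinsic volume, and the packing argument alone gives $n(m)\le C_1$ immediately. For $d\geq 3$, however, the naive volume of $R_m$ grows like $\sinh(\lambda m)^{d-2}$, so one cannot dispense with the sparsity of $\psi(G)$ on $S_{\lambda m}$; reconciling the hyperbolic volume estimate with the rough-similarity scaling from Theorem \ref{thm-bs} and the volume growth $\nu_G = 1$, so that the exponentials cancel, is the step I expect to require genuine care.
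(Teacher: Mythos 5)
Your overall strategy matches the paper's: for part (1), pass to $\H^d$ via the Bonk--Schramm rough similarity $\psi$, note that the angular spread of $H_m$ is $\lesssim e^{-\lambda m}$, intersect with a bounded radial shell, and count $\psi(G)$-points by a packing argument; for part (2), find a group element whose image lands in $A$ and use the volume growth $v_{\skrig}=1$. This is the same decomposition the paper uses, so there is no genuinely different route here.

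The concern you raise for part (1) when $d\geq 3$ is, however, not a side remark -- it is the crux, and your proposal does not close it. The angular fraction of directions within angle $O(e^{-\lambda m})$ of the cone $\{\angle = \alpha\}$ is $\lesssim e^{-\lambda m}$, and the $(d-1)$-dimensional volume of the annular shell $\{\lambda m - 2c \le d_{\H}(\cdot,0)\le \lambda m + 2c\}$ is $\asymp e^{(d-1)\lambda m}$ for $d\geq 3$; the resulting region has volume $\asymp e^{(d-2)\lambda m}$, so a packing bound alone gives $n(m)\lesssim e^{(d-2)\lambda m}$, which is \emph{not} uniformly bounded. You identify this correctly, but then write that "balancing the resulting growth against the density ceiling on $\psi(G)$-points \dots should yield the asserted uniform bound." Carrying that balance out under an equidistribution heuristic gives $n(m)\lesssim |S^{\skrig}_m|\cdot(\text{angular fraction}) \asymp e^m\cdot e^{-\lambda m} = e^{(1-\lambda)m}$, which is uniformly bounded only when $\lambda\geq 1$; nothing in the statement or the background sections guarantees $\lambda\geq 1$. (Notably, the paper's own computation implicitly uses the $\H^2$ identity $|B_r^\circ|\asymp e^r$ and invokes "$\varepsilon<1\leq\lambda$" in the proof of Lemma \ref{lemma def barriers} without establishing it, so the gap you are pointing at is genuine; flagging it is good, but your proposal does not resolve it, and thus (1) is left unproved for $d\geq 3$.)

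For part (2) your Shadow Lemma route does not land. Lemma \ref{shadow lemma} bounds the $\nu_o$-\emph{measure} of a shadow, not the \emph{number} of lattice points inside it, and there is no step in your sketch converting the measure estimate into a cardinality estimate. Also "summing over the $\asymp e^m$ admissible positions of $g^\star$" conflates the single reference point $g^\star$ with the $\asymp e^m$ elements of $S^{\skrig}_m$ you ultimately want to count. The paper's argument avoids the Shadow Lemma entirely: once one fixes $x_N\in G$ with $\psi(x_N)\in A$ at a distance where the residual angular error $c_1 e^{-\lambda d_{\skrig}(x_N,o)}$ is smaller than the slack $\alpha-\beta$, the entire forward cone of $x_N$ (all $g$ with $x_N$ coarsely on $[o,g]$) has $\psi(g)\in A$, and the count of such $g$ at $d_{\skrig}$-radius $m$ is $\asymp e^{m}$ directly from $v_{\skrig}=1$. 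Rewriting part (2) in that form would be both shorter and correct.
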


\begin{proof}
$(i)$: Let $H_m :=\{g\in H : \lfloor d_{\skrig}(g,o)\rfloor =m\}$. We will look at which region in $\H^d$ the image of $H_m$ under the Bonk-Schramm embedding $\psi$ can lie in. 
We estimate the size of this region, and the number of balls $B_1$ of radius one needed, to cover this region. This then leads to an upper bound on $n(m)$, since under the Bonk-Schramm embedding each of these balls can contain at most a bounded number of points in $\psi(G)$.

\noindent Since for $g\in H_m$ we have $\lfloor d_{\skrig}(o,g)\rfloor=m$, in $\H^d$, $\d_{\H}(\psi(g),0)\in[\lambda m -c,\lambda (m+1) +c]$.
Denoting by $B^{\circ}_{r}$ the interior of the ball of size $r$ around $0$ in $\H^{d}$, we thus have
\[n(m)= \bigg\vert\bigcup_{\eta} \bigg\{g\in G : \lfloor d_{\skrig}(o,g)\rfloor=m,\, d_{\H}(\psi(g),[0,\eta))<c\bigg\}\bigg\vert \leq \abs{\psi(H) \cap\big(B^{\circ}_{\lambda (m+1) +c}\backslash B^{\circ}_{\lambda m -c}\big)}.\]

By definition, $\psi(g)$ is at distance at most $c$ from some ray $[0,\eta)$. Denote by $\eta_g$ such a point at distance less than $c$ to $\psi(g)$. 
Then $\angle([0,\psi(g)],[0,\eta))< c_1e^{-\lambda m}$ for some $c_1>0$. This can be seen as follows. For $\xi, \eta \in \partial \H^d$, we have 
\[(\xi,\eta)_0^{\H^d} = \log(\sin\Big({\frac{\vartheta}{2}} \Big)^{-1})\]
where $\vartheta = \angle ([0,\xi),[0,\eta))$. So as the angle between the rays goes to $0$, their Gromov product goes to infinity. And, for small values of $\vartheta$, 
\[(\xi,\eta)_0^{\H^d} \approx \log \frac{2}{\vartheta}.\]
More generally, $\log(\nicefrac{2}{\vartheta})$ is an upper bound on the Gromov product. So let $x,y\in \H^d$ with $d_{\H}(x,0)=k$ and $d_{\H}(x,y)= c$, where $k$ is big compared to $c$, and assume that $r= d_{\H}(x,0) - d_{\H}(y,0)>0$. Then
\[(x,y)_0^{\H} = (0,y)_x^{\H} - d_{\H}(x,0) = \frac{1}{2} \big( d_{\H}(x,y) + r \big)-d_{\H}(x,0)\]
and hence
\[\vartheta \approx \exp\Big(-k + \frac{1}{2}(c + r)\Big) \leq \exp\Big(-k+\frac{c}{2}\Big).\]
Since $sin(x)\approx x$ is a good approximation for small values of $x$, we have to make sure that $k$ is big enough with respect to $c$. Note that we also used that for $x,y$ far enough from $0$, we have $(x,y)^{\H}_0= (\xi,\eta)^{\H}_0$ where $\xi, \eta$ are the hitting points on $\partial \H^d$, when we extend the rays $[0,x)$ and $[0,y)$ past $x$ and $y$ to the boundary. This holds, since both Gromov products denote the maximum distance along  $[0,\eta)$ from $0$, at which $[0,\xi)$ and $[0,\eta)$ are $2\delta$ apart. Here $\delta>0$ is the
minimal constant for which  $\H^d$ is $\delta$-hyperbolic.

\noindent Applying this to our setting, there exists $c_1>0$ depending on $c$ but not on $k$, such that the angle between $[0,\psi(g)]$ and $[0,\eta)$ is at most $c_1 \e^{-\lambda k}$. Hence, for any $g\in H_m$,
\[\angle([0,\psi(g)],[0,\xi))\in[\beta - c_1 \e^{-\lambda m},\beta - c_1 \e^{-\lambda m}].\]
By looking at $\H^d$ in the ball model in polar coordinates, the proportion of rays with angle in $[\beta - c_1 \e^{-\lambda m},\beta - c_1 \e^{-\lambda m}]$ to $[0,\xi)$ is 
\[(2\pi)^{-(d-1)} \big((\beta + c_1 \e^{-\lambda m})^{d-1}-(\beta - c_1 \e^{-\lambda m})^{d-1}\big)\leq c_2 \e^{-\lambda m}\]
for some $c_2>0$.

\noindent Now, for any point $g \in H$, the corresponding closest point $\eta_g$ on one such ray, is at distance $d_{\H}(x,0)\in[\lambda m-2c,\lambda m +2c]$ by the triangle inequality.
Hence the images of points in $H_m$ under $\psi$ all lie in a space of size
\[ c_2 \e^{-\lambda m} \abs{B^{\circ}_{\lambda m +2c}\backslash B^{\circ}_{\lambda m -2c}} = c_2 \e^{-\lambda m}\Big(\e^{\lambda m +2c} - \e^{\lambda m +2c}\Big) \leq C_1 \]
for an appropriate choice of $C_1>0$.

\noindent So we can cover the region in $\H^{d}$, where $\psi(H_m)$ lies, by $C_1$ balls of diameter $1$. And
since in a ball of size $1$ in $\H^d$, there are at most as many points of $\psi(G)$ as fit into $B^{\skrig}_{\lambda^{-1} +c}(o)$ in the group, this means that $n(m)\leq C_1$.

$(ii)$: By assumption there exists a point $\hat \xi \in \psi^{-1}(B)$. Let $(x_n)_{n\in\N}$ be a sequence of neighboring points in $G$ which converges to $\hat \xi$. Then at some time $N>0$, $\psi(x_N) \in A$. And since the image of $G$ under $\psi$ is convex, this cannot happen for the first time at a distance $d_{\skrig}(x_N,o)$ where the angle between the images of two neighboring points is much less than $\alpha$. By the above study of the connection between the Gromov product and the angle in $\H^d$, this means it must be $\e^{-\lambda d_{\skrig}(x_N,o)}\gtrsim \alpha$. Now, walking away from $x_N$ towards the boundary can only lead to an angle of at most $c_1\e^{-d_{\skrig}(x_N,o)}$. Hence eventually all group geodesics starting at a point inside $A$ will converge to a point in $B$. So the number of points inside $A_1$ grows exponentially with the same growth factor as the volume growth of $G$. Since $\nu_G =1$, we have $N(m)\geq C_2 \e^{m}$ where the constant $C_2$ absorbs the fact that the exponential growth only starts after $X_N$. 
\end{proof}

\begin{proof}[{\bf Proof of Lemma \ref{lemma def barriers}}]
We will show that if we start with a fixed barrier $H_M$, we may choose an $\alpha_{M-1}$ such that for $H_{M-1}= H(\alpha_{M-1})$ the bound in \eqref{bd green function fixed end} holds. The claim then follows inductively.

\noindent Let $H_M:=H(\alpha_M)$ for any $\alpha_M\in I_M$ and $a_M\in H_M$, and let $\alpha_{M-1}\in I_{M-1}$ be chosen uniformly at random, i.e. with distribution being the rescaled Lebesgue measure $\e^{\varepsilon n}\Leb$. Denote by $I(a)$ the interval in $I_{M-1}$ of angles $\alpha$ for which $a \in H_{\alpha}$ and by $H(I_{M-1})=\bigcup_{\alpha \in I_{M-1}}H(\alpha)$. Then 
\begin{align}
\E\Big[\sum_{a_{M-1} \in H_{M-1}}\skrig(a_{M-1},a_{M})^2\Big]&=\sum_{\substack{a \in G,\\d_{\skrig}(a,0)\geq n}}\skrig(a,a_M)^2 \e^{\varepsilon n}\Leb(I(\alpha)) \nonumber\\
&{\leq} \sum_{a \in H(I_{M-1})}\skrig(a,a_M)^2 \e^{\varepsilon n}\e^{- \lambda d_{\skrig}(a,0)}\label{ast1}\\
&{\leq} C_A^2 \e^{\varepsilon n} \sum_{a \in H(I_{M-1})}\skrig(a,x_a)^2\skrig(x_a,a_M)^2 \e^{- \lambda d_{\skrig}(a,0)}\label{ast2}\\
&{\leq} C_A^2 \e^{(\varepsilon-\lambda) n} \sup_{a \in H(I_{M-1})}\skrig(x_a,a_M)^2 \sum_{a \in H(I_{M-1})}\skrig(a,x_a)^2\nonumber\\
&{\leq} C \e^{(\varepsilon-\lambda) n} \skrig(a_M,a_M^n)^2\nonumber.
\end{align}
In \eqref{ast1} we used that moving away by $c_0$ from a point $x \in [0,\eta)$ at distance $d(0,x)=n$ produces an angle of at most $2^{-1}\e^{\frac{c_0}{2}}\e^{-n}=c_1\e^{-n}$ to bound $\Leb(I(a))$. This follows again from the fact that for $\xi, \eta \in \partial \H^d$, we have 
\[(\xi,\eta)_0^{\H^d} = \log(\Big(\sin{\frac{\theta}{2}} \Big)^{-1})\]
where $\theta= \angle([0,\xi),[0,\eta))$, as in the previous proof of Lemma \ref{number hitting points}.

Along the same lines, we also get that since $\angle([0,\psi(a)),[0,\psi(a_M)))\geq\e^{-\nicefrac{\varepsilon n}{2}}$, the rays are more than $2\delta$ apart outside $B_{\varepsilon n}(0)$.
Therefore, there exists a point $x_a \in [a,b]$ such that $d_{\skrig}(x_a,0)\leq \varepsilon n$, which was used in \eqref{ast2}. We then used the Ancona inequality from Lemma \ref{ancona}, which says there exists $C_A>0$ such that for all geodesics $[a,a_M]$ and $x_a \in [a,a_M]$
\[\skrig(a,b)\leq C_A \skrig(a,x_a)\skrig(a_M,x_a).\] 
In the last line we used that $\sum_{a \in G}\skrig(a,0)^2<\infty$. And finally, we may bound the supremum  by $\skrig(a_M,a_{M}^n)$ where $a_{M}^n\in [o,a_M]$ is a point with $\lfloor d_{\skrig}(a^n_{M},o)\rfloor=n$, since $d_{\skrig}(x_a,o)\leq \varepsilon n$ and hence $d_{\skrig}(x_a,a_M)>d_{\skrig}(a_M,a_M^n)$, meaning the Green functions have the opposite relationship. 

So by choosing $\varepsilon < \lambda$, we have shown that the expectation $\E[\sum_{a_{M-1} \in H_{M-1}}\skrig(a_{M-1},a_{M})^2]$ on the LHS of \eqref{ast1}-\eqref{ast2} 
decays exponentially in $n$. Hence it is possible to choose $\alpha_{M-1}$ such that for $H_{M-1}:= H(\alpha_{M-1})$ we have 
\[\sum_{a_{M-1} \in H_{M-1}}\skrig(a_{M-1},a_{M})^2 \leq C \e^{(\varepsilon-\lambda) n} \skrig(a_M,a_M^n)^2.\]
This completes the proof of the first claim in Lemma \ref{lemma def barriers}.

The second claim then follows from this bound. Namely, if we denote by $a_M^n$ a point $B_{\varepsilon n}\cap [a_M,a_{M+1}]$
\begin{align*}
\sum_{\substack{a_{M-1}\in H_{M-1},\\a_{M}\in H_{M}}} \skrig(a_{M-1},a_M)^2 
&\leq \sum_{a_{M}\in H_{M}} C \e^{(\varepsilon-\lambda) n} \skrig(a_M,a_M^{n})^2\\
&=C \e^{(\varepsilon-\lambda) n} \sum_{m\geq n}\sum_{\substack{a_{M}\in H_{M}, \\\lfloor d_{\skrig}(o,a_M)\rfloor =m}} \skrig(a_M,a_M^{n})^2\\
&\leq C \e^{(\varepsilon-\lambda) n} \sum_{m\geq n} C_1 \e^{-2(m-\varepsilon n)}\\
\end{align*}
Here we used that by the definition of the Green metric,  $\skrig(x,y)^2 = \e^{-2d_{\skrig}(x,y)} \skrig(0,0)^2$.\\
Further $n(m):= \{a_{M}\in H_{M}, \lfloor d_{\skrig}(o,a_M)\rfloor =m\} \leq C_1 $, as shown in Lemma \ref{number hitting points}.
In total we get something that decays exponentially in $n$ for $\varepsilon < 1 \leq \lambda$, hence we have the second claim of Lemma \ref{lemma def barriers} and thus the proof of this lemma is also complete. 
\end{proof}

\begin{proof}[{\bf Proof of Lemma \ref{decay greens function}}]
Fix $\varepsilon >0$ and let $2M = \lfloor \e^{\varepsilon n}\rfloor$. We note that, if we define the barriers $H_1,...,H_M$ as in Lemma \ref{lemma def barriers}, the random walk restricted to staying outside of $B_n(0)$ has to pass each of those barriers at some point to get from $a$ to $b$. Let $H_0=\{a\}$ and restrict $H_M$ to $\{b\}$, then we can estimate
\[\skrig(a,b;B_n^c(0))\leq \sum_{a_0 \in H_0}... \sum_{a_M\in H_M} \prod_{i=0}^M \skrig(a_i,a_{i+1}) .\]
Denote by $L_i: \ell^2(H_i)\to \ell^2(H_{i+1})$ the operator defined as follows
\[(L_i f)(a)= \sum_{b\in H_{i}}\skrig(a,b) f(b).\]
Then the above sum can be written as $(L_0...L_M\delta_b)(a)$ and hence
\[\skrig(a,b,B_n^c(0))\leq \prod_{i=1}^M\norm{L_i}_{\ell^2}.\]
By the Cauchy Schwartz inequality we have
\begin{equation*}
\begin{aligned}
&\norm{L_i}_{\ell^2}^2 = \sum_{a \in H_i}\Big\vert\sum_{b\in H_{i+1}}\skrig(a,b)f(b)\Big\vert^2 \\
\leq& \sum_{a \in H_i}\Big(\sum_{b\in H_{i+1}}\skrig(a,b)^2\Big)\Big(\sum_{b\in H_{i+1}} f(b)^2\Big) =\sum_{a \in H_i, b\in H_{i+1}}\skrig(a,b)^2 \norm{f}_{\ell^2}^2
\end{aligned}
\end{equation*}
and hence by the bounds on the Green function shown in Lemma \ref{lemma def barriers}
\[
\skrig(a,b;B_n^c(0)) \leq (\e^{-rn})^M \skrig(b,b^n) = \exp\big\{-rn \alpha \,\e^{\varepsilon n} \big\}\skrig(b,b^n).
\]
\end{proof}

\subsection{Proof of Theorem \ref{behavior Theta 3}}\label{sec-pfofbehavior Theta 3}

We will now show that 

\[\Theta_3(B_1 \times B_2)\asymp \int_{B_1\times B_2}\e^{2(\xi,\eta)_0}\d(\nu_o\otimes\nu_o)(\xi,\eta)\]
for all disjoint sets $B_1,B_2$ in a generating set of the $\sigma$-algebra on $\partial^2 G$. 

Throughout the whole section, we let again $\psi$ be the rough similarity which embeds $G$ into $\H^d$ for $d\geq 1$ big enough as in Theorem \cite{BS}. And denote by $\lambda$ the multiplicative constant and the additive constant by $c$.

We first show that the lower bound in the equation above holds. This is done similarly as in the case of the free group in Section \ref{example density tree}, by just summing over origins which roughly lie on the shortest geodesic between $o$ and the geodesic $(\xi,\eta)$.

\begin{prop}[Lower bound on the density]\label{lower bound density}
There exists $C>0$ such that for all sets $B_1\times B_2\subset \partial2 G$ we have
\[\Theta_3(B_1 \times B_2)\geq C \int_{B_1\times B_2}\e^{2(\xi,\eta)_0}\d(\nu_o\otimes\nu_o)(\xi,\eta).\]
\end{prop}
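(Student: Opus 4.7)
The plan is to mimic the free-group computation of Section \ref{example density tree}, but with the summation over the ``overlap segment'' replaced by a uniformly positive contribution from a single coarse projection of $o$ onto a geodesic $(\xi,\eta)$. Concretely, for each pair $(\xi,\eta)\in\partial^2 G$ pick a coarse projection $m(\xi,\eta)\in G$ of $o$ onto a geodesic $(\xi,\eta)$ in $(G,d_{\skrig})$, so that $d_{\skrig}(o,m(\xi,\eta))=(\xi,\eta)_o^{\skrig}+O(1)$. For a fixed $R>0$ (chosen below), set $\Omega_g:=\{(\xi,\eta)\in\partial^2 G: m(\xi,\eta)\in B_R(g)\}$; these sets cover $\partial^2 G$ with $\sum_g \mathbf{1}_{\Omega_g}\ge 1$ pointwise.

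Using the disjoint decomposition $G^\Z=\bigsqcup_g G^\Z_g$ and then disintegrating each $\bar\P_g$ over the asymptotic endpoints via Theorem \ref{disintegration}, one obtains
\[
\Theta_3(B_1\times B_2)=\sum_{g\in G}\int_{B_1\times B_2}\bar\P_g\big(\skrid\,\big|\,\xi,\eta\big)\,d(\nu_g\otimes\nu_g)(\xi,\eta).
\]
Restricting each integral to $\Omega_g$, the proposition reduces to two uniform estimates for $(\xi,\eta)\in\Omega_g$: firstly, a lower bound $\bar\P_g(\skrid\mid\xi,\eta)\ge c>0$, and secondly, a density lower bound $\frac{d(\nu_g\otimes\nu_g)}{d(\nu_o\otimes\nu_o)}(\xi,\eta)\gtrsim e^{2(\xi,\eta)_o^{\skrig}}$.

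The density bound is a short Gromov-inner-product manipulation, essentially the computation already carried out in the proof of Lemma \ref{qi Theta2}: the Radon--Nikodym formula of Theorem \ref{density hitting measures} expresses the density as the exponential of $\limsup[d_{\skrig}(o,\xi_n)+d_{\skrig}(o,\eta_n)-d_{\skrig}(g,\xi_n)-d_{\skrig}(g,\eta_n)]$, and since $g$ lies within $R$ of a coarse projection of $o$ onto $(\xi,\eta)$, both of the triangles $\Delta(o,g,\xi_n)$ and $\Delta(o,g,\eta_n)$ are nearly degenerate. The $M$-quasi-ruled property from Remark \ref{rmk-qr} then forces the above limsup to equal $2d_{\skrig}(o,g)+O(1)=2(\xi,\eta)_o^{\skrig}+O(1)$, giving the required bound.

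The conditional bound on $\skrid$ is the main obstacle and requires the deviation estimates of Section \ref{sec bi-inf RW}. The event $\skrid$ is the parametrization condition that $z=0$ is the first minimum of $z\mapsto d_{\skrig}(X_z,o)$; conditioning on $(\bd_\infty,\bd_{-\infty})=(\xi,\eta)$ decouples the walk into two conditionally independent semi-infinite walks from $g$ towards $\xi$ and $\eta$. By Theorem \ref{disintegrated deviation bds} and Corollary \ref{almost geodesic}, each of these stays within $O(\log n)$ of the corresponding geodesic ray $[g,\xi)$ or $[g,\eta)$, and once $R$ is chosen large enough that $g$ is a genuine coarse projection of $o$ onto every $(\xi,\eta)\in\Omega_g$, these rays move strictly away from $o$. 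Combined with the transience of the walk (giving uniform positive probability of never revisiting $B_{d_{\skrig}(g,o)}(o)$ after taking an initial step outward), this produces a uniform constant $c>0$. The reason this step is subtle is that $\skrid$ is a precise parametrization constraint rather than a shadow-type neighborhood, so its conditional probability must be controlled simultaneously in both halves of the bi-infinite walk and uniformly in the boundary conditioning---which is precisely why the refined disintegrated estimate of Theorem \ref{disintegrated deviation bds} was developed in place of the ambient Theorem \ref{deviation bds}.
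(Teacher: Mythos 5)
Your proposal takes a genuinely different route from the paper and, modulo one gap described below, the structure is sound. The paper's proof does not disintegrate: it picks the coarse projection $g$ of $o$ onto $(\xi_1,\xi_2)$, sums over origins $x$ on (an $R$-neighborhood of) the whole geodesic segment $[o,g]$, and for each such $x$ bounds the \emph{set} probability
$\P_x\bigl(\bd(X_n)_n\in B_i,\ d_{\skrig}(X_n,o)\geq d_{\skrig}(o,x)\bigr)\geq c_0\,\nu_x(B_i)$
via a half-space comparison; the sum over $x$ then converges because $(\xi,\eta)_{x}^{\skrig}$ decays roughly like $d_{\skrig}(x,g)$, producing a geometric series. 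You instead push the disintegration all the way through, restrict the $g$-sum to a bounded set of origins near the projection $m(\xi,\eta)$ (via the cover $\Omega_g$), and therefore need a \emph{pointwise conditional} lower bound $\bar\P_g(\skrid\mid\xi,\eta)\geq c$ rather than the paper's averaged comparison. Your trade: you avoid summing a geometric series, but you must control a conditional probability uniformly over $(\xi,\eta)$; the paper's route avoids conditioning entirely. Your density estimate via the quasi-ruled property is fine and matches the paper's use of $2(\xi,\eta)_x^{\skrig}=O(1)$ for $x$ near the projection.

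The gap is precisely in your conditional lower bound $\bar\P_g(\skrid\mid\xi,\eta)\geq c$. The tools you cite do not deliver it. Theorem~\ref{disintegrated deviation bds} controls the deviation of $Y_n$ \emph{at a single fixed time} $n$ from the geodesic $[o,\xi)$, not the behavior of the whole path, whereas the event $\skrid$ constrains the infimum of $z\mapsto d_{\skrig}(X_z,o)$ over \emph{all} $z$. Likewise, Corollary~\ref{almost geodesic} is an almost-sure $\limsup$ statement and gives no quantitative, uniform-in-$(\xi,\eta)$ probability bound. And ``transience of the walk'' refers to the \emph{unconditioned} chain; once you condition on the endpoint $\xi$, the law becomes the Doob $K(\cdot,\xi)$-transform, and a uniform-over-$\xi$ positive lower bound on the probability of never re-entering $B_{d_{\skrig}(g,o)}(o)$ is not a corollary of transience of the unconditioned chain. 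To close the gap you would have to prove a conditional analogue of the paper's half-space step: that the $K(\cdot,\xi)$-conditioned walk started at a coarse projection of $o$ onto $(\xi,\eta)$ enters and never leaves a half-space avoiding $o$ with probability bounded below uniformly in $(\xi,\eta)\in\Omega_g$. This is plausible via Ancona's inequality (Lemma~\ref{ancona}) and the Martin kernel description of the conditioned walk, but it is a nontrivial additional lemma that must be supplied; it does not follow from what you cite. A secondary, easily repairable point: $\sum_g \mathbf{1}_{\Omega_g}\ge 1$ needs a measurable selection of the coarse projection $m(\xi,\eta)$, which should be stated.
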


\begin{proof}
Let $\xi_1,\xi_2\in \partial G$, $\beta_1,\beta_2 \in [0,\delta]$ and $B_1,B_2\subset \partial G$ be the sets 
\[B_{i}:=\{\eta\in \partial G: \,(\xi_i,\eta)_o^{\skrig}\geq \beta_i^{-1}\}.\]
The sets $B_1\times B_2$ then generate the $\sigma$-algebra on $\partial^2 G$.

Let $g \in G$ be the closest point to $o$ on a geodesic $(\xi_1,\xi_2)$. 
Then $(\xi_1,g)_o^{\skrig},(\xi_2,g)_o^{\skrig}\in [(\xi_1,\xi_2)_o^{\skrig}-r,(\xi_1,\xi_2)_o^{\skrig}+r]$ where $r$ is the universal constant needed for this to be possible in general, coming from the fact that $G$ is a coarse geometric space.
Now, since $B_1,B_2$ only contain points in a small neighborhood of $\xi_1$ and $\xi_2$, there exists $R>0$ such that for any $\eta_1,\eta_2$ in $B_1$ and $B_2$ respectively, if $\tilde g$ is the closest point to $o$ on the geodesic $(\eta_1,\eta_2)$, then $\tilde g \in B^{\skrig}_R(g)$.

Let $H:=\{x \in [0,\tilde g], \tilde g \in B_R(g)\}$. Note that, since two geodesics between the same two points are always at most at bounded distance from each other, the size of $H$ grows linearly in $d_{\skrig}(o,g)$. In particular the number of $x\in H$ with $\lfloor d_{\skrig}(o,x)\rfloor=m$ is bounded for all $m$.

Further there exists $c_0>0$ such that
\[\frac{\P_x\big(\bd(X_n)_{n\in\N}\in B_1 ,\,d_{\skrig}(X_n,o)\geq d_{\skrig}(o,x)\big)}{\P_x(\bd(X_n)_{n\in\N}\in B_1)} 
\geq c_0\]
independently of $x$, since we can bound
\[\P_x\big(\bd(X_n)_{n\in\N}\in B_1 ,\,d_{\skrig}(X_n,o)\geq d_{\skrig}(o,x)\big) \geq \P_x\big(\bd(X_n)_{n\in\N}\in B_1 ,(X_n)_{n\in\N} \in H_x \big)\]
where $H_x$ is a half space in $G$ which does not contain $o$ and has $x$ on its boundary.

The same bound then also holds, if we replace $\geq$ with $>$ in the probability in the numerator. Hence

\begin{align}
\Theta_3(B_1\times B_2)
&=\sum_{x\in G}\bar\P_x\Big(\bd\big(X_z\big)_{z\in\Z}\in B_1\times B_2,\, d_{\skrig}(X_{-n},o)> d_{\skrig}(x,o),\text{and }\nonumber \\
&\hspace{40mm}d_{\skrig}(X_n,o)\geq d_{\skrig}(x,o) \text{ for } n>0\Big)\nonumber\\
&\geq \sum_{x \in H}\bigg[\P_x\big(\bd(X_n)_{n\in\N}\in B_1 ,d_{\skrig}(X_z,o)> d_{\skrig}(x,o) \text{ for } z<0\big)\nonumber \\
&\qquad\qquad\qquad\qquad \times \P_x\big(\bd(X_n)_{n\in\N}\in B_2 ,d_{\skrig}(X_n,o)\geq d_{\skrig}(x,o)\big) \bigg]\nonumber\\
&\geq c_0^2 \sum_{x \in H}\P_x\big(\bd(X_n)_{n\in\N}\in B_1\big)\P_x\big(\bd(X_n)_{n\in\N}\in B_2\big)\nonumber\\
&= c_0^2 \int_{B_1 \times B_2} \sum_{x\in H} \frac{\d (\nu_x \otimes \nu_x)}{\d (\nu_o \otimes \nu_o)}(\xi,\eta)\d (\nu_o \otimes \nu_o)(\xi,\eta)\nonumber\\
& = c_0^2 \int_{B_1 \times B_2} \sum_{x\in H}\exp\Big\{\limsup_{\xi_n \to \xi,\eta_n \to \eta} d_{\skrig}(\xi_n, 0)+d_{\skrig}(\eta_n, 0)-d_{\skrig}(\xi_n, x)-d_{\skrig}(\eta_n, x)\Big\}\d (\nu_o \otimes \nu_o)(\xi,\eta)\nonumber\\
& \geq c_0^2 \int_{B_1 \times B_2} \sum_{x\in H}\e^{-2(\xi,\eta)_x^{\skrig}-2\delta}\, \e^{2(\xi,\eta)_o^{\skrig}}\d (\nu_o \otimes \nu_o)(\xi,\eta)\nonumber\\
&{\geq} c_0^2 \int_{B_1 \times B_2} \sum_{x\in H}\e^{-2(\xi,\eta)_{\tilde x}^{\skrig}-2\delta-d_{\skrig}(\tilde x,x)}\,\e^{2(\xi,\eta)_o^{\skrig}}\d (\nu_o \otimes \nu_o)(\xi,\eta)\label{ast3}\\
& \geq c_0^2 \int_{B_1 \times B_2} \sum_{x\in H}\e^{-2(\xi,\eta)_{\tilde x}^{\skrig}-2\delta-R}\,\e^{2(\xi,\eta)_o^{\skrig}}\d (\nu_o \otimes \nu_o)(\xi,\eta)\nonumber\\
& \geq C \int_{B_1 \times B_2} \e^{2(\xi,\eta)_o^{\skrig}}\d (\nu_o \otimes \nu_o)(\xi,\eta)\nonumber
\end{align}
In \eqref{ast3} we chose $\tilde x$ dependent on $x,\xi,\eta$ such that $\tilde x$ is on the shortest geodesic between $o$ and $(\xi,\eta)]$ and $\lfloor d_{\skrig}(o,\tilde x)\rfloor= \lfloor d_{\skrig}(o, x)\rfloor$.
The constant $C$ comes from $c_0^2$ and the fact, that the sum converges as $g \to \infty$. Indeed, since $d_{\skrig}$ is almost additive on geodesics, $(\xi,\eta)_{\tilde x}^{\skrig}$ is of order $(\xi,\eta)_o^{\skrig}-d_{\skrig}(o,\tilde x)$, and for any $m$ there are at most linearly many $\tilde x$ in $H$ with $\lfloor d_{\skrig}(o,\tilde x)\rfloor=m$. This completes the proof of the desired lower bound. 
\end{proof}

As remarked earlier, the proof of the upper bound is much more involved.
We will split the sum in $\widehat \Q$ roughly into three terms, namely

\begin{enumerate}
\item terms for $g\in B^{\skrig}_R(o)$ for some small constant $R>0$,
\item terms for $g$ close to the geodesic between $[o,x]$, where $x$ is the closest point to $o$ on the geodesic $(\xi,\eta)$,
\item and terms for $g$ far away from $o$ and $[0,x]$.
\end{enumerate}
The reason for this splitting is as follows. 

\begin{enumerate}
\item for origins $g$ inside a small ball $B^{\skrig}_R(o)$ the restriction to staying outside $B_{d_{\skrig}(o,g)}(o)$ will not change the probabilities much. A random walk started in $g$ and converging to a part of the boundary on the opposite side of the ball is not forced to walk far away from the geodesic. 

\item The terms for $g$ close to $[o,x]$ will be the ones that lead to the exponential factor in the density. For these starting points, the geodesics from $g$ into $B_1$ and $B_2$ do not cross $B_{d_{\skrig}(o,g)}(o)$ (for very long). So restricting the probabilities of hitting $B_1$ and $B_2$ to staying outside $B_{d_{\skrig}(o,g)}(o)$, will not change their value a lot. 
\item For the remaining points, the restriction of staying outside the ball significantly reduces the probabilities. Here we will use the bound from Lemma \ref{decay greens function} to show that this part of the sum does not contribute a significant amount to $\Theta_3$. 
\end{enumerate}

\begin{prop}[Upper bound on the density]\label{upper bound density}
There exists $C>0$ such that for all sets $B_1\times B_2\subset \partial^2 G$ we have
\[\Theta_3(B_1 \times B_2)\leq C \int_{B_1\times B_2}\e^{2(\xi,\eta)_0}\d(\nu_o\otimes\nu_o)(\xi,\eta).\]
\end{prop}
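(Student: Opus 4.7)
The plan is to expand $\Theta_3(B_1\times B_2)$ as a sum over possible origins $g\in G$ of the bi-infinite trajectory, and then split $G$ into three regions to control each contribution separately. It suffices to verify the bound for a generating family of product sets; I will fix $\xi_1,\xi_2\in\partial G$ with neighborhoods $B_1,B_2$ shrinking around them, and let $x\in G$ be a closest point to $o$ on a geodesic representative of $(\xi_1,\xi_2)$, so that $d_{\skrig}(o,x)\asymp(\xi_1,\xi_2)_o^{\skrig}$.

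By the construction of $\widehat\Q$ and the independence of the two halves of $\bar\P_g$ (Definition \ref{def bi-inf RW}), one writes
\[
\Theta_3(B_1\times B_2)=\sum_{g\in G}\P_g\big[\bd(Y)\in B_1,\,d_{\skrig}(Y_n,o)\geq d_{\skrig}(g,o)\ \forall n\geq1\big]\cdot\P_g\big[\bd(Y)\in B_2,\,d_{\skrig}(Y_n,o)>d_{\skrig}(g,o)\ \forall n\geq1\big].
\]
Dropping the ball-avoidance constraints would bound each factor by $\nu_g(B_i)$, but then $\sum_g\nu_g(B_1)\nu_g(B_2)$ diverges, so the constraints are essential in the ``far'' regime. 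I decompose $G=R_1\sqcup R_2\sqcup R_3$ with $R_1:=B_R^{\skrig}(o)$ for a fixed $R>0$, $R_2$ a bounded tube around the geodesic $[o,x]$, and $R_3$ the complement---roughly, points whose $\psi$-image in $\H^d$ makes substantial angle with $[0,\psi(x))$.

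For $R_1$, the Radon--Nikodym derivatives $\d\nu_g/\d\nu_o$ and the number of terms are uniformly bounded, so the contribution is $\lesssim\nu_o(B_1)\nu_o(B_2)\leq\int_{B_1\times B_2}\e^{2(\xi,\eta)_o^{\skrig}}\,\d(\nu_o\otimes\nu_o)$. For $R_2$ the natural geodesics from $g$ to $\xi_1$ and $\xi_2$ move outward from $o$ up to a bounded error, so dropping the ball-avoidance constraints loses only a multiplicative constant; Theorem \ref{density hitting measures} then rewrites the $R_2$-contribution as
\[
\sum_{g\in R_2}\int_{B_1\times B_2}\exp\!\Big\{\beta_\xi^{\skrig}(o,g)+\beta_\eta^{\skrig}(o,g)\Big\}\,\d(\nu_o\otimes\nu_o)(\xi,\eta).
\]
For $g\in[o,x]$ both Busemann exponents equal $d_{\skrig}(o,g)+O(1)$ by the quasi-ruled property of $(G,d_{\skrig})$, producing a geometric series $\sum_{k=0}^{d_{\skrig}(o,x)}\e^{2k}\asymp\e^{2d_{\skrig}(o,x)}\asymp\e^{2(\xi,\eta)_o^{\skrig}}$; Lemma \ref{number hitting points}(i) bounds the number of $g\in R_2$ per Green-annulus, absorbing the tube's thickness into a constant.

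The main obstacle is $R_3$, where the ball-avoidance is genuinely binding and a trivial bound by $\nu_g(B_i)$ would diverge. For $g\in R_3$ the angle $\alpha=\angle([0,\psi(g)),[0,\psi(x)))$ in $\H^d$ is bounded below by the width of $R_2$, so Lemma \ref{decay greens function} applied with $b$ on a geodesic from $o$ toward $\xi_i$ yields $\skrig(g,b;B_n^{\skrig}(o))\lesssim\exp(-r\,n\,\alpha\,\e^{\varepsilon n})\skrig(b,b^n)$, where $n=d_{\skrig}(g,o)$; combined with the Ancona inequality (Lemma \ref{ancona}), this upgrades to a bound on the full restricted hitting probability appearing in the expansion of $\Theta_3$. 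To sum over $R_3$ I will further decompose it into dyadic angular sectors $\{\alpha_k\leq\alpha<\alpha_{k+1}\}$ and Green-annuli of radius $n$; Lemma \ref{number hitting points}(i) controls the count in each annular sector, while the double-exponential factor $\exp(-r\,n\,\alpha_k\,\e^{\varepsilon n})$ dominates. The delicate point is choosing the threshold between $R_2$ and $R_3$: the minimal angle on $R_3$ must be large enough that $n\,\alpha\,\e^{\varepsilon n}$ grows with $n$ (keeping the double-exponential decay), while simultaneously $R_2$ remains narrow enough that its geometric series stays of order $\e^{2(\xi,\eta)_o^{\skrig}}$. Once this balance is set up, the $R_3$-sum is dominated by $\int_{B_1\times B_2}\e^{2(\xi,\eta)_o^{\skrig}}\,\d(\nu_o\otimes\nu_o)$, and combining the three region estimates completes the proof.
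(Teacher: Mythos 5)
Your high-level plan --- expand $\Theta_3$ as a sum over origins and split $G$ into a near ball $R_1$, a tube $R_2$ around $[o,x]$, and a far complement $R_3$ --- matches the paper's structure, and your treatment of $R_1$ and $R_2$ is essentially the paper's handling of its regions $D_3$ and $D_1$. The gap is in $R_3$, which in the paper has to be split further because the double-exponential estimate by itself is not enough.

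The claim that ``the double-exponential factor $\exp(-r n \alpha_k \e^{\varepsilon n})$ dominates'' fails precisely in the regime that matters. Write $T:=(\xi_1,\xi_2)_o^{\skrig}$. For $R_2$ to stay of bounded size (so that its geometric series gives $\asymp \e^{2T}$), its angular width must be of order $\e^{-T}$; hence the smallest angles occurring in $R_3$ are $\alpha \sim \e^{-T}$, and for such $\alpha$ the exponent $n\alpha\e^{\varepsilon n}$ remains $O(1)$ for all $n \lesssim T/\varepsilon$. On that whole range of radii $\exp(-rn\alpha\e^{\varepsilon n}) \approx 1$ gives you nothing, while the number of candidate origins at Green-distance $n$ grows like $\e^{n}$. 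You cannot tune the threshold between $R_2$ and $R_3$ to escape this: either the angular cutoff is comparable to a fixed $\gamma>0$ (and then $R_2$ is no longer bounded, and its geometric series no longer equals $\e^{2T}$), or it is $\lesssim \e^{-T}$ (and then the double-exponential bound is inactive over an interval of radii that grows linearly with $T$). The paper's proof introduces a separate region $D_5$ of origins with $\e^{-T} \lesssim \alpha \lesssim \gamma$ and $n \lesssim T/\varepsilon$ and controls its contribution by an entirely different mechanism: splitting the Green-annulus $\{ \lfloor d_{\skrig}(o,g)\rfloor = n\}$ into angular sectors, a volume count per sector, and the decay of the Radon--Nikodym factor $\e^{-2(\xi_1,\xi_2)_g^{\skrig}}$ with angular distance from the geodesic, which yields $\sum_{g\in D_5,\;\lfloor d_{\skrig}(o,g)\rfloor=n}\e^{-2(\xi_1,\xi_2)_g^{\skrig}} \lesssim \e^{-3n}$ (the computation around \eqref{ast4}--\eqref{ast5}). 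Your sketch contains no such compensating estimate, and without one the $R_3$-sum over $R \leq n \lesssim T/\varepsilon$ does not converge uniformly in $T$.

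A smaller but related issue: you invoke Lemma~\ref{number hitting points}(i) to bound ``the count in each annular sector,'' but that lemma controls the number of points in a \emph{barrier} --- a thin slice of angular width $\sim \e^{-n}$ --- per Green-annulus. A dyadic angular sector of width $\alpha_{k+1}-\alpha_k$ at Green-distance $n$ contains on the order of $\e^{n}\alpha_k^{d-1}$ points, not $O(1)$; this exponential count is exactly what makes the naive double-exponential argument insufficient and what forces the paper's sector-volume/Gromov-product trade-off.
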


\begin{proof}
We show the upper bound for a slightly different set of generating sets of the $\sigma$-algebra on $\partial^2 G$. We again denote by $\psi: G\to \H^d$ the Bonk-Schramm embedding. Note that this embedding can be extended to an injective embedding $\partial \psi: \partial G\to\partial \H^d$. For $i=1,2$, let $\xi_i \in \partial \H^d$ and $\beta_i \in [0,\delta]$ for some $\delta$ small enough. Then set $\tilde B_i$ to be the set of all $\eta\in \partial \H^d$ such that $\angle([0,\xi_i),[0,\eta))\leq \beta_i$, $B_i = \partial\psi^{-1} \tilde B_i$ and
\begin{equation}
A_i= A(\xi_i, \beta_i):=\bigcup_{\eta \in \tilde B_i}\{x \in [0,\eta)\} \quad\text{ for } i =1,2
\end{equation}
The sets $\tilde B_1\times\tilde B_2$ generate the $\sigma$-algebra on $\partial^{2}\H^d$. And since 
$\partial \psi: \partial G\to\partial \H^d$ is injective, the sets $B_1\times B_2$ generate the $\sigma$-algebra on $\partial^2 G$.

\noindent We will first consider the case that the sets $B_i$ lie roughly on opposite sides of $0$, which reduces the cases we have to consider to points $(i)$ and $(iii)$ and makes it easier to see the main arguments. The general case then follows along the same lines, one just has to be more careful about into which category the origin $g$ falls. 

\noindent So let $A_1,A_2$ be sets as defined above for $\xi_1,\xi_2$ for which $\angle([0,\xi_1),[0,\xi_2))\approx \pi$. Let $R>0$ be big enough, that we can apply Lemma \ref{decay greens function} to the origins which lie outside $B^{\skrig}_{R}(o)$. By choice of $\xi_1,\xi_2$ and the upper bound on the angles $\alpha_1,\alpha_2$ we may assume that all geodesics between points $\eta_1 \in B_1$ and $\eta_2 \in B_2$ go through the ball $B_{\lambda R}(0)$, i.e.\ for all such points $(\eta_1,\eta_2)^{\H}_0\leq \lambda R$.
By definition of $\psi$,
\[\abs{\lambda (\gamma_1,\gamma_2)_o^{\skrig} - (\psi(\gamma_1),\psi(\gamma_2))_0}\leq 3c\]
 and for $\gamma_i \in B_i$, so
\[(\gamma_1,\gamma_2)_o^{\skrig} \leq R+3c.\]
We want to show that in this case
\[\Theta_3(B_1 \times B_2) \lesssim \nu_o(B_1)\nu_o(B_1).\]
For origins $g\in B_R^{\skrig}(o)$, restricting to $\skrid$ in the definition of $\widehat \Q$ means we remove paths which lead through balls contained in $B_R^{\skrig}(o)$. Hence there exists $c_1>0$ such that for all $g\in B_R^{\skrig}(o)$
\[\frac{\P_g(\bd(X_n)_n\in B_1, (X_n)_n \notin B_{d_{\skrig}(o,g)}(o))}{\P_g(\bd(X_n)_n\in B_1)}\leq c_1.\]
The same also holds for the restriction to the inside of $B_{d_{\skrig}(o,g)}^{c}(o)$.
Furthermore, since we only consider $g$ such that $d_{\skrig}(o,g)\leq R$, there also exists $c_2>0$ such that
\[\frac{\P_g(\bd(X_n)_n\in B_1)}{\P_o(\bd(X_n)_n\in B_1)}\leq c_2.\]
Putting these two bounds together, we get that
\begin{equation}\label{inside BR}
\begin{aligned}
&\sum_{g\in B_R^{\skrig}(o)}\bar\P_g\Big(\bd(X_n)_{n\in\Z}\in B_1 \times B_2 ,d_{\skrig}(o,X_{-n})> d_{\skrig}(o,g),d_{\skrig}(o,X_n)\geq d_{\skrig}(o,g) \text{ for }n>0\Big)\\
&= \sum_{g\in B_R(0)}\P_g\Big(\bd(X_n)_{n\in\N}\in B_1 ,d_{\skrig}(o,X_n)> d_{\skrig}(o,g)\text{ for } n>0\Big)\\
&\qquad\qquad\qquad\cdot \P_g\Big(\bd(X_n)_{n\in\N}\in B_2 ,d_{\skrig}(o,X_n)\geq d_{\skrig}(o,g) \text{ for } n>0 \Big)\\
&\leq c_1^2 c_2^2 \abs{B_R(0)} \nu_o(B_1)\nu_o(B_2) 
\end{aligned}
\end{equation}
For $g \notin B_R^{\skrig}(o)$ we will estimate
\[\frac{\P_g(\bd(X_n)_n\in B_1, (X_n)_n \in B_{d_{\skrig}(o,g)}^{c}(o))}{\P_g(\bd(X_n)_n\in B_1)}\]
by using the bound shown in Lemma \ref{decay greens function}. So let $H_1,...,H_M$ be the boundaries as constructed in Lemma \ref{decay greens function}. In the proof of that lemma we showed that the last boundary $H_M$ may be chosen freely. So let 
\[H_M = \bigcup_{\eta} \{x \in G: d_{\skrig}(x,o)\geq n \text{ and } d_{\H}(\psi(x),[0,\eta))<C_0\}\]
where $\psi$ is the Bonk-Schramm embedding, 
and the union is over all rays $\eta$ such that $\angle([0,\eta),[0, \xi_1))=\alpha_1+\delta$ for some very small $\delta>0$. The random walk started at $g$ and converging to a point in $B_1$ and restricted to staying outside $B_{d_{\skrig}(o,g)}^{\skrig}(o)$, has to hit $H_M$ at some point.
This gives us the following bound:
\begin{equation}\label{change from restriction}
\begin{aligned}
&\frac{\P_g\big(\bd(X_n)_n\in B_1, (X_n)_n \notin B^{\skrig}_{d_{\skrig}(o,g)}(0)\big)}{\P_g\big(\bd(X_n)_n\in B_1\big)}& \\
&\leq \sum_{g'\in H_M}\frac{\P_{g'}\big(\bd(X_n)_n\in B_1, (X_n)_n \in B_{d(0,g)}^{c}(0)\big)\,\P_g\big(g' \text{ is hitting point of }(X_n) \text{ in }H_M\big)}{\sum_{\tilde g \in H_{M}(g)}\P_{\tilde g}\big(\bd(X_n)_n \in B_1, (X_n)_n \in B_{d(0,\tilde g)}^c(0)\big)\,\P_g\big(X_n \text{ hits }\tilde g\big)}\\
&\leq \sum_{g'\in H_M}\frac{\P_{g'}\big(\bd(X_n)_n\in B_1, (X_n)_n \in B_{d_{\skrig}(o,g)}^{c}(0)\big) \,\skrig(g,g';B_{d_{\skrig}(o,g)}^c(0))}{\sum_{\tilde g \in H_{M}(g)}\P_{\tilde g}\big(\bd(X_n)_n \in B_1, (X_n)_n \in B_{d(0,\tilde g)}^c(0)\big)\, F(g,\tilde g)}\\
&\leq \sum_{g'\in H_M}\frac{\exp\big\{- rn k \, \e^{\varepsilon n}\big\}\, \skrig(g',g'_n)}{\sum_{\tilde g \in H_{M}(g)} \skrig(g,\tilde g_n)\,\skrig(\tilde g, \tilde g_n)\,\skrig(e,e)}\\
&\leq \exp\big\{- rn k \,\e^{\varepsilon n}\big\} \, \skrig(e,e)^{-1}\sum_{m \geq d_{\skrig}(o,g)} \frac{n(m)}{N(m) + n(m)} \frac{1}{\inf_{\tilde g}\skrig(g,\tilde g_n)}\\
&\leq c_3 \exp\big\{- rn k\, \e^{\varepsilon n}\big\}
\end{aligned}
\end{equation}
for some appropriate choice of $c_3>0$, since $n(m)\leq C_1$ and $N(m)\geq C_2 \e^{vm}$ for some constants $C_1,C_2>0$ as shown in Lemma \ref{number hitting points}. The same arguments lead to the corresponding bounds for $B_2$.
With this we get that
\begin{equation}\label{outside BR}
\begin{aligned}
& \sum_{g\notin B_R^{\skrig}(0)}\P_g\big(\bd(X_n)_{n\in\N}\in B_1 ,(X_n)_{n\in\N}\in B_{d_{\skrig}(o,g)-1}^c(0)\big)\P_g\big(\bd(X_n)_{n\in\N}\in B_2 ,(X_n)_{n\in\N}\in B_{d_{\skrig}(o,g)}^c(0)\big)\\
& \leq \sum_{m\geq R} \sum_{g: d_{\skrig}(o,g)=m} C \exp\big\{- rn \beta \e^{\varepsilon n}\big\}\P_{g}\big(\bd(X_n)_n \in B_1\big)\P_{g}\big(\bd(X_n)_n \in B_2\big)\\
& \leq \sum_{m\geq R} \sum_{g: d(0,g)=m} C \exp\big\{- rn \beta\,  \e^{\varepsilon n}\big\} \int_{B_1\times B_2} \e^{\limsup_{\xi_n \to \xi, \eta_n \to \eta} d_{\skrig}(o,\xi_n)+ d_{\skrig}(o,\eta_n)-d_{\skrig}(g,\xi_n)- d_{\skrig}(g,\eta_n)}\d (\nu_o \otimes \nu_o)\\
& \leq \sum_{m\geq R} \abs{\S_m(0)} \, c_3\, \exp\big\{- rm \beta \, \e^{\varepsilon m} + 2m \big\} \nu_o(B_1)\nu_o(B_2)\\
&\leq c_4 \nu_o(B_1)\nu_o(B_2)
\end{aligned}
\end{equation}
where we used that the $\limsup$ is bounded above by $2 d_{\skrig}(o,g)$ and the sum over $m>R$ converges due to the double exponential decay and $c_4>0$ absorbs all those constants.

\noindent Putting the bounds \eqref{inside BR} and \eqref{outside BR} together we thus get
\[
\widehat \Q(\bd^{-1}(B_1 \times B_2))\leq (c_1^2 c_2^2 \abs{B_R(0)}+c_4 )\nu_o(B_1)\nu_o(B_2) = C \nu_o(B_1)\nu_o(B_2)
\]
meaning we have shown the claim for $B_1$ and $B_2$ lying roughly opposite in $\H^d$. 

Now, on to the general case. We still consider $A_i,B_i$ as above for $i=1,2$, i.e. we let $\xi_i \in \partial \H^d$ and $\beta_i \in [0,\delta]$ for some $\delta$ small enough, and set $\tilde B_i$ to be the set of all $\eta\in \partial \H^d$ such that $\angle([0,\xi_i),[0,\eta))\leq \beta_i$, $B_i = \partial\psi^{-1} \tilde B_i$, and $A_i:= \bigcup_{\eta \in \tilde B_i}\{x \in [0,\eta)\}$. But now we allow the angle $\beta= \angle([0,\xi_1),[0,\xi_2))$ to be arbitrary. Further, we may assume that there exist $\xi_1^G,\xi_2^G$ such that $\xi_i = \partial \psi(\xi^G_i)$ for $i=1,2$.

Let $[0,\eta)$ be the geodesic which contains the point $x_m \in (\xi_1,\xi_2)$ closest to $0$. By choice of $\tilde B_1,\tilde B_2$, there then exists $R_1>0$ such that for any pair of points $\eta_i \in \tilde B_i$ for $i=1,2$ the point closest to $0$ on $(\eta_1,\eta_2)$ is in $B_{R_1}(x_m)$.

Further, by the properties of the embedding, there are at most $c_R$ many points in the image of $G$ inside $B_{R_1}(x_m)$.

\noindent We now split the sum over the origins $g$ in $\widehat \Q$ into the following parts.
\begin{itemize}
\item $D_1 := \big\{g \in G: d(\psi(g),0)\leq (\xi_1,\xi_2)_0^{\H^d}+R_1+R_2,\, \angle\big([0,\psi(g)],[0,\eta)\big) \leq \e^{-(\xi^G_1,\xi^G_2)_o^{\skrig} - R_1}\big\}$
\item $D_2 :=\big \{g \in G: d(\psi(g),0)\geq (\xi_1,\xi_2)_0^{\H^d}+R_1+R_2, \,\angle\big([0,\psi(g)],[0,\eta)\big) \leq \e^{-(\xi^G_1,\xi^G_2)_o^{\skrig}-R_1}\big\}$
\item $D_3 := B^{\skrig}_{R_2}(0)\backslash D_1$
\item $D_4 := \big\{g \in G: d(\psi(g),0)\leq (\xi_1,\xi_2)_0^{\H^d}+R_1+R_2, \,\angle\big([0,\psi(g)],[0,\eta)\big) \geq \max\{\gamma, \e^{-(\xi^G_1,\xi^G_2)_o^{\skrig}-R_1}\}\big\}$
\item $\begin{aligned}[t] D_5 := \big\{g \in G:\,& d(\psi(g),0)\geq (\xi_1,\xi_2)_0^{\H^d}+R_1+R_2, \\ 
&\e^{-(\xi^G_1,\xi^G_2)_o^{\skrig}-R_1} \leq \angle\big([0,\psi(g)],[0,\eta)\big) \leq \max\{\gamma, \e^{-(\xi^G_1,\xi^G_2)_o^{\skrig}-R_1}\}\big\}\end{aligned}$
\end{itemize}
Then $D_1$ contains the points close to $[0,x_m]$, $D_2$ the region between $A_1$ and $A_2$ and far away from $0$. $D_3$ consists of the points close to $0$, which are not also in $D_1$. And finally, $D_4$ and $D_5$ consists of  the points which do not lie between $A_1$ and $A_2$. Thus  for these the angle to $[0,\eta)$ is big. Note that splitting this last region into the sets $D_4$ and $D_5$ is necessary, since it grows linearly in $(\xi^G_1,\xi^G_2)_o^{\skrig}$. We hence split it into $D_4$, a set whose size is uniformly bounded, and $D_5$, the problematic region. Showing that the bound for the sum over origins in $D_5$ is small enough is the most involved and requires us to carefully consider how big the angle $\angle([0,\psi(g), [0,\eta))$ actually is.

\noindent $D_1$ is the area, where we see the exponential behavior of the density. 
We first note that 
\begin{align*}
D_1 &= \{g \in G: d(\psi(g),0)\leq (\xi_1,\xi_2)_0^{\H^d}+R_1+R_2, \angle\big([0,\psi(g)],[0,\eta)\big) \leq \e^{-(\xi^G_1,\xi^G_2)_o^{\skrig}-R_1}\}\\
&\subseteq \{g \in G: d_{\skrig}(g,0)\leq (\xi^G_1,\xi^G_2)_o^{\skrig}+R_1+R_2+3\lambda^{-1}c, \angle\big([0,\psi(g)],[0,\eta)\big) \leq \e^{-(\xi^G_1,\xi^G_2)_o^{\skrig}-R_1}\}.
\end{align*}
Finally, using the same counting argument as in Lemma \ref{number hitting points}, we have
\[\abs{D_1}\leq \tilde h_1 \e^{-(d-1)((\xi^G_1,\xi^G_2)_o^{\skrig}+R_1)} \e^{(\xi^G_1,\xi^G_2)_o^{\skrig}+R_1+R_2+3\lambda^{-1}c} \leq h'_1\]
Recall that the volume growth with respect to $d_{\skrig}$ is $1$. The upper bound by a constant $h'_1>0$ independent of $\xi, \eta$ comes from the fact that for non-elementary hyperbolic groups the dimension of $\H^d$ in the embedding is $d \geq 2$. Hence
\begin{equation}\label{H1}
\begin{aligned}
&\sum_{g\in D_1}\P_g\big(\bd(X_n)_{n\in\N}\in B_1 ,d_{\skrig}(X_n,o)> d_{\skrig}(o,g)\text{ for }n\geq 1\big)\\
&\quad \quad \cdot \P_g\big(\bd(X_n)_{n\in\N}\in B_2 ,d_{\skrig}(X_n,o)\geq d_{\skrig}(o,g)\text{ for }n\geq 1\big)\\
\leq& \int_{B_1 \times B_2} \sum_{g\in D_1} \exp\big\{2(\xi,\eta)_o^{\skrig}-2(\xi,\eta)^{\skrig}_g+4\delta\big\}\d (\nu_o \otimes \nu_o)(\xi,\eta)\\
\leq& \int_{B_1 \times B_2}h_1\, \e^{2(\xi,\eta)_o^{\skrig}} \d (\nu_o \otimes \nu_o)(\xi,\eta)\\
\end{aligned}
\end{equation}
where the $4\delta$ comes from the fact that for any fixed sequences $\xi_n \to \xi, \eta_n\to \eta$ we have 
\[\limsup_{n\to \infty} (\xi_n,\eta_n)_g^{\skrig} \geq (\xi,\eta)_g^{\skrig} - 4\delta.\]
In the last step we combined the bound on the number of summands and and the fact that, after pulling out the factor $\exp\{2 (\xi,\eta)_o^{\skrig}\}$, each summand is bounded by $\e^{4\delta}$ into the constant $h_1$.

\noindent To get a bound for the points in $D_2$, we first notice that for $\xi_1\in B_1,\xi_2\in B_2$,  $(\xi_1,\xi_2)_{x_m}^{\skrig}\leq R_1$. So, by replacing the restriction of staying outside of $B_{d_{\skrig}(o,g)}(o)$, by the images $\psi(X_n)$ staying outside $B_{d_{\H}(x_m,\psi(g))}(x_m)$, we get an upper bound for the probability. We can then immediately apply \eqref{outside BR} from the case of a bounded Gromov product between $\xi_1$ and $\xi_2$, i.e. there exists $h_2>0$ such that
\begin{equation}\label{H2}
\begin{aligned}
&\sum_{g\in D_2}\P_g\big(\bd(X_n)_{n\in\N}\in B_1 ,d_{\skrig}(X_n,o)> d_{\skrig}(o,g)\text{ for }n\geq 1\big)\\
&\quad\quad \P_g\big(\bd(X_n)_{n\in\N}\in B_2 ,d_{\skrig}(X_n,o)\geq d_{\skrig}(o,g)\text{ for }n\geq 1\big)\\
\leq& \,h_2 \,\nu_o(B_1)\nu_o(B_2).
\end{aligned}
\end{equation}

For $D_3$, we get by the same argument as used for \eqref{inside BR} in the above case that there exists $h_3>0$ such that
\begin{equation}\label{H3}
\begin{aligned}
&\sum_{g\in D_3}\P_g\big(\bd(X_n)_{n\in\N}\in B_1 ,d_{\skrig}(X_n,o)> d_{\skrig}(o,g)\text{ for }n\geq 1\big)\\
&\quad\quad\P_g\big(\bd(X_n)_{n\in\N}\in B_2 ,d_{\skrig}(X_n,o)\geq d_{\skrig}(o,g)\text{ for }n\geq 1\big)\\
\leq& \,h_3\, \nu_o(B_1)\nu_o(B_2).
\end{aligned}
\end{equation}

Now for the remaining points we will again apply the bounds for the probability of walking a certain angle around the ball proven in \eqref{decay greens function}. If we however immediately apply this bound to all remaining points, by saying that the random walk has to at least cross the angle between the sets $A_1$ and $A_2$, this will lead to a multiplicative constant of order $(\xi^G_1,\xi^G_2)_o^{\skrig}$, i.e. a constant which is not uniformly bounded over $\partial^2 G$. We hence choose some angle $\gamma \in [0, \pi]$ such that $\sin(\gamma)$ is still sufficiently close to $\gamma$. If $(\xi^G_1,\xi^G_2)_o^{\skrig} \leq \log(\nicefrac{2}{\gamma})$, we get by the same arguments as in \eqref{outside BR}, that there exists $h'_4>0$ such that

\begin{equation}\label{H4}
\begin{aligned}
&\sum_{g\in D_4}\P_g\big(\bd(X_n)_{n\in\N}\in B_1 ,d_{\skrig}(X_n,o)> d_{\skrig}(o,g)\text{ for }n\geq 1\big)\\
&\quad\quad\P_g\big(\bd(X_n)_{n\in\N}\in B_2 ,d_{\skrig}(X_n,o)\geq d_{\skrig}(o,g)\text{ for }n\geq 1\big)\\
\leq& h'_4 \nu_o(B_1)\nu_o(B_2).
\end{aligned}
\end{equation}
for $D_4 = \{g \in G: \angle\big([0,\psi(g)],[0,\eta)\big) \geq \gamma\}$.
The constant $h'_4$ however comes from summing over $\exp\{- r d_{\skrig}(o,g) \gamma \e{\varepsilon d_{\skrig}(o,g)}\}$, and in particular goes to $\infty$ as $(\xi^G_1,\xi^G_2)_o^{\skrig}$ does. Hence we have to bound the Gromov product in $D_4$ and consider $D_5$ separately. Again using \eqref{change from restriction} with angle $\e^{(\xi^G_1,\xi^G_2)_o^{\skrig}}$, we get that

\begin{equation}\label{H5}
\begin{aligned}
&\sum_{g\in D_5}\P_g\big(\bd(X_n)_{n\in\N}\in B_1 ,d_{\skrig}(X_n,o)> d_{\skrig}(o,g)\text{ for }n\geq 1\big)\\
&\quad\quad \P_g\big(\bd(X_n)_{n\in\N}\in B_2 ,d_{\skrig}(X_n,o)\geq d_{\skrig}(o,g)\text{ for }n\geq 1\big)\\
\leq& \e^{4\delta} \sum_{g\in D_5} \exp\Big\{-r d_{\skrig}(o,g)\, \e^{\varepsilon d_{\skrig}(o,g)-\lambda(\xi^G_1,\xi^G_2)_o^{\skrig}+R_1} +2(\xi^G_1,\xi^G_2)_o^{\skrig}-2(\xi^G_1,\xi^G_2)_g^{\skrig} \Big\}
\nu_o(B_1)\nu_o(B_2)
\end{aligned}
\end{equation}
Now, for $d_{\skrig}(o,g)\geq\frac{\lambda}{\varepsilon}(\xi^G_1,\xi^G_2)_o^{\skrig}$, this decays exponentially. And as $(\xi^G_1,\xi^G_2)_o^{\skrig}$ grows, the amount of this sum decreases. We hence get a uniform upper bound $c_5$ for this sum and only really have to consider those $g$ which lie inside $B_{\frac{\lambda}{\varepsilon}(\xi^G_1,\xi^G_2)_o^{\skrig}}(0)$. 

We first take care of the $\e^{-2(\xi_1,\xi_2)_g^{\skrig}}$ part of the sum. Let $\alpha = 2\e^{-(\xi^G_1,\xi^G_2)_o^{\skrig}}, 2k=(\gamma-\alpha)\e^{(\xi^G_1,\xi^G_2)_o^{\skrig}}$ and $A(m):= \{g\in H_5: \angle ([0,\psi(g)),[0,\eta))\in [m\alpha,(m+1)\alpha]\}$ for $m=1,...,k$

\begin{align}
\sum_{g\in D_5: \lfloor d_{\skrig}(o,g)\rfloor =n} \e^{-2(\xi_1,\xi_2)_g^{\skrig}} &= \sum_{m=1}^{k} \sum_{\substack{g\in A(m),\\ \lfloor d_{\skrig}(o,g)\rfloor =n}} \e^{-2(\xi_1,\xi_2)_g^{\skrig}} \nonumber\\
&{\leq }\sum_{m=1}^{k} \sum_{\substack{g\in A(m),\\ \lfloor d_{\skrig}(o,g)\rfloor =n}} \e^{-4 \log(2m\alpha)}\e^{-4n}\label{ast4}\\ %we use that (\xi,\eta)_g^(G) is the distance of g to (\xi,\eta) and estimate this distance by the (min) angle between g and A_1 or A_2 = m\alpha. from this angle we get the distance by remembering, that by walking away c from a point at distance n, we at most change the angle by $e^{c/2}e^{-n}$. Plugging this in, the distance is >= 2\log(2\alpha + n)
&{\leq } \e^{-4n} \sum_{m=1}^{k}\Big((m+1)^{(d-1)} -m^{(d-1)}\Big)\, \alpha^{d-1}\, (2m\alpha)^{-4}\,\e^{n}\label{ast5}\\ %use n(m) to est. amount of points in this angle
&\leq 2^{-4}\e^{-3n} \alpha^{d-5} \sum_{m=1}^{k}(d-1)(m+1)^{(d-2)}\,m^{-4}\nonumber\\ %(m+1)^{d-1}-m^{d-1} \leq d-1(m+1)^{d-2}, 
&\leq 2^{-4}\, \e^{-3n}\, \alpha^{d-5}\, k\,(k+1)^{(d-2)}\,k^{-4}\nonumber\\ %upper bound by plugging in k
&\leq \gamma^{- d+5} \,2^{-4}\, \e^{-3n}\, \alpha^{d-5}\, \alpha^{-1}\, (\alpha^{-1}+1)^{(d-2)}\,\alpha^{4}\nonumber\\ % k \leq \gamma/\alpha
&\leq c_6\, \e^{-3n}\nonumber
\end{align}
for some $c_6>0$ large enough. In \eqref{ast5} we used the estimate from Lemma \ref{number hitting points}. 
And in \eqref{ast4} we use that $(\xi,\eta)_g^{\skrig}$ is approximately the distance of $g$ to $(\xi,\eta)$. We then estimate this distance by the angle between $[0,\psi(g))$ and $A_1$ or $A_2$ respectively, which is given by $m\alpha$. From this angle we get the distance by remembering, that by walking away $c$ from a point at distance $n$, we at most change the angle by $e^{c/2}e^{-n}$. Plugging this in, the distance is bigger than $2\log(2\alpha + n)$.

And hence
\begin{equation}
\begin{aligned}
& \e^{4\delta} \sum_{g\in D_5, d_{\skrig}(o,g)\leq \frac{\lambda}{\varepsilon}(\xi^G_1,\xi^G_2)_o^{\skrig}} \exp\Big\{-rd_{\skrig}(o,g)\,\e^{\varepsilon d_{\skrig}(o,g)-\lambda(\xi^G_1,\xi^G_2)_o^{\skrig}+R_1}+ 2(\xi^G_1,\xi^G_2)_o^{\skrig}-2(\xi_1,\xi_2)_g^{\skrig}\Big\}\\
& =\e^{2(\xi^G_1,\xi^G_2)_o^{\skrig}} \sum_{n=R_2}^{\frac{\lambda}{\varepsilon}(\xi^G_1,\xi^G_2)_o^{\skrig}} \exp\Big\{-rn\,\e^{\varepsilon n-\lambda(\xi^G_1,\xi^G_2)_o^{\skrig}+R_1}\Big\} \sum_{g\in D_5, \lfloor d_{\skrig}(o,g)\rfloor =n}\e^{-2(\xi_1,\xi_2)_g^{\skrig}}\\%split sum and plug in 
&\leq c_6 \e^{2(\xi^G_1,\xi^G_2)_o^{\skrig}} \sum_{n=R_2}^{\frac{\lambda}{\varepsilon}(\xi^G_1,\xi^G_2)_o^{\skrig}} \exp\Big\{-rn\e^{\varepsilon n-\lambda(\xi^G_1,\xi^G_2)_o^{\skrig}+R_1}\Big\} \, \e^{-3n}\\
&\leq c_7 \e^{2(\xi^G_1,\xi^G_2)_o^{\skrig}}
\end{aligned}
\end{equation}
where we bounded 
$$
\exp\bigg\{-rn\e^{\varepsilon n-\lambda(\xi^G_1,\xi^G_2)_o^{\skrig}+R_1}\bigg\}\leq 1
$$
and used that the sum over $\e^{-3n}$ converges. Setting $h_5 =c_5+c_7$, we thus get

\[\Theta(B_1 \times B_2) \leq \int_{B_1 \times B_2}\Big(h_1 +h_5 +\frac{h_2+h_3+h_4}{\e^{(\xi,\eta)_o^{\skrig}}}\Big)\,  \e^{2(\xi,\eta)_o^{\skrig}}\, \d (\nu_0 \otimes \nu_o)(\xi,\eta)\]
which is the desired bound. 

\end{proof}
Combining these arguments we can now conclude the proof of Theorem \ref{behavior Theta 3}.
\begin{proof}[{\bf Proof of Theorem \ref{behavior Theta 3}}]
In Proposition \ref{upper bound density} and Proposition \ref{lower bound density} we have shown that almost surely
\[\frac{\d \Theta_3}{\d (\nu_o\otimes \nu_o)}(\xi,\eta)\asymp \e^{2(\xi,\eta)_o^{\skrig}}.\]

The invariance of $\Theta_3$ under the $G$-action on $\partial^2 G$ follows directly from the invariance of $\Q$ under the $G$-action on $G^{\Z}$. Indeed, since $\Q$ is invariant for both the $G$ and $\Z$-action, and since these two actions commute, $\widehat \Q$ is invariant under the induced $G$-action on the fundamental domain $\skrid$. And since the left multiplication by group elements commutes with the boundary map $\bd: G^{\Z}\to \partial^2 G$, we have for any $g\in G$ and $B_1\times B_2\subset \partial^2 G$
\begin{align*}
\Theta_3\big(g B_1\times g B_2\big) &= \widehat \Q\big( \bd^{-1}(g B_1\times g B_2)\big) =\widehat \Q\big(g \bd^{-1}(B_1\times B_2)\big)\\
&= \widehat \Q\big(\bd^{-1}(B_1\times B_2)\big)=\Theta_3\big(B_1\times B_2\big) 
\end{align*}
Thus $\Theta_3$ is invariant for the $G$-action on $\partial^2 G$.
\end{proof}

\section{Double-Ergodicity, Exponential Mixing and CLT}\label{sec ergodicity}
\subsection{Double-ergodicity.}\label{sec proof ergodicity} 
To conclude the proof of Theorem \ref{theorem 1}, we now show that $\Theta_3$, and hence $\Theta$, is ergodic for the $G$-action on $\partial^2 G$. In order to achieve this, we first prove Theorem \ref{big ergodicity intro}, namely that the measures $\bar\P_{o}$ and $\widehat \Q$ are ergodic for the random walk flow and the $G$-shift on the paths respectively. The ergodicity of $\Theta_3$, and hence $\Theta$, then follows as a corollary. As discussed towards the end of Section \ref{sec ergodicity} and Remark \ref{comparison versions Theta}, the construction and properties of the measure $\Theta_3$ developed in Section \ref{sec Theta push-forward} will play a very important role for the purposes of the current section. 
 
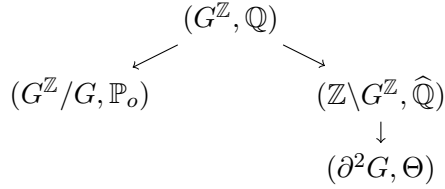
\begin{figure}[h!]
  \centering

    \begin{tikzpicture}
 % Define the positions of the nodes
 \node (A) at (0,1) {$(G^{\Z},\Q)$};
 \node (B) at (-2,0) {$(G^{\Z}/G, \P_o)$};
 \node (C) at (2,0) {$(\Z\backslash G^{\Z}, \widehat \Q)$};
 \node (D) at (2,-1) {$(\partial^2 G, \Theta)$};
 
 % Draw the arrows
 \draw[->] (A) -- (B);
 \draw[->] (A) -- (C);
 \draw[->] (C) -- (D);
\end{tikzpicture}
    %\caption{Our setup}
% \end{subfigure}
  %\caption{Comparing the setup in \cite{BF} (on the left) to our setup (on the right).}
\caption{Showing ergodicity in $\partial^2 G$ requiring a detour via the $G$ and $\Z$-quotients of $G^{\Z}$.}
\end{figure}

\begin{theorem}\label{big ergodicity}
The $G$-action on $\Z \backslash G^{\Z}$ and $\Z$-action on $G^{\Z}/G$ are ergodic for $\bar\P_{o}$ and $\widehat \Q$. Furthermore, $G^{\Z} = G^{\Z}_{0} \cross G$ is ergodic for $\Q$ under the product of the $\Z$-action on $G_{0}^{\Z}$ with the $G$-action on $G$. 
\end{theorem}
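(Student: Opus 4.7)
The plan is to unify all three claims via the commuting structure of the $G$- and $\Z$-actions on $G^\Z$, reduce the core question to Kaimanovich's double ergodicity of the Poisson boundary \cite{Kai03}, and then derive claim (3) as a standard product-action argument.

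First, I would note that claims (1) and (2) are equivalent. Since the $G$- and $\Z$-actions on $G^\Z$ commute and act freely off a $\Q$-null set, with explicit fundamental domains $G^\Z_o$ and $\skrid$, a $G$-invariant measurable subset of $\Z\backslash G^\Z$ lifts bijectively (modulo null sets) to a $(G\times\Z)$-invariant measurable subset of $G^\Z$, which in turn corresponds to a $\Z$-invariant measurable subset of $G^\Z/G$; nontriviality is preserved in either direction. Both claims thus reduce to the single statement that $\Q$ on $G^\Z$ admits no nontrivial $(G\times\Z)$-invariant subset. The key external input is Kaimanovich's theorem \cite{Kai03}, which combined with the identification of the Gromov boundary as the Poisson boundary \cite{Kai00}, asserts that the $G$-action on $(\partial^2 G, \nu_o \otimes \nu_o)$ is ergodic. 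Since $\Theta_3 = \bd_*\widehat\Q$ is equivalent to $\nu_o \otimes \nu_o$ by Theorem \ref{density Theta}, $G$ acts ergodically on $(\partial^2 G, \Theta_3)$ as well.

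The central step is to transfer this boundary ergodicity back to $G^\Z$: every $(G\times\Z)$-invariant measurable $\widetilde A \subseteq G^\Z$ should coincide modulo $\Q$-null sets with $\bd^{-1}(B)$ for some $G$-invariant $B \subseteq \partial^2 G$. Restricting to the fundamental domain $G^\Z_o$, this is equivalent to showing that $\tau$-invariant $\bar\P_o$-measurable subsets of $G^\Z_o$ are precisely, modulo null, pullbacks under $\bd$ of $G$-invariant subsets of $\partial^2 G$. The forward direction is immediate since $\bd(\tau_k(x))$ always lies in the $G$-orbit of $\bd(x)$. The reverse direction is the main obstacle, and I would establish it by disintegrating $\bar\P_o$ along $\bd$ and proving a conditional ergodicity statement: for $\nu_o \otimes \nu_o$-almost every $(\xi, \eta)$, the conditioned measure $\bar\P_o(\cdot \vert \xi, \eta)$ has trivial $\tau$-invariant $\sigma$-algebra. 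This \emph{bi-infinite bridge} ergodicity should be extractable by combining tail triviality of the Markov chain on each half of the trajectory (via \cite{Kai00}) with the almost-geodesic estimate of Corollary \ref{almost geodesic}, which anchors each half of the conditioned trajectory to the geodesic $(\xi, \eta)$. Once established, $G$-invariance of $\widetilde A$ and $G$-equivariance of $\bd$ force $B$ to be $G$-invariant, and Kaimanovich's ergodicity then shows $B$ is $\Theta_3$-null or conull, so $\widetilde A$ is $\Q$-null or conull.

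For claim (3), I would identify $G^\Z \cong G^\Z_o \times G$ via $(x_n)_n \mapsto ((x_0^{-1}x_n)_n, x_0)$; under this $\Q$ becomes the product of $\bar\P_o$ with counting measure on $G$, and the product $\Z \times G$-action acts by $\tau$ on the first factor and by left translation on the second. For any subset $E$ invariant under the product action, each fiber $E_y := \{g \in G : (y, g) \in E\}$ is invariant under all left translations of $G$, forcing it to be $\emptyset$ or $G$. Hence $E = F \times G$ for some $\tau$-invariant $F \subseteq G^\Z_o$, and by claim (2), $F$ is $\bar\P_o$-null or conull, so $E$ is $\Q$-null or conull.
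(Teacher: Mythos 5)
Your proposal takes a genuinely different route from the paper and contains a genuine gap in the step that does all the work. The paper does not invoke Kaimanovich at all: it proves ergodicity of the $\Z$-action on $(G^{\Z}/G,\bar\P_o)$ directly via a Birkhoff-type argument. For a continuous $f$ on $G^\Z_o$, the Ces\`aro averages $\frac1n\sum_{l=0}^n f\circ\tau_l$ converge to a $\tau$-invariant limit $f_\infty$; since $f$ depends on finitely many coordinates, applying $\tau_l$ with $l$ large pushes all of them into positive time, so $f_\infty$ is measurable with respect to $\sigma(x_z:z>0)$; time-reversal $(x_z)\mapsto(x_{-z})$ preserves $\bar\P_o$ and intertwines $\tau_1$ with $\tau_{-1}$, so $f_\infty$ is also measurable with respect to $\sigma(x_z:z<0)$; by independence of the two halves under $\bar\P_o$, $f_\infty$ is constant. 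Ergodicity of $\bar\P_o$ follows for indicators, and the $G$-ergodicity on $\Z\backslash G^\Z$ then follows from the commuting-action argument you also use. This is self-contained and exploits exactly the symmetric, independently-glued structure of $\bar\P_o$.

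Your central transfer step, by contrast, is not established. You want to deduce $\Z$-ergodicity of $\bar\P_o$ from $G$-ergodicity of $(\partial^2G,\Theta_3)$ by showing that every $\tau$-invariant set in $G^\Z_o$ is, modulo null, a $\bd$-pullback of a $G$-invariant set in $\partial^2 G$, and you propose to prove this via ``for a.e.\ $(\xi,\eta)$, the conditioned measure $\bar\P_o(\cdot\mid\xi,\eta)$ has trivial $\tau$-invariant $\sigma$-algebra.'' That statement does not make sense as written: $\tau_k$ does not preserve the fiber $\bd^{-1}(\xi,\eta)$, since $\bd(\tau_k(x))=x_k^{-1}\cdot\bd(x)$ with $x_k$ a path-dependent, almost surely nontrivial element, so there is no $\tau$-action on a single fiber to be ergodic for. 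What your plan actually requires is relative ergodicity of the skew extension $\bd:(G^\Z_o,\bar\P_o,\tau)\to(\partial^2 G,\nu_o\otimes\nu_o)$, i.e.\ that the $\tau$-invariant $\sigma$-algebra equals (mod null) $\bd^{-1}$ of the $G$-invariant $\sigma$-algebra on $\partial^2 G$. This is the entire difficulty of claim (2), and ``tail triviality of each half plus Corollary 3.9'' does not obviously deliver it: under $\tau$-shifting the two halves are not fixed but get re-cut, with initial positive-time steps moved into negative time, so invariance under $\tau$ is not a tail condition on each half separately. You have re-packaged the theorem into an assertion you have not proved, which the paper's Birkhoff/time-reversal argument is precisely designed to circumvent. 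The remaining pieces of your plan are fine: the equivalence of claims (1) and (2) via commuting essentially free actions with explicit fundamental domains is correct (the paper uses one direction of it), and the fibered argument for claim (3) is correct as the theorem's product action does act transitively on the second factor, forcing every invariant set to be of the form $F\times G$.
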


Key to proving this Theorem will be the following Birkhoff-type ergodic theorem for the $\Z$-action on $G_{0}^{\Z}$ with respect to the measure $\bar\P_{o}$.

\begin{theorem}
Let $f$ be a continuous function on $G^{\Z}_{0}$ which is integrable with respect to $\bar\P_{o}$. Then
\[\lim_{n\to\infty}\frac{1}{n} \sum_{l=0}^{n} f(\tau_l (x_z)_{z\in\Z})= \int_{G^{\Z}_{0}} f \d \bar\P_{o}\]
almost surely.
\end{theorem}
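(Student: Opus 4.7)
The plan is to reduce the claim to the classical Birkhoff pointwise ergodic theorem for a Bernoulli shift, via an explicit measure isomorphism between $(G_o^{\Z},\bar\P_o,\tau_1)$ and $(S^{\Z},\mu^{\otimes\Z},\sigma)$, where $\sigma$ is the shift.

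Measure preservation is immediate: by Lemma \ref{lemma measure Q}, $\Q$ on $G^{\Z}$ is jointly invariant under the commuting left $G$-action and the $\Z$-shift, and $\bar\P_o$ is obtained by restricting $\Q$ to the $G$-fundamental domain $G_o^{\Z}$. Since $\tau$ is by construction the $\Z$-action induced on this quotient, each $\tau_l$ preserves $\bar\P_o$.

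The heart of the argument is to parametrize $G_o^{\Z}$ by its increments. Define $\Phi:G_o^{\Z}\to S^{\Z}$ by $\Phi(x)_z := x_{z-1}^{-1}x_z$. Unpacking Definition \ref{def bi-inf RW}, together with $X_z=Z_z$ for $z\geq 0$ and $X_z=\bar Z_{-z}$ for $z<0$, one finds
\[
\Phi(x)_z = W_z \text{ for } z\geq 1, \qquad \Phi(x)_z = \bar W_{1-z}^{-1} \text{ for } z\leq 0,
\]
where $W_j$ and $\bar W_j$ denote the $j$-th increments of the forward walk $Z$ and the backward walk $\bar Z$. The symmetry of $\mu$ makes each $\bar W_{1-z}^{-1}$ again $\mu$-distributed, and the mutual independence of all the $W$'s and $\bar W$'s under $\bar\P_o$ then gives $\Phi_{\ast}\bar\P_o = \mu^{\otimes\Z}$. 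The map $\Phi$ is invertible through the telescoping formulas $x_z = u_1\cdots u_z$ for $z\geq 1$ and $x_{-n} = u_0^{-1}\cdots u_{1-n}^{-1}$ for $n\geq 1$, so it is a measurable isomorphism.

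Under $\Phi$, the map $\tau_1$ is conjugated to a unit shift on $S^{\Z}$: since $\tau_1$ only re-bases the bi-infinite path by moving its zero-position by one step and then left-multiplying so that the new position at time $0$ is again $o$, a direct computation using $\tau_1(x)_n = x_1^{-1}x_{n+1}$ gives
\[
\Phi(\tau_1 x)_n = (x_1^{-1}x_n)^{-1}(x_1^{-1}x_{n+1}) = x_n^{-1}x_{n+1} = \Phi(x)_{n+1},
\]
so $\Phi\circ\tau_1 = \sigma\circ\Phi$. Hence $(G_o^{\Z},\bar\P_o,\tau_1)$ is measurably isomorphic to the Bernoulli system $(S^{\Z},\mu^{\otimes\Z},\sigma)$, which is mixing and in particular ergodic. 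Applying the classical Birkhoff pointwise ergodic theorem to the transported observable $f\circ\Phi^{-1}\in L^1(\mu^{\otimes\Z})$ and pulling back by $\Phi$ yields the stated almost-sure convergence. No step is a serious obstacle once the increment parametrization is in place; the essential ingredient is the symmetry of $\mu$, which is exactly what turns the inverted backward-side increments into genuinely iid $\mu$-variables and thereby reduces the system to an honest Bernoulli shift.
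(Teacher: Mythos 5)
Your proof is correct, and it takes a genuinely different route from the paper's. The paper applies the Birkhoff ergodic theorem directly to $(G_o^{\Z},\bar\P_o,\tau_1)$ to get a.s.\ convergence of the Ces\`aro averages to $\E[f\mid\skrif^{\tau_1}]$, and then identifies the limit as constant through a tail argument: using the continuity of $f$ (so that it depends on only finitely many coordinates), the forward Ces\`aro limit is shown to be measurable with respect to the far-future $\sigma$-algebra of the path, the backward Ces\`aro limit with respect to the far past, and since $\tau_1$ and $\tau_{-1}$ have the same invariant $\sigma$-algebra these two limits agree; as the two tails are independent under $\bar\P_o$, the common limit must be a.s.\ constant. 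Your argument instead observes that the increment map $\Phi(x)_z=x_{z-1}^{-1}x_z$ is a measurable bijection $G_o^{\Z}\to S^{\Z}$ under which $\bar\P_o$ is sent to $\mu^{\otimes\Z}$ (the symmetry of $\mu$ makes the inverted backward increments $\mu$-distributed, and independence comes from the product structure of $\bar\P_o=\P_o\otimes\P_o$) and $\tau_1$ is conjugated to the shift $\sigma$, so the whole system is measurably isomorphic to a Bernoulli shift and in particular ergodic; Birkhoff then finishes the job. Your approach is cleaner, gives strictly more (a Bernoulli, hence mixing, system), and dispenses with the continuity hypothesis on $f$: it works for arbitrary $f\in L^1(\bar\P_o)$, which is actually what is needed later when the theorem is applied to indicator functions $\mathbbm{1}_A$ of $\Z$-invariant sets in the proof of Theorem~\ref{big ergodicity}. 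One small caveat unrelated to your argument: the paper's displayed formula for $\tau_z$ appears to carry a sign typo (as written, $\tau_z(x)_0\ne o$ in general), and the convention $\tau_1(x)_n=x_1^{-1}x_{n+1}$ that you use is the self-consistent one.
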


\begin{proof}
By construction, $\bar\P_{o}$ is invariant under the induced $\Z$-action by $\{\tau_z: z\in \Z\}$. Hence by Birkhoff's ergodic theorem 
\[\lim_{n\to\infty}\frac{1}{n} \sum_{l=0}^{n} f(\tau_l (x_z)_{z\in\Z}) = \E_{0}[f \vert \skrif^{\tau_{1}}](x_z)_{z\in \Z}\]
almost surely, where $\skrif_{\tau_{1}}$ is the $\tau_{1}$-invariant sub-$\sigma$-algebra of the product-$\sigma$-algebra on $G^{\Z}_{0}$. But in the same way also
\[\lim_{n\to -\infty}\frac{1}{-n} \sum_{l=0}^{n} f(\tau_l (x_z)_{z\in\Z}) = \E_{0}[f \vert \skrif^{\tau_{-1}}](x_z)_{z\in\Z}\]
almost surely, where we're conditioning on the $\tau_{-1}$-invariant $\sigma$-algebra. But since $\tau_{1}$ and $\tau_{-1}$ are inverse to each other, $\skrif_{\tau_{1}}=\skrif_{\tau_{-1}}$ and hence the limits agree almost surely as well. We denote this limit by $f_{\infty}$.

What is left to do , is to show that $f_{\infty}$ is indeed constant. We first regard the case that $f$ is a simple function over cylinder sets. Notice that by the above argument, $f_{\infty}(x_z)_{z\in\Z} =f_{\infty}(x_{-z})_{z\in\Z}$, so $f_{\infty}$ is invariant under time-reversal. Furthermore, $f$ only depends on finitely many indices. 
This also means that $f_{\infty}(x_z)_{z\in\Z}$ only depends on the positive indices of $(x_z)_{z\in\Z}$, since at some point $\tau_l(x_z)_{z\in\Z}$ has moved $x_0$ so far back that $f(\tau_l(x_z)_{z\in\Z})$ does not depend on $(x_{-n})_{n\in\N}$ anymore. But by invariance of $f_{\infty}$ under time-reversal, this also means that $f_{\infty}$ does not depend on the positive indices of $(x_z)_{z\in\Z}$. And hence, $f_{\infty}$ is constant almost surely. The identity
\[f_{\infty} = \int_{G^{\Z}_0}f \d \P_{o}\]
then also follows from Birkhoff's ergodic theorem.

For a general continuous function $f$ note that we can approximate $f$ arbitrarily well by a simple function over cylinder sets. The claim then follows by dominated convergence.
\end{proof}

\begin{proof}[{\bf Proof of Theorem \ref{big ergodicity}}]
We will first consider the $\Z$-action. Let $A$ be invariant under the $\Z$-action and assume that $\bar\P_{o}(A)>0$. Then almost surely
\[\lim_{n\to\infty}\frac{1}{n} \sum_{l=0}^{n} \mathbbm{1}_{A}\big(\tau_l (x_z)_{z\in\Z}\big)= \P_{o}(A).\]
But since $\tau_l (x_z)_{z\in\Z} \in A \Leftrightarrow (x_z)_{z\in\Z}\in A$ by invariance of $A$, this means for any choice of sequence $(x_z)_{z\in\Z}$ the summands are either always $0$ or always $1$. And since the series converges almost surely and $\bar\P(A)>0$, this means $\mathbbm{1}_{A}((x_z)_{z\in\Z})=1$ on an almost sure set, meaning in particular that $\bar\P(A)=1$.

Since the $G$-action and the $\Z$-action on $G^{\Z}$ commute, this in particular also means that the induced $G$-action on $\Z \backslash G^{\Z}$ is ergodic. Indeed, assume that there exists a non-trivial set $C$ in our fundamental domain of $\Z \backslash G^{\Z}$ which is $G$-invariant. Then $\Z C\in G^{\Z}$ is both $G$-invariant by assumption and $\Z$ invariant by construction. Further, it is non-trivial in $\Q$, since $\Q(\Z C)= \sum_{z\in\Z}\Q(z.C)$. And since $\Q(C)$ is non trivial, by invariance of $\Q$ under the $\Z$-action, this also means $\Q(\Z C)$ is non-trivial. Further, by construction, $\Q=\sum_{g}\bar\P_g$. This implies in particular that $\bar\P_g(\Z C)$ is non trivial for one, and hence by $G$-invariance for all $g\in G$. In particular, $\bar\P_{o}(\Z C)$ is non trivial. But since this is the measure of the projection of $\Z C$ into the $G$-quotient, this means that we have found a $\Z$ invariant set in the quotient, which is non trivial, contradicting the ergodicity of the $\Z$-action in the quotient. 
\end{proof}

\begin{cor}
$\Theta_3$ and hence $\Theta$ is ergodic for the $G$-action on $\partial^2 G$.
\end{cor}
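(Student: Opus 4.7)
The plan is to deduce ergodicity of $\Theta_3$ directly from Theorem~\ref{big ergodicity} by pulling a hypothetical invariant set back through the boundary map, and then transfer the conclusion to $\Theta$ using the measure-equivalence established in Theorem~\ref{theorem 1}.

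First, note that the boundary map $\bd\colon \Z\backslash G^\Z\to\partial^2 G$ (where we identify $\Z\backslash G^\Z$ with the fundamental domain $\skrid$) is $G$-equivariant: for any bi-infinite trajectory $(x_z)_{z\in\Z}\in\skrid$ and any $g\in G$, the shifted trajectory $g\cdot(x_z)_{z\in\Z}=(gx_z)_{z\in\Z}$ has forward/backward boundary limits $g\bd(x_z)_{z\geq 0}$ and $g\bd(x_{-z})_{z\geq 0}$, by continuity of the $G$-action on $\hat\Gamma=\Gamma\cup\partial G$. (Strictly speaking one has to verify that the $G$-action preserves $\skrid$ up to $\widehat\Q$-null sets, which follows from the fact that $\Q$ is $G$-invariant and the partition of $G^\Z$ into $\Z$-orbits is $G$-equivariant.)

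Now suppose $A\subset\partial^2 G$ is measurable and $G$-invariant (modulo $\Theta_3$-null sets) with $\Theta_3(A)>0$. By $G$-equivariance of $\bd$, the preimage $\bd^{-1}(A)\subset\Z\backslash G^\Z$ is $G$-invariant modulo $\widehat\Q$-null sets. Since $\Theta_3=\bd_\ast\widehat\Q$, we have $\widehat\Q(\bd^{-1}(A))=\Theta_3(A)>0$. The ergodicity of the $G$-action on $(\Z\backslash G^\Z,\widehat\Q)$ from Theorem~\ref{big ergodicity} then forces $\widehat\Q((\Z\backslash G^\Z)\setminus\bd^{-1}(A))=0$, and pushing forward gives $\Theta_3(\partial^2 G\setminus A)=0$. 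Hence $A$ has full $\Theta_3$-measure, and $\Theta_3$ is ergodic for the $G$-action.

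Finally, to upgrade this to ergodicity of $\Theta$: by Theorem~\ref{theorem 1} and Lemmas~\ref{density Theta1}, \ref{density Theta2}, \ref{density Theta}, all three measures $\Theta_1,\Theta_2,\Theta_3$ are mutually absolutely continuous with $\nu_o\otimes\nu_o$ on $\partial^2 G$, hence measure-equivalent to $\Theta$. By Remark~\ref{rem ergodicity}, ergodicity is a property of the measure class of a quasi-invariant measure, so ergodicity of $\Theta_3$ implies ergodicity of $\Theta$. This completes the proof.

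The only genuine obstacle is verifying $G$-equivariance of $\bd$ at the level of the fundamental domain $\skrid$ (as opposed to $G^\Z$ itself), since the $G$-action does not literally preserve $\skrid$. However, because $\Z$ and $G$ commute on $G^\Z$ and both preserve $\Q$, the induced $G$-action on $\Z\backslash G^\Z$ is well-defined and $\widehat\Q$-preserving, and the boundary map descends because the forward/backward endpoints are unchanged by the $\Z$-shift. This is precisely the content encoded in Theorem~\ref{big ergodicity}, which is why the argument reduces to a clean pull-back computation.
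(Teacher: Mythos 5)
Your proof is correct and takes essentially the same route as the paper: observe that the boundary map $\bd$ is $G$-equivariant and factors through $\Z\backslash G^{\Z}$, pull a hypothetical invariant set back to $\Z\backslash G^{\Z}$, invoke Theorem~\ref{big ergodicity} to conclude it is null or co-null for $\widehat\Q$, push forward, and then transfer to $\Theta$ via Remark~\ref{rem ergodicity}. Your additional remarks about the $G$-action on the fundamental domain $\skrid$ being only defined up to null sets are a reasonable expansion of what the paper leaves implicit.
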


\begin{proof}
For the boundary map $\bd: G^{\Z}\to\partial^2G$ we have $\bd(g(x_n)_n)= g(\bd(x_n)_n)$, i.e. it commutes with the $G$-actions on the two spaces. Further the boundary map does not depend on the parametrization of the sequence $(x_n)_n$. Hence, the same also holds for the restriction of the boundary map to $\Z \backslash G^{\Z}$ with the respective $G$-action.

 So for any $G$-invariant $A\subset\partial^{2}G$, $\bd^{-1}(A)$ is also $G$-invariant and hence by Theorem \ref{big ergodicity}
\[\Theta_3(A)= \widehat \Q(\bd^{-1}(A))=0\]
or
\[\Theta_3(A^c)= \widehat \Q(\bd^{-1}(A^c))=\widehat \Q(\bd^{-1}(A)^c)= 0.\]
Thus, $\Theta_3$ is ergodic for the $G$-action on $\partial^2 G$. And by Remark \ref{rem ergodicity}, this means $\Theta$ is ergodic for the $G$-action on $\partial^2 G$ as well. 
\end{proof}

\subsection{Exponential Mixing and the CLT}\label{sec mixing}

In this section we will prove Theorem \ref{thm mixing} and Theorem \ref{thm CLT}.

\noindent{\bf Proof of Theorem \ref{thm mixing}.} We will first prove that the $\Z$ action on $(G^\Z_o,\bar\P_o)$ is mixing of all orders (recall Definition \ref{def Mixing}). As in the  proof of ergodicity, we will again leverage the fact that the $\Z$-action is essentially a shift action on the sequence of i.i.d increments. Indeed, let $A_1,…,A_r$ be finite unions of cylinder sets in $G^{\Z}_o$ and let $(t_i^{1})_i,…,(t_i^{r})_i$ be sequences in $\Z$ such that $t_i = \min_{l\ne k} \abs{t_i^{l}-t_{i}^k}$ diverges. Then there exists $t>0$ such that for all $\abs{t_i}>t$ fixed, for $k=1,..,r$ the supports of the cylinders $\tau_{t_i^{k}}A_k$ are disjoint. Hence, 
\begin{equation}\label{cylinder fun}
\int_{G_{o}^{\Z}} \prod_{i=1}^r \big(\mathbbm{1}_{A_i}\circ \tau_{t_i^i}\big) \d \bar\P_o = \prod_{i=1}^r \bigg(\int_{G_{o}^{\Z}} \mathbbm{1}_{A_i} \d \bar\P_o\bigg) 
\end{equation}

%\[\bar\P_o[\tau_{t_i^{1}}A_1,…, \tau_{t_i^{r}}A_r] = \bar\P_o[\tau_{t_i^{1}}A_1] \cdot … \cdot \bar\P_o[\tau_{t_i^{r}}A_r] = \bar\P_o[A_1] \cdot … \cdot \bar\P_o[A_r]\]
by independence of the increments and shift invariance of $\bar\P_o$. So for $t_i>t$, the difference is $0$.

Now suppose $r\geq 2$ and $f_1,\dots, f_r\in L^\infty(\bar\P_o)$. For these functions, the general statement can be shown by approximating these functions by simple functions defined w.r.t. cylinder sets. Indeed, for $i=1,..,r$, let $(f_i^{m})$ be a sequence of such simple functions approximating each $f_i\in L^\infty(\bar\P_o)$ as $m\to\infty$. By the above argument, we have for every fixed $m$, 
$$
\lim_{i\to \infty} \abs{\int_{G_{o}^{\Z}} f_1^m\circ \tau_{t_i^1} \cdot...\cdot f_r^m\circ \tau_{t_i^r} \d \bar\P_o - \int_{G_{o}^{\Z}} f_i^m d \bar\P_o \cdot...\cdot  \int_{G_{o}^{\Z}} f_r^m \d \bar\P_o} =0.
$$
Furthermore, for fixed $i \in \N$, we have 
\begin{align*}
&\lim_{m \to \infty}\abs{\int_{G_{o}^{\Z}} f_1^m\circ \tau_{t_i^1} \cdot...\cdot f_r^m\circ \tau_{t_i^r} d \bar\P_o - \int_{G_{o}^{\Z}} f_i^m d \bar\P_o \cdot...\cdot  \int_{G_{o}^{\Z}} f_r^m d \bar\P_o }  \\
&= \abs{\int_{G_{o}^{\Z}} f_1\circ \tau_{t_i^1} \cdot...\cdot f_r\circ \tau_{t_i^r} d \bar\P_o - \int_{G_{o}^{\Z}} f_1 d \bar\P_o \cdot...\cdot  \int_{G_{o}^{\Z}} f_r d \bar\P_o}  
\end{align*}

So using dominated convergence, 
\begin{align*}
0
%&= \lim_{m \to \infty}\lim_{i \to \infty} \abs{\int_{G_{o}^{\Z}} f_1^m\circ \tau_{t_i^1} \cdot...\cdot f_r^m\circ \tau_{t_i^r} d \bar\P_o - \int_{G_{o}^{\Z}} f_i^m d \bar\P_o \cdot...\cdot  \int_{G_{o}^{\Z}} f_r^m d \bar\P_o } \\
%&=\lim_{m \to \infty}\sum_{i=1}^{\infty}\abs{\int_{G_{o}^{\Z}} f_1^m\circ \tau_{t_i^1} \cdot...\cdot f_r^m\circ \tau_{t_i^r} d \bar\P_o - \int_{G_{o}^{\Z}} f_i^m d \bar\P_o \cdot...\cdot  \int_{G_{o}^{\Z}} f_r^m d \bar\P_o } \\
%& \qquad\qquad- \abs{\int_{G_{o}^{\Z}} f_1^m\circ \tau_{t_{i-1}^1} \cdot...\cdot f_r^{m}\circ \tau_{t_{i-1}^r} d \bar\P_o - \int_{G_{o}^{\Z}} f_1^m d \bar\P_o \cdot...\cdot  \int_{G_{o}^{\Z}} f_r^m d \bar\P_o }\\
= \lim_{i\to \infty} \abs{\int_{G_{o}^{\Z}} f_1\circ \tau_{t_i^1} \cdot...\cdot f_r\circ \tau_{t_i^r} d \bar\P_o - \int_{G_{o}^{\Z}} f_i d \bar\P_o \cdot...\cdot  \int_{G_{o}^{\Z}} f_r d \bar\P_o }.
\end{align*}
%by dominated convergence, since the summands are bounded by $4$. 
This proves the first statement of Theorem \ref{thm mixing} regarding mixing of all orders.

\medskip

 For Hölder continuous functions $f_i$ we want to be a bit more careful regarding how we choose the approximating functions $f_i^m$. 
We recall that we equip $X=G^\Z_o$ with the metric $d(x,y)= \sum_j 2^{-j} \max\{1,d_{\mathcal G}(x,y)\}$ which is equivalent to the following metric on $G^\Z_o$:
\begin{equation}\label{def metric2}
d^\prime(x,y)= \e^{-n(x,y)} \qquad\mbox{where}\quad n(x,y)=\max\{|k|: x_k \ne y_k\}.
\end{equation}
Both metrics induce the product topology. We also recall that 
a function $f: G^\Z_o\to \R$ is H\"older continuous if there exists $C=C_f>0$ and $\alpha=\alpha_f>0$ such that 
\begin{equation}\label{def Holder2}
x_k= y_k \quad\forall |k|\leq n \qquad\mbox{implies} \qquad |f(x) - f(y)| \leq C \e^{-\alpha n}
\end{equation}
This is equivalent to requiring that $|f(x)- f(y)| \leq C d(x,y)^\alpha$ for all $x,y\in G^\Z_o$ with $d(x,y)=\sum_j 2^{-j} \max\{1,d_{\mathcal G}(x,y)\}$. For any $n\in \N$, we define 
$$
\mathcal F_n:=\sigma(x_{-n},\cdots.x_n\big)
$$

It suffices to show the exponential mixing for two functions $f,g$. The rest follows inductively. We approximate $f, g$ by simple functions as follows.

\begin{lemma}\label{lemma step2}
If $f \in L^\infty(\bar\P_o)$ is H\"older continuous with constant $C$ and exponent $\alpha>0$, then for $f_n=\E^{\bar\P_o}[f | \mathcal F_n]$
$$
\|f - f_n\|_\infty \leq C \e^{-\alpha n} 
$$
\end{lemma}
\begin{proof}
Fix $x\in G^\Z_o$. Then the conditional expectation $f_n=\E^{\bar\P_o}[f| \mathcal F_n]$ is constant on the cylinder set $C_x^n:=\{y\in G^\Z_o: y_{-n,\dots, n}=x_{-n,\dots,n}\} \in \mathcal F_n$. For any $y\in C_x^n$, we have $n(x,y)\geq n$ (see \eqref{def metric2} above). Since $f$ is assumed to be H\"older continuous (see \eqref{def Holder2}), we have for $y\in C_x^n$, $|f(y)- f(x)| \leq C \e^{-\alpha n}$. Using these two facts we have 
$$
|f_n(x) - f(x)| = \bigg|\int_{C_{x}^n} (f(y) - f(x)) \bar\P_o(\d y | \mathcal \mathcal{F}_n)\bigg| \leq C \e^{-\alpha n}.
$$
Taking supremum over $x\in G^\Z_o$ shows $\|f - f_n\|_\infty \leq C \e^{-\alpha n}$, proving the lemma.  
\end{proof}

With this we can prove
\begin{prop}\label{prop exp mixing}
Let us assume $r=2$, and let $f, g\in L^\infty(\bar\P_o)$ be H\"older continuous functions with exponent $\alpha = \min\{\alpha_f, \alpha_g\}>0$. Then for all $s,t\in \Z$, 
$$
\bigg|\int (f\circ \tau_t) (g\circ \tau_s) \d\bar\P_o - \int f \d\bar\P_o \int g\d\bar\P_o \bigg|  \leq C(f,g) \e^{- \delta |t-s|}, \qquad\delta= \alpha/2>0.
$$
\end{prop}

\noindent{\bf Proof of Proposition \ref{prop exp mixing}.} Let 
$$
\delta_{f,g}(t-s): = \int (f\circ \tau_t)(g\circ \tau_s) \d\bar\P_o - \int f\d\bar\P_o \int g\d\bar\P_o = \int f \,\, (g\circ \tau_{t-s}) \d\bar\P_o - \int f\d\bar\P_o\int g\d\bar\P_o
$$
Also we set 
$$
f_n:= \E^{\bar\P_o}[f|\mathcal F_n], \qquad g_n:=\E^{\bar\P_o}[g|\mathcal F_n]
$$
so that $f_n$ depends on the coordinates in $[-n,n]$ and $g_n\circ\tau_{t-s}$ depends on the coordinates in $[t-s-n,t-s+n]$ so that if $\ell\geq 1$ is an integer such that $|t-s| \geq 2n + \ell$, then the two windows are separated by at least $\ell\geq 1$. Hence,  as in (\ref{cylinder fun}), 
\begin{equation}\label{decorr n}
\delta_{f_n,g_n}(t-s)=0\qquad\forall |t-s| \geq 2n + \ell.
\end{equation}
Therefore, 
$$
\begin{aligned}
|\delta_{f,g}(t-s)|&= |\delta_{f,g}(t-s) - \delta_{f_n,g_n}(t-s)| \\
&= \bigg| \int f\,\,  g\circ\tau_{t-s} \d\bar\P_o - \int f\d\bar\P_o \int g\bar\P_o - \int f_n \,\, g_n \circ\tau_{t-s}\d\bar\P_o + \int f_n \d\bar\P_o\int g_n\d\bar\P_o \bigg| \\
&\leq \bigg| \int f \,\, g\circ\tau_{t-s}\d\bar\P_o - \int f_n \,\, g_n\circ \tau_{t-s}\d\bar\P_o \bigg| + \bigg| \int f_n \d\bar\P_o\int g_n\d\bar\P_o - \int f\d\bar\P_o \int gd\bar\P_o\bigg| \\
&= (I) + (II)
\end{aligned}
$$
Now 
$$
\begin{aligned}
(I) &= \bigg| \int f \,\, g\circ\tau_{t-s}\d\bar\P_o - \int f_n \,\, g_n\circ \tau_{t-s}\d\bar\P_o \bigg| \\
&= \bigg| \int f \,\, g\circ\tau_{t-s} - \int f_n \d\bar\P_o\,\, g \circ \tau_{t-s}\d\bar\P_o  + \int f_n \,\, g\circ\tau_{t-s}\d\bar\P_o - \int f_n \,\, g_n\circ\tau_{t-s}\d\bar\P_o\bigg| \\
& \leq \|f - f_n\|_\infty \|g\|_\infty + \|f_n\|_\infty \|g-g_n\|_\infty
\end{aligned}
$$
using that $\bar\P_o$ is $\Z$ invariant. Moreover, note that, by definition and by Jensen's inequality, $\|f_n\|_\infty \leq \|f\|_\infty$ (and likewise $\|g_n\|_\infty\leq \|g\|_\infty$). We now apply Lemma \ref{lemma step2} to estimate $\|f-f_n\|_\infty$ and $\|g-g_n\|_\infty$. Likewise,
$$
\begin{aligned}
(II) &= \bigg| \int f_n\d\bar\P_o \int g_n\d\bar\P_o - \int f\d\bar\P_o \int g\d\bar\P_o\bigg| \\
&=\bigg| \int (f_n- f)\d\bar\P_o \int g_n\d\bar\P_o + \int f\d\bar\P_o \int g_n\d\bar\P_o -\int f\d\bar\P_o \int g\d\bar\P_o\bigg| \\
&\leq \|f_n - f\|_\infty \|g_n\|_\infty + \|f\|_\infty \|g_n-g\|_\infty
\end{aligned}
$$
and once more we use Lemma \ref{lemma step2} to estimate $\|f-f_n\|_\infty$ and $\|g-g_n\|_\infty$ and invoke $\|g_n\|_\infty\leq \|g\|_\infty$. So choosing 
$$
2n:= |t-s| - \ell -1, \qquad \delta:= \frac \alpha 2. 
$$
proves Proposition \ref{prop exp mixing}. Therefore, Theorem \ref{thm mixing} is also proved now. \qed

\medskip

\noindent{\bf Proof of Theorem \ref{thm CLT}.} We note that it is equivalent to look at the space of increments $(S^{\Z}, \mu^{\otimes \Z})$ instead of $(G^{\Z}_o,\P_o)$, and Hölder continuous functions $f:S^{\Z}\to \R$ (recall that $S$ is the finite symmetric generating set of $G$ and $\mu$ is the step distribution which is supported on $S$).  

Assume $f$ is of the form $f\big((s_n)_{n\in \Z}\big) = f_m\big((s_n)_{n=-\infty}^m\big)$ for some $m$ fixed. Then by \cite[Theorem 3 (i)]{Wu05}, for every $m$, 

\begin{equation}\label{CLT m}
\frac{\sum_{i=1}^{n}f_m\circ\tau_i ((S_n)_{n}) - n \E^{\bar\P_o}[f_m(S_n)_n]}{\sqrt{n}} \to \mathcal{N}(0,\sigma_m^2)
\end{equation}
in distribution as $n\to\infty$, where 
\[\sigma_m^2= \norm{\sum_{n \in \N }\E^{\bar\P_o}\bigg[f_m\circ\tau_n(S_z)_z\big\vert \sigma(S_i: i\leq 0)\bigg] - \E^{\bar\P_o}\bigg[f_m\circ\tau_n(S_n)_n\vert \sigma(S_i: i< 0)\bigg]}_2^2 < \infty.\]

\noindent Indeed, since $f_m$ is Hölder continuous, for every $m$,
\[w^m(n):=\norm{\E^{\bar\P_o}[f_m\circ\tau_n(S_n)_n\vert \sigma(S_i: i\leq 0)] - \E^{\bar\P_o}[f_m\circ\tau_n(S_n)_n\vert \sigma(S_i: i< 0)]}_2\leq C_{f_m} \e^{-\alpha_{f_m} n}\]
and hence $\sum_{n\in \N}w^m(n)<\infty$ and so is $\sigma_m$. For general Hölder continuous functions $f$, we again approximate $f$ by $f_m := \E[f\vert \mathcal F_m]$ (recall Lemma \ref{lemma step2}). Note that,  
\[\abs{\sum_{i=1}^{n}f_m\circ\tau_i ((S_n)_{n}) - n \E^{\bar\P_o}[f_m(S_n)_n] - \sum_{i=1}^{n}f\circ\tau_i ((S_n)_{n}) - n \E^{\bar\P_o}[f(S_n)_n]} \leq  2C_f n \e^{-\alpha_f m} \]
which decays in $n$ if we choose for example $m=n$. Thus, by \eqref{CLT m}, we conclude that the central limit theorem also holds for $f$ by Slutsky's theorem, using also that by dominated convergence $\sigma_m^2\to \sigma^2$ as $m \to \infty$.
\qed

\appendix

\section{}\label{appendix}

\subsubsection{\bf Relations and Differences with Some Percolation Models in Statistical Mechanics}
A different, systematic way of sampling bi-infinite random walk paths was uncovered in a seminal work of Sznitman \cite{szn-annals} on random interlacements in the context of $\Z^d$, and generalized to transient weighted graphs $\Gamma$ by Teixeira \cite{teix}. 
We will now  underline the differences in the two viewpoints in the discussion below and in Example \ref{example RI} we will discuss a concrete case to demonstrate these differences. 

We restrict the construction in \cite{teix} to the test case that $\Gamma$ is the Cayley graph of a finitely generated group with respect to a generating set $S$ and we are regarding the random walk on $\Gamma$ with increments with symmetric law $\mu$ supported on $S$.

Let $L$ be the Cayley graph of $\Z$ with respect to the generating set $\{-1,1\}$ and for $-\infty\leq n\leq m\leq \infty$, denote by $L(n,m)$ the restriction of $L$ to the vertices $\{n,...,m\}$. Further, we denote by $\mathcal{W}(n,m)$ the set of all graph homomorphism $L(n,m) \to \Gamma$ such that the pre-image of each vertex in $\Gamma$ is finite. Finally 
$\mathcal{W} = \bigcup_{n,m}\mathcal{W}(n,m)$. 
On $\mathcal{W}$ the time shift $s_k: \mathcal{W}(n,m)\to \mathcal{W}(n-k,m-k)$ is then defined by $s_k(v_i)=v_{i-k}$ and $s_k(e(i,i+1))=e(i-k,i+1+k)$ where the vertex $v_i$ is the image of $i$ in the graph homomorphism, and similarly $e(i,i+1)$ the image of the edge between $i$ and $i+1$. The space of trajectories is then given by $\mathcal{W}^{\ast}= \mathcal{W}/\sim$ where $w_1 \sim w_2$ if $w_1=s_k(w_2)$ for some $k\in \Z$. 

In the case that $\Gamma$ is the Cayley graph of a group $G$, $\mathcal W(-\infty,\infty)$ is a systematic way of representing the restriction of the support of $\Q$ on $G^{\Z}$ to paths which escape to $\infty$. It is however in the measures that the different points of view become apparent.

As shown in \cite{teix}, $\mathcal{W}^{\ast}$ is then equipped with a measure $\mathcal{Q}^{\ast}$, which is invariant under graph automorphisms of $G$. This measure is uniquely identified by 
\[\mathcal{Q}^{\ast}(A \cap \mathcal{W}_K)= \mathcal{Q}_K(\pi^{-1}(A))
\]
for $A \subset \mathcal{W}^{\ast}$ and $K$ a finite set of vertices in $G$, where $\pi$ is the projection $\mathcal{W}\to \mathcal{W}^{\ast}$, $\mathcal{W}_K$ are the trajectories which visit $K$, and $\mathcal{Q_K}$ is defined by
\begin{align*}
&\mathcal{Q}_K\big(w \in \mathcal{W}\,:\, w_{\vert(-\infty,0]}\in B_1,\, w_{\vert[0,\infty)}\in B_2 \big)\\
&= \sum_{u\in K} \P_u\big((X_n)_{n\in \N}\in \bar B_1, \tau_K = \infty\big)\P_u\big((X_n)_{n\in \N}\in \bar B_2\big)
\end{align*}
where $\P_u$ is the distribution of the random walk started at $u$ and $\tau_K$ being the first hitting time in $K$ after time $0$. Note that since $K$ is a finite set of vertices, this only gives us information on the behavior of the measure $\mathcal{Q}^{\ast}$ inside the graph $G$. Indeed, $\mathcal{Q}^{\ast}$ is a measure on the one-point compactification of $G$ in the following way. 

Let $V_n$ be an increasing sequence of vertices in $G$ exhausting $V$ and set $G_n$ to be the graph gained from $G$ by identifying all points in $V_n^{c}$ to one point $\partial_n$. Setting for $K\subset V_n$
\begin{align*}
&\mathcal{Q}_K^n\big(w \in \mathcal{W}\,:\, w_{\vert(-\infty,0]}\in B_1,\, w_{\vert[0,\infty)}\in B_2 \big)\\
&= 
\sum_{u\in K} \P_u\big((X_n)_{n\in \N}\in \bar B_1, \tau_K > \tau_{\partial_n}, \tau_{\partial_n}< \infty \big)\P_u\big((X_n)_{n\in \N}\in \bar B_2, \tau_{\partial_n}< \infty\big)
\end{align*}
where $\tau_{\partial_n}$ is the hitting time in $\partial_n$. After normalizing this measure by $\mathcal{Q}_K^n(\mathcal{W})$, this is the distribution of a random walk started at the "point at infinity" $\partial_n$ and killed upon returning to $\partial_n$, conditioned to hitting $K$ for the first time at time $0$. Now, we again get a measure $\mathcal{Q}^{n\ast}$ identified by
\[\mathcal{Q}^{\ast}(A \cap \mathcal{W}_K)= \mathcal{Q}_K(\pi^{-1}(A))
\]
and since $\mathcal{Q}_K^n \to \mathcal{Q}_K$ weakly as $n\to \infty$, also $\mathcal{Q}^{n\ast}\to\mathcal{Q}^{\ast}$.
See also for example \cite[Sec. 3]{hutchcroft interlacements}, for a more detailed write-up of this.

While our quotient $\Z\backslash G^{\Z}$ is, up to a set of measure zero, isomorphic to $\mathcal{W}(-\infty,\infty)$, the measure $\widehat \Q$ differs from $\mathcal{Q}$ due to our construction of the bi-infinite random walks with fixed origin $g$. \emph{Thus, it is the quotient measure spaces constructed that give rise to an essential difference between $\widehat \Q$ and $\mathcal{Q}$.}
Recall that we defined $\Q= \sum_{g\in G}\bar \P_g$ and $\widehat \Q$ as the restriction of $\Q$ to an almost sure fundamental domain $\mathcal D$ of the $\Z$-shift on $G^{\Z}$. 
In particular, for $K\subset G$ finite,
\begin{align*}
\widehat \Q((x_z)_{z\in\Z}\text{ hits }K)=
\sum_{g\in G}\bar\P_g\big((X_z)_{z\in\Z}\cap K \ne \emptyset, d_{\skrig}(o,X_{-n})>d_{\skrig}(o,g), d_{\skrig}(o,X_{n})\geq d_{\skrig}(o,g) \text{for }n>0 \big)
\end{align*}
which reduces the summands to points $g$ such that $K$ is not fully contained in the ball $\{g': d_{\skrig}(g',o)<d_{\skrig}(g,o)\}$. Note that while for $K=o$ this agrees with $\mathcal Q_K$, for any other choice of $K$ it does not, as in our definition the set $K$ can be hit by both the negative time part of $(X_z)_{z\in\Z}$ as well as by the positive time part of the path.

Note that, while both $\mathcal{Q}^{\ast}$ and $\widehat \Q$ are invariant for the $G$-action on the quotient $\Z\backslash G^{Z}$, they are so in subtly different ways. Since $\widehat \Q$ is defined on an almost sure fundamental domain of the $\Z$-action, it is invariant under the induced $G$-action on the domain. This action shifts a path $(x_z)_{z\in\Z}$ and then reparametrizes it. $\mathcal{Q}^{\ast}$ on the other hand is identified via the push-forward of measures $\mathcal{Q}_K$ on $\mathcal{W}$. In particular this means that for any $g\in G$
\[\mathcal{Q}^{\ast}(\text{trajectory hits }K)=\mathcal{Q}^{\ast}(\text{trajectory hits }gK)\]
while in general
\[\widehat\Q(\text{trajectory hits }K)\ne\widehat\Q(\text{trajectory hits }gK).\]

Finally, since  the bi-infinite random walk $(X_n)_{n\in\Z}$ converges to random points $X_{\infty}$ and $X_{-\infty}$ in $\partial G$ as $z\to \infty$ and $z\to -\infty$ respectively and by definition $\widehat \Q$ can measure the behavior of the limits on the boundary. 
In particular, our measure $\widehat \Q$ does not live on the one-point compactification of $G$ but rather in the Gromov compactification $G\cup \partial G$.

\begin{example}\label{example RI}
To further illustrate the difference between the intensity measures of the random interlacements and the measure $\widehat \Q$, we discuss this example which shows the different behavior of the measures for sets inside the group.

Let $T$ be the Cayley graph of $\mathbb{F}_2$ the free group in two generators $a$ and $b$ with respect to the symmetric edge set $S=\{a,a^{-1}, b,b^{-1}\}$. 
We consider our bi-infinite random walk paths $(X_z)_{z\in \Z}$ as described above with driving measure $\mu = \text{Unif}(S)$. Let $K=\{k\}\subset \mathbb{F}_2$ and consider the event $A=\{\text{ the bi-infinite path hits } k\}$. This set can be seen as a subset of either notion of space of bi-infinite random walk paths.

\noindent To calculate $\widehat \Q(A)$, we first notice that the restrictions $d_{\skrig}(X_z,o)> d_{\skrig}(g,o) \text{ for } z<0$ and $ d_{\skrig}(X_z,o)\geq d_{\skrig}(g,o) \text{ for } z>0$ mean that the summands in the definition of $\widehat \Q$ are only positive for origins $g\in [o,k]$, the unique geodesic between $o$ and $k$. Let
\[c^1_g = \frac{\P_g\big((X_n)_{n\in\N}\in A, d_{\skrig}(X_n,o)> d_{\skrig}(g,o) \text{ for all } n\big) }{\P_g\big((X_n)_{n\in\N}\in A\big)}\]
and 
\[c^2_g = \frac{\P_g\big((X_n)_{n\in\N}\in A, d_{\skrig}(X_n,o)\geq d_{\skrig}(g,o) \text{ for all } n\big) }{\P_g\big((X_n)_{n\in\N}\in A\big)}.\]
Note that these values actually just depend on the distance between $g$ and $k$.
Then with $\widehat \Q$ defined in \eqref{def QZ int}, 
\begin{align*}
\widehat \Q(A)&= \sum_{g\in [o,k]}\bar\P_g\big((X_z)_{z\in\Z}\in A, d_{\skrig}(X_z,o)> d_{\skrig}(g,o) \text{ for } z<0, d_{\skrig}(X_z,o)\geq d_{\skrig}(g,o) \text{ for } z>0\big)\\
&=\sum_{g\in [o,k]} (c^1_g+c^2_g) \big(2 \P_g((X_n)_{n\in\N}\in A) - \P_g((X_n)_{n\in\N}\in A)^2\big)\\
&=\sum_{g\in [o,k]} (c^1_g+c^2_g) \big(2 F(g,k) - F(g,k)^2\big).
\end{align*}
It is well known, that for the simple random walk on a $q$-regular tree, we have
\[F(x,y) = (q-1)^{-d(x,y)} \]
where $d$ is the graph metric (cf. \cite{woess}, Lemma 1.24).
In our case this leads to 

\[\widehat \Q(A) = \sum_{g\in [o,k]} (c^1_g+c^2_g) \big(2\cdot 3^{-d(g,k)}- 9^{-d(g,k)}\big) \] 
i.e.\ it is positive and increases in the distance of $k$ to $o$.

For the intensity measure of random interlacements however
\begin{align*}
\mathcal{Q}^{\ast}(\mathcal{W}^{\ast}_K) &= \mathcal{Q}_{K}(\mathcal{W})= \P_k((X_n)_{n\in\N} \text{ does not return to }k)\\
&= 1- \sum_{g\sim k}\mu(gk^{-1}) F(k,g) = \frac{2}{3}.
\end{align*}
Which, as stated above, is constant over all choices of $k$. This demonstrates that the two measures are not equal.
\end{example}

\noindent{\bf Acknowledgements.} We are  grateful to Vadim Kaimanovich for valuable feedback and pointing out a number of references to us -- these and others are included in Section~\ref{sec-lit}. We also thank Konstantin Recke and Eduardo Silva for very helpful  discussions on the first draft of the article. 
The research of the first and the third author is funded by the Deutsche Forschungsgemeinschaft (DFG) under Germany’s Excellence Strategy EXC 2044-390685587, Mathematics M\"unster: Dynamics-Geometry- Structure. The second author is partly supported by a grant from the TNQ foundation under the "Numbers and Shapes" initiative, an  endowment of the Infosys Foundation, a DST JC Bose Fellowship,  and by  the Department of Atomic Energy, Government of India, under project no.12-R\&D-TFR-5.01-0500. A part of this work was done when the second author was visiting the first and the third authors at University of M\"unster, and another part when all three authors were visiting Fields Institute, Toronto, during a special semester on Randomness and Geometry in 2024. We thank these institutions for their support.

\end{document}